\documentclass[a4paper,12pt,final]{amsart}
\usepackage{amsaddr}
\usepackage[a4paper,left=24mm,right=24mm,top=34mm,bottom=34mm]{geometry}
\usepackage[utf8]{inputenc}
\usepackage{color}
\usepackage{xr-hyper} 
\usepackage{hyperref}
\usepackage{cleveref}
\usepackage{showlabels}
\usepackage{todonotes}
\hypersetup{
	colorlinks,
	citecolor=red,
	filecolor=blue,
	linkcolor=blue,
	urlcolor=purple
}

\pagestyle{myheadings}

\makeatletter
\makeatother

\def\subjclass#1{{\renewcommand{\thefootnote}{}%
		\footnote{\emph{Mathematics Subject Classification (2010):} #1}}}
\def\keywords#1{\par\medskip 	\noindent\textbf{Keywords.} #1}

\usepackage[all]{xy}
\usepackage{amsmath,amssymb,amsfonts,amsthm,graphicx}

\usepackage{multirow}
\usepackage{comment}
\theoremstyle{plain}

\tikzset{node distance=2cm, auto}

\newtheorem{thm}{Theorem}[section]

\newtheorem{cor}[thm]{Corollary}
\newtheorem{prop}[thm]{Proposition}
\newtheorem{df}[thm]{Definition}
\newtheorem{lm}[thm]{Lemma}

\newtheorem*{theorem*}{Theorem}
\newtheorem*{aim*}{Aim}
\newtheorem*{initialaim*}{Initial Aim}
\newtheorem*{conj*}{Conjecture}
\newtheorem*{cor*}{Corollary}
\newtheorem*{prop*}{Proposition}
\newtheorem*{df*}{Definition}
\newtheorem*{lm*}{Lemma}
\newtheorem*{example*}{Example}
\newtheorem*{notation*}{Notation}
\newtheorem*{prob*}{Problem}

\theoremstyle{remark}

\newcommand{\N}{\mathbb{N}}
\newcommand{\Z}{\mathbb{Z}}
\newcommand{\Q}{\mathbb{Q}}
\newcommand{\R}{\mathbb{R}}
\newcommand{\C}{\mathbb{C}}

\newcommand{\F}{\mathbb{F}}

\newcommand{\GenGp}[1]{\langle#1\rangle}

\newcommand{\Norm}[2]{{\rm N}_{#1}({#2})}
\newcommand{\SL}{\operatorname{SL}}
\newcommand{\SU}{\operatorname{SU}}
\newcommand{\tA}{\mathrm A}

\newcommand{\calB}{{\mathcal B}}\newcommand{\calV}{{\mathcal V}}

\newcommand{\calG}{\mathcal G}
\newcommand{\calM}{\mathcal M}
\newcommand{\calP}{\mathcal P}

\newcommand{\wtbT}{\widetilde{\bf T}}
\newcommand{\wtbL}{\widetilde\bL}
\newcommand{\Symm}{\mathfrak{S}}
\newcommand{\Bl}{\operatorname{Bl}}
\newcommand{\Out}{\operatorname{Out}}
\newcommand{\PSL}{\operatorname{PSL}}

\newcommand{\GL}{\operatorname{GL}}
\newcommand{\GU}{\operatorname{GU}}
\newcommand{\bl}{\operatorname{bl}}

\newcommand{\diag}{\operatorname{diag}}
\newcommand{\Irr}{\operatorname{Irr}}
\newcommand{\IBr}{\operatorname{IBr}}
\newcommand{\Inn}{\operatorname{Inn}}
\newcommand{\Aut}{\operatorname{Aut}}
\newcommand{\Ind}{\operatorname{Ind}}
\newcommand{\Res}{\operatorname{Res}}

\newcommand{\ol}{\overline}
\newcommand{\ul}{\underline}

\newcommand{\wh}{\widehat}

\newcommand{\bG}{{{\mathbf G}}}
\newcommand{\bH}{{{\mathbf H}}}

\numberwithin{equation}{section}

\newcommand{\Cl}{\operatorname{Cl}}
\newcommand{\bL}{{{\mathbf L}}}
\newcommand{\bM}{{{\mathbf M}}}
\newcommand{\bK}{{{\mathbf K}}}
\newcommand{\Ze}{\operatorname Z}
\newcommand{\FF}{\ensuremath{\mathbb{F}}}
\newcommand{\bS}{{\mathbf S}}
\newcommand{\bT}{{\mathbf T}}
\newcommand{\NNN}{\operatorname{N}}
\newcommand{\cE}{\mathcal{E}}
\newcommand{\RDL}{\operatorname{R}}
\newcommand{\GF}{ {\bG^F}}
\newcommand{\wGF}{ {\wbG^F}}
\newcommand{\wG}{ {\widetilde G }}
\newcommand{\Cent}{\operatorname{C}}
\newcommand{\Zent}{\operatorname{Z}}
\newcommand{\cP}{{\mathcal P}}
\newcommand{\calN}{{\mathcal N}}
\newcommand{\cD}{{\mathcal D}}
\newcommand{\wt}{\widetilde}
\newcommand{\wbG}{{\widetilde \bG}}
\newcommand{\la}{\lambda}
\newcommand{\lp}{{\ell'}}
\newcommand{\BB}{{\mathbb B}}
\newcommand{\wN}{{\wt N}}

\usepackage{cleveref,enumitem}
\newlist{asslist}{enumerate}{1} 
\setlist[asslist]{label=(\roman*), ref=\thethm(\roman*)}

\crefalias{asslistenumi}{Assumption} 

\newlist{thmlist}{enumerate}{1} 
\setlist[thmlist]{label=(\alph*), ref=\thethm(\alph*)}

\title{On the Alperin-McKay conjecture for simple groups of type $\mathrm{A}$}

\begin{document}

\date{\today}
\author{Julian Brough and Britta Sp\"ath}
\thanks{The first author thanks the Humbold Foundation for its support.}
\thanks{This material is partially based upon work supported by the NSF Grant No. DMS-1440140 while the second author was in residence at the MSRI in Berkeley, California, during the Spring 2018 semester. Both authors like to thank the research training group \emph{GRK 2240: Algebro-Geometric Methods in Algebra, Arithmetic and Topology}, funded by the DFG}
\address{School of Mathematics and Natural Sciences University of Wuppertal, Gau\ss str. 20, 42119 Wuppertal, Germany}
	\email{brough@uni-wuppertal.de,\, bspaeth@uni-wuppertal.de} 
\begin{abstract}
	In this paper characters of the normaliser of $d$-split Levi subgroups in ${\SL}_n(q)$ and ${\SU}_n(q)$ are parametrized with a particular focus on the Clifford theory between the Levi subgroup and its normalizer.
	These results are applied to verify the Alperin-McKay conjecture for primes $\ell$ with $\ell\nmid 6(q^2-1)$ and the Alperin weight conjecture for $\ell$-blocks of those quasi-simple groups with abelian defect. The inductive Alperin-McKay condition and inductive Alperin weight condition by the second author are verified for certain blocks of ${\SL}_n(q)$ and ${\SU}_n(q)$.

	\keywords{Alperin-McKay conjecture, inductive Alperin-McKay condition,
		inductive blockwise Alperin weight condition, special linear group, special unitary group}
\end{abstract}

\date{\today}
\subjclass{ 20C20 (20C33 20C34)}
\maketitle

\excludecomment{commentMe}

\vspace{-10mm}
\normalsize

\section{Introduction}
The McKay conjecture and its blockwise version, the Alperin-McKay conjecture, are two fundamental conjectures in representation theory of finite groups. 

For a finite group $G$, a prime $\ell$ and $N$ the normalizer of a Sylow $\ell$-subgroup of $G$, the McKay conjecture is concerned with the cardinality of $\Irr_{\ell '}(X)$ where $\Irr_{\ell'}(X)=\{ \chi\in \Irr(X)\mid \chi(1)_{\ell}=1\}$ and states that 
$$|\Irr_{\ell '}(G)|=|\Irr_{\ell '}(N)|.$$ 

For an $\ell$-block $B$ of $G$ with defect group $D$ and $b$ the Brauer correspondent of $B$, an $\ell$-block of the normalizer in $G$ of $D$, the Alperin-McKay conjecture claims that $$|\Irr_0(B)|=|\Irr_0(b)|,$$ where $\Irr_0(C)$ is the set of height $0$ characters in a block $C
$. By summing over blocks with maximal defect, the Alperin-McKay conjecture implies the McKay conjecture.

Alperin introduced with the Alperin Weight Conjecture a similar global-local conjecture that claimed that the number of irreducible Brauer characters of a given $\ell$-block coincides with the number of weights of this $\ell$-block. If the block $B$ has abelian defect group, the conjecture is equivalent to \[|\IBr(B)|=|\IBr(b)|,\]
where $b$ is as before the Brauer correspondent of $B$. 
A more structural explanation of both conjectures was suggested by Brou\'e, namely that in case of abelian defect the blocks $B$ and $b$ are derived equivalent. 

In \cite{IMNRedMcKay,AMSp} the McKay and the Alperin-McKay conjecture have been reduced to finite simple groups. It remains to check the corresponding so-called {\it inductive condition} for all finite simple groups and primes $\ell$. 
Substantial progress has been made towards proving the McKay conjecture along that line.
In particular, using the inductive McKay condition, Malle and the second author established the McKay conjecture for $\ell=2$ \cite{MalSpOddDegree}.
Furthermore for odd primes the inductive McKay condition has been verified in multiple cases of simple groups \cite{SpIMDefChar, CabSpMZ, CabSpIMTypeC, CabSpCharTypeA}.
Less cases have been checked for the inductive Alperin-McKay (AM) condition. \cite{AMSp} verified the inductive AM condition for simple groups of Lie type, when $\ell$ is the defining characteristic, and for alternating groups, when the prime $\ell$ is odd. Additionally \cite{BAWMa, ASF_Sp6} have dealt with simple groups of types $^2\mathrm{B}_2, {}^2\mathrm{G}_2$ and $^2\mathrm{F}_4$. Further results have been established in \cite{CabSpAMTypeA, KosSpAMBAWCy, KosSpAM2BlCy} by considering particular structures of the defect group of the block.

The present paper is concerned with the Alperin-McKay condition for quasi-simple groups of type $\tA$ and primes $\ell$ different from the defining characteristic with $\ell\geq 5$. Note that the inductive Alperin-McKay (AM) and the blockwise Alperin weight (BAW) conditions hold for most blocks in the defining characteristic according to \cite{AMSp} and \cite{BAWSp}. 
In order to verify the inductive Alperin-McKay condition a new criterion is introduced in Theorem~\ref{NewIndAmCond}, which will have applications to other series of simple groups, see \cite{CabASFSp}. It complements the criterion given in \cite{CabSpAMTypeA}.
This leads to the following statement, where we write $\SL_n(-q)$ for $\SU_n(q)$ and $\GL_n(-q)$ for $\GU_n(q)$.

\begin{thm}\label{IndAMAbBl}\label{thmA}
Let $\ell$ be a prime, $q$ a prime power and $\epsilon\in \{\pm 1\}$ with $\ell\nmid 3q( q-\epsilon)$, 
$\bG:=\SL_n(\overline\FF_q)$, $G:=\SL_n(\epsilon q)$, $B_0$ an $\ell$-block of $G$ with defect group $D$, and $B$ the $\GL_n(\epsilon q)$-orbit containing $B_0$. Assume that $\PSL_n(\epsilon q)$ is simple, $G$ is its universal covering group and the stabilizer $\Out(G)_B$ is abelian. 
\begin{thmlist}
	\item \label{thm11a}
The inductive AM condition from Definition 7.2 of \cite{AMSp} holds for $B_0$.
\item \label{thm11b} Let $d$ be the order of $q$ in $(\Z/\ell\Z)^\times$. 
If $D$ is abelian and $\Cent_{\bG}(D)$ is a $d$-split Levi subgroup of $\bG$, then the inductive BAW condition from \cite{BAWSp} holds for $B_0$.
\end{thmlist}
\end{thm}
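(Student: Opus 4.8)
The plan is to deduce both parts from the criterion of \cref{NewIndAmCond}, so that the real work is to verify its hypotheses; these concern the larger group $\wt G:=\GL_n(\epsilon q)$, the normaliser of a defect group inside it, and the Clifford theory between $G\triangleleft\wt G$ and $N_G(D)\triangleleft N_{\wt G}(D)$ --- precisely the data analysed in the earlier sections on $\Irr\big(N_{\bG}(\bL)^F\big)$ and on $d$-split Levi subgroups. The coprimality $\ell\nmid 3q(q-\epsilon)$ enters throughout: $\ell\nmid q$ is the non-defining-characteristic hypothesis; $\ell\nmid(q-\epsilon)$ makes $\wt G/G$ (cyclic of order $q-\epsilon$) an $\ell'$-group, so that the Clifford theory of $G\triangleleft\wt G$, and of $N_G(D)\triangleleft N_{\wt G}(D)$, is $\ell'$-Clifford theory and restriction of blocks, of height-zero characters, and (for \cref{thm11b}) of Brauer characters from $\wt G$ to $G$ is controlled; and $\ell\neq 3$ --- note that $\ell$ is then forced to be odd --- disposes of the exceptional Schur multipliers and small-rank exceptions.

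First I would describe both sides over $\wt G$: write $\wt B_0$ for the block of $\wt G$ and $\wt b_0$ for the block of $N_{\wt G}(D)$ lying above $B_0$ and $b_0$ respectively. Since $\GL_n$ has connected centre, Jordan decomposition of blocks reduces $\wt B_0$ to a unipotent block of a Levi subgroup of $\GL$-type, and for such a block the classical description of $\ell$-blocks of general linear and unitary groups applies: a defect group $D$ of $\wt B_0$ is (conjugate to) a Sylow $\ell$-subgroup of $\Cent_{\wt G}(\bL)$ for a suitable $d$-split Levi subgroup $\bL$ of $\bG$ (with $d$ depending only on $q$ and $\ell$), and the Brauer correspondent $b_0$ in $N_G(D)$ is described, via the same theory, in terms of the normaliser $N_{\bG}(\bL)^F$. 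Combining this with $d$-Harish--Chandra theory produces a bijection $\Irr_0(\wt B_0)\to\Irr_0(\wt b_0)$, and restricting it through $\wt G/G$ and $N_{\wt G}(D)/N_G(D)$ yields a candidate bijection $\Omega\colon\Irr_0(B_0)\to\Irr_0(b_0)$.

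It then remains to check that $\Omega$, or rather the $\wt G$-relative datum underlying it, satisfies the three demands of \cref{NewIndAmCond}: (i) equivariance under $\wt G\rtimes\cD$, where $\cD$ is generated by the relevant field and graph automorphisms; (ii) compatibility with the Clifford theory of $G\triangleleft\wt G$ and of $N_G(D)\triangleleft N_{\wt G}(D)$ --- a matching of stabilisers together with a choice of the two \emph{maximal-extendibility} situations, for $G\triangleleft\wt G\rtimes\cD$ and for $N_G(D)\triangleleft N_{\wt G}(D)\rtimes\cD$, whose associated $2$-cocycles correspond under $\Omega$; and (iii) agreement of the central characters on $\Zent(G)$. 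Point (i) is the known bookkeeping of how automorphisms act on Lusztig series and on relative Weyl groups, and point (iii) is automatic from the $\wt G$-relative construction. The heart of the matter is (ii): here one uses the explicit extension maps $\bL^F\to N_{\bG}(\bL)^F$ and the refined parametrisation of $\Irr\big(N_{\bG}(\bL)^F\big)$ from the earlier sections to build, on the local side, a system of character extensions mirroring term-by-term --- through $\Omega$ --- the one on the global side. This is exactly where the assumption that $\Out(G)_B$ is abelian enters: it pins down the cyclic constituents of the relevant stabilisers, and hence the projective representations and Schur multipliers that have to be reconciled.

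For \cref{thm11b} the argument runs in parallel but is lighter. When $D$ is abelian with $\Cent_\bG(D)=\bL$ a $d$-split Levi subgroup, $b_0$ has $D$ as a normal defect group, and its irreducible Brauer characters --- like those in $\IBr(B_0)$ --- are read off from this Levi and the parametrisation of $\Irr\big(N_{\bG}(\bL)^F\big)$; restricting the $d$-Harish--Chandra bijection to the appropriate characters gives an $\IBr$-bijection, and one then verifies the hypotheses of the inductive BAW criterion of \cite{BAWSp} with the same equivariance and cocycle checks as in \cref{thm11a}. In both parts the genuine obstacle is step (ii): producing the local maximal-extendibility and cohomology data for $N_G(D)\triangleleft N_{\wt G}(D)\rtimes\cD$ so that it is consistent with, and matched through $\Omega$ to, the global data for $G\triangleleft\wt G\rtimes\cD$. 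This is precisely why the Clifford theory between a $d$-split Levi subgroup and its normaliser must be developed in the sharp, automorphism-equivariant form of the preceding sections, rather than merely at the level of character counts.
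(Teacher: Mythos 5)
Your overall strategy matches the paper's: both parts are deduced from the criterion of Theorem~\ref{NewIndAmCond}, the bijection is built at the $\wt G=\GL_n(\epsilon q)$-level via $d$-Harish–Chandra theory (Proposition~\ref{prop72neu}, Theorem~\ref{thm72}) and the local side is analysed through the Clifford theory of $d$-split Levi normalisers from Sections~3--4. But several steps in your sketch are either mis-located or genuinely missing.

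First, the local subgroup $M$ of Theorem~\ref{NewIndAmCond} is not $\NNN_G(D)$ but $\NNN_{\bG^F}(\bL)=\NNN_{\bG^F}(\bS)$, the normaliser of the Sylow $\Phi_d$-torus of $\Zent^\circ(\bL)$. That this subgroup swallows $\NNN_G(D)$ (and is $\Aut$-stable with $\Cent_{\wGF}(D)\leq\bL^F$) is exactly the content of Theorem~\ref{71b}; without it the whole set-up fails and you never said it. Second, your ``three demands'' do not reproduce the actual conditions (i)--(v) of Theorem~\ref{NewIndAmCond}: condition~(ii) is a block-theoretic compatibility ($\bl(\wt\Omega(\wt\chi))^{\wt G}=\bl(\wt\chi)$, central characters, compatibility with $\Irr(\wGF/G)$), and (iii)/(iv) are separate split-stabiliser/extendibility conditions on the global and local sides, verified by Theorem~\ref{StarCondSL} and Theorem~\ref{thm12b} respectively. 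Your reading of the hypothesis ``$\Out(G)_B$ abelian'' is also off: in the paper it is precisely assumption~\ref{NewIndAmCondv}, whose role is to force the stabilisers of characters in $\Out(G)$ to be $\wt G$-conjugation-stable so that the bijection can be defined on a $(\wt GE)_D$-transversal and then transported; it is not about Schur multipliers. Finally, you need the height-zero constraint of Theorem~\ref{Conjunip} ($\Irr_0$ lands in $\cE(\bG^F,t)$ with $t\in \Zent(\bK^*)^F_\ell$) to control $\calG_\bS$ and to get the degree formula in Lemma~\ref{lem_deg}; you do not mention this.

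For part~(b) the gap is more substantive. The paper does not simply ``restrict a $d$-Harish--Chandra bijection to get an $\IBr$-bijection''; it uses that $\cE_{\ell'}(\bG^F)$ is a basic set with a unitriangular $\ell$-modular decomposition matrix (Geck's theorem), combines this with Corollary~\ref{cor_AWC} to propagate the block-wise bijection $\Omega_\calB$ to the basic sets, identifies $\IBr(c)$ with characters trivial on the normal defect group, and then applies the criterion of \cite[Theorem~1.2]{KosSpAMBAWCy}, not the raw definition from \cite{BAWSp}. The decomposition-matrix input is essential and entirely absent from your outline.
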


In Corollary \ref{corindAM}, the inductive AM condition is also proven for blocks that are $\GL_n(\epsilon q)$-stable and satisfy another similar assumption. 

For unipotent blocks Feng has established in \cite{Z_Feng} the inductive BAW condition for unipotent blocks and Li-Zhang have treated
in \cite{LiZhang} other blocks under additional assumptions on the outer automorphism group. Also Li constructed in \cite{Li_Sp2nq} an equivariant bijection for the inductive BAW condition in symplectic groups under some assumption on the $\ell$-modular decomposition matrix. More particular cases of simple groups of small rank were checked in \cite{FengLiLi,Schulte,ASF_Sp6}.

In the proof of Theorem \ref{thmA} a main step is to parametrize the characters of the normalizers of $d$-split Levi subgroups. These normalizers serve as local subgroups in the inductive AM condition, see Theorem~\ref{71b}. 
We investigate the action of automorphisms on those characters in terms of their parameters.
Essential is to understand the Clifford theory of irreducible characters of a $d$-split Levi subgroup $L$ in $\NNN_{G}(L)$.
Furthermore, we consider the action of the stabilizer ${\Aut}(G)_{B,L}$ on the irreducible characters and verify that the corresponding inertia groups are of a particular structure. 

\begin{thm}\label{StarConddSpLevi}
Let $\bG:=\SL_n(\overline \F_q)$, $\wt\bG:=\GL_n(\overline \F_q)$ , 
		$F:\wbG\rightarrow \wbG$ a Frobenius endomorphism defining an $\FF_q$-structure, $\bL$ a $d$-split Levi subgroup of $({\bG}, F)$, $N_0:=\Norm{\GF}{{\bf L}}$ and $\wt N_0:=\Norm{{\wt\bG^F}}{\bL}$.
\begin{thmlist}
\item Every $\lambda\in\Irr(\bL^F)$ extends to its inertia group in $N_{0}$.
\item \label{thm12b}
 Let $E_0\leq\Aut(\wt\bG^F)$ be the image of $E$ defined in 3.A and let $\psi\in \Irr ( \NNN_0)$. Then there exists a $\wt \NNN_0$-conjugate $\psi_0$ of $\psi$ such that 
\begin{asslist}
\item $O_0=(\wt {\bG}^F\cap O_0)\rtimes (E_0\cap O_0)$ for $O_0:={\bG}^F(\wt {\bG}^F\rtimes E_0)_{\bL,\psi_0}$, and
\item $\psi_0$ extends to $({\bG}^F\rtimes E_0)_{\bL,\psi_0}$.
\end{asslist}
\end{thmlist}
\end{thm}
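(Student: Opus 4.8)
The plan is to treat both parts as statements about the Clifford theory of the chain $\bL^F\trianglelefteq N_0\trianglelefteq\wt N_0$ together with the action of $E_0$ on it, and to reduce them to the corresponding assertions — available from the study of $\GL_n$ and $\SL_n$ recalled in the preceding sections — for the groups $\GL_m$ and $\GU_m$ over extension fields of $\FF_q$ that occur as the direct factors of the $d$-split Levi $\bL$. Throughout I use the known structure: $\wt\bL^F$ is a direct product of such general linear and general unitary groups, the quotient $\wt\bL^F/\bL^F$ is cyclic (it embeds into $\wt\bG^F/\bG^F$), and $\wt W:=\wt N_0/\bL^F$ and $W:=N_0/\bL^F\le\wt W$ are closely tied to products of wreath products $C_e\wr\Symm_m$.

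For part~(a), which is maximal extendibility with respect to $\bL^F\trianglelefteq N_0$, the key inputs are the classical fact that a tensor power $\theta^{\otimes m}$ extends canonically to the wreath product $\GL_m(\pm q^a)\wr\Symm_m$, which yields maximal extendibility with respect to the direct product $\wt\bL^F\trianglelefteq N_{\wt\bG^F}(\bL)$, and the vanishing of the second cohomology of the cyclic group $(\wt\bL^F)_\lambda/\bL^F$, which gives that every $\lambda\in\Irr(\bL^F)$ extends to $(\wt\bL^F)_\lambda$. Assembling these along the description of $\bL^F$ as the product of the $\GL$-factors of $\wt\bL^F$ cut out by a single determinant condition and of $N_0/\bL^F$ in terms of wreath groups produces, by a direct construction, the required extension of $\lambda$ to $(N_0)_\lambda$; the content is that the two potential obstructions — over the cyclic quotient and over the wreath-product part — can be resolved simultaneously.

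For part~(b), fix a $\wt\bL^F$-conjugate $\lambda_0$ of a constituent of $\psi|_{\bL^F}$. By part~(a) there is a Clifford correspondence writing any character of $N_0$ over $\lambda_0$ as $\Ind_{(N_0)_{\lambda_0}}^{N_0}(\wh\lambda_0\cdot\wt\eta)$ with $\wh\lambda_0$ an extension of $\lambda_0$ and $\wt\eta\in\Irr((N_0)_{\lambda_0}/\bL^F)$, so the $(\wt\bG^F\rtimes E_0)$-action on $\Irr(N_0)$ is governed by its action on $\lambda_0$ and its explicit (largely trivial) action on the relative Weyl group. After replacing $\wh\lambda_0$ by an $E_0$-equivariant extension — available from the canonical nature of the wreath-product extensions, or a short argument over the abelian group $E_0$ — the two aspects decouple: the $\wt\eta$-part is a character of a product of wreath groups on which $E_0$ acts at most through a field automorphism on the matrix entries, which is $E_0$-stable and extends over any cyclic quotient; the $\lambda_0$-part, i.e. the interaction of $\wt\bL^F/\bL^F$ with field and graph automorphisms on $\Irr(\bL^F)$, is controlled factor by factor by the corresponding statements for $\SL\trianglelefteq\GL$ from \cite{CabSpCharTypeA}. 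Choosing $\lambda_0$ inside an $E_0$-stable system of $\wt\bL^F$-orbit representatives then forces, for the resulting $\psi_0$, that the $E_0$-component of any element of $O_0$ already stabilises $\bL$ and $\psi_0$ modulo $\bG^F$, which is condition~(i). For condition~(ii), $(\bG^F\rtimes E_0)_{\bL,\psi_0}/N_0$ embeds into $E_0$, hence is cyclic or a cyclic group times $C_2$; a stable character extends over a cyclic quotient automatically, and the order-$2$ graph contribution is handled by a compatible realisation of the graph automorphism on $\wt\bL^F$ and on the wreath part, giving the extension of $\psi_0$ to $(\bG^F\rtimes E_0)_{\bL,\psi_0}$.

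The main obstacle is condition~(i) of part~(b) when $\gcd(n,q-\epsilon)>1$: there both $\wt\bL^F/\bL^F$ and the field and graph automorphisms act non-trivially on $\Irr(\bL^F)$, so the stabiliser of a character of $N_0$ need not split off its $E_0$-part, and one must pin down the $\wt\bL^F$-orbit of $\lambda_0$ so that the two actions become transverse. This is precisely the phenomenon resolved for $\SL_n(\epsilon q)$ itself in \cite{CabSpCharTypeA}, and the crux is to bootstrap that result factor by factor along the product decomposition of $\bL$ while keeping the bookkeeping of the relative Weyl group action — which permutes the equal factors — compatible with $E_0$; this is where case distinctions, especially in the unitary case, arise. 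A secondary point, controlled via determinants, is the possible gap between $W$ and $\wt W$, i.e. whether $\wt N_0=\wt\bL^F N_0$, which feeds into the reduction in part~(b).
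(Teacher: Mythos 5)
Your sketch follows the same broad strategy as the paper (factor-by-factor analysis of the $d$-split Levi, the cyclic quotient $\wt\bL^F/\bL^F$, wreath products $C_{d_0}\wr\Symm_{t_f}$, and bootstrapping the $\SL\lhd\GL$ results from \cite{CabSpCharTypeA}), but it stops at exactly the point where the paper does real work. For part~(a), knowing that every $\lambda\in\Irr(\bL^F)$ extends to $(\wt\bL^F)_\lambda$ (cyclic quotient) and that maximal extendibility holds for $\wt\bL^F\lhd\wt N_0$ (wreath products) does not by itself yield extendibility for $\bL^F\lhd N_0$: the inertia group $(N_0)_\lambda$ is not a product of these two pieces, since $N_0\cap\wt\bL^F$ is just $\bL^F$ and the wreath complement sits transversally. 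The paper resolves this with an explicit construction inside $\bG^{vF}$: complements $V_0,S$ to $L$ in $N$ built from the $\mathbf n_\alpha$, an auxiliary group $K=\langle L_0,Z_2\rangle\cap L$ containing the $-\mathrm{Id}$'s of each factor so that $K\cap S=L\cap S$, and then a gluing criterion (Proposition~\ref{prop_tool}, from \cite{SpaethtypeD}) together with \cite[Lemma 4.1]{SpSylowTori2}. Your phrase ``by a direct construction'' names the gap rather than filling it.

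For part~(b), the paper does not argue directly on $\Irr(N_0)$ either. It first proves the corresponding extension and stabiliser-splitting statement for $\wt L\lhd\wt N\lhd\wh N$ in the twisted picture (Corollary~\ref{ExtNToNE}), feeding Theorem~\ref{NEquivExt}, Proposition~\ref{RelWeylExt} and Corollary~\ref{IntCond} into the formal criterion Theorem~\ref{MainThmReq} (imported from \cite[Thm.~4.3]{CabSpIMTypeC}), and only then transports the result to $\wt\bG^F\rtimes E_0$ via the isomorphism with $\Cent_{\wt\bG^{F_p^{em}}E}(v\wh F)/\langle v\wh F\rangle$ from \cite[Prop.~5.3]{CabSpCharTypeA}. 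The nontrivial hypothesis in that criterion — Proposition~\ref{RelWeylExt}, a character-level statement about the relative Weyl groups $W_\chi$, $W_{\wt\chi}$ and their normalisers $K$, $\wh K$ — is exactly what your ``the $\wt\eta$-part ... extends over any cyclic quotient'' is waving at, but the actual argument is factor-wise and uses the delicate construction of $\wt\eta_{0,f}$ from \cite[Prop.~5.12]{CabSpCharTypeA}, not an automatic cyclicity observation. You correctly flag the stabiliser-splitting when $\gcd(n,q-\epsilon)>1$ as the crux, but ``choosing $\lambda_0$ inside an $E_0$-stable transversal forces condition~(i)'' is the conclusion to be proved, not a step; establishing it is precisely the content of Lemma~\ref{InterialNChi0} and Corollary~\ref{IntCond}, neither of which is reproduced. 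Finally, the Lang–Steinberg twist to $\bG^{vF}$ is not a cosmetic choice in the paper — it is what makes the complements $V_0,S$ and the graph/field generators of $\wh N$ concretely realisable inside $V=\langle\mathbf n_\alpha\rangle$ and $E$ — and its absence in your sketch means the claimed constructions have no explicit home.
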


For groups of Lie type with abelian Sylow $\ell$-subgroup, bijections implying the Alperin-McKay conjecture and blockwise Alperin weight were constructed in \cite[Theorem 2.9]{BAWMa} assuming the above \ref{thm12b}(a) for analogous local subgroups.
As a consequence of our proof of Theorem~\ref{StarConddSpLevi}, Alperin-McKay conjecture holds via \cite[Theorem 2.9 and Corollary 3.7]{BAWMa} for special linear and unitary groups with abelian Sylow $\ell$-subgroup.
Thanks to Theorem~\ref{Conjunip} we are able to generalize Malle's approach from \cite{MalleHeight0}, where he constructed a bijection for the inductive McKay condition. 
By considerations inspired by \cite[\S 10]{SpExz} we deduce from this the Alperin-McKay conjecture for all blocks, using results of Puig and Zhou on the so-called {\it inertial blocks}. 
Note that for $\ell\mid q$ the Alperin-McKay conjecture was proven in \cite{AMSp} based on earlier work by Green-Lehrer-Lusztig while for $\ell\mid (q-\epsilon)$ results of Puig in \cite[\S 5]{Puig_Scope} imply the conjecture for most $\ell$-blocks of $\SL_n(\epsilon q)$ with abelian defect.

\begin{thm}\label{AMTypeA}
Let $G =\SL_n(\epsilon q)$. Let $\ell$ be a prime with $\ell\nmid 3q(q-\epsilon)$.
\begin{thmlist}
\item \label{thm13a}
The Alperin-McKay Conjecture holds for all $\ell$-blocks of $G$.
\item \label{thm13b}
The Alperin weight Conjecture holds for all $\ell$-blocks of $G$ with abelian defect.
\end{thmlist}
\end{thm}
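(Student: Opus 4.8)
The plan is to prove both conjectures for $G$ directly, not via the inductive conditions, whose verification in Theorem~\ref{thmA} requires hypotheses on $\Out(G)$ and on $\PSL_n(\epsilon q)$ that are not imposed here. Concretely, for each $\ell$-block $B_0$ of $G$ with defect group $D$ and Brauer correspondent $b$ in $\Norm{G}{D}$, the goal is a bijection $\Irr_0(B_0)\to\Irr_0(b)$ and, when $D$ is abelian, also $\IBr(B_0)\to\IBr(b)$. Write $\wt G:=\GL_n(\epsilon q)$ and let $d$ be the order of $\epsilon q$ in $(\Z/\ell\Z)^\times$. The hypothesis $\ell\nmid 3q(q-\epsilon)$ means $\ell\geq 5$, $\ell\nmid q$ and $\ell\nmid(q-\epsilon)$, and the last condition is used twice: since $[\wt G:G]=q-\epsilon$ is prime to $\ell$, a block of $G$ and a block of $\wt G$ covering it share their defect groups and are tied together by $\ell'$-Clifford theory, both globally and on the relevant local subgroups; and since $|\Ze(G)|=\gcd(n,q-\epsilon)$ is prime to $\ell$, a block with central defect group has defect zero, for which the Alperin--McKay equality is trivial. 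So from now on $D\neq 1$.

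Assume first that a Sylow $\ell$-subgroup of $G$ is abelian, so that every $\ell$-block of $G$ has abelian defect. By \cite[Theorem~2.9 and Corollary~3.7]{BAWMa}, for a group of Lie type with abelian Sylow $\ell$-subgroup both conjectures follow once the normalizers of the $d$-split Levi subgroups serving as local subgroups satisfy the Clifford-theoretic conclusions of Theorem~\ref{StarConddSpLevi}, in particular \ref{thm12b}; for $\bG=\SL_n(\ol\F_q)$ this is precisely what that theorem establishes. This settles (a) and (b) in the abelian-Sylow case.

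Next let $B_0$ have abelian defect $D$ without assuming the Sylow subgroup abelian. Here the plan is to extend Malle's $d$-Harish-Chandra approach from \cite{MalleHeight0}. Using the Bonnaf\'e--Rouquier Morita equivalences inside $\wt G$ coming from Jordan decomposition of blocks, together with Theorem~\ref{Conjunip} for the unipotent blocks that arise, one parametrizes $\Irr_0(B_0)$, through $d$-Harish-Chandra theory, by the irreducible characters of the relative Weyl group of a $d$-cuspidal pair, and likewise $\Irr_0(b)$, where $b$ is a block of $\Norm{G}{\bL}$ for the associated $d$-split Levi subgroup $\bL$ of $(\bG,F)$. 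The one point at which $\SL_n$ departs from the connected-center case treated by Malle is the Clifford theory between $\bL^F$ and $\Norm{G}{\bL}$ --- exactly the data furnished by Theorem~\ref{StarConddSpLevi}. Matching the two parametrizations produces a bijection $\Irr_0(B_0)\to\Irr_0(b)$; the same bookkeeping performed on Brauer characters, using Theorem~\ref{Conjunip} and $\ell$-modular Harish-Chandra theory for the unipotent parts and again the descent of Theorem~\ref{StarConddSpLevi}, gives $|\IBr(B_0)|=|\IBr(b)|$. This proves (b), and (a) for all blocks of abelian defect.

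Finally, let $B_0$ be an arbitrary $\ell$-block of $G$. Following the line of argument of \cite[\S 10]{SpExz}, the plan is to reduce the Alperin--McKay equality for $B_0$ to the cases already handled: via Jordan decomposition and the Bonnaf\'e--Rouquier Morita equivalences inside $\wt G$, pushed down to $G$ through the $\ell'$-Clifford theory, together with the results of Puig and Zhou on inertial blocks, $B_0$ is brought to either a block of abelian defect (treated above) or one that is basic Morita equivalent to its Brauer correspondent, for which the equality of height-zero character counts is immediate; pulling this back yields $|\Irr_0(B_0)|=|\Irr_0(b)|$ and completes (a). The principal obstacle throughout is the descent from $\GL_n(\epsilon q)$ to $\SL_n(\epsilon q)$: although $[\wt G:G]$ is prime to $\ell$, one must control the Clifford theory simultaneously and compatibly between $\wt G$ and $G$ and between the local subgroups $\Norm{\wt G}{\bL}$ and $\Norm{G}{\bL}$, while for $\bG=\SL_n$ the centralizers of semisimple elements --- hence of defect groups --- need not be connected, so these $d$-split Levi subgroups carry genuinely non-trivial Clifford theory; bringing this under control is the content of Theorem~\ref{StarConddSpLevi} and the technical heart of the paper. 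A secondary difficulty is that the inertial-block reduction needs more than bare character counts, namely enough module-theoretic data to transport along the Bonnaf\'e--Rouquier and Puig--Zhou equivalences.
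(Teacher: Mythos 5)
Your three-phase plan uses the right ingredients ($d$-Harish-Chandra theory via Corollary~\ref{cor74b}, Theorem~\ref{StarConddSpLevi}, Theorem~\ref{Conjunip}, Bonnaf\'e--Dat--Rouquier, and the Puig--Zhou theory of inertial blocks), but its architecture diverges from the paper's in a way that leaves a real gap.

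The paper's proof is uniform, with no split into abelian-Sylow / abelian-defect / general cases. It runs in two steps. First, Corollary~\ref{cor74b} gives $|\Irr_0(B)|=|\Irr_0(b)|$ where $b$ is a block of $N:=\NNN_G(\bL)$ for the $d$-cuspidal Levi $\bL$, \emph{not} of $\NNN_G(D)$. Second, Proposition~\ref{prop75} closes the gap from $N$ to $\NNN_G(D)$: one verifies that the relevant block of $LD$ (equivalently, of $N_{C_0}$) is \emph{inertial} — this uses that a $d$-cuspidal block of $\wt L:=(\Zent(\wt\bG)\bL)^F$ is splendid Rickard equivalent via Bonnaf\'e--Dat--Rouquier to a unipotent block with central defect, hence nilpotent, and nilpotency is preserved under the $\ell'$-extension to $\wt LD$ (Cabanes), giving an inertial block of $LD$ by Puig. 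So the whole Bonnaf\'e--Rouquier/Puig--Zhou machinery acts at the \emph{local} level, on blocks of $L$, $\wt L$, $LD$ and $N_{C_0}$, to show that $b\in\Bl(N)$ satisfies the Alperin--McKay equality against its correspondent in $\NNN_N(D)=\NNN_G(D)$.

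Your phase 3 applies the same machinery to the \emph{global} block $B_0$ of $G$, claiming it can be brought ``to either a block of abelian defect \ldots or one basic Morita equivalent to its Brauer correspondent.'' No such reduction is available: Bonnaf\'e--Rouquier produces Morita equivalences preserving defect (and hence never reduces to abelian defect), and most blocks of $\SL_n(\epsilon q)$ are neither nilpotent nor inertial, so $B_0$ is not basic Morita equivalent to its Brauer correspondent. Relatedly, in your phase 2 the $d$-Harish-Chandra matching of parametrizations produces a bijection with height-zero characters of a block of $\NNN_G(\bL)$, not of $\NNN_G(D)$; these groups differ in general (they coincide only when $\Cent_\bG(D)$ itself is $d$-split, as in Theorem~\ref{thm11b}), and the further step from $\NNN_G(\bL)$ to $\NNN_G(D)$ is exactly where the inertial-block argument is needed — applied locally, not to $B_0$. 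Your phase 1 (abelian Sylow via \cite[Thm.\ 2.9, Cor.\ 3.7]{BAWMa}) is a valid shortcut in that special case, as the introduction notes, but the paper does not need or use the case split.

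For part (b), the same structural issue persists: the paper again works through Proposition~\ref{prop29b}, applying a unitriangularity argument for the decomposition matrix of the \emph{local} $d$-cuspidal block $c_0=\bl(\zeta)$ of $\bL^F$ (via Bonnaf\'e--Dat--Rouquier and the basic set $\cE_{\ell'}$), not a global Brauer-character bookkeeping of the sort your phase 2 gestures at.

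In short: the missing idea is that Proposition~\ref{prop75} (resp.\ \ref{prop29a}/\ref{prop29b}) is the correct bridge between $N=\NNN_G(\bL)$ and $\NNN_G(D)$, and that the inertial/nilpotent-block reduction must be performed on the $d$-cuspidal block of the Levi — where central-defect forces nilpotency — rather than on $B_0$ itself.
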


This paper is organised in the following way: in Section~\ref{RefomIndAM} we provide a criterion for the inductive AM condition and give some helpful statements using inertial blocks.
Then in Section~\ref{dSplitLevi} we construct explicitly the $d$-split Levi subgroups, their normalizers and we highlight some important properties of the irreducible characters.
The Clifford theory between $d$-split Levi subgroups and their normalizers is studied in Section~\ref{CliffThdSpLevi} in order to prove Theorem~\ref{StarConddSpLevi}. 
The final sections deduce from this the main results using $d$-Harish-Chandra theory and Jordan decomposition.

{\bf Acknowledgement.} The authors thank Lucas Ruhstorfer for useful discussions on the subject. Additionally we thank Gunter Malle for many useful comments on an earlier version.

\section{The inductive AM condition}
\label{RefomIndAM}

In this section we recall the inductive AM condition and give a criterion for proving it in our situation. The proof requires some arguments relying on Dade's ramification group. 
We also give a short lemma that will later be used to verify the Alperin-McKay conjecture. 

For characters of finite groups and blocks we will freely make use of the notation from \cite{IsaChTh} and \cite{NavBl}. Our blocks are considered with regard to a prime $\ell$.
For an $\ell$-block $c$ of a subgroup of a finite group $H$ we denote the induced block by $c^H$. For a generalized character $\chi$ of $H$ we denote by $\Irr(\chi)\subseteq\Irr(H)$ its set of irreducible constituents and for an irreducible character $\chi$ we denote by $\bl(\chi)$ the $\ell$-block that it belongs to. We denote by $\chi^\circ$ the restriction of $\chi$ to the $\ell '$-elements of $H$. Recall that whenever the group $A$ acts on a set $M$ we denote by $A_m$ the stabilizer of an element $m$ of $M$ in $A$.

\subsection{The inductive AM condition for a set of blocks}
In \cite[Definition 7.1]{AMSp} the inductive AM condition for a simple group and a prime $\ell$ were introduced. An alternative version, relative to a radical $\ell$-subgroup was given in \cite[Section 7.1]{CabSpMZ}. In addition \cite[Def.~3.2]{KosSpAMBAWCy} provided one related to a single block. 

First recall the following standard notations.
For $D$ an $\ell$-subgroup of a finite group $X$ we denote by $\Bl(X\mid D)$ the set of $\ell$-blocks of $X$ which have $D$ as a defect group.
Recall the notation 
$\Irr_0(B)$ for the set of height zero characters in a block $B$. For any subset $\calB\subset \Bl(X)$ let the sets $\Irr(\calB)$ and $\Irr_0(\calB)$ be analogously defined. 

\begin{df}\label{IndAMCond}
Let $S$ be a finite non-abelian simple group and $\ell$ a prime. 
Let $G$ be the universal covering group of $S$ and $B\in \Bl(G)$ with defect group $D$.
Then we say that {\bf the inductive AM condition holds for $B$} if
\begin{asslist}
\item there exists some $\Aut (G)_{B,D}$-stable subgroup $M$ such that $\NNN_G(D)\leq M\lneq G$.
\item\label{indAmii}
For $B'\in\Bl(M)$ with $B'^G=B$ there exists an $\Aut (G)_{B,D}$-equivariant bijection
\[
\Omega_B:\Irr_0(B) \longrightarrow \Irr_0(B'),
\]
such that
\begin{itemize}[label={\textbullet}]
\item $\Omega_B\left( \Irr_0(B) \cap \Irr (G\mid \nu ) \right)\subseteq \Irr (M\mid \nu)$ for every $\nu\in \Irr (\Zent(G))$,
\item ${\bl}(\chi)={\bl}\left( \Omega_B(\chi)\right) ^G$ for every $\chi\in \Irr_0(B)$.
\end{itemize}
\item For every character $\chi\in\Irr_0(B)$ there exists a group $A$ and characters $\wt\chi$ and $\wt\chi'$ such that 
\begin{enumerate}
\item For $Z:=\ker (\chi)\cap \Zent(G)$ and $\ol{G}:=G/Z$ the group $A$ satisfies $\ol{G}\lhd A$, $\Cent_A(\ol{G})=\Zent(A)$ and $A/\Zent(A)\cong \Aut (G)_{\chi}$,
\item $\wt{\chi}\in \Irr (A)$ is an extension of $\ol{\chi}$, where $\ol{\chi}\in \Irr (\ol{G})$ lifts to $\chi$,
\item for $\ol{M}:=MZ/Z$ and $\ol{D}:=DZ/Z$ the character $\wt{\chi}'\in \Irr\left( \ol{M}\NNN_A(\ol{D}) \right)$ is an extension of $\ol{\chi}'$, where $\ol{\chi}'$ lifts to $\chi':=\Omega_B(\chi)\in \Irr (M)$,
\item $\bl\left( \Res^A_J(\wt{\chi})\right) = \bl\left( \Res^{\ol{M}\NNN_A(\ol{D})}_{\ol{M}\NNN_J(\ol{D})}(\wt{\chi}') \right)^J$ for every $J$ with $\ol{G}\leq J\leq A$,
\item $\Irr (\Res^{A}_{\Zent(A)}( \wt{\chi}))=\Irr (
\Res^{\overline M\NNN_A(\overline D)}_{\Zent(A)}( \wt{\chi}')
)$.
\end{enumerate}
\end{asslist}
\end{df}

In particular, \cite[Proposition 2.2]{CabSpAMTypeA} shows that if the inductive AM condition holds for any $B\in\Bl(G)$, then $S$ satisfies the inductive AM condition \cite[Definition 7.2]{AMSp} with respect to $\ell$.
Furthermore, \cite[Theorem 4.1]{CabSpAMTypeA}, an alternative version of the above condition was given in the case the radical subgroup is a Sylow $\ell$-subgroup.
In other words, a modified version to consider blocks with maximal defect.
The aim is to provide an analogous statement which focuses on blocks with a ``nice'' stabilizer subgroup in the automorphism group of $G$. 

\subsection{Some block theory}
To prove Theorem~\ref{NewIndAmCond} below, Dade's {\it ramification group} provides a fundamental tool. It was introduced in \cite{DadeBlExt} and then reformulated in \cite{MurBlNorSub} by Murai. We use it to study blocks and to define a bijection required for Definition~\ref{IndAMCond}.

For each block $b$ we denote by $\la_b$ the associated map defined in \cite[\S 3]{NavBl}. For a group $G$ and $x\in G$ we denote by $\Cl_G(x)^+$ the sum of the $G$-conjugacy class containing $x$ in the group algebra $\Z[G]$. 
\begin{df}
Let $X\lhd \wt X$ and $B\in \Bl(X)$, then the group $\wt X[b]$ is defined by
\[
\wt X[B]:=\Big\{ x\in \wt X_B \mid \lambda_{b^{(x)}}\big( \Cl_{\langle X,x\rangle }(y)^+\big)\ne 0 \text{ for some } y\in xX\Big\},
\]
where $b^{(x)}$ denotes an arbitrary block of $\langle X,x\rangle$ which covers $B$.
(This definition of $\wt X[B]$ is independent of the choice of $b^{(x)}$, see \cite[3.3]{MurBlNorSub}.)
\end{df}

In the above situation let $\chi\in\Irr (B)$. Then $\wt X[B]\leq \wt X_\chi\leq \wt X_B$
(see \cite[Lemma 3.2]{KosSpCliffTh}).

For the proof of Theorem~\ref{NewIndAmCond} we use the following technical lemma. Recall that for $X\lhd \wt X$ and $B\in\Bl(X)$ we denote by $\Bl(\wt X\mid B)$ the set of all blocks of $\wt X$ covering $B$. 

\begin{lm}\label{CovBlBrCorr}
Let $X\lhd \wt X$ be finite groups with abelian quotient $\wt X/X$, and
$B\in \Bl(X)$ with defect group $D$. Let $M\leq X$ with $M\geq {\NNN}_X(D)$, $\wt M\leq \wt X$ with $\wt M\cap X=M$ and $\wt M\geq M {\rm N}_{\wt X}(D)$, $b\in\Bl(M)$ with $b^X=B$ and $\wt b\in\Bl(\wt M\mid b)$.

Let $\wh M\leq \wt M_b$, $\wh X:=X\wh M$ and $\wt B:=\wt b^{\wt X}$.
\begin{thmlist}
\item If $\widehat b \in\Bl(\wh M \mid b)$ is covered by $\wt b$, 
$\widehat B=\widehat b^{\wh M X}$ covers $B$ and is covered by $\wt B$. 
\item If $\widehat B\in\Bl(\wh X \mid B)$ is covered by $\wt B$, then $\wt b$ covers some $\wh b\in\Bl(\wh M\mid b)$ with $\wh b^{\wh X}=\wh B$. 
\end{thmlist}
\end{lm}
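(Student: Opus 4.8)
The plan is to organize everything around Dade's ramification group $\wt M[b]$ and the compatibility of block induction with covering along abelian quotients. First I would record the ambient structure: since $\wt X/X$ is abelian, every subgroup between $X$ and $\wt X$ is normal in $\wt X$, and the same holds on the $M$-level because $\wt M/M$ embeds into $\wt X/X$; in particular $\wh M\lhd\wt M$, $\wh X\lhd \wt X$, $M\lhd \wh M$ and $X\lhd \wh X$, and $\wh X/X\cong \wh M/M$ via the natural map. This reduces all the induced-block assertions to applications of two standard facts: (1) if $B'^X=B$ with $X\lhd Y$ then for a block $C$ of an intermediate group $M\le Z\le Y$ covering $B'$ one has $C^Z$ is defined and covers $B$, and (2) transitivity of induction, $C^Z{}^Y=C^Y$, whenever all the relevant induced blocks are defined (e.g.\ by Brauer's first main theorem applied via the common defect group $D$, which lies in $M\le \wh M\le \wt M$ and in $X\le\wh X\le\wt X$).

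For part (i), start from $\wh b\in\Bl(\wh M\mid b)$ covered by $\wt b$. Then $\wh b^{\wh M X}=\wh B$ is defined (using that $D$ is a defect group of $\wh b$, which follows since $D\lhd \wh M$ is a defect group of $b$ and $\wh b$ covers $b$, so by \cite[Theorem 9.26]{NavBl} or the analogue $\wh b$ has defect group $D$ as well), and by fact (1) $\wh B$ covers $B=b^X$. To see $\wt B=\wt b^{\wt X}$ covers $\wh B$: we have $\wt b$ covers $\wh b$, hence by fact (1) again $\wt b^{\wt X}$ covers $\wh b^{\wh X}$; but $\wh X=X\wh M=\wh M X$, so $\wh b^{\wh X}=\wh B$, giving the claim. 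Transitivity of induction guarantees $\wt B=\wt b^{\wt X}=\wh B^{\wt X}=(\wh b^{\wh X})^{\wt X}$, consistent with everything.

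For part (ii) — the harder direction — suppose $\wh B\in\Bl(\wh X\mid B)$ is covered by $\wt B=\wt b^{\wt X}$. The issue is to manufacture a block of $\wh M$ below $\wt b$ that induces to $\wh B$; the obstruction here is precisely that covering need not ``descend'' through Brauer correspondence without an extra hypothesis, and this is where Dade's ramification group enters. Since $\wh M\le \wt M_b$, the relevant fact (from \cite{DadeBlExt}, in Murai's formulation \cite{MurBlNorSub}, cf.\ also \cite{KosSpCliffTh,KosSpAMBAWCy}) is that over $\wt M[b]$ there is a well-behaved bijection $\Bl(\wh M\mid b)\leftrightarrow$ (blocks of $\wh X$ over $B$) compatible with induction, and more generally that block induction $\Bl(\wh M\mid b)\to\Bl(\wh X\mid B)$ is a bijection once one is inside the ramification group. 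Concretely: the blocks of $\wh M$ covering $b$ are permuted transitively by $\wh X$ with a common stabilizer containing $\wt M[b]\cap\wh M$, and likewise the blocks of $\wh X$ covering $B$ with Brauer correspondent in $\wh M$ biject with $\wh X$-orbits of such; matching $\wh B$ to its $\wh X$-orbit and pulling back produces $\wh b\in\Bl(\wh M\mid b)$ with $\wh b^{\wh X}=\wh B$. Finally one checks $\wt b$ covers this particular $\wh b$: both $\wt b$ and $\wh b$ lie over $b$, and since $\wh B=\wh b^{\wh X}$ is covered by $\wt B=\wt b^{\wt X}$, applying the above bijection one level up (for $\wh X\lhd\wt X$, or directly comparing central characters via $\la_{\wt b}$ and $\la_{\wh b}$ on $\Cl_{\wh X}(y)^+$ for $y$ with $\la_{\wt b}(\Cl_{\wt X}(y)^+)\ne0$) forces $\wt b$ to cover $\wh b$ rather than an $\wh X$-conjugate. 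I expect this last compatibility — propagating the covering relation between the $\wh X$-level and the $\wh M$-level through the ramification group — to be the main technical obstacle, and the place where the hypothesis $\wh M\le\wt M_b$ (rather than merely $\wh M\le\wt M$) is essential.
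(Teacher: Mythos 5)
Your overall strategy is the right one and largely parallels the paper's, but there are two concrete issues.

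First, in part (i) you write ``since $D\lhd \wh M$ is a defect group of $b$'' — but there is no hypothesis making $D$ normal in $\wh M$ (nor even in $M$; only $\NNN_X(D)\le M$ is assumed). The paper does not need this: it simply cites \cite[9.28]{NavBl} (the Harris--Kn\"orr bijection, which involves only $D\le M$, $\NNN_X(D)\le M$, $\NNN_{\wt X}(D)\le \wt M$ and the corresponding configuration inside $\wh M$, $\wh X$) to conclude in one stroke that $\wh B$ is defined, covers $B$, and is covered by $\wt B$. Your ``fact (1) applied twice'' is in spirit the same thing, but the parenthetical justification via $D\lhd \wh M$ is wrong and should be removed; the defect-group bookkeeping is carried by Harris--Kn\"orr itself.

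Second, and more seriously, the last step of part (ii) — showing that $\wt b$ covers the specific $\wh b$ you constructed, as opposed to some $\wh X$- or $\wt M$-conjugate — is exactly the place you flag as ``the main technical obstacle,'' and you leave it as such. The paper closes this gap cleanly by staying entirely inside \cite[9.28]{NavBl}: $\Bl(\wt M\mid\wh b)$ is a subset of $\Bl(\wt M\mid b)$, and block induction gives a bijection $\Bl(\wt M\mid\wh b)\to\Bl(\wt X\mid\wh B)$; since $\wt B\in\Bl(\wt X\mid\wh B)$ there is $\wt b'\in\Bl(\wt M\mid\wh b)$ with $\wt b'^{\wt X}=\wt B$, and then the \emph{injectivity} of the block-induction bijection $\Bl(\wt M\mid b)\to \Bl(\wt X\mid B)$ forces $\wt b'=\wt b$, so $\wt b$ covers~$\wh b$. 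Your suggestion to ``compare central characters $\la_{\wt b}$ and $\la_{\wh b}$'' would need to be turned into an actual computation, whereas the paper's three-level bijection argument requires no such computation. Related to this, you invoke Dade's ramification group as the key mechanism; the paper's proof of this particular lemma never uses the ramification group — it is pure Harris--Kn\"orr. (The ramification group does enter the surrounding argument for Theorem~2.4, but not here.)
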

\begin{proof}
	Note that since $\wt X/X$ is abelian, $\widehat M\lhd \wt M$. By \cite[9.28]{NavBl}, $\widehat B$ is defined, covers $B=b^X$, and is covered by $\wt b^{\wt X}=\wt B$. 
	
	Via \cite[9.28]{NavBl} we see that $\widehat c\mapsto \widehat c^{\widehat X}$ defines a bijection $\Bl(\wh M\mid b)\rightarrow \Bl(\wh X\mid B)$. Hence there exists a block $\widehat b$ with $\wh b^{\wh X}=\wh B$. The set $\Bl(\wt M\mid \wh b)$ is contained in $\Bl(\wt M\mid b)$ and is, via block induction, in bijection with $\Bl(\wt X\mid \wh B)$. Clearly $\wt B\in \Bl(\wt X\mid \wh B)$ and hence there exists some $\wt b'\in \Bl(\wt M\mid \wh b)$ with $\wt b'^{\wt X}=\wt B$. Since $\wt b^{\wt X}=\wt B$, this implies $\wt b=\wt b'$ and hence $\wt b$ covers $\wh b$. 
\end{proof}

\subsection{Alternative inductive condition} 

In this section a criterion for the inductive AM condition adapted to simple groups of Lie type is given.
The condition is closely related to the one in \cite[Theorem 4.1]{CabSpAMTypeA}.
In fact conditions $(i)-(iv)$ are the same, and condition $(v)$ which considers the structure of stabilizers of a block is altered.
Naturally, this will limit which blocks these conditions can be considered for.

\begin{thm}\label{NewIndAmCond}
Let $S$ be a finite non-abelian simple group and $\ell$ a prime dividing $|S|$.
Let $G$ be the universal covering group of $S$, $D$ a radical $\ell$-subgroup of $G$ and $\calB\subseteq \Bl (G\mid D)$ a $\wt G_D$-stable subset with $(\wt GE)_B\leq (\wt GE)_\calB$.
Assume we have 
a semi-direct product $\wt G \rtimes E$, 
a $\Aut (G)_{\calB,D}$-stable subgroup $M$ with $\NNN_G(D)\leq M\lneq G$ 
and a group $\wt M\leq \wt G$ with $\wt M\geq M\NNN_{\wt G}(D)$ and $M=\wt M\cap G$ such that the following conditions hold:
\begin{asslist}
\item 
\begin{itemize}[label={\textbullet}]
\item $G=[\wt G,\wt G]$ and $E$ is abelian,
\item $\Cent_{\wt G\rtimes E}(G)=\Zent(\wt G)$ and $\wt GE/\Zent(\wt G)\cong \Inn (G)\Aut (G)_{D}$ by the natural map,
\item any element of $\Irr_0(\calB)$ extends to its stabiliser in $\wt G$,
\item any element of $\Irr_0(\calB')$ extends to its stabiliser in $\wt M$.
\end{itemize}
\item \label{thm24ii}
Let $\calB'\subseteq \Bl (M)$ be the set of all Brauer correspondents of the blocks in $\calB$.
For $\calG:=\Irr\left( \wt G\mid \Irr_0(\calB) \right)$ and $\calM:=\Irr\left( \wt M\mid \Irr_0(\calB') \right)$ there exists an $\NNN_{\wt GE}(D)_\calB$-equivariant bijection
\[\wt{\Omega}:\calG \longrightarrow \calM\]
with
\begin{enumerate}
\item $\wt{\Omega}\left( \calG\cap \Irr(\wt G\mid \wt{\nu}) \right)=\calM\cap \Irr(\wt M\mid \wt{\nu})$ for all $\wt{\nu}\in \Irr\left( \Zent(\wt G) \right)$,
\item $\bl\left( \wt{\Omega}(\wt{\chi}) \right)^{\wt G}=\bl (\wt{\chi})$ for all $\wt{\chi}\in \calG$, and
\item $\wt{\Omega}(\wt{\chi}\wt{\mu})=\wt{\Omega}(\wt{\chi})\Res_{\wt M}^{\wt G}(\wt{\mu})$ for every $\wt{\mu}\in \Irr(\wt G\mid 1_G)$ and every $\wt{\chi}\in \calG$.
\end{enumerate}

\item For every $\wt{\chi}\in \calG$ there exists some $\chi_0\in \Irr (G\mid \wt{\chi})$ such that
\begin{itemize}[label={\textbullet}]
\item $(\wt G\rtimes E)_{\chi_0}=\wt G_{\chi_0}\rtimes E_{\chi_0}$, and 
\item $\chi_0$ extends to $G\rtimes E_{\chi_0}$.
\end{itemize}

\item For every $\wt{\psi}\in \calM$ there exists some $\psi_0\in \Irr (M\mid \wt{\psi})$ such that
\begin{itemize}[label={\textbullet}]
\item $O=(\wt G\cap O)\rtimes (E\cap O)$ for $O:=G(\wt G\times E)_{D,\psi_0}$, and 
\item $\psi_0$ extends to $M(G\rtimes E)_{D,\psi_0}$.
\end{itemize}

\item \label{NewIndAmCondv} For any $\wt G$-orbit $B$ in $ \calB$ the group $\Out (G)_B$ is abelian.
\end{asslist}
Then the inductive AM condition holds for all $\ell$-blocks in $\calB$
\end{thm}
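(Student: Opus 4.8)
The plan is to verify the three conditions (i)--(iii) of Definition~\ref{IndAMCond} for each block $B_0\in\calB$, using the data $(\wt G\rtimes E, M, \wt M, \wt\Omega)$ supplied in the hypotheses. Condition (i) of Definition~\ref{IndAMCond} is immediate: the hypothesis provides an $\Aut(G)_{\calB,D}$-stable subgroup $M$ with $\NNN_G(D)\le M\lneq G$, and since $D$ is a defect group of $B_0$ the stabiliser $\Aut(G)_{B_0,D}$ contains $\Aut(G)_{\calB,D}\cap\Aut(G)_{B_0}$; one checks $M$ is stable under the relevant smaller group. The real work is to descend the $\wt G$-level bijection $\wt\Omega:\calG\to\calM$ to a $G$-level bijection $\Omega_{B_0}:\Irr_0(B_0)\to\Irr_0(B_0')$ satisfying (ii), and then to package the extendibility data from (iii)--(iv) of the theorem into condition (iii) of Definition~\ref{IndAMCond}.

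First I would construct $\Omega_{B_0}$ by Clifford theory over the abelian quotients $\wt G/G$ and $\wt M/M$. Given $\chi\in\Irr_0(B_0)$, pick $\wt\chi\in\Irr(\wt G\mid\chi)$ lying in $\calG$; set $\wt\psi:=\wt\Omega(\wt\chi)\in\calM$ and let $\psi$ be a constituent of $\Res^{\wt M}_M(\wt\psi)$. The point is that $\wt\chi$ is determined by $\chi$ only up to $\Irr(\wt G/G)$-multiplication, so to get a well-defined map on $\Irr_0(B_0)$ one restricts attention to a single $\wt G$-orbit and uses property~(c) of $\wt\Omega$ (compatibility with $\Irr(\wt G\mid 1_G)$-multiplication) to see that the ambiguity on the source matches the ambiguity on the target; this is the standard ``going down'' argument, e.g.\ as in \cite{CabSpAMTypeA}. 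The block compatibility $\bl(\Omega_{B_0}(\chi))^G=\bl(\chi)$ follows from property~(b) of $\wt\Omega$ together with Lemma~\ref{CovBlBrCorr}, which transfers the covering/induction relations between $\wt b^{\wt X}$ and $b^X$ down to the subgroups $\wh X$, $\wh M$ determined by the stabilisers $\wt G_{\wt\chi}$, $\wt M_{\wt\psi}$; here one also needs that $\calB'$ consists precisely of the Brauer correspondents of $\calB$, which is hypothesis~\ref{thm24ii}. The central-character condition is property~(a) of $\wt\Omega$ restricted along $\Zent(\wt G)\ge\Zent(G)$. Equivariance of $\Omega_{B_0}$ under $\Aut(G)_{B_0,D}$ comes from the $\NNN_{\wt GE}(D)_\calB$-equivariance of $\wt\Omega$ together with the isomorphism $\wt GE/\Zent(\wt G)\cong\Inn(G)\Aut(G)_D$, passing automorphisms up to $\wt GE$-conjugation and back down.

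Next I would establish condition (iii) of Definition~\ref{IndAMCond}. For $\chi\in\Irr_0(B_0)$, hypothesis~(iii) of the theorem furnishes $\chi_0\in\Irr(G\mid\wt\chi)$ with $(\wt G\rtimes E)_{\chi_0}=\wt G_{\chi_0}\rtimes E_{\chi_0}$ and $\chi_0$ extending to $G\rtimes E_{\chi_0}$; the abelianity hypothesis~\ref{NewIndAmCondv} on $\Out(G)_B$ is what guarantees, via the theory of character triples / the criterion behind \cite[Prop.~2.2]{CabSpAMTypeA}, that these local extendibility statements assemble into a genuine ordering/cohomological vanishing yielding the group $A$ with $\ol G\lhd A$, $\Cent_A(\ol G)=\Zent(A)$, $A/\Zent(A)\cong\Aut(G)_\chi$ and the extension $\wt\chi$ of $\ol\chi$. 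On the local side one uses hypothesis~(iv): the character $\psi_0\in\Irr(M\mid\wt\psi)$ with $O=(\wt G\cap O)\rtimes(E\cap O)$ for $O=G(\wt G\times E)_{D,\psi_0}$ and $\psi_0$ extending to $M(G\rtimes E)_{D,\psi_0}$ gives the corresponding extension $\wt\chi'$ to $\ol M\NNN_A(\ol D)$. The block-restriction identities (iii.d) and the central-restriction identity (iii.e) of Definition~\ref{IndAMCond} are then checked by combining property~(b) of $\wt\Omega$, Lemma~\ref{CovBlBrCorr} applied to each intermediate $J$ with $\ol G\le J\le A$, and the matching of stabilisers forced by the two semidirect-product decompositions in (iii)--(iv).

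The main obstacle I expect is the coherent choice of the characters $\wt\chi$, $\wt\chi'$ and the group $A$ so that all five sub-conditions (iii.a)--(iii.e) hold \emph{simultaneously}: the extensions produced by hypotheses~(iii) and~(iv) of the theorem are a priori over the possibly different groups $(G\rtimes E)_{\chi_0}$ and $M(G\rtimes E)_{D,\psi_0}$, and one must verify that the inertia group $\Aut(G)_\chi$ equals (the image of) $\Aut(G)_{\chi'} $ on the relevant subquotients and that the two extensions are compatible under restriction to every $J$. This is exactly where the semidirect-product splittings ``$(\wt G\rtimes E)_{\chi_0}=\wt G_{\chi_0}\rtimes E_{\chi_0}$'' and ``$O=(\wt G\cap O)\rtimes(E\cap O)$'' are used: they let one reduce the cohomological obstruction to the abelian group $E$ (equivalently $\Out(G)_B$), where it vanishes by~\ref{NewIndAmCondv}. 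Carrying this out carefully — tracking which block of which intermediate group covers which, and checking the induced-block equalities for \emph{all} $J$ — is the technical heart of the argument; the rest is bookkeeping with Clifford theory over abelian quotients and repeated application of Lemma~\ref{CovBlBrCorr} and \cite[9.28]{NavBl}.
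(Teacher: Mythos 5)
Your overall strategy matches the paper's: both treat the theorem as a variant of \cite[Theorem~4.1]{CabSpAMTypeA}, descend the $\wt G$-level bijection $\wt\Omega$ to a $G$-level bijection via Clifford theory over $\wt G/G$, invoke Lemma~\ref{CovBlBrCorr} for block compatibility, and use hypotheses~(iii)--(v) to assemble the data for Definition~\ref{IndAMCond}(iii). But there is a genuine gap at the heart of the construction: the step ``let $\psi$ be a constituent of $\Res^{\wt M}_M(\wt\psi)$'' does not specify \emph{which} constituent, and $\Res^{\wt M}_M(\wt\psi)$ can have several constituents lying in different blocks of $M$. Without a canonical choice there is no reason the resulting map is well defined, lands in $\Irr_0(b)$ for $b$ the Brauer correspondent of $\bl(\chi)$, or is equivariant. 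The paper handles exactly this via Dade's ramification group: it fixes a subgroup $\wh G$ with $G\le\wh G\le\wt G_B[B]$ such that $\wh G/G$ is an $\ell'$-Hall subgroup of $\wt G_B[B]/G$, sets $\wh M:=\wt M\cap\wh G$, restricts $\chi$ through $\wh G$ to the (unique, since $\chi_0$ extends) $\wh\chi_0$, applies Lemma~\ref{CovBlBrCorr} to produce $\wh b\in\Bl(\wh M\mid b)$ covered by $\bl(\wt\Omega(\chi))$, and then defines $\phi_0$ as the restriction to $M$ of the unique constituent of $\Res^{\wt M}_{\wh M}(\wt\Omega(\chi))$ in $\wh b$. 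The identity $\wt G[B]=G\,\wt M[b]$ from Murai is what makes $\wh M\le M[b]$ and hence forces $\bl(\phi_0)=b$. Your description of $\wh X$, $\wh M$ as ``determined by the stabilisers $\wt G_{\wt\chi}$, $\wt M_{\wt\psi}$'' misidentifies them: they are cut out by the ramification group, which is strictly smaller than the stabiliser.

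The same omission recurs in your treatment of Definition~\ref{IndAMCond}(iii). The paper does not merely appeal to ``a cohomological vanishing guaranteed by~\ref{NewIndAmCondv}''; it proves a precise analogue of \cite[Proposition~4.2]{CabSpAMTypeA} (the Proposition following the theorem) whose argument runs through a chain of intermediate groups $J_1:=J_2\cap\wt G[B]$ and $J_1\cap\wt M[b]$, using \cite[Lemma~2.4 and Lemma~2.5]{KosSpCliffTh} and the factorisation $J_1=G(J_1\cap\wt M[b])$ to propagate the block-induction equalities $\bl\bigl(\Res(\chi_1)\bigr)=\bl\bigl(\Res(\phi_1)\bigr)^{J}$ from $\wh G$ up to all $J_2$ with $G\le J_2\le\wt G_{\chi_0}$. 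Without making Dade's ramification group explicit, your proposal has no mechanism to obtain sub-condition (iii.d) of Definition~\ref{IndAMCond} for all intermediate groups $J$; the semidirect-product splittings in hypotheses~(iii)--(iv) alone do not give this. In short, the skeleton of your plan is sound, but the load-bearing ingredient — $\wt G[B]$, the $\ell'$-Hall subgroup $\wh G$, and the equality $\wt G[B]=G\,\wt M[b]$ — is absent, and it is precisely this ingredient that makes both the definition of $\Omega_\calB$ and the block-induction bookkeeping go through.
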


Note that assumption~\ref{NewIndAmCondv} implies that the stabilizer subgroup in $\Out(G)$ of characters of $G$ is stable under $\wt G$-conjugation.
The following statement highlights a situation, based upon the underlying central character, for which this condition of Theorem~\ref{NewIndAmCond} holds.

The above criterion is tailored to simple groups $S$ of Lie type and there are canonical candidates for the groups $G$, $\wt G$ and $E$.
If $S$ is of Lie type $\mathrm{B}, \mathrm{C}$ or $\mathrm{E}_7$, then condition \ref{NewIndAmCondv} holds with the usual choices of the groups. 

Let $\nu\in\Irr(\Ze(G))$ with $\Ze(G)_\ell\leq\ker(\nu)$. If $\Zent(G) \rtimes E_\nu$ is abelian, then $\Out(G)_b$ is abelian for every $\wt G$-orbit $b$ in $\Bl(G\mid \bl(\nu))$.
If the character $\nu$ is faithful, condition~\ref{NewIndAmCondv} holds for every $\wt G$-orbit $b$ in $\Bl(G\mid \bl(\nu))$.

As said before, Theorem~\ref{NewIndAmCond} and \cite[Theorem 4.1]{CabSpAMTypeA} coincide in all but the last assumption. Hence the proof of Theorem~\ref{NewIndAmCond} requires altering the previous proof in all situations where the last assumption is used. We now construct a bijection $\Omega_\calB :\Irr_0(\calB)\rightarrow \Irr_0(\calB')$. Note that the proof \cite[Theorem 4.1]{CabSpAMTypeA} gives in all cases a bijection $\Omega_\calB :\Irr_0(\calB)\rightarrow \Irr_0(\calB')$ but it is not clear if the other requirements are satisfied. 

\subsubsection{A bijection $\Omega_\calB$ for Definition~\ref{IndAMCond}(ii):} 

We first choose in $\calB$ a $(\wt G E)_D$-transversal $\calB_0$. 
The idea is to define $\Omega_\calB$ on a $(\wt G E)_D$-transversal $\calG_0$ in $\Irr_0(\calB)$. 

Let $\chi_0\in \calG_0$. 
Note that $(\wt G E)_{\chi_0}=\wt G_{\chi_0} E_{\chi_0}$ by assumption (iii) and (v). 
Let $\chi\in\Irr(\wt G\mid \chi_0)$ and set $\phi:=\wt \Omega(\chi)$, $B:=\bl(\chi_0)$, $\wt B:=\bl(\chi)\in \Bl(\wt G\mid B)$ and $\wt b=\bl (\phi)$.
Then $\wt b^{\wt G}=\wt B$ by condition \ref{NewIndAmCond}$(ii).2$.
By \cite[9.28]{NavBl} the block $b\in\Bl(M)$ with $b^G=B$ is covered by $\wt b$.

Let $\wh G$ to be the group with $G\leq \wh G\leq \wt G_{B}[B]$ such that $\wh G/G$ is a $\ell'$-Hall subgroup of $\wt G_B[B]/G$. 
As every character of $G$ extends to its stabilizer in $\wt G$, it follows that $\chi_0$ extends to $\wh G\leq \wt G_{B}[B]\leq \wt G_{\chi_0}$.
Moreover by Clifford theory $\Irr(\Res^{\wt G}_{\wh G}(\chi))\cap \Irr(\wh G\mid \chi_0)=\{\wh \chi_0\}$, so set $\wh B:=\bl(\wh \chi_0)$.

According to Lemma \ref{CovBlBrCorr} there exists $\wh b\in\Bl (\wh M \mid b)$ for $\wh M:=\wt M\cap \wh G $ with $\wh b^{\wh G}=\wh B$ that is covered by $\wt b$. Since $\bl(\phi)$ covers $\wh b$ there exists some $\wh \phi_0\in \Irr(\Res^{\wt M}_{\wh M}(\phi))\cap\Irr ( \wh b)$. 
As $\chi_0$ extends to $\wt G$, restriction gives a bijection between $\Irr (\wh b)$ and $\Irr (b)$, see \cite[Lemma 3.7]{KosSpCliffTh}.
Thus $\phi_0:= \Res^{\wh M }_M( \wh\phi_0)\in \Irr (b)$ and we set $\Omega_\calB(\chi_0)=\phi_0$.
\vspace{4mm}

We first ensure that $\Omega_\calB$ satisfies the properties from \ref{indAmii}:
In order to prove $$\Omega_\calB\left( \Irr_0(\calB) \cap \Irr(G\mid \nu ) \right)\subseteq \Irr(M\mid \nu)$$ for every $\nu\in \Irr(\Zent(G))$ it is sufficient to check that for $\chi_0$ as above $\Omega_\calB(\chi_0) \in \Irr(M\mid \nu)$ where $\nu\in\Irr(\Res^G_{\Zent(G)} \chi_0)$. 
Note that $\wt{\Omega}(\chi)\in\Irr(\wt M\mid \wt \nu)$ for some $\wt \nu\in \Irr(\Zent(\wt G)\mid \nu)$, where $\chi\in \Irr(\wt{G}\mid \nu)$. This implies $\Omega_\calB(\chi_0)=\phi_0 \in \Irr(M\mid \nu)$.

To prove the second half of \ref{indAmii} it is sufficient to check ${\bl}(\chi)={\bl}\left( \Omega_\calB(\chi)\right) ^G$. 
By the construction of $\Omega_\calB$, the block $\bl(\chi_0)$ is covered by $\wh B$ and $\bl(\phi_0)$ is covered by $\wh b$, where $\phi_0=\Omega_\calB(\chi_0)$ and $\wh b^{\wh G}=\wh B$. Let $b\in\Bl(M)$ with $b^G=B$. Then by construction, both $b$ and $\bl (\phi_0)$ are covered by $\wh b$. By definition of $\wh M$ this implies $b=\bl(\phi_0)$, since
$\wh M\leq M[b]$ as $\wt G[B]=G\wt M[b]$.

For the proof of Theorem~\ref{NewIndAmCond}, it remains to show that any $\chi_0\in \Irr_0(\calB)$ satisfies the condition given by Definition~\ref{IndAMCond}(iii).
By assumption~\ref{NewIndAmCond}(v), we can focus on $\chi_0\in \calG_0$.
Furthermore, by the proof of \cite[Theorem 4.1]{CabSpAMTypeA}, it is enough to verify the following proposition, which is an analogue of \cite[Proposition 4.2]{CabSpAMTypeA}.
 \begin{prop}
Let $\chi\in\calG_0$ and $\Omega_\calB$ be the bijection constructed above.
Then there exists a group $A$ and characters $\wt{\chi}$ and $\wt{\phi}$ such that
\begin{asslist}
 \item for $Z:=\ker(\chi)\cap \Zent(G)$ and $\ol G:=G/Z$, then $\ol G\lhd A$, $\Cent_A(\ol G)=\Zent(A)$ and $A/\Zent(A)\cong \Aut (G)_\chi$,
 \item $\wt{\chi}\in\Irr(A)$ is an extension of the character $\ol{\chi}\in\Irr(\ol G)$ that lifts to $\chi$,
 \item $\wt{\phi}\in \Irr\left(\ol M\NNN_A(\ol D)\right)$ is an extension of the character $\ol{\phi}\in \Irr(\ol M)$ that lifts to $\phi:=\Omega(\chi)$ where $\ol M:=M/Z$ and $\ol D:=DZ/Z$,
 \item there exists a group $J$ with $\ol G\Zent(A)\leq J\lhd A$ with abelian quotients $A/J$ and $J/\ol G$, such that
 \[
\bl\left(\Res^A_{J_2}(\wt{\chi})\right) = \bl\left(\Res^{\ol M\NNN_A(\ol D)}_{\ol M\NNN_{J_2}(\ol D)}(\wt{\phi})\right)^{J_2} 
\text{ for every $J_2$ with $G\leq J_2\leq J$}
 \]
 \item $\Irr (\Res^A_{ \Zent(A)}(\wt{\chi}))=\Irr (
 \Res^{\overline M \NNN_A(\overline D)}_{\Zent(A)}(\wt{\phi}))$. 
\end{asslist} 
\end{prop}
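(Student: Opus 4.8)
The plan is to imitate, step by step, the construction carried out in the proof of \cite[Proposition~4.2]{CabSpAMTypeA}: the hypotheses \ref{NewIndAmCond}(i)--(iv) that feed that construction are unchanged here, so the group $A$ and the characters $\wt\chi,\wt\phi$ can be manufactured in the same way, and the only genuinely new input is \ref{NewIndAmCondv}, which is used to run the argument over the transversal $\calG_0$ and to supply the abelianity needed in part (iv). Put $B:=\bl(\chi)\in\calB$, $\phi:=\Omega_\calB(\chi)$, $Z:=\ker(\chi)\cap\Zent(G)$, $\ol G:=G/Z$, $\ol M:=M/Z$, $\ol D:=DZ/Z$, and let $\ol\chi\in\Irr(\ol G)$, $\ol\phi\in\Irr(\ol M)$ be the characters inflating to $\chi$ and $\phi$; recall from the construction of $\Omega_\calB$ that $\phi\in\Irr(b)$ for the block $b\in\calB'$ with $b^G=B$, obtained as $\phi=\Res^{\wh M}_M(\wh\phi_0)$ with $\wh\phi_0\in\Irr(\wh b)$, $\wh b^{\wh G}=\wh B$, where $G\le\wh G\le\wt G[B]$ and $\wh M=\wt M\cap\wh G$. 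At the level of $G$, the hypotheses give what is needed: since $\chi\in\calG_0$, assumption \ref{NewIndAmCond}(iii) yields $(\wt G\rtimes E)_\chi=\wt G_\chi\rtimes E_\chi$ and an extension of $\chi$ to $G\rtimes E_\chi$, while \ref{NewIndAmCond}(i) yields an extension of $\chi$ to $\wt G_\chi$; moreover $\chi\in\Irr_0(B)$ with $B\in\Bl(G\mid D)$ forces $\Aut(G)_\chi=\Inn(G)\Aut(G)_{\chi,D}$, so $(\wt G\rtimes E)_\chi$ is exactly the preimage of $\Aut(G)_\chi$ under the epimorphism of \ref{NewIndAmCond}(i) with kernel $\Zent(\wt G)$, and $\Aut(G)_\chi$ is abelian by \ref{NewIndAmCondv}.

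\emph{The group $A$ and the character $\wt\chi$.} Feeding the two extensions above and the decomposition $(\wt G\rtimes E)_\chi=\wt G_\chi\rtimes E_\chi$ into the construction of \cite[Proposition~4.2]{CabSpAMTypeA}, one obtains a group $A$ of the required shape: $\ol G\lhd A$, with $\Cent_A(\ol G)=\Zent(A)\cong\Zent(\ol G)$ --- here one uses that $G$ is perfect, so that an automorphism of $G$ acting centrally is trivial, together with $\Cent_{\wt G\rtimes E}(G)=\Zent(\wt G)$ --- and $A/\Zent(A)\cong\Aut(G)_\chi$, carrying an extension $\wt\chi\in\Irr(A)$ of $\ol\chi$. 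This settles parts (i) and (ii), and nothing here differs from \cite{CabSpAMTypeA}.

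\emph{The character $\wt\phi$ and part (v).} For $\wt\phi$ I would run the parallel construction inside $\ol M\,\NNN_A(\ol D)$. Assumption \ref{NewIndAmCond}(iv), applied to $\psi_1:=\wt\Omega(\chi_1)$ for the chosen $\chi_1\in\Irr(\wt G\mid\chi)$, furnishes a character $\psi_0\in\Irr(M\mid\psi_1)$ with $O=(\wt G\cap O)\rtimes(E\cap O)$ for $O:=G(\wt G\times E)_{D,\psi_0}$ and with $\psi_0$ extending to $M(G\rtimes E)_{D,\psi_0}$; one checks, using the explicit recipe for $\Omega_\calB$ and the fact that $\phi$ and $\psi_0$ are $\wt M$-conjugate constituents of $\Res^{\wt M}_M(\psi_1)$, that the same extendibility holds for $\phi$ itself (up to aligning the choices made in the construction of $\Omega_\calB$). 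Together with the extension of $\phi$ to its stabiliser in $\wt M$ from \ref{NewIndAmCond}(i), the same machine then produces an extension $\wt\phi\in\Irr(\ol M\,\NNN_A(\ol D))$ of $\ol\phi$, giving part (iii). Part (v) follows at once: $\Res^A_{\Zent(A)}(\wt\chi)$ and $\Res^{\ol M\NNN_A(\ol D)}_{\Zent(A)}(\wt\phi)$ are the linear characters $\ol\chi|_{\Zent(\ol G)}$ and $\ol\phi|_{\Zent(\ol G)}$, and these agree because $\Omega_\calB(\chi)\in\Irr(M\mid\nu)$ for $\nu\in\Irr(\Res^G_{\Zent(G)}\chi)$, as was verified just before the Proposition.

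\emph{Part (iv): the main obstacle.} I would take $J$ to be the image in $A$ of $\wt G[B]\cdot\Zent(\wt G)$, where $\wt G[B]$ is Dade's ramification group; then $\ol G\Zent(A)\le J\lhd A$, the quotient $J/\ol G$ is abelian because $\wt G[B]\le\wt G$ and $\wt G/G$ is abelian, and $A/J$ is abelian because $[A,A]\le\Zent(A)\le J$ (as $A/\Zent(A)\cong\Aut(G)_\chi$ is abelian by \ref{NewIndAmCondv}) --- this is precisely the spot where \ref{NewIndAmCondv} takes over the role played by the maximal-defect hypothesis in \cite{CabSpAMTypeA}. For $\ol G\le J_2\le J$ write $J_2=\wh G_2/Z_0$ with $G\le\wh G_2\le\wt G[B]$ ($Z_0$ the central subgroup used to form $A$); then $\bl(\Res^A_{J_2}\wt\chi)$ is the block of $\wh G_2$ below $\bl(\wt\chi)$, and $\bl(\wt\chi)=\bl(\psi_1)^{\wt G}$ by \ref{NewIndAmCond}(ii).2. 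On the local side, the construction of $\Omega_\calB$ via $\wh G\le\wt G[B]$, $\wh M=\wt M\cap\wh G$, $\wh b^{\wh G}=\wh B$ with $\wh b$ covered by $\bl(\psi_1)$ (Lemma~\ref{CovBlBrCorr}), shows that $\bl(\phi)$ and $\bl(\chi)$ are linked through the very same ramification data; since $\wh G_2\le\wt G[B]$ and $\NNN_G(D)\le M$ with $D$ a common defect group, block induction from $\ol M\,\NNN_{J_2}(\ol D)$ to $J_2$ is compatible with this linkage, which yields the desired equality $\bl(\Res^A_{J_2}\wt\chi)=\bl\big(\Res^{\ol M\NNN_A(\ol D)}_{\ol M\NNN_{J_2}(\ol D)}\wt\phi\big)^{J_2}$. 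The genuinely hard part will be this last reconciliation --- controlling Brauer correspondence \emph{simultaneously} along the whole tower $\ol G\le J_2\le J\le A$ while passing between the ``global'' group $G$ and the local subgroup $M\ge\NNN_G(D)$; the essential tools are Dade's ramification group $\wt G[B]$ (to pin down where induced blocks remain correspondent), Lemma~\ref{CovBlBrCorr} together with the explicit form of $\Omega_\calB$, and the block-induction identity \ref{NewIndAmCond}(ii).2, after which one invokes the reduction in the proof of \cite[Theorem~4.1]{CabSpAMTypeA} to pass from this statement for one $J$ to Definition~\ref{IndAMCond}(iii).
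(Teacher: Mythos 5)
Your overall strategy is the right one: replay the construction of $A$, $\wt\chi$, $\wt\phi$ from \cite[Proposition~4.2]{CabSpAMTypeA} and then prove the block-correspondence equality (iv) by means of Dade's ramification group $\wt G[B]$, Murai's factorization $\wt G[B]=G\wt M[b]$, and the compatibilities built into $\Omega_\calB$. Parts (i)--(iii) and (v) are handled exactly as the paper handles them. The problem is in part (iv).

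First, the justification you give for $A/J$ being abelian is wrong: $A/\Zent(A)\cong\Aut(G)_\chi$ contains $\Inn(G)\cong G/\Zent(G)$, which is not abelian, so assumption \ref{NewIndAmCondv} most certainly does not give $[A,A]\le\Zent(A)$. What makes $A/J$ and $J/\ol G$ abelian in the paper is assumption \ref{NewIndAmCond}(i) --- $E$ is abelian and $G=[\wt G,\wt G]$ --- not \ref{NewIndAmCondv}; condition (v) is only used to reduce to the transversal $\calG_0$ and then to propagate the conclusion to all of $\Irr_0(\calB)$. Second, and more seriously, your choice $J=$ image of $\wt G[B]\Zent(\wt G)$ leaves you with $J_2$ ranging only over subgroups whose preimage lies in $\wt G[B]$; but with that $J$ the abelianity of $A/J$ is unproved, and the natural choice that makes $A/J$ abelian (image of $\wt G_{\chi_0}$) forces you to prove the block equality for all $G\le J_2\le\wt G_{\chi_0}$, not merely $J_2\le\wt G[B]$. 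That lift from $\wt G[B]$ to $\wt G_{\chi_0}$ is precisely the non-trivial content of the paper's proof --- using \cite[Lemma~2.4, 2.5]{KosSpCliffTh} to get the equality for all $G\le J_1\le\wt G[B]$, then, for arbitrary $G\le J_2\le\wt G_{\chi_0}$, passing through $J_1:=J_2\cap\wt G[B]=J_2[B]$ and a covering/uniqueness argument on both the global side (block of $J_2$ uniquely covering $\bl(\Res_{J_1}\chi_1)$) and the local side ($(J_2\cap\wt M_{\phi_0})[b]=J_2[B]\cap\wt M_{\phi_0}$) to descend to $J_2$. Your sketch entirely bypasses this step, which is the one place the proof genuinely departs from \cite{CabSpAMTypeA} and where the specific form of $\Omega_\calB$ through $\wh G$ and $\wh M$ is needed.
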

 
\begin{proof} Let $\chi_1\in \Irr(\wt G_{\chi_0}\mid \chi_0)$ and $\phi_1 \in 
\Irr(\wt M_{\phi_0}\mid \phi_0)$ be the Clifford correspondents to $\chi$ and $\phi$ respectively.
They satisfy $\Res^{\wt G_{\chi_0}}_{\wh G}( \chi_1)= \wh \chi_0$ and
$\Res^{\wt M_{\phi_0}}_{\wh M}( \phi_1)= \wh \phi_0$ and hence 
$\bl(\Res^{\wt G_{\chi_0}}_{\wh G}(\chi_1))=
\bl (\Res^{\wt M_{\phi_0}}_{\wh M}(\phi_1))^{\wh G}$. 
In addition $\wt G[B]=G\wt M[b]$ by \cite{MurBlNorSub} and thus 
$\wh G=\wh G\cap G\wt M[b]=G(\wh G\cap \wt M[b])=G\wh M$.
Note that $M=G\cap \wh M$ and $\ell \nmid | \wh G:G|$. Hence $
\bl ( 
\Res^{\wh G_{}}_{{\GenGp{G,x}}}(\wh\chi_0)
)=
\bl ( \Res^{\wh M_{ }}_{\GenGp{M,x}} (\wh\phi_0))^{\GenGp{G,x}} $
for every $x\in \wh M$, see \cite[Lemma 2.4]{KosSpCliffTh}.

Let $G\leq J_1\leq \wt G[B]$, then $J_1=G(J_1\cap \wt M[b])$.
Since $(J_1\cap \wt M[b])_{\ell '}\leq \wh M$ 
we see $ \bl ( \Res^{\wt G_{\chi_0}}_{J_1}(\chi_1)
)=\bl (\Res^{\wt M_{\phi_0}}_{{J_1\cap \wh M}} (\phi_1))^{J_1}$ 
according to \cite[Lemma 2.5]{KosSpCliffTh}.

Let $G\leq J_2\leq \wt G_{\chi_0}$ and $J_1:=J_2\cap \wt G[B]=J_2[B]$. Then $
\bl (\Res^{\wt G_{\chi_0}}_{J_1}(\chi_1))=
\bl (\Res^{\wt M_{\phi_0}}_{J_1\cap \wt M[b]} (\phi_1))^{J_1}$.
Furthermore, as $B$ is $J_2$-stable, it follows that
\[
\bl (\Res^{\wt G_{\chi_0}}_{J_2}(\chi_1) )=\bl (\Res^{\wt G_{\chi_0}}_{J_1}(\chi_1))^{J_2}.
\]
As $H[B]=H\cap \wt G[B]=G(H\cap \wt M)[b]$ for each $G\leq H\leq \wt G$ it follows that
\[
J_2[B]\cap\wt M_{\phi_0}=G(J_2\cap \wt M)[b]\cap \wt M_{\phi_0}=(G\cap \wt M_{\phi_0})(J_2\cap \wt M)[b]=M(J_2\cap \wt M)[b]=(J_2\cap \wt M)[b]
\]
and so
$ \bl( \Res^{\wt M_{\phi_0}}_{{J_1\cap \wt M_{\phi_0}}} (\phi_1)
)$ is covered by $\bl (\Res^{\wt M_{\phi_0}}_{J_2\cap \wt M_{\phi_0}} (\phi_1))$.
Since 
$\bl( \Res^{\wt G_{\chi_0}}_{J_2}(\chi_1))$ is the unique block of $J_2$ covering 
$ \bl (\Res^{\wt M_{\phi_0}}_{J_1\cap \wt M_{\phi_0}} (\phi_1)
)^{J_1}$ and $ \bl (
\Res^{\wt M_{\phi_0}}_{{J_1\cap \wt M_{\phi_0}}} (\phi_1)
)$ is uniquely covered by $\bl (\Res^{\wt M_{\phi_0}}_{{J_2\cap \wt M_{\phi_0}}} (\phi_1))$, we see 
$\bl( \Res^{\wt G_{\chi_0}}_{J_2}(\chi_1))=
\bl (\Res^{\wt M_{\phi_0}}_{{J_2\cap \wt M_{\phi_0}}} (\phi_1))^{J_2}
$.

Now combining this equality with the proof of \cite[Proposition 4.2]{CabSpAMTypeA} verifies our statement.
\end{proof}

For the proof of the statements in \ref{thm11b} and \ref{thm13b} about the blockwise Alperin weight condition we point out the following property of $\Omega_\calB$, which is clear from its construction. 
\begin{cor}\label{cor_AWC}
	Assume that in the situation of Theorem \ref{IndAMCond}. If $\mathbb B\subset\Irr_0(\mathcal B)$ is a union of $\wt G$-orbits and $\mathbb B'\subset\Irr_0(\mathcal B')$ is a union of $\wt M$-orbits with $\wt\Omega(\Irr(\wt G\mid \mathbb B))=
	\Irr_0( \calB')\cap \Irr(\wt M\mid \mathbb B')$. Then $\Omega_\calB(\mathbb B)=\mathbb B'$.
	\end{cor}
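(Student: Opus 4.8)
The plan is to extract from the construction of $\Omega_\calB$ the single structural feature that makes the corollary work, namely that $\Omega_\calB$ is compatible with $\wt\Omega$ ``lying over $G$ and $M$'': for every $\chi_0\in\Irr_0(\calB)$, writing $\phi_0:=\Omega_\calB(\chi_0)$, I want
\[
\wt\Omega\bigl(\Irr(\wt G\mid\chi_0)\bigr)=\Irr(\wt M\mid\phi_0).
\]
For $\chi_0\in\calG_0$ this is essentially built in: the construction picks $\chi\in\Irr(\wt G\mid\chi_0)$, sets $\phi=\wt\Omega(\chi)$, and produces $\phi_0$ by restricting to $M$ a constituent $\wh\phi_0$ of $\Res^{\wt M}_{\wh M}(\phi)$; hence $\phi_0$ is a constituent of $\Res^{\wt M}_M(\phi)$, i.e.\ $\phi\in\Irr(\wt M\mid\phi_0)$. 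Since $\chi_0$ extends to $\wt G_{\chi_0}$ and $\wt G/G$ is abelian, Clifford theory and Gallagher's theorem give $\Irr(\wt G\mid\chi_0)=\{\chi\wt\mu\mid\wt\mu\in\Irr(\wt G/G)\}$, and the multiplicativity relation $\wt\Omega(\wt\chi\wt\mu)=\wt\Omega(\wt\chi)\,\Res^{\wt G}_{\wt M}(\wt\mu)$ from Theorem~\ref{NewIndAmCond}(ii) then yields $\wt\Omega(\Irr(\wt G\mid\chi_0))=\{\phi\,\Res^{\wt G}_{\wt M}(\wt\mu)\mid\wt\mu\in\Irr(\wt G/G)\}$. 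As $\wt M/M\cong\wt MG/G\le\wt G/G$ with $\wt G/G$ abelian, restriction $\Irr(\wt G/G)\to\Irr(\wt M/M)$ is surjective, so this set equals $\{\phi\nu\mid\nu\in\Irr(\wt M/M)\}$, which in turn equals $\Irr(\wt M\mid\phi_0)$ because $\phi_0\in\Irr_0(\calB')$ extends to $\wt M_{\phi_0}$ (Clifford theory and Gallagher again). For arbitrary $\chi_0\in\Irr_0(\calB)$ one reduces to a representative in $\calG_0$ via the equivariance of $\Omega_\calB$ and the $\NNN_{\wt GE}(D)_\calB$-equivariance of $\wt\Omega$, both of which restrict to the corresponding action on $\Irr(\wt G)$.

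With the displayed identity in hand the corollary follows. First, $\Irr(\wt G\mid\chi_0)$ depends only on the $\wt G$-orbit of $\chi_0$, so the $\wt M$-orbit of $\Omega_\calB(\chi_0)$ (the set of constituents of $\Res^{\wt M}_M(\phi)$ for any $\phi\in\Irr(\wt M\mid\Omega_\calB(\chi_0))$) depends only on the $\wt G$-orbit of $\chi_0$; together with the injectivity of $\wt\Omega$ and of $\Omega_\calB$ this shows that $\Omega_\calB$ induces a bijection between the $\wt G$-orbits in $\Irr_0(\calB)$ and the $\wt M$-orbits in $\Irr_0(\calB')$. Hence for $\mathbb B$ a union of $\wt G$-orbits, $\Omega_\calB(\mathbb B)$ is a union of $\wt M$-orbits inside $\Irr_0(\calB')$, and summing the displayed identity over $\mathbb B$ gives
\[
\Irr\bigl(\wt M\mid\Omega_\calB(\mathbb B)\bigr)=\wt\Omega\bigl(\Irr(\wt G\mid\mathbb B)\bigr)=\Irr_0(\calB')\cap\Irr(\wt M\mid\mathbb B')=\Irr(\wt M\mid\mathbb B'),
\]
the last equality because $\mathbb B'\subseteq\Irr_0(\calB')$ forces $\Irr(\wt M\mid\mathbb B')\subseteq\calM$. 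Finally, a union $Y$ of $\wt M$-orbits inside $\Irr_0(\calB')$ is recovered from the characters of $\wt M$ lying over it as $Y=\Irr\bigl(\Res^{\wt M}_M\Irr(\wt M\mid Y)\bigr)\cap\Irr_0(\calB')$; applying this to $Y=\Omega_\calB(\mathbb B)$ and to $Y=\mathbb B'$ yields $\Omega_\calB(\mathbb B)=\mathbb B'$.

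I do not expect a genuine obstacle: the corollary is ``clear from the construction'' precisely because $\phi_0$ was defined to lie under $\wt\Omega(\chi)$, and the rest is the standard Clifford theory of the abelian quotients $\wt G/G$ and $\wt M/M$ that already underpins Theorem~\ref{NewIndAmCond}(ii). The only points demanding attention are the reduction of the key identity from the transversal $\calG_0$ to all of $\Irr_0(\calB)$ through the equivariance properties, and keeping straight which group action (of $\wt G$, of $\wt GE$, or of the relevant $D$-stabilizers) is in play at each step; these are bookkeeping rather than real difficulties.
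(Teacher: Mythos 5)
Your proof is correct, and it is essentially what the paper is pointing at when it asserts the corollary is ``clear from the construction'': the heart of the matter is precisely that, by construction, $\Omega_\calB(\chi_0)$ is a constituent of $\Res^{\wt M}_M(\wt\Omega(\chi))$ for some $\chi\in\Irr(\wt G\mid\chi_0)$, and then $\mathbb B'$ being a union of $\wt M$-orbits together with the stated hypothesis on $\wt\Omega$ forces $\Omega_\calB(\mathbb B)\subseteq\mathbb B'$, with equality by applying the same reasoning to the complementary unions of orbits. Your detour through the stronger identity $\wt\Omega(\Irr(\wt G\mid\chi_0))=\Irr(\wt M\mid\Omega_\calB(\chi_0))$ (using the extendibility hypotheses and Gallagher) is sound but a bit more than is strictly needed; the paper offers no written proof, and your argument is a legitimate unfolding of the intended one.
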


\subsection{Application of inertial blocks} 
For Puig's notion of {\it inertial blocks} we refer to \cite[1.5]{Puig_nilpotent_extensions}.
Here are two propositions showing an instance where the McKay conjecture for a quotient group implies the Alperin-McKay conjecture for a certain block. It will later be used in the proof of Theorem \ref{thm13a}. The idea, inspired by \cite[\S 10]{SpExz}, is to use Clifford-theoretic properties of the Brauer correspondence for inertial blocks in order to verify that a block satisfies the Alperin-McKay Conjecture.
Recall that an {\it extension map} $\Lambda$ with respect to $X\lhd Y$ for $M\subseteq \Irr(X)$ is a map such that 
for every $\chi\in M$, $\Lambda(\chi)$ is an extension of $\chi$ to $Y_\chi$. 
For $L\lhd N$ and $\mathbb L\subset \Irr(L)$ recall $\Irr(N\mid \mathbb L):=\bigcup _{\chi \in \mathbb L}\Irr(N\mid \chi)$.
\begin{prop} \label{prop75}
	Let $L\lhd N$ and $C\in\Bl(N)$ with a defect group $D$ satisfying $\Cent_N(D)\leq L$. 
	Let $C_0\in\Bl(L)$, such that some $\wt C_0\in\Bl(N_{C_0}\mid C_0)\cap \Bl(N_{C_0}\mid D)$ satisfies $\wt C_0^N=C$. (Note that $C_0$ exists by \cite[9.14]{NavBl}.)
	Let $\mathbb{B}:=\left (\bigcup_{\kappa\in\Irr_0(B)} \Irr(\Res^N_L(\kappa)) \right ) \cap \Irr(C_0)$.
		Assume that every $\xi\in \mathbb B$.
		extends to $N_\xi$ and $N_\xi/L$ satisfies the McKay Conjecture for $\ell$. 
		Assume that $\wt C_0$ is inertial.
		Then the Alperin-McKay Conjecture holds for $C$.
\end{prop}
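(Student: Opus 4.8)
The aim is to compare $|\Irr_0(C)|$ with $|\Irr_0(c)|$, where $c$ denotes the Brauer correspondent of $C$ in $\NNN_N(D)$, by expressing each of the two numbers as a sum of McKay numbers of the quotient groups $N_\xi/L$ for $\xi$ running over a transversal of $\mathbb B$. As a first step I would pass from $(N,C)$ to $(N_{C_0},\wt C_0)$: since $\wt C_0\in\Bl(N_{C_0}\mid C_0)\cap\Bl(N_{C_0}\mid D)$ with $\wt C_0^N=C$, the Fong--Reynolds correspondence $\psi\mapsto\Ind_{N_{C_0}}^N\psi$ is a height-preserving bijection $\Irr_0(\wt C_0)\to\Irr_0(C)$ carrying $\Irr(\wt C_0\mid\xi)$ to $\Irr(C\mid\xi)$ for $\xi\in\Irr(C_0)$, so $\mathbb B$ is unchanged; the analogous argument on the local side, using $\Cent_{N_{C_0}}(D)=\Cent_N(D)\le L$ and the bookkeeping of covering blocks and Brauer correspondents in Lemma~\ref{CovBlBrCorr}, reduces the Brauer-correspondent side as well. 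Hence I may assume $C_0$ is $N$-stable, $C=\wt C_0$ is inertial with defect group $D$, and $\Cent_N(D)\le L$; in particular every $\kappa\in\Irr_0(C)$ lies over some $\xi\in\mathbb B$.

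Next I would set up the counts. Fix an $N$-transversal $\mathbb B_0$ of $\mathbb B$ and, for $\xi\in\mathbb B_0$, choose the representative so that $D\le N_\xi$. Using the structure of the inertial block $C$ one shows that $S:=DL/L$ is a Sylow $\ell$-subgroup of $N_\xi/L$ and that a single block of $N_\xi$ lying over $\xi$ induces to $C$; combined with Kn\"orr's theorem (so that $D\cap L$ is a defect group of $C_0$ and $d(C)-d(C_0)=\nu_\ell(|S|)$) this forces $\xi\in\Irr_0(C_0)$. Since $\xi$ extends to $N_\xi$, the Clifford correspondence and Gallagher's theorem identify $\Irr(N\mid\xi)$ with $\Irr(N_\xi/L)$ via $\eta\mapsto\Ind_{N_\xi}^N(\widehat\xi\,\eta)$, and the degree identity $\kappa(1)=[N:N_\xi]\,\xi(1)\,\eta(1)$ together with the defect bookkeeping shows that $\Ind_{N_\xi}^N(\widehat\xi\,\eta)\in\Irr_0(C)$ if and only if $\eta\in\Irr_{\ell'}(N_\xi/L)$; summing gives $|\Irr_0(C)|=\sum_{\xi\in\mathbb B_0}|\Irr_{\ell'}(N_\xi/L)|$. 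The identical analysis applied to $c$ yields $|\Irr_0(c)|=\sum_{\xi'\in\mathbb B_0'}|\Irr_{\ell'}(\NNN_N(D)_{\xi'}/\NNN_L(D))|$ for a suitable transversal $\mathbb B_0'$ of $\mathbb B'$. To match the two sums I would invoke the theory of Puig and Zhou on inertial blocks: the basic Morita equivalence between $C$ and $c$ is compatible with the Clifford theory over the normal subgroup $L$, hence produces a bijection $\mathbb B_0\to\mathbb B_0'$, $\xi\mapsto\xi'$, under which $\NNN_N(D)_{\xi'}/\NNN_L(D)$ is isomorphic to the normalizer $\NNN_{N_\xi/L}(S)$ of $S$ in $N_\xi/L$ and under which the relevant extension maps correspond. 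The McKay Conjecture for $N_\xi/L$ then gives $|\Irr_{\ell'}(N_\xi/L)|=|\Irr_{\ell'}(\NNN_{N_\xi/L}(S))|=|\Irr_{\ell'}(\NNN_N(D)_{\xi'}/\NNN_L(D))|$, and summation yields $|\Irr_0(C)|=|\Irr_0(c)|$, which is the Alperin--McKay Conjecture for $C$.

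The main obstacle is turning the hypothesis that $\wt C_0$ is inertial into the concrete Clifford-theoretic facts used above: that over each $\xi\in\mathbb B$ exactly one block of $N_\xi$ induces to $C$, that its defect group meets $N_\xi/L$ in a full Sylow $\ell$-subgroup, and that the Brauer correspondence matches $N_\xi/L$ with the $\ell$-local subgroup $\NNN_N(D)_{\xi'}/\NNN_L(D)$ compatibly with the extension maps. This is exactly where the structure theory of Puig and Zhou for inertial blocks, and the hypothesis $\Cent_N(D)\le L$, must be exploited; once it is available, the degree and height comparisons of the second step are routine and the Fong--Reynolds reduction of the first step is standard.
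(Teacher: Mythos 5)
Your approach diverges from the paper's at precisely the step you flag as ``the main obstacle,'' and I do not think the obstacle is resolvable in the form you propose. You want to parametrize \emph{both} $\Irr_0(C)$ and $\Irr_0(c)$ by Clifford theory over a normal subgroup ($L\lhd N$ on one side, $\NNN_L(D)\lhd \NNN_N(D)$ on the other) and then match the two sums via a claimed ``compatibility of the basic Morita equivalence between $C$ and $c$ with Clifford theory over $L$.'' This compatibility is not a general fact: a basic Morita equivalence is an equivalence of block algebras and does not a priori respect a fixed normal subgroup, nor does it automatically yield a bijection $\xi\mapsto\xi'$ between $\mathbb B_0$ and a local transversal together with isomorphisms $\NNN_N(D)_{\xi'}/\NNN_L(D)\cong \NNN_{N_\xi/L}(S)$ compatible with extension maps. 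Neither Puig's nor Zhou's results supply this. Moreover, the proposition's hypotheses only give you extension maps for $\xi\in\mathbb B$ with respect to $L\lhd N$; nothing is assumed about extensions of characters of $\NNN_L(D)$ to their stabilizers in $\NNN_N(D)$, so the local Clifford parametrization you invoke is not available from the hypotheses.

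The paper avoids this entirely by a different ordering of the steps. After the Fong--Reynolds reduction, it applies the McKay conjecture for the quotients $N_\xi/L$ to pass from $N_{C_0}$ to the intermediate group $M:=\NNN_{N_{C_0}}(LD)$: since $\wt C_0$ is the only block of $N_{C_0}$ over $C_0$, one has $\ell\nmid|N_{C_0}:LD|$, so $DL/L$ is a Sylow $\ell$-subgroup of $N_{C_0}/L$ and $M_\xi/L=\NNN_{N_\xi/L}(DL/L)$; McKay then gives $\sum_\xi|\Irr_{\ell'}(N_\xi/L)|=\sum_\xi|\Irr_{\ell'}(M_\xi/L)|$, i.e.\ $|\Irr_0(\wt C_0)|=|\Irr_0(\wt C_0')|$, where Clifford theory is still over $L\lhd M$ and hence uses only the assumed extension maps. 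Only after this does the inertial-block machinery enter: $\wt C_0'$ is inertial (Zhou applied to an $\ell'$-extension of the inertial block of $LD$), hence basic Morita equivalent to the Fong--Reynolds block $\wt c_0$ of $\NNN_M(D,c_0)$, which gives the \emph{count} equality $|\Irr_0(\wt C_0')|=|\Irr_0(\wt c_0)|$, and a final Fong--Reynolds step yields $|\Irr_0(\wt c_0)|=|\Irr_0(C')|$. No Clifford theory over $\NNN_L(D)$ and no compatibility statement for the Morita equivalence is ever needed. This intermediate passage through $M$ is the idea missing from your argument; without it, your matching of the two Clifford sums is unjustified.
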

\begin{proof} 
	Let $C'$ be the Brauer correspondent in $\NNN_N(D)$ of $C$.
	
	According to \cite[9.14]{NavBl} induction defines a bijection between $\Irr_0(\wt C_0)$ and $\Irr_0(C)$. 
	Let $\Lambda$ be an extension map with respect to $L\lhd N$ for 
	$\left (\bigcup_{\kappa\in\Irr_0(B)} \Irr(\Res^N_L(\kappa)) \right ) \cap \Irr(C_0)$.
	The characters in $\Irr_0(\wt C_0)$ are of the form $\Ind_{N_{\xi}}^{N_{C_0}}(\Lambda(\xi)\eta)$ where $\xi\in\mathbb B$ with $D\leq N_\xi$ and $\eta\in\Irr(N_\xi/L)$ with $\ell\nmid \eta(1)$.
	Since $\Cent_N(D)\leq L$, all characters of this form belong to $\wt C_0$.
	Since $\wt C_0$ is the only block of $N_{C_0}$ that covers $C_0$ we have $\ell \nmid |N_{C_0}:LD|$ according to \cite[9.17]{NavBl}.

	Then $\{ \kappa \in \Irr(N_\xi\mid \xi)\mid \kappa(1)_\ell=\xi(1)_{\ell}\}$ corresponds to 
	 $\{ \eta \in \Irr(N_\xi/L )\mid \eta (1)_\ell=1\}=\Irr_{\ell'}(N_\xi/L)$. 
	 Let $M:=\NNN_{N_{C_0}}(LD)$.
	Now the McKay conjecture for $N_\xi/L$ 
	implies $|\Irr_{\ell'}(N_\xi/L)|= |\Irr_{\ell'} (M_\xi)|$. 
	The later set corresponds to $\{\kappa \in \Irr(M_\xi \mid \xi)\mid \kappa(1)_\ell=\xi(1)_{\ell}\}$. 
	Let $\wt C_0'$ be the block of $M$ with $(\wt C_0')^{N_{C_0}}=\wt C_0$. The above implies $|\Irr_0(\wt C_0')|=|\Irr_0(C)|$. 
	
	Recall that by assumption the block $C_1\in \Bl(LD\mid C_0)$ is inertial in the sense of \cite[1.5]{Puig_nilpotent_extensions}.
	Because of $\ell \nmid |N_{C_0}:LD|$ we see that $LD\lhd M$ with $\ell'$-index. 
	Then $\wt C_0'$ covers $C_1$ and is inertial according to \cite[Corrollary]{Zhou_pp_ext}. Accordingly $\wt C_0'$ is basic Morita equivalent in the sense of \cite[\S 7]{Puig_book} to the corresponding block $\wt c_0$ of its stabilizer subgroup $\NNN_{M}(D,{c_0})$, where $c_0\in \Bl(\Cent_M(D))$ that is covered by the Brauer correspondent $\wh c_0'\in \Bl(\NNN_M(D))$ of $\wt C_0'$. Note that the stabilizer subgroup from \cite[1.5]{Puig_nilpotent_extensions} is defined in terms of local pointed groups, while here we use the equivalent description of this group in terms of a maximal Brauer pairs, see \cite[40.13(d)]{Thevenaz_book}. A basic Morita equivalence implies $|\Irr_0(\wt C_0')|=|\Irr_0(\wt c_0)|$. By definition, $\wt c_0$ is the Fong-Reynolds correspondent of $C'$ with respect to $c_0$, i.e., $\wt c_0^{\NNN_{N}(D)}=C'$ and $|\Irr_0(\wt c_0)|=|\Irr_0(C')|$. This implies the statement. 
\end{proof}

For proving the blockwise Alperin weight conjecture we can prove the following analogue of the above. 
\begin{prop}
	Assume that $D$ is abelian in the situation of Proposition \ref{prop75}. 
	\begin{thmlist}
		\item \label{prop29a}
	Then $C$ satisfies the blockwise Alperin weight conjecture. 
	\item 	\label{prop29b}
	If $\mathbb B' \subset \left (\bigcup_{\kappa\in\Irr_0(B)} \Irr(\Res^N_L(\kappa)) \right ) \cap \Irr(C_0)$ forms an $N_{C_0}$-stable basic set of $C_0$ in the sense of \cite[14.3]{CabEnRedGp} whose corresponding $\ell$-modular decomposition matrix is unitriangular, then $|\Irr(N\mid \mathbb B')|=|\IBr(C)|$.
	\end{thmlist}
\end{prop}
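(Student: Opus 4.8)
The plan is to run the argument of Proposition \ref{prop75} in parallel, replacing the set $\Irr_0$ of height-zero characters by the Brauer characters $\IBr$, and using that $D$ abelian forces all characters in the relevant blocks to have height zero so that the counting from the McKay conjecture can be replaced by counting $\IBr$. First I would recall that since $D$ is abelian, by Kessar--Malle the block $C$ (and its Brauer correspondents) has all irreducible characters of height zero, hence the bijection $\Irr_0(\wt C_0)\to\Irr_0(C)$ coming from \cite[9.14]{NavBl} is in fact a bijection on all of $\Irr$. The blockwise Alperin weight conjecture for abelian defect reduces, as recalled in the introduction, to $|\IBr(C)|=|\IBr(C')|$ for $C'$ the Brauer correspondent in $\NNN_N(D)$. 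So for \ref{prop29a} it suffices to produce that equality, and for \ref{prop29b} one additionally tracks the basic set.

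The key steps, in order: (1) Use the hypothesis that each $\xi\in\mathbb B$ extends to $N_\xi$ together with an extension map $\Lambda$ with respect to $L\lhd N$ to parametrize $\Irr(\wt C_0)$ — and hence, via the Fong--Reynolds bijection, $\Irr(C)$ — by pairs $(\xi,\eta)$ with $\xi\in\mathbb B$, $D\leq N_\xi$, and $\eta\in\Irr(N_\xi/L)$, exactly as in Proposition \ref{prop75} but now without the degree restriction. (2) Replace the use of the McKay conjecture for $N_\xi/L$ by the observation that, for \ref{prop29b}, the unitriangular decomposition matrix and $N_{C_0}$-stability of the basic set $\mathbb B'$ give a bijection between $\Irr(N\mid\mathbb B')$ and a set indexed by $\mathbb B'$-pairs; since $|\mathbb B'|$ equals the number of irreducible Brauer characters of $C_0$ lying in the appropriate Clifford-theoretic block, Clifford theory of Brauer characters (the $\ell$-modular analogue of the Clifford correspondence, using again that $\xi$ extends and $\ell\nmid|N_{C_0}:LD|$ from \cite[9.17]{NavBl}) yields $|\IBr(C)|=|\Irr(N\mid\mathbb B')|$. (3) For \ref{prop29a}: invoke that $\wt C_0$, being inertial with $D$ abelian, is basic Morita equivalent to the corresponding block $\wt c_0$ of a local subgroup by \cite[Corollary]{Zhou_pp_ext} and \cite[\S 7]{Puig_book}, so $|\IBr(\wt C_0')|=|\IBr(\wt c_0)|$; then identify $\wt c_0$ as the Fong--Reynolds correspondent of $C'$, giving $|\IBr(\wt c_0)|=|\IBr(C')|$, and chain the equalities — using $\ell\nmid|N_{C_0}:LD|$ and $\ell\nmid|M:LD|$ to pass Brauer character counts through the $\ell'$-extensions — to conclude $|\IBr(C)|=|\IBr(C')|$, which is the abelian-defect form of BAW for $C$.

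The main obstacle I expect is the bookkeeping in step (2): ensuring that the unitriangularity hypothesis on the decomposition matrix of $C_0$ genuinely transports to a bijection at the level of $N$ rather than just $L$, i.e. that the basic set $\mathbb B'$ and its triangular decomposition matrix are compatible with the Clifford-theoretic block decomposition of $\Irr(N\mid\mathbb B')$ and with the extension map $\Lambda$. Concretely one must check that inducing/extending the triangular system along $L\lhd N_\xi\lhd N$ preserves triangularity, which is where the $N_{C_0}$-stability of $\mathbb B'$ and the existence of the extensions $\Lambda(\xi)$ are used; the analogous $\Irr_0$-statement in Proposition \ref{prop75} hides this behind the McKay conjecture, so here it has to be done by hand. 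Everything else is a routine transcription of the proof of Proposition \ref{prop75} with $\IBr$ in place of $\Irr_0$, the abelian-defect hypothesis removing the need to track heights.
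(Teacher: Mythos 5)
Your plan matches the paper's proof: for \ref{prop29a} you chain $|\IBr(C)|=|\IBr(\wt C_0')|=|\IBr(\wt c_0')|=|\IBr(C')|$ via Fong--Reynolds and Zhou/Puig's basic Morita equivalence of inertial blocks, after noting that $D$ abelian forces $LD=L$ (so $C_0$ itself is inertial) and $M=N_{C_0}$; for \ref{prop29b} you exploit the unitriangular decomposition matrix to produce an $N_{C_0}$-equivariant bijection $\Upsilon:\mathbb B'\to\IBr(C_0)$ and the induced extension map $\Lambda'$ on Brauer characters, exactly as the paper does. The appeal to Kessar--Malle is unnecessary (the paper bypasses $\Irr_0$ and works directly with $\IBr$) but harmless, and the ``obstacle'' you flag is precisely the step where the paper verifies that an extension of $\Upsilon(\xi)$ is a constituent of $\Lambda(\xi)^\circ$.
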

\begin{proof}
	Since $D$ is abelian, $D\leq \Cent _N(D)\leq L$, hence $LD=L$ and $C_0$ is inertial. 
	Recall $\ell\nmid |N_{C_0}:L|$ by the 
	proof of Proposition \ref{prop75}, hence $M:=\NNN_{N_{C_0}}(LD)=N_{C_0}$. By the above proof the Fong-Reynolds correspondent $\wt C_0'\in \Bl(M)$ of $C$ covering $C_0$ is inertial, as well. We see that $|\IBr(C)|=|\IBr(\wt C_0')|$. 
	Let $\wt c_0'\in \Bl(\NNN_M(D))$ be the Brauer correspondent of $\wt C_0$. Now $\wt C_0'$ and $\wt c_0'$ are basic Morita equivalent and hence satisfy $|\IBr(\wt C_0')|=|\IBr(\wt c_0))|$.
	
	Since $\mathbb B'$ is an $N_{C_0}$-stable set and the $\ell$-modular decomposition matrix of $\Irr(C_0)$ is unitriangular with respect to $\mathbb B'_0$ there exists a $N_{C_0}$-equivariant bijection $\Upsilon:\mathbb B'\longrightarrow \IBr(C_0)$ such that $\Upsilon(\xi)$ has multiplicity one in $\xi^\circ$, where $\xi^\circ $ denotes the restriction of $\xi$ to $\ell$-regular elements. Let $\xi\in\mathbb B'$. Then
$N_{\xi}=N_{\Upsilon(\xi)}$ because of the equivariance of $\Upsilon$. Some extension of $\Upsilon(\xi)$ is a constituent of $\Lambda(\wt \xi)^\circ$, since $\Lambda(\wt \xi)^\circ$ has a constituent $\phi$ in $\IBr(N_\xi\mid \Upsilon(\xi))$ and
$\Res^{N_\xi}_L(\phi)$ is a summand of $ \Res^{N_\xi}_L(\Lambda(\wt \xi)^\circ)=\xi^\circ$. Let $\Lambda'$ be the $N_{C_0}$-equivariant extension map with respect to $L\lhd N_{C_0}$ for $\mathbb B'$ where for $\xi\in \mathbb B'$, $\Lambda'(\Upsilon(\xi))$ is defined as the unique extension of $\Upsilon(\xi)$ that is a constituent of $\Lambda(\xi)^\circ$. 
Recall $\ell\nmid |N_{C_0}:L|$. Then via $\Lambda'$ there is a correspondence between $\Irr(N\mid \mathbb B')$ and $\IBr(C)$.
\end{proof}

\section{$d$-Split Levi subgroups in type $\mathrm{A}_{n-1}$}
\label{dSplitLevi}

The conditions presented in Theorem~\ref{NewIndAmCond} are aimed specifically at groups of Lie type. We will apply it for simple groups $S$ such that $G=\SL_n(q)$ or $\SU_n(q)$ for $q$ a power of a prime $\not=\ell$. 

For that purpose it is important to study the Clifford theory arising from $d$-split Levi subgroups and their normalizers. The case of minimal $d$-split Levi subgroups is the subject of \cite[\S 5]{CabSpCharTypeA}. Here we treat the general case.
\subsection{Notation for type $\mathrm{A}_{n-1}$}
This section is used to fix some of the standard notation that will be used throughout.

Fix a prime $p$ and $q=p^m$ for some positive integer $m$.
Let $n\geq 2$. We denote
\[
{\bG}:=\SL_n(\ol {\F}_p)\leq \wt {\bG}:=\GL_n(\ol {\F}_p).
\]
Let $\wtbT$ the maximal torus of diagonal matrices in $\wt{\bG}$ and ${\bf T}:=\wtbT\cap {\bG}$.
The corresponding root system $\Gamma$ identifies with the subset $\{ e_i-e_j\mid 1\leq i,j\leq n, i\ne j\}$ of $\R^n$, where $(e_i)_{1\leq i\leq n}$ denotes the standard orthonormal basis \cite[1.8.8]{GLS3}.
For every root $\alpha\in\Gamma$, let ${\bf x}_{\alpha}(t)$ the matrix such that ${\bf x}_{e_i-e_j}(t)-{\rm Id}_n$ is the elementary matrix with entry $t$ at position $(i,j)$.
Set ${\bf n}_{\alpha}:={\bf x}_\alpha(1){\bf x}_{-\alpha}(-1){\bf x}_{\alpha}(1)$. 
For $1\leq i\leq n-1$, denote ${\bf n}_{e_{i+1}-e_i}\in \NNN_{\bG}({\bf T})$ by ${\bf n}_i$.

Set $\gamma_0: \wt{\bG}\rightarrow \wt{\bG}$ to be the automorphism defined by $g\mapsto (g^{tr})^{-1}$, where $g^{tr}$ denotes the transpose matrix of $g$. 
Let $v_0\in\wt{\bG}$ be the matrix with the entry $(-1)^{k+1}$ at position $(k,n+1-k)$ and $0$ elsewhere \cite[2.7]{GLS3}.
Then 
\[
\gamma:\wt{\bG}\rightarrow \wt{\bG}, \text{ } g\mapsto v_0\gamma_0(g)v_0^{-1},
\]
is a graph automorphism of $\wt{\bG}$ with $\gamma({\bf x}_{\alpha}(t))={\bf x}_{\gamma(\alpha)}((-1)^{|\alpha|+1}t)$ where $|e_i-e_j|:=|i-j|$ and $\gamma(e_i-e_j)=e_{n+1-j}-e_{n+1-i}$.
Also denote by \[
F_q:\wt{\bG}\rightarrow \wt{\bG}
\] the raising of matrix entries to the $q$-th power for $q=p^m$ ($m\geq 1$), so that $F_q=F_p^m$. For $\epsilon\in\{\pm 1\}$ set $F=\gamma^{\frac{1-\epsilon}{2}}\circ F_q$, 
$\wt{G}:=\wt{\bG}^F={\rm GL}_n({\epsilon}q)$ and $G:={\bG}^F=\SL_n(\epsilon q)$.
As $F_q$ commutes with $v_0$, it follows that $\bG^F$ is $\wt{\bG}$-conjugate to $\bG^{\gamma_0^{\frac{1-\epsilon}{2}}F_q}$, in particular they are isomorphic.

For each root $\alpha$, it is clear that ${\bf n}_{\alpha}$ is fixed by $F_p$.
Moreover, by considering the action of $\gamma_0$ on ${\bf n}_{\alpha}$, it can be seen that $\gamma_0$ also fixes each such element.

Let $V:=\langle {\bf n}_{\alpha}\mid \alpha\in \Gamma\rangle$, $H:={\bf T}\cap V$.\ 
 Let $e:=2|V|$ and define
\begin{equation}\label{eq:EAuto}
E:=C_{em}\times C_2
\end{equation}
which acts on ${\wt\bG}^{F_p^{em}}$ such that the generating element of the first factor acts by $F_p$ and the second factor acts by $\gamma$.
Denote by $\widehat{F}$ the element $\gamma\widehat{F}_p^m$ of $E$ inducing the automorphism $F$ on ${\bG}^{F_p^{em}}$.
Observe that $F^e=F_p^{em}$ and thus $G\leq {\bG}^{F_p^{em}}$, an $E$-stable subgroup.
For any $F$-stable subgroup ${\bf H}\leq {\bG}$ the normalizer in ${\bG}^F\rtimes E$ is well-defined and will be denoted by $({\bG}^F\rtimes E)_{\bf H}$.

\subsection{Construction of $d$-spit Levi subgroups}\label{ConstdSplitLevi}

The aim is to study how $\Aut ({\bG}^F)_\bL$ acts on $\Irr (\NNN_{\bG}(\bL)^F)$ for $d$-split Levi subgroups $\bL$ of ${\bG}$.
The groups are studied by considering the corresponding subgroups of $\bG^{vF}$ for some well chosen element $v\in {\bG}^{F_p}$.

Let ${\calV}$ be the $\R$-span of $\Gamma$ in $\R^n$ and fix $1\leq d\leq n$ and $d_0\geq 1$ the integer such that $\Phi_{d_0}(-x)=\pm\Phi_d(x)$, where $\Phi_d(x)$ denotes the $d^{\rm th}$-cyclotomic polynomial.
Set $a:=\left \lfloor{\frac{n}{d_0}}\right \rfloor$ and $$v:={\bf n}_1\dots {\bf n}_{d_0-1}{\bf n}_{d_0+1}\dots {\bf n}_{ad_0-1}$$ that is the product ${\bf n}_1\dots {\bf n}_{ad_0}$ where the $ {\bf n}_{id_0}$'s for $1\leq i\leq a$ are removed.
Then $w:=\rho(v)\in \NNN_{\bG}({\bT})/{\bT}\cong \Symm_n$ is a product of $a$ disjoint $d_0$-cycles and $v$ acts on $\Gamma$ by $v(e_i-e_j)=e_{w(i)}-e_{w(j)}$.
Let $\zeta$ be a primitive $d_0^{\rm th}$-root of unity in $\C$ and set ${\calV}(w,\zeta)$ to be the eigenspace of $w$ on ${\calV}$ with eigenvalue $\zeta$.
Let $\Gamma'$ be a $w$-stable parabolic root subsystem of $\Gamma$ such that 
\begin{equation}\label{eq:wStabRoot}
\left( {\calV}(w,\zeta)\cap \Gamma'^{\perp} \right)^{\perp}\cap \Gamma=\Gamma'. 
\end{equation}
Then the corresponding $d$-split Levi subgroup of $\wt{\bG}$ is given by
\[
\wtbL_{\Gamma'}:=\GenGp{\wtbT, {\bf X}_{\alpha}\mid \alpha\in\Gamma'}=\Cent_{\wt \bG}(\wt \bS) ,
\]
where $\wt{\bS}$ is a $vF$-stable $\Phi_d$-torus of $\wt{\bG}$.
In addition, taking ${\bS}$ and ${\bL}_{\Gamma'}$ to be the kernel of the determinant map restricted to $\wt{\bS}$ respectively $\wtbL_{\Gamma'}$, yields the corresponding $d$-split Levi subgroup of ${\bG}$.

\subsection{Identification by partitions}
A parabolic root subsystem $\Gamma'\subseteq \Gamma$ yields an equivalence relation on $\ul n:=\{1,\dots, n\}$ by saying $i\sim j$ if and only if $e_i-e_j\in \Gamma'\cup\{0\}$ and the equivalence classes provide a partition of $\ul n$.
Conversely, a partition $\lambda=(\lambda_1,\dots,\lambda_t)\vdash \ul n$ yields a parabolic root subsystem
\[
\Gamma_{\lambda}:=\{e_i-e_j\mid \{i,j\}\subseteq \lambda_k \text{ for some } k \}.
\]

\begin{lm}\label{wStableBij}
There is a bijection $\gamma$ between the set of $\sigma$-stable parabolic root subsystems of $\Gamma$ and the set of $w$-stable partitions of $\ul n$.
\end{lm}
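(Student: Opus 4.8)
The plan is to promote the elementary correspondence between parabolic root subsystems of $\Gamma$ and partitions of $\ul n$, set up at the beginning of this subsection, to a bijection between the respective sets of $w$-fixed points, by checking that this correspondence is equivariant for the natural permutation actions of $w$ on both sides.

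First I would verify that the assignments $\Gamma'\mapsto\lambda(\Gamma')$ and $\lambda\mapsto\Gamma_\lambda$ are mutually inverse bijections between parabolic root subsystems of $\Gamma$ and partitions of $\ul n$, where $\lambda(\Gamma')$ denotes the partition induced by the relation $i\sim_{\Gamma'}j\iff e_i-e_j\in\Gamma'\cup\{0\}$. Reflexivity and symmetry of $\sim_{\Gamma'}$ are clear, and transitivity holds because a parabolic subsystem is closed: for distinct $i,j,k$ with $e_i-e_j,\ e_j-e_k\in\Gamma'$ one has $e_i-e_k=(e_i-e_j)+(e_j-e_k)\in\Gamma\cap\Gamma'$. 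So $\lambda(\Gamma')$ is well defined. Directly from the definitions, $e_i-e_j\in\Gamma_{\lambda(\Gamma')}$ iff $i\sim_{\Gamma'}j$ iff $e_i-e_j\in\Gamma'$, whence $\Gamma_{\lambda(\Gamma')}=\Gamma'$; and $\lambda(\Gamma_\lambda)=\lambda$ because the classes of $\sim_{\Gamma_\lambda}$ are exactly the blocks of $\lambda$. (Here one uses the standard fact that in type $\mathrm A$ the parabolic subsystems of $\Gamma$ are precisely the $\Gamma_\lambda$, equivalently that the parabolic subgroups of $\Symm_n$ are the Young subgroups $\Symm_{\lambda_1}\times\dots\times\Symm_{\lambda_t}$.)

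Next I would check equivariance. The permutation $w$ acts on $\Gamma$ by $w(e_i-e_j)=e_{w(i)}-e_{w(j)}$ and on partitions of $\ul n$ by sending $\lambda$ to the partition $w\cdot\lambda$ whose blocks are $w(\lambda_1),\dots,w(\lambda_t)$. From the defining formula for $\Gamma_\lambda$ one gets $w(\Gamma_\lambda)=\Gamma_{w\cdot\lambda}$, and dually $i\sim_{w(\Gamma')}j\iff w^{-1}(i)\sim_{\Gamma'}w^{-1}(j)$, that is $\lambda(w(\Gamma'))=w\cdot\lambda(\Gamma')$. Hence the bijection of the previous paragraph intertwines the two $w$-actions, and therefore restricts to a bijection between the $w$-stable parabolic root subsystems of $\Gamma$ and the $w$-stable partitions of $\ul n$; this restricted map is the bijection $\gamma$ of the statement.

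I do not expect a serious obstacle here: the content is purely combinatorial, and once the correspondence with partitions is in place the only thing to observe is its $w$-equivariance, which is immediate. The single point deserving a line of justification is the identity $\Gamma_{\lambda(\Gamma')}=\Gamma'$, i.e.\ that a parabolic subsystem is saturated with respect to its induced partition; this is exactly where the hypothesis ``parabolic'' (rather than merely a symmetric set of roots) enters, and it follows from the description of parabolic subsystems in type $\mathrm A$ as intersections $\Gamma\cap U$ with $U$ a linear subspace. The normalisation \eqref{eq:wStabRoot} plays no role in this lemma, as the statement concerns all $w$-stable parabolic subsystems.
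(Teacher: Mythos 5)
Your proof is correct. The paper actually states this lemma without any proof at all (it is treated as an evident combinatorial fact), and your argument — establish the standard correspondence between parabolic subsystems of a type~$\mathrm A$ root system and partitions of $\ul n$, observe it is $w$-equivariant, and restrict to fixed points — is precisely the natural argument being implicitly invoked. Your remark that the closure condition \eqref{eq:wStabRoot} is irrelevant here is also accurate, since that constraint only enters in the subsequent Proposition~\ref{PartwStableRootSubsys}. One minor wrinkle you silently smooth over: the lemma as printed says ``$\sigma$-stable'' where $\sigma$ is not defined at that point in the text; you correctly read this as ``$w$-stable,'' which is what the surrounding discussion and the statement of Proposition~\ref{PartwStableRootSubsys} require.
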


\begin{prop}\label{PartwStableRootSubsys}
 The following are equivalent for a $w$-stable partition $\lambda$:
\begin{asslist}
\item $\Gamma_{\lambda}$ satisfies Equation {\rm (\ref{eq:wStabRoot})}.
\item $w$ acts on $\lambda$ with at most one fixed point and all other orbits have length $d_0$. 
\end{asslist}
\end{prop}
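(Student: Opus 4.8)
The plan is to rewrite Equation~(\ref{eq:wStabRoot}) as the statement that a certain eigenspace separates the parts of $\lambda$, and then to read that off from the cycle type of $w$ on those parts. Eigenspaces below are taken over $\C$, and I write $\Lambda=\{\lambda_1,\dots,\lambda_t\}$ for the set of parts of $\lambda$. First I would note that, $\lambda$ being $w$-stable, so are $\Gamma_\lambda$ and hence $\Gamma_\lambda^\perp$, so that $U:=\calV(w,\zeta)\cap\Gamma_\lambda^\perp$ is precisely the $\zeta$-eigenspace of $w$ on $\Gamma_\lambda^\perp$. The inclusion $(\calV(w,\zeta)\cap\Gamma_\lambda^\perp)^\perp\cap\Gamma\supseteq\Gamma_\lambda$ is automatic, since $U\subseteq\Gamma_\lambda^\perp$ forces $U^\perp\supseteq(\Gamma_\lambda^\perp)^\perp=\langle\Gamma_\lambda\rangle$ and $\langle\Gamma_\lambda\rangle\cap\Gamma=\Gamma_\lambda$ because $\Gamma_\lambda$ is parabolic. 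Hence (i) is equivalent to the reverse inclusion, that is: no root $e_i-e_j$ with $i,j$ in distinct parts of $\lambda$ is orthogonal to all of $U$. Now every $g\in U\subseteq\Gamma_\lambda^\perp$ is constant on each part, so it induces a function $\bar g$ on $\Lambda$, and $g$ being orthogonal to $e_i-e_j$ says exactly that $\bar g$ agrees on the parts containing $i$ and $j$. Thus (i) holds if and only if the family $\{\bar g\mid g\in U\}$ separates the parts of $\lambda$.

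The next step is to compute $U$. Via $g\mapsto\bar g$, the space of vectors in $\calV\otimes\C$ that are constant on the parts of $\lambda$ is identified with the subspace $\{\phi\in\C^\Lambda\mid\sum_{P\in\Lambda}|P|\,\phi(P)=0\}$ of the permutation module $\C^\Lambda$ for $\langle w\rangle$ acting on $\Lambda$, and this space is $\Gamma_\lambda^\perp$. Decomposing $\C^\Lambda$ over the $w$-orbits on $\Lambda$, a $w$-orbit $\mathcal{O}$ of length $e$ contributes the eigenvalue $\zeta$ if and only if $\zeta^e=1$; as $e$ divides $d_0$ and $\zeta$ is a primitive $d_0$-th root of unity, this forces $e=d_0$, in which case $\zeta$ occurs with multiplicity one, with an eigenvector $\psi_{\mathcal{O}}$ supported exactly on the $d_0$ parts in $\mathcal{O}$ and taking there the pairwise distinct nonzero values $1,\zeta^{-1},\dots,\zeta^{-(d_0-1)}$. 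Because $\zeta\neq1$, any such eigenvector automatically satisfies $\sum_{P}|P|\,\psi_{\mathcal{O}}(P)=0$, hence already lies in $\Gamma_\lambda^\perp$; therefore $U=\bigoplus_{\mathcal{O}:\,|\mathcal{O}|=d_0}\C\,\psi_{\mathcal{O}}$.

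Finally I would combine the two steps. Take distinct parts $P,P'$. If at least one of them, say $P$, lies in an orbit $\mathcal{O}$ of length $d_0$, then $\psi_{\mathcal{O}}$ separates $P$ and $P'$: either $P'\notin\mathcal{O}$, so $\psi_{\mathcal{O}}(P)\neq0=\psi_{\mathcal{O}}(P')$, or $P'\in\mathcal{O}$, so $\psi_{\mathcal{O}}$ takes distinct values on $P$ and $P'$. If instead both $P$ and $P'$ lie in orbits of length $<d_0$, then every $\psi_{\mathcal{O}}$ vanishes on both, so $\bar g(P)=\bar g(P')=0$ for all $g\in U$. Consequently $\{\bar g\mid g\in U\}$ separates all parts of $\lambda$ if and only if at most one part lies in a $w$-orbit of length different from $d_0$; and since a $w$-orbit of length $e$ with $2\le e<d_0$ would by itself account for at least two such parts, this is in turn equivalent to requiring that every $w$-orbit on $\Lambda$ of length $>1$ have length exactly $d_0$ and that there be at most one $w$-fixed part --- which is condition (ii). Together with the first step this proves (i)$\Leftrightarrow$(ii).

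The heart of the argument is the reduction in the first step together with the permutation-module description in the second; the remainder is elementary bookkeeping about cyclic permutations. The two points that need care are: being consistent about whether $\perp$ is formed inside $\calV$ or inside $\C^n$ (this is harmless for the final intersection with $\Gamma$, since every root is orthogonal to $(1,\dots,1)$), and noticing that it is exactly the hypothesis $\zeta\neq1$ --- equivalently $d_0\ge2$, the only case arising here since $d_0=1$ forces $d=2$ --- that makes the $\zeta$-eigenspace of $\C^\Lambda$ land inside $\calV$ for free, so that $U$ is as large as claimed.
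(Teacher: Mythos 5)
Your proof is correct and takes a route genuinely different from the paper's. After reducing (i) to the statement that the $\zeta$-eigenspace $U:=\calV(w,\zeta)\cap\Gamma_\lambda^\perp$ separates the parts of $\lambda$, you compute $U$ outright by transporting to the permutation module $\C^\Lambda$ and decomposing over $w$-orbits: $U$ is the span of the one-dimensional $\zeta$-eigenvectors $\psi_{\mathcal O}$ carried by the length-$d_0$ orbits, and the equivalence then falls out by inspecting where these vectors vanish. The paper instead argues directly from the eigenvector relation $v_{w(i)}=\zeta^{-1}v_i$ without ever computing $U$: in the forward direction it introduces $\lambda_0$ as the set of coordinates annihilated by every $v\in U$, shows this is a single $w$-fixed part (possibly empty), and shows every other part lies in a $d_0$-orbit; in the converse it produces a root $e_i-e_{w^k(j)}\in\Gamma_\lambda$ with $k\neq 0$ and derives a contradiction. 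Your explicit description of $U$ makes the ``(ii)$\Rightarrow$(i)'' implication, which in the paper rests on a rather terse existence claim, entirely transparent, at the cost of the extra module-theoretic setup. Both arguments, as you note, tacitly require $\zeta\neq 1$, i.e.\ $d_0\geq 2$; for $d_0=1$ the eigenspace computation degenerates (the $1$-eigenspace of $\C^\Lambda$ picks up the constants, which need not lie in $\Gamma_\lambda^\perp$), and the statement itself would need revisiting in that case.
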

\begin{proof}

Assume $\Gamma'$ satisfies Equation {\rm (\ref{eq:wStabRoot})} and set $\lambda=\gamma(\Gamma')$.
Set $X={\calV}(w,\zeta)\cap (\Gamma')^\perp$ so that $\Gamma'=X^\perp\cap \Gamma$ and $e_i-e_j\in \Gamma'$ if and only if $v_i=v_j$ for all $v\in X$.
Let $\lambda_0:=\{i\mid e_i\in X^\perp \}$.
If $i\in\lambda_0$, and $e_i-e_j\in \Gamma'$, then $j\in \lambda_0$ and thus $\lambda_0\in\lambda$ and is fixed by $w$.
Take $\mu\in\lambda\setminus\{\lambda_0\}$ and $i\in \lambda_k$.
If $j=w^k(i)\in\mu$, then $v_i=v_j=\zeta^{-k}v_i$ for all $v\in X$ and so $i\in \lambda_0$, which is a contradiction.
This shows that all $w$-orbits on $\lambda\setminus\{\lambda_0\}$ have length $d_0$.

In the converse direction, assume that $w$ acts on $\lambda$ with at most one fixed point and all other orbits have length $d_0$.
Let $e_i-e_j\in\left( {\calV}(w,\zeta)\cap \Gamma_{\lambda}^{\perp} \right)^{\perp}\setminus\{\Gamma_{\lambda}\}$.
Then there must exist a root of the form $e_i-e_{w^k\cdot j}\in \Gamma_{\lambda}$, with $k\ne 0$.
Thus $v_j=v_i=v_{w^k\cdot j}$, which implies $\{i,j\}$ is contained within the fixed set of $\lambda$ giving a contradiction.
Thus $\Gamma_\lambda$ satisfies Equation (\ref{eq:wStabRoot}).
\end{proof}

\subsection{Some notation associated to partitions}\label{NotPart}

Assume $\lambda\vdash \underline{n}$ satisfies condition (ii) of Proposition~\ref{PartwStableRootSubsys}.
If $\lambda$ has a set fixed by $w$, then we denote this set by $\lambda_0$ and $\lambda':=\lambda\setminus\{\lambda_0\}$, otherwise $\lambda'$ coincides with $\lambda$.
Let $\ol{\mu}$ denote the $w$-orbit of $\mu\in\lambda'$ and $\ol{\lambda'}$ the set of all $w$-orbits.
As all $w$-orbits on $\lambda'$ have length $d_0$, for each $f\in \{1,\dots,n\}$ define $\lambda_f:=\{ \mu\in\lambda'\mid |\mu|=f\}$ and
\[
t_f:=\frac{|\lambda_f|}{d_0},
\]
so that $t:=\sum_{f=1}^n t_f$ equals the number of $w$-orbits on $\lambda'$.
In addition, for each $w$-orbit $\ol{\mu}\in\ol{\lambda'}$ define
\[
I_{\ol{\mu}}:=\{ i \mid \text{ there exists } \mu'\in\ol{\mu} \text{ such that } i\in\mu'\}.
\]

Take $\mu,\mu'\in\lambda'$ such that $|\mu|=|\mu'|=f$ but $\ol{\mu}\ne \ol{\mu}'$.
Set $I_{\ol{\mu}}=\{I_{\ol{\mu}}(1),\dots,I_{\ol{\mu}}(d_0f)\}$ with $I_{\ol{\mu}}(i)< I_{\ol{\mu}}(i')$ if and only if $i<i'$.
This labelling yields an order preserving bijection 
\begin{equation}\label{eq:BijMu}
\begin{array}{crcl}
\tau_{\ol{\mu},\ol{\mu}'}: & I_{\ol{\mu}} & \longrightarrow & I_{\ol{\mu}'}\\
 & I_{\ol{\mu}}(i) & \longmapsto & I_{\ol{\mu}'}(i).\\
\end{array}
\end{equation}
Furthermore, up to conjugation of $\lambda$ by an element of the centraliser in the symmetric group $\mathfrak{S}_n$ of $w$, it can be assumed that $\tau_{\ol{\mu},\ol{\mu}'}(w^k(\mu))\in \ol{\mu}'$.

\subsection{The structure of $d$-split Levi subgroups}

Let $\lambda\vdash \ul n$ be a partition satisfying condition (ii) of Proposition~\ref{PartwStableRootSubsys}. 
We will make use of the notation from Section~\ref{NotPart}.
For each $\mu\in \lambda$ define 
\[
\wtbT_{\mu}:=\{ \diag(z_1,\dots,z_n)\in \wtbT\mid z_j=1 \text{ if } j\not\in \mu\} \text{ and }
\wtbL_{\mu}:=\GenGp{ \wtbT_{\mu}, X_{\alpha}\mid \alpha\in \Gamma_{\mu}}.
\]
Then
\[
\wtbL_\lambda:=\GenGp{ \wtbT, {\bf X}_{\alpha}\mid \alpha\in\Gamma_{\lambda}}=\wtbL_{\lambda_0}\times \prod_{\mu\in\lambda'} \wtbL_{\mu}\cong \GL_{|\lambda_0|}(\ol{\F}_p)\times \prod_{f=1}^n \GL_{f}(\ol{\F}_p)^{d_0t_f}.
\]
Furthermore, by construction $F_p(\wtbL_{\mu})=\wtbL_{\mu}$ and $v(\wtbL_{\mu})=\wtbL_{w(\mu)}$.
Therefore
\[
\wt{L}_\lambda:=\wtbL_\lambda^{vF_q}\cong \GL_{|\lambda_0|}(\epsilon q)\times \prod_{f=1}^n \GL_{f}(\epsilon q^{d_0})^{t_f}.
\]
In addition $\bL_{\lambda}$ and $L_{\lambda}$ are the kernel of the determinant maps on $\wtbL_{\lambda}$ and $\wt{L}_{\lambda}$ respectively.

\subsection{The structure of $N_{\lambda}/L_{\lambda}$}

Given ${\bG}$ and an $F$-stable Levi subgroup $\bL$, let $W_{\bG}$ and $W_\bL$ denote the Weyl groups of ${\bG}$ and $\bL$ respectively.
In addition, set ${\bf N}:=\NNN_{\bG}(\bL)$ so that $N={\bf N}^{vF}$. 
The relative Weyl group of $\bL$ in ${\bG}$ (see \cite[Section 4]{BMGenHC}) is defined to be
\[
W_{\bG}(\bL):={\bf N}/\bL\cong \NNN_{W_{\bG}}(W_\bL)/W_\bL.
\]
If $\bL$ is $vF$-stable, it follows that ${\bf N}$ is also $vF$-stable, and therefore $vF$ induces an automorphism $\sigma$ on $W_{\bG}(\bL)$.
Furthermore, by the Lang-Steinberg Theorem
\[
N/L\cong \Cent_{W_{\bG}(\bL)}(\sigma).
\]

For a partition $\lambda\vdash \ul n$ set $W_{\lambda}=\cap_{\mu\in\lambda} W_\mu$ the intersection of the setwise stabilisers of $\mu\in\lambda$ in $W\cong \Symm_n$.
If $\lambda$ is a partition satisfying condition (ii) of Proposition~\ref{PartwStableRootSubsys} and $\bL:=\bL_{\lambda}$, then
\[
N/L\cong \{x\in \Symm_n\mid \lambda^x=\lambda \text{ and } w^xw^{-1}\in W_{\lambda}\}/W_{\lambda}. 
\]
If $x\in \Symm_n$ fixes $\lambda$ and $w^xw^{-1}\in W_{\lambda}$, then $w^xw^{-1}$ fixes $\lambda_0$ and so $(\lambda_0)^x$ is fixed by $w$.
Therefore $(\lambda_0)^x=\lambda_0$. 
Furthermore, as $|\mu^x|=|\mu|$ for all $\mu\in \lambda'$, it follows that 
\[
\Cent_{W_{{\bG}}(\bL_{\lambda})}(w)=\prod\limits_{e=1}^n \Cent_{{\rm Sym}(\lambda_f)}(w),
\]
where we recall that $\lambda_f:=\{\mu\in\lambda\mid |\mu|=f\}$. 

\begin{prop}\label{RelWeylGpPart}
Let $\lambda$ be a $w$-stable partition of $\ul n$ satisfying condition (ii) of Proposition~\ref{PartwStableRootSubsys}.
Then for $L=\bL_\lambda^{vF}$ and $N=\NNN_{\bG}(\bL_\lambda)^{vF}$, 
\[
N/L \cong \prod\limits_{e=1}^n C_{d_0}\wr \Symm_{t_f}.
\]
\end{prop}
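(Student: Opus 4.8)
The plan is to compute the centralizer $\Cent_{W_{\bG}(\bL_\lambda)}(\sigma)$ directly and identify it with the displayed wreath products, using the combinatorial description of $N/L$ already obtained above. First I would recall that, just before the statement, it was shown that
\[
N/L\cong \{x\in\Symm_n\mid \lambda^x=\lambda\text{ and }w^xw^{-1}\in W_\lambda\}/W_\lambda,
\]
that such an $x$ necessarily fixes $\lambda_0$, and that
\[
\Cent_{W_{\bG}(\bL_\lambda)}(w)=\prod_{f=1}^n\Cent_{\Symm(\lambda_f)}(w).
\]
So the first reduction is to observe that $N/L$ is exactly this centralizer: since $vF$ acts on $W_{\bG}(\bL_\lambda)$ as conjugation by $w$ (via $v$) composed with the trivial action of $F_q$ on the ambient symmetric group, the automorphism $\sigma$ is conjugation by $w$, and $\Cent_{W_{\bG}(\bL_\lambda)}(\sigma)$ is the image in $W_{\bG}(\bL_\lambda)=N_{W}(W_\lambda)/W_\lambda$ of $\{x : \lambda^x=\lambda,\ w^xw^{-1}\in W_\lambda\}$. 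Because the factor $\lambda_0$ is a single $w$-fixed part and no part of $\lambda'$ has size $|\lambda_0|$ unless it interacts trivially, the $\lambda_0$-component contributes nothing to the quotient (the stabilizer of $\lambda_0$ acting on it lies inside $W_{\lambda_0}\le W_\lambda$), so one reduces to the product over $f\ge 1$ of the corresponding groups for $\lambda_f$.

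Next, fix $f$ with $t_f>0$ and analyze $\Cent_{\Symm(\lambda_f)}(w)/W_{\lambda_f}$ where here I abusively write $W_{\lambda_f}$ for $\bigcap_{\mu\in\lambda_f}W_\mu$ restricted to the coordinates in $\bigcup\lambda_f$. The set $\lambda_f$ consists of $d_0 t_f$ blocks each of size $f$, permuted by $w$ in $t_f$ orbits of length $d_0$ (each such orbit being a $d_0$-cycle on blocks). The claim is
\[
\Cent_{\Symm(\lambda_f)}(w)\big/ \big(\textstyle\bigcap_{\mu\in\lambda_f}W_\mu\big)\cong C_{d_0}\wr\Symm_{t_f}.
\]
To see this I would argue as follows. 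An element $x\in\Symm_n$ stabilizing $\lambda$ and with $w^xw^{-1}\in W_\lambda$ induces a permutation $\bar x$ of the set $\ol{\lambda_f}$ of $w$-orbits (there are $t_f$ of them), giving the $\Symm_{t_f}$ factor; this is surjective because, using the normalization $\tau_{\ol\mu,\ol\mu'}(w^k(\mu))\in\ol\mu'$ from Section~\ref{NotPart}, the order-preserving bijections $\tau_{\ol\mu,\ol\mu'}$ on the index sets $I_{\ol\mu}$ provide explicit $w$-commuting permutations realizing any transposition of orbits. The kernel of $\bar x$ consists (modulo $\bigcap W_\mu$) of elements preserving each $w$-orbit $\ol\mu$; such an element, working modulo the within-block stabilizer, can at most cyclically shift the $d_0$ blocks of $\ol\mu$ consistently with commuting with $w$, and every such cyclic shift is realized by a power of the restriction of $v$ (equivalently of $w$ itself) to $I_{\ol\mu}$. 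This gives the base group $C_{d_0}^{t_f}$, and the extension is the standard wreath product structure because conjugating the per-orbit cyclic shifts by the orbit-permuting elements permutes them accordingly. Taking the product over all $f$ from $1$ to $n$ yields the stated isomorphism.

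The main obstacle I expect is the careful bookkeeping in the kernel computation: one must show that, after quotienting by $\bigcap_{\mu\in\lambda}W_\mu$, the only freedom left inside a single $w$-orbit of blocks is precisely a cyclic group $C_{d_0}$ and not something larger (e.g.\ one must rule out extra automorphisms coming from symmetries within the blocks of equal size $f$ that are not already absorbed into $W_\lambda$, and one must confirm the shifts are forced to be \emph{uniform} across the $d_0$ blocks of the orbit by the condition $w^xw^{-1}\in W_\lambda$ rather than merely $w^x w^{-1}\in W$). The normalization of $\lambda$ up to $\Cent_{\Symm_n}(w)$-conjugacy established at the end of Section~\ref{NotPart}, together with the order-preserving bijections $\tau_{\ol\mu,\ol\mu'}$, is exactly what makes the surjectivity onto $\Symm_{t_f}$ and the wreath-product splitting transparent, so I would lean on those throughout. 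A secondary, purely notational nuisance is that the displayed answer uses the index $e$ as a dummy variable (matching $\prod_{e=1}^n$) whereas the parts are indexed by $f$; I would simply read $e=f$ and note that all factors with $t_f=0$ are trivial.
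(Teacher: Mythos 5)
Your proposal is correct and follows essentially the same route the paper takes, though the paper in fact gives no explicit proof of Proposition~\ref{RelWeylGpPart}: it states the result as a direct consequence of the discussion immediately preceding it (the identification $N/L\cong\{x\in\Symm_n\mid \lambda^x=\lambda,\ w^xw^{-1}\in W_\lambda\}/W_\lambda$ and the factorization $\Cent_{W_\bG(\bL_\lambda)}(w)=\prod_f\Cent_{\Symm(\lambda_f)}(w)$), and then the subsequent subsection realizes the isomorphism explicitly via the subgroups $V_0$ and $S$. Your computation fills in exactly the step the authors leave implicit: after passing to the induced permutation $\bar x$ of the parts (which is faithful on $\NNN_{\Symm_n}(W_\lambda)/W_\lambda$), the condition $w^xw^{-1}\in W_\lambda$ becomes $\bar x\bar w=\bar w\bar x$ in $\Symm(\lambda)$, so each $f$-factor is the centralizer of a product of $t_f$ disjoint $d_0$-cycles in $\Symm_{d_0t_f}$, which is the wreath product $C_{d_0}\wr\Symm_{t_f}$. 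Your bookkeeping worries (uniformity of shifts inside a $w$-orbit, triviality of the $\lambda_0$-contribution since $\lambda_0$ is the unique $\bar w$-fixed part) all resolve correctly and are consistent with the authors' explicit generators $v_{\ol\mu}$ and $s_{\ol\mu,\ol\mu'}$ in the next subsection; the dummy-index typo $e$ versus $f$ in the display is, as you note, cosmetic.
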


\subsection{The structure of $N_{\lambda}$}

Let $\lambda\vdash \ul n$ satisfying Proposition~\ref{PartwStableRootSubsys} and recall that $\lambda_f=\{ \mu\in\lambda' \mid |\mu|=f\}$ and set $\ol{\lambda}_f$ to be the set of $w$-orbits on $\lambda_f$, so $t_f=|\ol{\lambda}_f|$. 
Then for ${\rm Sym}(\ol{\lambda}')_{\{ \ol{\lambda}_1,\dots,\ol{\lambda}_n\} }$, the intersection of the setwise stabilisers ${\rm Sym}(\ol{\lambda}')_{\ol{\lambda}_f}$, Proposition~\ref{RelWeylGpPart} shows that
\[
N/L\cong (C_{d_0})^t \rtimes {\rm Sym}(\ol{\lambda}')_{\{ \ol{\lambda}_1,\dots,\ol{\lambda}_n\} }.
\]
We construct subgroups $V_0,S\leq V\cap N$ such that $\rho(V_0)=C_{d_0}^t$ and $\rho(S)={\rm Sym}(\ol{\lambda}')_{\{ \ol{\lambda}_1,\dots,\ol{\lambda}_n\} }$.

\subsubsection{The construction of $V_0$}

Let $\ol{\mu}\in\ol{\lambda}'$ and recall the elements ${\bf n}_i$ as defined in Section \ref{ConstdSplitLevi}.
Then define
\[
v_{\ol{\mu}}:=\prod\limits_{i\in I_{\ol{\mu}}\setminus d\Z}{\bf n}_i,
\]
where the product is taken with respect to the natural ordering "$\leq$ " on $\N$.
Then by construction $[v_{\ol{\mu}},v]=1$ and the two elements $v_{\ol{\mu}}$ and $v$ induce the same action on $\wt{ L}_{\ol{\mu}}$ while $[v_{\ol{\mu}},\wt{L}_{\ol{\mu}'}]=1$ whenever $\ol{\mu}'\ne \ol{\mu}$.
Set
\[
V_f:=\GenGp{ v_{\ol{\mu}}\mid |\mu|=f}
\text{ and }
V_0:=\GenGp{ v_{\ol{\mu}}\mid \ol{\mu}\in\ol{\lambda'}}.
\]
Then $\rho(V_f)=C_{d_0}^{t_f}$ and $\rho(V_0)=C_{d_0}^t$.

\subsubsection{The construction of $S$}

Let $\ol{\mu}\ne \ol{\mu}'\in\ol{\lambda}_f$ and $\tau_{\ol{\mu},\ol{\mu}'}$ the bijection between $I_{\ol{\mu}}$ and $I_{\ol{\mu}'}$ from Equation (\ref{eq:BijMu}).
Define an element
\[
s_{\ol{\mu},\ol{\mu}'}:=\prod\limits_{i\in I_{\ol{\mu}}} {\bf n}_{e_i-e_{\tau_{\ol{\mu},\ol{\mu}'}(i)}}(1).
\]
Then $s_{\ol{\mu},\ol{\mu}'}$ acts on $\wt{L}$ by permuting $\wt{L}_{\ol{\mu}}$ and $\wt{L}_{\ol{\mu}'}$, while $[s_{\ol{\mu},\ol{\mu}'},\wt{L}_{\ol{\mu}''}]=1$ for every $\mu''\not\in \ol{\mu}$ or $\ol{\mu}'$.
Let
\[
S_f:=\GenGp{ s_{\ol{\mu},\ol{\mu}'}\mid \ol{\mu},\ol{\mu}'\in\ol{\lambda}_f}
\text{ and } 
S=\GenGp {S_f\mid 1\leq f\leq n}.
\]
Then by construction $\rho(S_f)={\rm Sym}(\ol{\lambda}_f)$, and $\rho(S)={\rm Sym}(\ol{\lambda}')_{\{ \ol{\lambda}_1,\dots,\ol{\lambda}_n\} }$.
Furthermore, $v_{\ol{\mu}}^{s_{\ol{\mu},\ol{\mu}'}}=v_{\ol{\mu}'}$ so $S$ is in the normalizer of $V_0$ and $[s_{\ol{\mu},\ol{\mu}'},v]=1$.

\begin{prop}\label{StructN}
Let $\lambda$ be a $w$-stable partition satisfying condition (ii) of Proposition~\ref{PartwStableRootSubsys}.
Let $L=\bL_\lambda^{vF}$ and $N=\NNN_{\bG^{vF}}(\bL_\lambda)$.
Then there exist subgroups $V_0,S\leq V\cap N$ such that
\[
N=LV_0S
\]
where $\rho(V_0)=C_{d_0}^t$ and $\rho(S)=\rho(S)={\rm Sym}(\ol{\lambda}')_{\{ \ol{\lambda}_1,\dots,\ol{\lambda}_n\} }$.
\end{prop}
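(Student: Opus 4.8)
The elements $v_{\ol{\mu}}$ and $s_{\ol{\mu},\ol{\mu}'}$ together with the subgroups $V_0,S$ they generate have already been produced, and along the way the facts $\rho(V_0)=C_{d_0}^t$, $\rho(S)={\rm Sym}(\ol{\lambda}')_{\{\ol{\lambda}_1,\dots,\ol{\lambda}_n\}}$, $[v_{\ol{\mu}},v]=[s_{\ol{\mu},\ol{\mu}'},v]=1$ and $v_{\ol{\mu}}^{s_{\ol{\mu},\ol{\mu}'}}=v_{\ol{\mu}'}$ have been recorded. So the plan has two parts: first check that $V_0,S\leq V\cap N$, and then deduce $N=LV_0S$. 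Since $L\lhd N$ and the last of the displayed relations gives $S\leq\NNN_N(V_0)$, the product $LV_0S$ is automatically a subgroup of $N$; hence for the equality it will suffice to show that the composite map $V_0S\hookrightarrow N\twoheadrightarrow N/L$ is surjective.

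For the first part, that $v_{\ol{\mu}}$ and $s_{\ol{\mu},\ol{\mu}'}$ lie in $V$ is immediate from their definitions as products of the ${\bf n}_i$. To see that they lie in $\NNN_{\bG}(\bL_\lambda)$, I would read off from the construction that $v_{\ol{\mu}}$ permutes the direct factors $\wtbL_{\mu'}$ with $\mu'\in\ol{\mu}$ exactly as $v$ does while centralising the remaining factors, and that $s_{\ol{\mu},\ol{\mu}'}$ interchanges $\wtbL_{\ol{\mu}}$ with $\wtbL_{\ol{\mu}'}$ while centralising the rest; in particular both stabilise $\wtbL_{\lambda_0}$. Hence they normalise $\wtbL_\lambda=\wtbL_{\lambda_0}\times\prod_{\mu\in\lambda'}\wtbL_\mu$, and since conjugation in $\GL_n(\ol{\F}_p)$ preserves determinants they also normalise $\bL_\lambda$, the kernel of the determinant map on $\wtbL_\lambda$. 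For fixity under $vF$ I would use that each ${\bf n}_\alpha$ lies in $\bG^{F_p}$ and is fixed by $\gamma_0$; after replacing $F$ by the $\wt{\bG}$-conjugate Frobenius $\gamma_0^{\frac{1-\epsilon}{2}}F_q$, which is allowed since $\bG^F$ is $\wt{\bG}$-conjugate to $\bG^{\gamma_0^{\frac{1-\epsilon}{2}}F_q}$, the endomorphism $F$ fixes every ${\bf n}_\alpha$ and hence fixes $v_{\ol{\mu}}$ and $s_{\ol{\mu},\ol{\mu}'}$; as $v$ commutes with both of them, they then lie in $\bG^{vF}$, so $V_0,S\leq V\cap N$.

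For surjectivity onto $N/L$, recall from the discussion preceding Proposition~\ref{RelWeylGpPart} that the Lang--Steinberg theorem identifies $N/L$ with $\Cent_{W_{\bG}(\bL_\lambda)}(\sigma)$ inside $\NNN_{W}(W_\lambda)/W_\lambda$, and that under this identification an element $x\in V_0S$ maps to $\rho(x)W_\lambda$. Thus the image of $V_0S$ in $N/L$ is $\rho(V_0)\rho(S)W_\lambda/W_\lambda$. By Proposition~\ref{RelWeylGpPart} together with the equalities $\rho(V_0)=C_{d_0}^t$ and $\rho(S)={\rm Sym}(\ol{\lambda}')_{\{\ol{\lambda}_1,\dots,\ol{\lambda}_n\}}$ from the construction, these two subgroups generate all of $N/L\cong (C_{d_0})^t\rtimes{\rm Sym}(\ol{\lambda}')_{\{\ol{\lambda}_1,\dots,\ol{\lambda}_n\}}$ (one may also just compare orders: the subgroup they generate has order $d_0^{\,t}\prod_f t_f!=|N/L|$). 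Hence $V_0S$ surjects onto $N/L$ and $N=LV_0S$.

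I expect the genuine difficulty to sit entirely in the preparatory steps that precede the proposition rather than in the assembly above: namely, verifying that the specific products of the ${\bf n}_i$ really do induce the asserted permutations of the Levi factors and have the asserted images under $\rho$, and that they are fixed by $vF$. Within the last of these the delicate point is the graph-automorphism bookkeeping in the unitary case $\epsilon=-1$: one must pass between $\gamma$ and $\gamma_0$ and use the normalisation of $\lambda$ up to $\Cent_{\Symm_n}(w)$-conjugacy fixed in Section~\ref{NotPart}, in order that the element $v$ and all of the generators $v_{\ol{\mu}}$, $s_{\ol{\mu},\ol{\mu}'}$ remain simultaneously $vF$-fixed. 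Once those facts are available, the proof of the proposition is the short combination set out above.
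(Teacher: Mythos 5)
Your proposal is correct and follows the same line as the paper, which in fact states Proposition~\ref{StructN} without a separate proof: the constructions of $V_0$ and $S$ and the records $\rho(V_0)=C_{d_0}^t$, $\rho(S)={\rm Sym}(\ol{\lambda}')_{\{\ol{\lambda}_1,\dots,\ol{\lambda}_n\}}$, $[v_{\ol\mu},v]=[s_{\ol\mu,\ol\mu'},v]=1$, $v_{\ol\mu}^{s_{\ol\mu,\ol\mu'}}=v_{\ol\mu'}$ are treated as already supplying the proposition. You fill in the two points the paper leaves implicit: that $LV_0S$ is a subgroup (since $S$ normalizes $V_0$ and $L\lhd N$) and that the image of $V_0S$ in $N/L$ is the whole relative Weyl group, which you correctly read off from Proposition~\ref{RelWeylGpPart}. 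Your remark about $vF$-fixity and the unitary case is apt: the paper only establishes that the $\mathbf n_\alpha$ are fixed by $F_p$ and $\gamma_0$, and the passage between $\gamma$ and $\gamma_0$ via the $\wt\bG$-conjugacy noted after the definition of $F$ is indeed what makes $V\leq\bG^{F}$ hold once the Frobenius is normalized; combined with $[v_{\ol\mu},v]=[s_{\ol\mu,\ol\mu'},v]=1$ this gives membership in $\bG^{vF}$, exactly as you argue.
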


\subsubsection{Constructing an extension map with respect to $L\cap S\lhd S$}
Let $f\in\{1,\dots, n\}$ and write $\ol{\lambda}_f=\{\ol{\mu}_1,\dots,\ol{\mu}_{t_f}\}$.
Set $V^{(f)}:=\GenGp{ {\bf n}_{e_i-e_j}(1)\mid 1\leq i,j\leq t_f} \leq \SL_{t_f}(p)$. 
There is a natural isomorphism from $S_f$ to $V^{(f)}$ given by $s_{\ol{\mu}_i,\ol{\mu}_j}\mapsto {\bf n}_{e_i-e_j}(1)$.
Moreover, for $T_f$ the diagonal maximal torus of $\SL_{t_f}(p)$, it follows that the image of $L\cap S_f$ is $T_f\cap V^{(f)}$.
\begin{cor}\label{ExtK0ToS}
There exists an extension map with respect to $L\cap S\lhd S$.
\begin{proof}
This follows from observing that $$L\cap S=\GenGp{s_{\ol{\mu},\ol{\mu}'}^2\mid \ol{\mu},\ol{\mu}'\in \ol{\lambda}_f \text{ for some }f}=\prod\limits_{f=1}^n L\cap S_f$$ and that there is an extension map with respect to $T_f\cap V^{(f)}\lhd V^{(f)}$ by \cite[Proposition 5.5]{CabSpCharTypeA}.
\end{proof}
\end{cor}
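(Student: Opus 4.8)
The plan is to reduce the construction to the individual factors $S_f$ and then invoke \cite[Proposition 5.5]{CabSpCharTypeA}. The starting point is the product decomposition of $S$. Since $s_{\ol{\mu},\ol{\mu}'}$ is a product of elements ${\bf n}_{e_i-e_j}(1)$ supported on the coordinate sets $I_{\ol{\mu}}, I_{\ol{\mu}'}$ with $\ol{\mu},\ol{\mu}'\in\ol{\lambda}_f$, and since the sets $\bigcup_{\ol{\mu}\in\ol{\lambda}_f} I_{\ol{\mu}}$ for distinct values of $f$ are pairwise disjoint, the subgroups $S_f$ ($1\le f\le n$) commute elementwise and have pairwise trivial intersection. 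Hence $S=S_1\cdots S_n$ is an internal direct product, and likewise $L\cap S=\prod_{f=1}^n(L\cap S_f)$: indeed $\rho(s_{\ol{\mu},\ol{\mu}'})$ has order $2$, so $s_{\ol{\mu},\ol{\mu}'}^2\in L\cap S_f$, and conversely $L\cap S_f=\ker(\rho|_{S_f})$ is exactly $\GenGp{s_{\ol{\mu},\ol{\mu}'}^2\mid \ol{\mu},\ol{\mu}'\in\ol{\lambda}_f}$ by the identification recorded just before the corollary.

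Given this, I would use the elementary fact that an extension map for a normal subgroup of a direct product assembles from extension maps for the factors. Concretely, every $\chi\in\Irr(L\cap S)$ factors as $\chi=\chi_1\times\cdots\times\chi_n$ with $\chi_f\in\Irr(L\cap S_f)$; its stabiliser in $S$ is $\prod_f(S_f)_{\chi_f}$; and if $\Lambda_f$ is an extension map with respect to $L\cap S_f\lhd S_f$, then the assignment $\chi\mapsto\Lambda_1(\chi_1)\times\cdots\times\Lambda_n(\chi_n)$ is an extension map with respect to $L\cap S\lhd S$. Thus it suffices to treat each $f$ separately.

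For a fixed $f$ I would transport the problem to $\SL_{t_f}(p)$ via the natural isomorphism $S_f\to V^{(f)}$ sending $s_{\ol{\mu}_i,\ol{\mu}_j}$ to ${\bf n}_{e_i-e_j}(1)$, under which $L\cap S_f$ corresponds to $T_f\cap V^{(f)}$, with $T_f$ the diagonal maximal torus of $\SL_{t_f}(p)$. Then \cite[Proposition 5.5]{CabSpCharTypeA} supplies an extension map with respect to $T_f\cap V^{(f)}\lhd V^{(f)}$, and pulling it back along the isomorphism yields $\Lambda_f$. Combining the $\Lambda_f$ as above completes the argument.

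There is no serious obstacle here: the substance has been front-loaded into the structural analysis of $N$ in the preceding subsection (the product decomposition of $S$ and the identification $S_f\cong V^{(f)}$, $L\cap S_f\cong T_f\cap V^{(f)}$) and into \cite[Proposition 5.5]{CabSpCharTypeA}. The only points needing any care are (i) confirming that $L\cap S$ is genuinely the internal direct product of the $L\cap S_f$, equivalently that $L\cap S_f$ is generated by the squares $s_{\ol{\mu},\ol{\mu}'}^2$, and (ii) the routine verification that the external product of extension maps for direct factors is again an extension map and is compatible with the stabiliser decomposition; both are straightforward.
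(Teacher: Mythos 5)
Your proposal follows the same route as the paper: decompose $L\cap S=\prod_f(L\cap S_f)$, assemble an extension map from the factors, and for each $f$ transport to $T_f\cap V^{(f)}\lhd V^{(f)}$ via the isomorphism $S_f\cong V^{(f)}$ and invoke \cite[Proposition 5.5]{CabSpCharTypeA}. The paper's proof is terser but identical in substance; you have merely spelled out the direct-product assembly and the generation of $L\cap S_f$ by the squares $s_{\ol{\mu},\ol{\mu}'}^2$, both of which the paper leaves implicit.
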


\subsection{Characters of $d$-split Levi subgroups}

Define $L_0:=[L,L]=[\wt{L},\wt{L}]$, $L_f:=[\wt{L}_f,\wt{L}_f]$ and $L_{\ol{\mu}}:=[\wt{L}_{\ol{\mu}},\wt{L}_{\ol{\mu}}]$ so that
\[
L_0=L_{\lambda_0}\times \prod_{f=1}^{n} L_f=L_{\lambda_0}\times \prod_{f=1}^{n} \left( \prod_{\ol{\mu}\in\ol{\lambda}_f} L_{\ol{\mu}}\right) \cong \SL_{|\lambda_0|}(\epsilon q)\times \prod_{f=1}^n \SL_{f}(\epsilon q^{d_0})^{t_f}.
\]

\begin{lm}\label{CliffCorrRest}
Let $\chi\in \Irr (L)$ and $\chi_0\in \Irr (\Res^L_{L_0}(\chi))$, then the Clifford correspondent of $\chi$ over $\chi_0$ restricts irreducibly to $\chi_0$.
\begin{proof}

Write 
\[
\chi_0=\chi_{\lambda_0}\times \prod_{\ol{\mu}\in\ol{\lambda}'} \chi_{\ol{\mu}},
\]
with $\chi_{\ol{\mu}}\in \Irr (L_{\ol{\mu}})$ and $\chi_{\lambda_0}\in \Irr (L_{\lambda_0})$.
Then
\[
\wt{L}_{\chi_0}=(\wt{L}_{\lambda_0})_{\chi_{\lambda_0}}\times \prod_{\ol{\mu}\in\ol{\lambda}'} (\wt{L}_{\ol{\mu}})_{\chi_{\ol{\mu}}}.
\]
Each factor $\wt{L}_{\ol{\mu}}/L_{\ol{\mu}}$ and $\wt{L}_{\lambda_0}/L_{\lambda_0}$ is cyclic, therefore $\chi_0$ extends from $L_0$ to $\wt{L}_{\chi_0}$ and hence to $L_{\chi_0}$.
Moreover as $L_{\chi_0}/L_0$ is abelian, it follows from Gallagher's Theorem that $\Res^{\wt L_\chi}_{L_0}(\wt{\chi})=\chi_0$. 
\end{proof}
\end{lm}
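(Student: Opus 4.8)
The plan is to reduce the statement to the direct product structure of $L_0$ together with the fact that $L_0=[\wt L,\wt L]$, which forces $\wt L/L_0$, and hence $L/L_0$ and every $L_{\chi_0}/L_0$, to be abelian. First I would decompose $\chi_0=\chi_{\lambda_0}\times\prod_{\ol\mu\in\ol\lambda'}\chi_{\ol\mu}$ along $L_0=L_{\lambda_0}\times\prod_{\ol\mu\in\ol\lambda'}L_{\ol\mu}$, with $\chi_{\lambda_0}\in\Irr(L_{\lambda_0})$ and $\chi_{\ol\mu}\in\Irr(L_{\ol\mu})$. Since $\wt L=\wt L_{\lambda_0}\times\prod_{\ol\mu\in\ol\lambda'}\wt L_{\ol\mu}$ is a direct product, the inertia group of $\chi_0$ in $\wt L$ factors as $\wt L_{\chi_0}=(\wt L_{\lambda_0})_{\chi_{\lambda_0}}\times\prod_{\ol\mu}(\wt L_{\ol\mu})_{\chi_{\ol\mu}}$, and $L_{\chi_0}=L\cap\wt L_{\chi_0}$.

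The decisive step is to extend $\chi_0$ to $\wt L_{\chi_0}$. Each quotient $\wt L_{\ol\mu}/L_{\ol\mu}$ and $\wt L_{\lambda_0}/L_{\lambda_0}$ is cyclic --- being a determinant quotient of a general linear or general unitary group --- so the invariant character $\chi_{\ol\mu}$ extends over $L_{\ol\mu}\lhd(\wt L_{\ol\mu})_{\chi_{\ol\mu}}$, and similarly $\chi_{\lambda_0}$ over $(\wt L_{\lambda_0})_{\chi_{\lambda_0}}$. The outer tensor product of these extensions is an irreducible character of $\wt L_{\chi_0}$ restricting to $\chi_0$ on $L_0$; restricting it further to $L_{\chi_0}$ yields an extension $\wh\chi_0\in\Irr(L_{\chi_0})$ of $\chi_0$.

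Finally I would apply Gallagher's theorem: if $\psi\in\Irr(L_{\chi_0}\mid\chi_0)$ denotes the Clifford correspondent of $\chi$ over $\chi_0$, i.e.\ the unique character with $\Ind_{L_{\chi_0}}^L(\psi)=\chi$, then $\psi=\wh\chi_0\cdot\beta$ for some $\beta\in\Irr(L_{\chi_0}/L_0)$. As $L_{\chi_0}/L_0$ is abelian, $\beta$ is linear, so $\Res^{L_{\chi_0}}_{L_0}(\psi)=\Res^{L_{\chi_0}}_{L_0}(\wh\chi_0)=\chi_0$, which is precisely the assertion that the Clifford correspondent restricts irreducibly to $\chi_0$.

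I expect the only input from the earlier structural results to be the cyclicity of the quotients $\wt L_{\ol\mu}/L_{\ol\mu}$ and $\wt L_{\lambda_0}/L_{\lambda_0}$ needed for the factorwise extension, which is immediate from the descriptions $\wt L_\lambda\cong\GL_{|\lambda_0|}(\epsilon q)\times\prod_{f=1}^n\GL_f(\epsilon q^{d_0})^{t_f}$ and $L_0=[\wt L,\wt L]$; the rest is routine Clifford theory and I do not anticipate a genuine obstacle.
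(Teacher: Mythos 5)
Your proof is correct and follows essentially the same route as the paper: decompose $\chi_0$ along $L_0 = L_{\lambda_0}\times\prod_{\ol\mu}L_{\ol\mu}$, extend factorwise over the cyclic quotients $\wt L_{\ol\mu}/L_{\ol\mu}$, restrict to $L_{\chi_0}$, and conclude via Gallagher because $L_{\chi_0}/L_0$ is abelian. You spell out the final Gallagher step (with $\psi=\wh\chi_0\cdot\beta$ and $\beta$ linear) a bit more explicitly than the paper, but the argument is the same.
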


\begin{lm}\label{InertiaL}
Let $\chi\in \Irr (L)$ and $\chi_0\in \Irr (\Res^L_{L_0}(\chi))$.
Then $\wt{L}_{\chi}=L\wt{L}_{\chi_0}$.
\begin{proof}
As $\wt{L}_\chi\leq L\wt{L}_{\chi_0}$, it suffices to show that $\chi$ extends to $L\wt{L}_{\chi_0}$.
Let $\psi\in \Irr (L\wt{L}_{\chi_0}\mid \chi)$.
Then $\chi_0\in \Irr (\Res^{L\wt{L}_{\chi_0}}_{L_0}(\psi))$ and hence there exists a unique $\wt{\psi}\in \Irr (\wt{L}_{\chi_0}\mid \chi_0)$ such that $\Ind_{\wt{L}_{\chi_0}}^{L\wt{L}_{\chi_0}}(\wt{\psi})=\psi$.
However, $\wt{\psi}$ restricts to an irreducible character of $L_{\chi_0}$ as $\wt{\psi}$ restricts to $\chi_0$ by Lemma~\ref{CliffCorrRest}.
Therefore $\Ind_{L_{\chi_0}}^L(\Res_{L_{\chi_0}}^{\wt{L}_{\chi_0}}(\wt{\psi}))$ is irreducible and thus $\Res_{L}^{L\wt{L}_{\chi_0}}(\psi)=\chi$. 
\end{proof}
\end{lm}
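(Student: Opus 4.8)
The goal is to prove the equality of stabilisers $\wt L_\chi = L\wt L_{\chi_0}$, and the plan is to establish the two inclusions separately, the containment $\wt L_\chi \leq L\wt L_{\chi_0}$ being immediate. Indeed, by Clifford's theorem the constituents of $\Res^L_{L_0}(\chi)$ form a single $L$-orbit which every $g \in \wt L_\chi$ permutes; since $\chi_0$ belongs to it, $\chi_0^g = \chi_0^\ell$ for some $\ell \in L$, whence $g\ell^{-1}\in\wt L_{\chi_0}$ and $g\in\wt L_{\chi_0}L = L\wt L_{\chi_0}$ (using $L\lhd\wt L$). Since $L \leq \wt L_\chi$ holds trivially, the reverse inclusion reduces to $\wt L_{\chi_0} \leq \wt L_\chi$, and I would deduce this from the stronger assertion that $\chi$ extends to $L\wt L_{\chi_0}$ (a character extending to an overgroup being in particular invariant under it).

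To construct such an extension I would start from an arbitrary $\psi \in \Irr(L\wt L_{\chi_0} \mid \chi)$, which then also lies over $\chi_0$. A direct check, using $L_{\chi_0} := L \cap \wt L_{\chi_0} \leq \wt L_{\chi_0}$, shows that the stabiliser of $\chi_0$ in $L\wt L_{\chi_0}$ equals $\wt L_{\chi_0}$, so Clifford correspondence gives a unique $\wt\psi \in \Irr(\wt L_{\chi_0} \mid \chi_0)$ with $\psi = \Ind_{\wt L_{\chi_0}}^{L\wt L_{\chi_0}}(\wt\psi)$. The decisive point is that $\wt\psi$ restricts irreducibly to $L_{\chi_0}$, which is the analogue of Lemma~\ref{CliffCorrRest} with $\wt L_{\chi_0}$ in place of $L_{\chi_0}$: writing $\chi_0 = \chi_{\lambda_0}\times\prod_{\ol{\mu}}\chi_{\ol{\mu}}$ one has $\wt L_{\chi_0} = (\wt L_{\lambda_0})_{\chi_{\lambda_0}}\times\prod_{\ol{\mu}}(\wt L_{\ol{\mu}})_{\chi_{\ol{\mu}}}$, each factor having cyclic quotient over the corresponding $L_{\ol{\mu}}$, resp.\ $L_{\lambda_0}$, so $\chi_0$ extends from $L_0$ to $\wt L_{\chi_0}$; as $\wt L_{\chi_0}/L_0$ is abelian, Gallagher's theorem then forces $\Res^{\wt L_{\chi_0}}_{L_0}(\wt\psi) = \chi_0$, and a fortiori $\Res^{\wt L_{\chi_0}}_{L_{\chi_0}}(\wt\psi)$ is irreducible and lies over the $L_{\chi_0}$-invariant $\chi_0$.

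Finally I would restrict $\psi$ to $L$. Since $L\lhd L\wt L_{\chi_0}$, Mackey's formula involves a single double coset and gives
\[
\Res^{L\wt L_{\chi_0}}_{L}(\psi)=\Ind_{L_{\chi_0}}^{L}\big(\Res^{\wt L_{\chi_0}}_{L_{\chi_0}}(\wt\psi)\big).
\]
The right-hand side is the induction to $L$ of an irreducible character of $L_{\chi_0}$ lying over the $L_{\chi_0}$-invariant character $\chi_0$, hence irreducible by Clifford correspondence; since it contains $\chi$ (as $\psi$ lies over $\chi$), it equals $\chi$. Thus $\psi$ extends $\chi$ to $L\wt L_{\chi_0}$, which yields $\wt L_{\chi_0}\leq\wt L_\chi$ and completes the argument.

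I expect the only genuinely delicate step to be the irreducibility of $\Res^{\wt L_{\chi_0}}_{L_{\chi_0}}(\wt\psi)$: it is not a consequence of abstract Clifford theory but of the explicit product structure of $\wt L$, which makes the relevant quotients by $L_0$ abelian and allows $\chi_0$ to extend, so one must check that the Gallagher argument behind Lemma~\ref{CliffCorrRest} carries over verbatim with $\wt L_{\chi_0}$ instead of $L_{\chi_0}$. The remaining manipulations with Clifford correspondence and the Mackey formula are routine.
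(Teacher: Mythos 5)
Your proposal is correct and follows essentially the same route as the paper: reduce to showing $\chi$ extends to $L\wt L_{\chi_0}$, take $\psi\in\Irr(L\wt L_{\chi_0}\mid\chi)$, pass to the Clifford correspondent $\wt\psi\in\Irr(\wt L_{\chi_0}\mid\chi_0)$, show $\wt\psi$ restricts irreducibly to $L_{\chi_0}$, and conclude via Mackey and Clifford correspondence that $\Res^{L\wt L_{\chi_0}}_L(\psi)=\chi$. You are, if anything, a bit more precise than the paper at the crucial step: the paper invokes Lemma~\ref{CliffCorrRest} directly to conclude that $\wt\psi$ restricts irreducibly to $L_{\chi_0}$, whereas that lemma as stated concerns the Clifford correspondent $\wt\chi_0\in\Irr(L_{\chi_0}\mid\chi_0)$, not an arbitrary $\wt\psi\in\Irr(\wt L_{\chi_0}\mid\chi_0)$; what is really being used is the intermediate fact established inside the proof of Lemma~\ref{CliffCorrRest}, namely that $\chi_0$ extends from $L_0$ to $\wt L_{\chi_0}$ (by the product decomposition and cyclic quotients) and that $\wt L_{\chi_0}/L_0$ is abelian, which by Gallagher forces $\Res^{\wt L_{\chi_0}}_{L_0}(\wt\psi)=\chi_0$. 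You spell this out explicitly, which is exactly the right thing to do.
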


Let us recall the fundamental property of stabilizers of characters in type $\mathrm{A}$.
\begin{thm}[Cabanes-Sp\"ath,{\cite[Thm 4.1]{CabSpCharTypeA}}]\label{StarCondSL} Let $E$ be defined as in Equation (\ref{eq:EAuto}).
If $\psi\in \Irr (\wt{\bf G}^F)$, then there exists a $\psi_0\in \Irr ({\bf G}^F\mid \psi)$ such that 
\[
(\wt{\bf G}^F\rtimes E)_{\psi_0}=(\wt{\bf G}^F)_{\psi_0}\rtimes E_{\psi_0}
\]
and $\psi_0$ extends to a character of ${\bf G}^F\rtimes E_{\psi_0}$.
\end{thm}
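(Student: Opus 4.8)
This is \cite[Thm~4.1]{CabSpCharTypeA}; I sketch the strategy that proves it. The plan is to strip the semisimple part off $\psi$ by Jordan decomposition, reduce to unipotent characters of a product of general linear groups over extension fields, and settle that case directly --- the crux throughout being to keep every choice simultaneously equivariant for $E$.

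Since $\wt\bG=\GL_n(\ol{\F}_p)$ has connected centre, a group dual to $\wt\bG^F$ is again a general linear group and every $\psi\in\Irr(\wt\bG^F)$ lies in a rational Lusztig series $\cE(\wt\bG^F,\wt s)$ for a semisimple $\wt s$ in that dual; Jordan decomposition gives a bijection of $\cE(\wt\bG^F,\wt s)$ with the unipotent characters of $\Cent(\wt s)$, a direct product $\prod_i\GL_{m_i}(\pm q^{d_i})$ indexed by the $F$-orbits of the eigenvalues of $\wt s$. I would use a form of Jordan decomposition equivariant for $\Aut(\wt\bG^F)$, hence for $E$, so that the $E$-stabiliser of $\psi$ becomes the joint $E$-stabiliser of $\wt s$ and of the attached tuple of partitions. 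The more delicate point is to carry the Clifford theory of $\bG^F\lhd\wt\bG^F$ through this bijection: following Bonnaf\'e's treatment of characters of reductive groups with disconnected centre and its refinement in \cite{CabSpCharTypeA}, the set $\Irr(\bG^F\mid\psi)$, the action of $\wt\bG^F/\bG^F$ on it by linear characters, and the residual $E$-action all match, on the dual side, the analogous data for the subgroup of $\Cent(\wt s)$ cut out by the relevant determinant condition. This reduces the assertion to the case of unipotent characters of such a product of general linear groups, relative to that subgroup and to the induced group of automorphisms, which may moreover permute isomorphic factors.

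For the \emph{unipotent case} I would exploit that unipotent characters of $\GL_m(\pm q^{d})$ are trivial on the centre, hence restrict irreducibly to the determinant-defined subgroup; that they are labelled by partitions, on which every field and graph automorphism acts trivially; and that each extends to its semidirect product with the cyclic group of field automorphisms, compatibly with the graph automorphism, by Malle's results on extending unipotent characters. Taking products over the factors and inducing to absorb permutations of isomorphic factors, then transporting the result back through the Jordan decomposition, produces a constituent $\psi_0\in\Irr(\bG^F\mid\psi)$ whose $E$-stabiliser coincides with the $E$-stabiliser of its $\wt\bG^F$-orbit --- which is precisely the equality $(\wt\bG^F\rtimes E)_{\psi_0}=(\wt\bG^F)_{\psi_0}\rtimes E_{\psi_0}$ --- and which extends to $\bG^F\rtimes E_{\psi_0}$.

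The \emph{main obstacle} is that the Jordan decomposition, the identification of $\Cent(\wt s)$ with a product of general linear groups, the choice of $\psi_0$ among its $\wt\bG^F$-conjugates, and the choice of the extensions must all be made \emph{simultaneously} $E$-equivariant, so that a single $\psi_0$ serves the whole $E$-stabiliser of its orbit. This is why one cannot appeal to a black-box Jordan decomposition but must fix compatible maximal tori and Levi subgroups --- this is the role of the explicit elements $\mathbf n_\alpha$ and of the group $V$ introduced in Section~\ref{dSplitLevi} --- and argue with an explicit ``Clifford theory with automorphisms''. Keeping the interaction of the determinant map with the disconnected centre $\Zent(\bG)$ equivariant, and verifying that the cohomological obstruction to the final extension vanishes (note that $E_{\psi_0}$ need not be cyclic), is the technical heart of the proof in \cite{CabSpCharTypeA}.
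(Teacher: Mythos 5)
The paper does not prove this statement: it is quoted as a known result, attributed to Cabanes--Sp\"ath \cite[Thm~4.1]{CabSpCharTypeA}, and cited without argument, so there is no in-paper proof to compare your sketch against. Taken on its own terms your outline is sound, and the local facts it uses are correct: for a unipotent $\chi\in\Irr(\GL_m(\pm q^d))$ and $1\ne\mu\in\Irr\bigl(\GL_m(\pm q^d)/\SL_m(\pm q^d)\bigr)$ the character $\mu\chi$ lies in a different Lusztig series, so the $\Irr(\wt\bG^F/\bG^F)$-orbit of $\chi$ is a singleton and $\chi$ restricts irreducibly to the determinant-one subgroup; unipotent characters are labelled by partitions and are stable under all field and graph automorphisms; extensions over the cyclic group of field automorphisms exist by Malle; and you correctly single out as the crux the simultaneous $E$-equivariance of the Jordan decomposition, of the identification of $\Cent_{\wt\bG^*}(\wt s)$ with a product of linear groups, of the choice of $\psi_0$, and of the extensions, as well as the point that $E_{\psi_0}$ need not be cyclic, so the last extension step is not automatic. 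This is a fair reconstruction of the strategy behind the cited theorem. I would only caution against presenting it as though the published proof launders the problem through a black-box ``equivariant Jordan decomposition'': the write-up in \cite{CabSpCharTypeA} implements the equivariance by explicit constructions inside $\wt\bG^F\rtimes E$ (fixing compatible tori and the elements $\mathbf n_\alpha$, building extension maps on concrete subgroups), and the reduction to unipotent data and the splitting of the stabiliser emerge as by-products of that bookkeeping rather than as a preliminary reduction; so the actual argument is considerably more hands-on and more rigid than your sketch makes it look, even though the conceptual outline is the same.
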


We apply this result to the Levi subgroups $L_{\ol{\mu}}$.
That is, set $\mathbb{T}_{\ol{\mu}}\subseteq \Irr (L_{\ol{\mu}})$ to be an $(E\langle v_{\ol{\mu}}\rangle )$-transversal on $\Irr (L_{\ol{\mu}})$ satisfying Theorem~\ref{StarCondSL} with respect to $(L_{\ol{\mu}}$ and $(\wt{L}_{\ol{\mu}}$.
Furthermore, each $\mathbb{T}_{\ol{\mu}}$ is chosen so that if $\theta\in \mathbb{T}_{\ol{\mu}}$ and $s_{\ol{\mu},\ol{\mu}'}\in S_f$ then $\theta^{s_{\ol{\mu},\ol{\mu}'}}\in \mathbb{T}_{\ol{\mu}'}$.
Then we can define $\mathbb{T}\subseteq \Irr (L_0)$ by
\begin{equation}\label{eq:CharTrans}
\mathbb{T}:=\{ \chi_{\lambda_0}\times \prod_{\ol{\mu}\in\ol{\lambda}'} \chi_{\ol{\mu}} \mid \chi_{\lambda_0}\in \mathbb{T}_{\lambda_0} \text{ and } \chi_{\ol{\mu}}\in \mathbb{T}_{\ol{\mu}} \}.
\end{equation}

\begin{lm}\label{InterialNChi0}
Let $\chi_0\in \mathbb{T}\subset \Irr (L_0)$ from Equation (\ref{eq:CharTrans}) and $V_0$, $S$ as in Proposition~\ref{StructN}.
Then
\[
(\wt{L}V_0ES)_{\chi_0}=\wt{L}_{\chi_0}(V_0E)_{\chi_0}S_{\chi_0}.
\]
In particular, for $\wh N:=NE$ it follows that
\[
\wt{N}_{\chi_0}=\wt{L}_{\chi_0}V_{0,\chi_0}S_{\chi_0} \text{ , } N_{\chi_0}=L_{\chi_0} V_{0,\chi_0}S_{\chi_0} \text{ and } \wh{N}_{\chi_0}=L_{\chi_0} (V_{0}E)_{\chi_0}S_{\chi_0}.
\]

\begin{proof}

Take $x\in \wt{L}V_0E$ and $s\in S$.
If $xs\in N_{\chi_0}$ and $\ol{\mu}\in\ol{\lambda}'$, then $\chi_{\ol{\mu}}^x$ and $\chi_{\rho(s)(\ol{\mu})}$ give the same character on $L_{\rho(s)(\ol{\mu})}$.
Thus by the choice of characters in $\mathbb{T}$, it follows that $\chi_{\ol{\mu}}^{s}=\chi_{\rho(s)(\ol{\mu})}$.
Therefore $s\in S_{\chi_0}$ and 
\[
(\wt{L}V_0ES)_{\chi_0}=(\wt{L}V_0E)_{\chi_0}S_{\chi_0}.
\]
Furthermore, the choice of $\mathbb{T}$ implies that $(\wt{L}V_0E)_{\chi_0}=\wt{L}_{\chi_0}(V_0E)_{\chi_0}$.
\end{proof}
\end{lm}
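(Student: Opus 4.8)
The plan is to analyse the stabiliser factor by factor along the decomposition $L_0=L_{\lambda_0}\times\prod_{\ol\mu\in\ol\lambda'}L_{\ol\mu}$, exploiting the two properties built into the transversal $\mathbb{T}$ of $(\ref{eq:CharTrans})$: that each $\mathbb{T}_{\ol\mu}$ realises the conclusion of Theorem~\ref{StarCondSL} for $L_{\ol\mu}\leq\wt L_{\ol\mu}$ with respect to the automorphisms induced by $E\GenGp{v_{\ol\mu}}$, and that $\mathbb{T}$ is $S$-compatible, i.e. $\theta^{s_{\ol\mu,\ol\mu'}}\in\mathbb{T}_{\ol\mu'}$ whenever $\theta\in\mathbb{T}_{\ol\mu}$. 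The basic observation is that $\wt L=\wt L_{\lambda_0}\times\prod_{\ol\mu}\wt L_{\ol\mu}$, the group $V_0=\GenGp{v_{\ol\mu}\mid\ol\mu\in\ol\lambda'}$ and $E$ all stabilise each factor $L_{\ol\mu}$ (and $L_{\lambda_0}$), inducing on $L_{\ol\mu}$ an automorphism lying in $\wt L_{\ol\mu}\GenGp{v_{\ol\mu}}E$, whereas $S$ permutes the $L_{\ol\mu}$ within each class $\ol\lambda_f$ through $\rho(S)=\mathrm{Sym}(\ol\lambda')_{\{\ol\lambda_1,\dots,\ol\lambda_n\}}$ and fixes $L_{\lambda_0}$. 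Since $S$ normalises $\wt L$ and $V_0$ (recall $v_{\ol\mu}^{s_{\ol\mu,\ol\mu'}}=v_{\ol\mu'}$), every element of $(\wt L V_0 E S)_{\chi_0}$ can be written $xs$ with $x\in\wt L V_0 E$ and $s\in S$, so the argument splits into two steps: first show $s\in S_{\chi_0}$, then decompose $(\wt L V_0 E)_{\chi_0}$.

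The main step, and the one I expect to be the real obstacle, is to show $s\in S_{\chi_0}$. Suppose $\chi_0^{xs}=\chi_0$; comparing the components supported on $L_{\rho(s)(\ol\mu)}$ gives $\chi_{\ol\mu}^{xs}=\chi_{\rho(s)(\ol\mu)}$ for every $\ol\mu\in\ol\lambda'$. Writing $\chi_{\ol\mu}^{xs}=(\chi_{\ol\mu}^x)^s$ with $\chi_{\ol\mu}^x$ in the $\wt L_{\ol\mu}\GenGp{v_{\ol\mu}}E$-orbit of $\chi_{\ol\mu}$, one concludes that $\chi_{\ol\mu}^{xs}$ lies in the $\wt L_{\rho(s)(\ol\mu)}\GenGp{v_{\rho(s)(\ol\mu)}}E$-orbit of $\chi_{\ol\mu}^s$; the point requiring care is that conjugation by $s$ carries the local data $(\wt L_{\ol\mu},\GenGp{v_{\ol\mu}},E)$ onto $(\wt L_{\rho(s)(\ol\mu)},\GenGp{v_{\rho(s)(\ol\mu)}},E)$ up to inner automorphisms from $\wt L$, which follows from $[s_{\ol\mu,\ol\mu'},v]=1$ and from the fact that $E$ induces the full group of field–graph automorphisms on each $L_{\ol\mu}\cong\SL_f(\epsilon q^{d_0})$. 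By $S$-compatibility $\chi_{\ol\mu}^s\in\mathbb{T}_{\rho(s)(\ol\mu)}$, while by construction $\chi_{\rho(s)(\ol\mu)}\in\mathbb{T}_{\rho(s)(\ol\mu)}$; as these two members of the transversal $\mathbb{T}_{\rho(s)(\ol\mu)}$ lie in a common orbit, they coincide, so $\chi_{\ol\mu}^s=\chi_{\rho(s)(\ol\mu)}$. Together with $\chi_{\lambda_0}^s=\chi_{\lambda_0}$ this gives $\chi_0^s=\chi_0$, hence $s\in S_{\chi_0}$ and $x=(xs)s^{-1}\in(\wt L V_0 E)_{\chi_0}$, so $(\wt L V_0 E S)_{\chi_0}=(\wt L V_0 E)_{\chi_0}S_{\chi_0}$.

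It remains to prove $(\wt L V_0 E)_{\chi_0}=\wt L_{\chi_0}(V_0E)_{\chi_0}$, which I would obtain by assembling the star condition over the factors: an element of $\wt L V_0 E$ fixes $\chi_0$ exactly when the automorphism it induces on each of $L_{\lambda_0}$ and the $L_{\ol\mu}$ fixes the corresponding character, and on $L_{\ol\mu}$ that automorphism lies in $\wt L_{\ol\mu}\GenGp{v_{\ol\mu}}E$, so the defining property of $\mathbb{T}_{\ol\mu}$ (Theorem~\ref{StarCondSL} applied to $\SL_f(\epsilon q^{d_0})$) yields $(\wt L_{\ol\mu}\GenGp{v_{\ol\mu}}E)_{\chi_{\ol\mu}}=(\wt L_{\ol\mu})_{\chi_{\ol\mu}}(\GenGp{v_{\ol\mu}}E)_{\chi_{\ol\mu}}$, and likewise for $L_{\lambda_0}$; assembling these using that $\wt L$ and $V_0$ are direct products over the factors while $E$ is common to all of them produces the decomposition, and combining the two steps gives $(\wt L V_0 E S)_{\chi_0}=\wt L_{\chi_0}(V_0E)_{\chi_0}S_{\chi_0}$. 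The three ``in particular'' identities then follow by intersecting this equality with $\wt N=\wt L V_0 S$, with $N=L V_0 S$ (Proposition~\ref{StructN}) and with $\wh N=NE$, using the elementary inclusions $\wt L_{\chi_0}\cap L=L_{\chi_0}$ and $(V_0E)_{\chi_0}\cap V_0=V_{0,\chi_0}$ and the fact that $S_{\chi_0}$ and $V_{0,\chi_0}$ already lie in the relevant groups. Thus the only genuinely delicate point is the transversal-preservation argument in the middle step; the rest is bookkeeping with direct products.
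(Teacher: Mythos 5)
Your proof follows the paper's argument exactly: split an element of $(\wt{L}V_0ES)_{\chi_0}$ as $xs$, read off the factor-wise condition $\chi_{\ol\mu}^{xs}=\chi_{\rho(s)(\ol\mu)}$, use the $S$-compatibility of $\mathbb{T}$ to conclude $\chi_{\ol\mu}^{s}=\chi_{\rho(s)(\ol\mu)}$ and hence $s\in S_{\chi_0}$, then decompose $(\wt{L}V_0E)_{\chi_0}$ factor-by-factor via the star condition of Theorem~\ref{StarCondSL}. Your write-up simply fills in the orbit/transversal bookkeeping and the $\ol G$-conjugation details that the paper compresses into two sentences.
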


\begin{cor}\label{IntCond}
Let $\chi\in \Irr (L)$.
Then $\wt{N}_{\chi}=\wt{L}_{\chi}N_{\chi}$.
Moreover, there exists $\mathcal{T}$ an $\wt{L}$-transversal on $\Irr (L)$ such that 
\[
(\wt{N}\wh{N})_{\chi}=\wt{N}_{\chi}\wh{N}_{\chi}
\]
for each $\chi\in \mathcal{T}$. 
\begin{proof}
It suffices to prove these properties hold for an $\wt{L}\wh{N}$-transversal of $\Irr (L)$.
In particular, it can be assumed that there exists a $\chi_0\in \Irr (\Res_{L_0}^L(\chi))\cap \mathbb{T}$.
Let $\wt{\chi}_0\in \Irr (L_{\chi_0}\mid \chi_0)$ be the Clifford correspondent inducing to $\chi$.
Hence by Lemma~\ref{InterialNChi0} and Lemma~\ref{InertiaL}
\[
\wt{N}_{\chi}=L(\wt{N}_{\chi_0})_{\wt{\chi}_0}=L(\wt{L}_{\chi_0}V_{0,\chi_0}S_{\chi_0})_{\wt{\chi}_0}=\wt{L}_{\chi}(V_{0,\chi_0}S_{\chi_0})_{\wt{\chi}_0}=\wt{L}_{\chi}N_{\chi}.
\]
This proves the first statement.
Similarly by Lemma~\ref{InterialNChi0}
\[
(\wt{N}_{\chi_0})_{\wt{\chi}_0}(\wh{N}_{\chi_0})_{\wt{\chi}_0}= \left( \wt{L}_{\chi_0}(V_{0,\chi_0}S_{\chi_0})_{\wt{\chi}_0} \right)\left( L_{\chi_0}(\wh{V}_{0,\chi_0}S_{\chi_0})_{\wt{\chi}_0} \right)=\wt{L}_{\chi_0}\left( \wh{V}_{0,\chi_0}S_{\chi_0} \right)_{\wt{\chi}_0}= \left( (\wt{N}\wh{N})_{\chi_0}\right)_{\wt{\chi}_0}.
\]
Thus the second statement follows.
\end{proof}
\end{cor}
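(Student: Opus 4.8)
The plan is to obtain Corollary~\ref{IntCond} by pure Clifford theory relative to $L_0\lhd L$, feeding in the factorisations of Lemma~\ref{InterialNChi0}; essentially all the real content has been placed in Lemmas~\ref{CliffCorrRest}, \ref{InertiaL}, \ref{InterialNChi0} and Theorem~\ref{StarCondSL}, so what is left is bookkeeping with Dedekind's modular law. Both equalities are statements about inertia groups inside $\wt N\wh N=\wt LV_0ES$ (using $\wt N=\wt L N$, which holds since the relative Weyl groups of $\bL$ in $\bG$ and of $\wt\bL$ in $\wt\bG$ coincide, together with $N=LV_0S$ and $\wh N=NE$), and inside this group $L$, $\wt L$, $N$, $\wt N$ are all normal. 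Hence for the first equality it suffices to treat one $\wt N$-orbit of $\Irr(L)$ at a time, and for the ``moreover'' part it is enough to verify the identity on a single $\wt L$-transversal $\mathcal T$ whose members are chosen adapted to the transversal $\mathbb T\subseteq\Irr(L_0)$ of \eqref{eq:CharTrans}. Since $\mathbb T$ is a transversal for $\langle V_0,E,S\rangle$ on $\Irr(L_0)$, every $\wt N\wh N$-orbit of $\Irr(L)$ contains a $\chi$ for which $\Res^L_{L_0}(\chi)$ has a constituent $\chi_0\in\mathbb T$; fix such a $\chi$, and let $\wt\chi_0\in\Irr(L_{\chi_0}\mid\chi_0)$ be the Clifford correspondent of $\chi$ over $\chi_0$, so that $\Ind_{L_{\chi_0}}^{L}(\wt\chi_0)=\chi$ and, by Lemma~\ref{CliffCorrRest}, $\Res^{L_{\chi_0}}_{L_0}(\wt\chi_0)=\chi_0$.

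The crucial small remark is that $\wt\chi_0$ is $\wt L_{\chi_0}$-invariant (hence $L_{\chi_0}$-invariant): by Lemma~\ref{InertiaL} one has $\wt L_{\chi_0}\leq\wt L_\chi=L\wt L_{\chi_0}$, so each $g\in\wt L_{\chi_0}$ fixes both $\chi$ and $\chi_0$, whence $\Ind_{L_{\chi_0}}^{L}(\wt\chi_0^{\,g})=\chi^g=\chi=\Ind_{L_{\chi_0}}^{L}(\wt\chi_0)$ and $\wt\chi_0^{\,g}=\wt\chi_0$ by injectivity of the Clifford correspondence $\Irr(L_{\chi_0}\mid\chi_0)\to\Irr(L\mid\chi_0)$. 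The same injectivity yields, for any subgroup $X$ with $L_0\lhd X$ and $L\lhd X$ (in particular for $X\in\{\wt L,N,\wt N,\wh N,\wt N\wh N\}$), the ``going up'' identity
\[
X_\chi=L\,(X_{\chi_0})_{\wt\chi_0},
\]
because $X_\chi$ permutes the $L$-orbit of constituents of $\Res^L_{L_0}(\chi)$, so $X_\chi\leq LX_{\chi_0}$, while $\chi^g=\chi\iff\wt\chi_0^{\,g}=\wt\chi_0$ for $g\in X_{\chi_0}$ and $L\leq X_\chi$, so Dedekind's law applies; for $X=\wt L$ this re-derives Lemma~\ref{InertiaL}.

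I would then substitute the factorisations of Lemma~\ref{InterialNChi0}, namely $\wt N_{\chi_0}=\wt L_{\chi_0}V_{0,\chi_0}S_{\chi_0}$, $N_{\chi_0}=L_{\chi_0}V_{0,\chi_0}S_{\chi_0}$, $\wh N_{\chi_0}=L_{\chi_0}(V_0E)_{\chi_0}S_{\chi_0}$ and $(\wt N\wh N)_{\chi_0}=\wt L_{\chi_0}(V_0E)_{\chi_0}S_{\chi_0}$. Each of the ``monomial'' factors $V_{0,\chi_0}$, $S_{\chi_0}$, $(V_0E)_{\chi_0}$ lies in $\wt N$ and fixes $\chi_0$, hence normalises $\wt L_{\chi_0}$ and $L_{\chi_0}$; combining this with the remark that $\wt L_{\chi_0}$ and $L_{\chi_0}$ fix $\wt\chi_0$, Dedekind's law lets me pull these ``torus'' factors out of the operation $(-)_{\wt\chi_0}$, e.g.\ $(\wt N_{\chi_0})_{\wt\chi_0}=\wt L_{\chi_0}(V_{0,\chi_0}S_{\chi_0})_{\wt\chi_0}$ and $((\wt N\wh N)_{\chi_0})_{\wt\chi_0}=\wt L_{\chi_0}\big((V_0E)_{\chi_0}S_{\chi_0}\big)_{\wt\chi_0}$. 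Plugging these into the going-up identity and using $\wt L_\chi=L\wt L_{\chi_0}$, $L_{\chi_0}\leq L\leq\wt L_\chi$ and $V_{0,\chi_0}S_{\chi_0}\leq(V_0E)_{\chi_0}S_{\chi_0}$ then gives
\[
\wt N_\chi=\wt L_\chi\,(V_{0,\chi_0}S_{\chi_0})_{\wt\chi_0}=\wt L_\chi N_\chi
\qquad\text{and}\qquad
(\wt N\wh N)_\chi=\wt L_\chi\big((V_0E)_{\chi_0}S_{\chi_0}\big)_{\wt\chi_0}=\wt N_\chi\wh N_\chi,
\]
which is what we want.

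The step I expect to cost the most care is not any one of these manipulations but the set-up around them: verifying that $\mathbb T$ really is a transversal for $\langle V_0,E,S\rangle$ on $\Irr(L_0)$ so that the reduction to $\mathcal T$ is legitimate — this is where the fine properties of the $\mathbb T_{\ol\mu}$ coming from Theorem~\ref{StarCondSL} and their compatibility with the permutations $s_{\ol\mu,\ol\mu'}$ enter, together with Corollary~\ref{ExtK0ToS} when the count must be transferred from $\chi_0$ to $\wt\chi_0$ — and, for the ``moreover'' statement, noting that the non-normality of $\wh N=NE$ in $\wt N\wh N$ makes the adapted choice of the $\wt L$-transversal $\mathcal T$ genuinely necessary rather than cosmetic. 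Once those points are settled and the (routine) subgroup-normalisation facts underlying the modular-law steps are checked, the corollary follows formally.
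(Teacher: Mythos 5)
Your proof is correct and follows essentially the same route as the paper's: reduce to characters $\chi$ whose restriction to $L_0$ has a constituent $\chi_0\in\mathbb T$, pass to the Clifford correspondent $\wt\chi_0$, feed in the factorisations of Lemma~\ref{InterialNChi0} together with $\wt L_\chi=L\wt L_{\chi_0}$ from Lemma~\ref{InertiaL}, and finish by the modular law. You are a bit more explicit than the paper about two small steps it leaves silent — that $\wt\chi_0$ is $\wt L_{\chi_0}$-invariant, and the ``going-up'' identity $X_\chi=L(X_{\chi_0})_{\wt\chi_0}$ — but these are exactly the facts the paper's displayed computations are implicitly using, so this is the same argument in slightly finer grain.
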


\section{The Clifford theory for $d$-split Levi subgroups}
\label{CliffThdSpLevi}

In \cite{CabSpIMTypeC} the authors provide a criterion to check property $2.2(iv)$ of \cite{SpIMDefChar} which arises in the inductive McKay condition.
In this section we consider the generalisation of this criterion to the case of a $d$-split Levi subgroup, which will be used to prove Theorem~\ref{StarConddSpLevi}.
\subsection{Clifford-theoretic tools}
There are two technical statements that will be applied in our context but have appeared in the context of the McKay Conjecture. 
\begin{thm}\label{MainThmReq}
Let $d$ be a positive integer and $\wtbL$ a $vF$-stable $d$-split Levi subgroup of $\wt{\bG}$.
Assume the groups $N:=\NNN_{{\bG}}(\bL)^{vF}$, $\wt{N}:=\NNN_{\wt{\bG}}(\bL)^{vF}$ and $\wh{N}:=(\Cent_{{\bG}^{F_0^{em}}E}(v\wh{F}))_\bL$ satisfy the following conditions:
\begin{asslist}
\item There exists some set $\mathcal{T}\subseteq \Irr (L)$, such that 
\begin{enumerate}
\item $\wt{N}_{\xi}=\wt{L}_{\xi}N_{\xi}$ for every $\xi\in\mathcal{T}$,
\item $(\wt{N}\wh{N})_{\Ind_L^N(\xi)}=\wt{N}_{\Ind_L^N(\xi)}\wh{N}_{\Ind_L^N(\xi)}$ for every $\xi\in \mathcal{T}$, and 
\item $\mathcal{T}$ contains some $\wh N$-stable $\wt{L}$-transversal of $\Irr (L)$.
\end{enumerate}

\item There exists an extension map $\Lambda$ with respect to $L\lhd N$ such that 
\begin{enumerate}
\item $\Lambda$ is $\wh{N}$-equivariant.
\item Every character $\xi\in \mathcal{T}$ has an extension $\wh{\xi}\in\Irr(\wh{N}_{\xi})$ with $\Res_{\NNN_{\xi}}^{\wh{N}_{\xi}}(\wh{\xi})=\Lambda(\xi)$ and $v\wh{F}\in \ker(\wh{\xi})$.
\end{enumerate}

\item Let $W_d:=N/L$ and $\wh{W}_d:=\wh{N}/L$.
For $\xi \in \Irr (L)$ and $\wt{\xi}\in \Irr (\wt{L}_\xi\mid \xi)$ let $W_{\wt{\xi}}:=N_{\wt{\xi}}/L$, $W_\xi:=N_{\xi}/L$, $K:=\NNN_{W_d}(W_\xi,W_{\wt{\xi}})$ and $\wh{K}:=\NNN_{\wh{W}_d}(W_\xi,W_{\wt{\xi}})$.
Then there exists for every $\eta_0\in\Irr (W_{\wt{\xi}})$ some $\eta\in \Irr (W_{\xi}\mid \eta_0)$ such that 
\begin{enumerate}
\item $\eta$ is $\wh{K}_{\eta_0}$-invariant.
\item If $D$ is non-cyclic, $\eta$ extends to some $\wh{\eta}\in \Irr (\wh{K}_{\eta})$ with $v\wh{F}\in \ker (\wh{\eta})$.
\end{enumerate}
\end{asslist}
Then:
\begin{thmlist}
\item For every $\chi\in \Irr (\wt{N})$ there exists some $\chi_0\in \Irr (N\mid \chi)$ such that
\begin{enumerate}
\item $(\wt{N}\wh{N})_{\chi_0}=\wt{N}_{\chi_0}\wh{N}_{\chi_0}$, and
\item $\chi_0$ has an extension $\wt{\chi}_0$ to $\wh{N}_{\chi_0}$ with $v\wh{F}\in \ker (\wt{\chi})$. 
\end{enumerate}
\item Moreover, there exists some $\wh N$-equivariant extension map with respect to $\wt L \lhd \wt N$ that is compatible with $\Irr(\wh N/N)$.
\end{thmlist}
\end{thm}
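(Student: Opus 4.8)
The plan is to prove both parts by a Clifford-theoretic ``going up'' argument from $L$ to $N$, carried out relative to the chains $L\lhd N$ and $L\lhd\wt L\lhd\wt N$ together with their $\wh{}$-versions, following the strategy of the criterion of \cite{CabSpIMTypeC} that is being generalised here. The point is that the three hypotheses furnish precisely the data needed to parametrise $\Irr(N\mid\xi)$ and $\Irr(\wt N\mid\xi)$ for $\xi\in\mathcal{T}$: by (ii) the extension map $\Lambda$ and Gallagher's Theorem identify $\Irr(N_\xi\mid\xi)$ with $\{\Lambda(\xi)\eta\mid\eta\in\Irr(W_\xi)\}$, so that the Clifford correspondence identifies $\Irr(N\mid\xi)$ with $\Irr(W_\xi)$ via $\eta\mapsto\Ind_{N_\xi}^N(\Lambda(\xi)\eta)$; and, since $\wt L/L$ is cyclic, $\xi$ admits an extension $\wt\xi\in\Irr(\wt L_\xi\mid\xi)$, so passing first from $L$ to $\wt L$ and then from $\wt L$ to $\wt N$ identifies $\Irr(\wt N\mid\xi)$ with the pairs $(\wt\xi,\eta_0)$, $\eta_0\in\Irr(W_{\wt\xi})$. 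Throughout one uses $\wt N=\wt L N$ (which holds by the construction of $N$, cf.\ Proposition~\ref{StructN}) and $\wt N_\xi=\wt L_\xi N_\xi$, the latter being hypothesis (i)(1).

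Given $\chi\in\Irr(\wt N)$ I would pick a constituent $\xi$ of $\Res^{\wt N}_L(\chi)$ and, after replacing $\chi$ by an $\wt L$-conjugate (which is harmless), use (i)(3) to arrange that $\xi$ lies in the $\wh N$-stable $\wt L$-transversal contained in $\mathcal{T}$. Writing $\chi$ in the parametrisation above as corresponding to $(\wt\xi,\eta_0)$ with $\eta_0\in\Irr(W_{\wt\xi})$, I would apply hypothesis (iii) to produce $\eta\in\Irr(W_\xi\mid\eta_0)$ that is $\wh K_{\eta_0}$-invariant and, when $D$ is non-cyclic, has an extension $\wh\eta\in\Irr(\wh K_\eta)$ with $v\wh F\in\ker(\wh\eta)$; then set $\chi_0:=\Ind_{N_\xi}^N(\Lambda(\xi)\eta)$. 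Compatibility of the two parametrisations --- which rests on $\wt N=\wt L N$ and Gallagher's Theorem for the abelian quotients involved --- shows that $\eta\in\Irr(W_\xi\mid\eta_0)$ forces $\chi_0\in\Irr(N\mid\chi)$, as required.

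It then remains to verify $(\wt N\wh N)_{\chi_0}=\wt N_{\chi_0}\wh N_{\chi_0}$ and that $\chi_0$ extends to $\wh N_{\chi_0}$ with $v\wh F$ in the kernel. For the stabiliser statement I would track $(\wt N\wh N)_{\chi_0}$ through the Clifford correspondence: modulo $N$ an element of $(\wt N\wh N)_{\chi_0}$ may be assumed to normalise $N_\xi$ and fix $\xi$, hence to lie in $(\wt N\wh N)_\xi$ and, by the $\wh N$-equivariance of $\Lambda$ from (ii)(1), to fix $\Lambda(\xi)$; its residual freedom is then visible on $\Irr(W_\xi)$, where the $\wh K_{\eta_0}$-invariance of $\eta$ from (iii)(a) furnishes the factorisation at the level of the relative Weyl groups. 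Combining this with the factorisation $(\wt N\wh N)_{\Ind_L^N(\xi)}=\wt N_{\Ind_L^N(\xi)}\wh N_{\Ind_L^N(\xi)}$ from (i)(2) and with $\wt N_\xi=\wt L_\xi N_\xi$ from (i)(1) yields $(\wt N\wh N)_{\chi_0}=\wt N_{\chi_0}\wh N_{\chi_0}$. For the extension I would take the extension $\wh\xi\in\Irr(\wh N_\xi)$ of $\xi$ with $\Res^{\wh N_\xi}_{N_\xi}(\wh\xi)=\Lambda(\xi)$ and $v\wh F\in\ker(\wh\xi)$ from (ii)(2), combine it with the inflation of $\wh\eta$ from (iii)(b) to obtain an extension of $\Lambda(\xi)\eta$ to the relevant stabiliser inside $\wh N$ --- still with $v\wh F$ in the kernel --- and transport this to the sought extension $\wt\chi_0$ on $\wh N_{\chi_0}$ via the Clifford correspondence. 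When $D$ is cyclic, hypothesis (iii)(b) is unavailable, and I would instead note that the relevant quotient $\wh N_{\chi_0}/\langle N_{\chi_0},v\wh F\rangle$ is cyclic, so that the extension exists automatically once the factorisation above is known.

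For the second conclusion I would build the extension map with respect to $\wt L\lhd\wt N$ from the same material: given $\vartheta\in\Irr(\wt L)$, let $\xi\in\mathcal{T}$ be the unique constituent of $\Res^{\wt L}_L(\vartheta)$ lying in the $\wh N$-stable $\wt L$-transversal from (i)(3), write the Clifford correspondent of $\vartheta$ over $\xi$ as $\wt\xi\beta$ with $\wt\xi\in\Irr(\wt L_\xi\mid\xi)$ extending $\xi$ and $\beta\in\Irr(\wt L_\xi/L)$, and use $\wt N_\xi=\wt L_\xi N_\xi$ together with the $\wh N$-equivariant extension $\Lambda(\xi)$ of $\xi$ to produce a canonical extension of $\wt\xi\beta$ to $(\wt N_\xi)_\vartheta$, which is then transported to an extension of $\vartheta$ to $\wt N_\vartheta$ by the Clifford correspondence; $\wh N$-equivariance and compatibility with $\Irr(\wh N/N)$ follow from the corresponding properties of $\Lambda$ and from the canonicity of the construction. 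I expect the main obstacle to be the stabiliser computation in the first conclusion: the bookkeeping of how the groups $N_\xi$, $\wt L_\xi$, $W_\xi=N_\xi/L$, $W_{\wt\xi}=N_{\wt\xi}/L$, $K=\NNN_{W_d}(W_\xi,W_{\wt\xi})$ and $\wh K=\NNN_{\wh{W}_d}(W_\xi,W_{\wt\xi})$ interlock, and of assembling the three separate factorisations of (i)(1), (i)(2) and (iii)(a) into the single identity $(\wt N\wh N)_{\chi_0}=\wt N_{\chi_0}\wh N_{\chi_0}$, is the technical heart of the argument.
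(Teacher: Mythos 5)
Your proposal matches the paper's approach: the paper's ``proof'' of Theorem~\ref{MainThmReq} is in fact just a pointer --- it states that part~(a) follows as in \cite[Theorem~4.3]{CabSpIMTypeC} and part~(b) from the considerations of \cite[Theorem~4.2]{CS_typeBE} using the $\wh N$-stability of $\mathcal{T}$ --- and your Clifford-theoretic going-up argument (parametrising $\Irr(N\mid\xi)$ and $\Irr(\wt N\mid\xi)$ via $\Lambda$, Gallagher, and the pair $(\wt\xi,\eta_0)$, setting $\chi_0:=\Ind_{N_\xi}^N(\Lambda(\xi)\eta)$, and assembling the stabiliser identity from (i)(1), (i)(2), (iii)(a)) is precisely the strategy those cited proofs carry out, including your use of (i)(3) for the equivariant map in part~(b).
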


\begin{proof}
The proof of part (a) is the same as the proof of \cite[Theorem 4.3]{CabSpIMTypeC}. 
Part (b) uses the assumption that $\mathcal T$ is $\wh N$-stable. The proof follows from the considerations as in the proof of Theorem~4.2 of \cite{CS_typeBE}. 
\end{proof}

The following result helps to construct extensions in our context. 
Recall that for a finite group $X$ and $\xi\in\Irr (X)$ there is an associated subgroup $\Zent(\xi):=\{ z\in X\mid |\xi(z)|=\xi(1) \}$, see \cite[2.26]{IsaChTh}. 
\begin{prop}[{\cite[Prop. 2.6]{SpaethtypeD}}]
	\label{prop_tool}
	Let $K\lhd M$, $K_0\lhd M$ with $K_0\leq K$ and $M$ a finite group.
	Let $\xi\in\Irr(K)$ a character with $\xi_0:= \Res^K_{K_0}(\xi)\in\Irr(K_0)$. Assume 
	\begin{asslist}
		\item \label{prop32i} $K=\Zent(\xi)K_0$;
		\item assume there exists some group $V\leq M$ such that
		\begin{enumerate}
			\item 	\label{prop23ii1}
			$M=KV$ and $H:=V\cap K\leq \Cent_K (K_0)$; and 
			\item \label{prop23ii2}
			there is a $\zeta\in\Irr(\Res_H^K(\xi))$ which extends to some $\wt \zeta\in\Irr(V_\zeta)$;
		\end{enumerate}
		\item \label{prop_toolext} $\xi_0$ extends to $K_0\rtimes \epsilon(V_{\xi_0})$, where 
		$\epsilon: V \rightarrow V/H$ is the canonical epimorphism. 
	\end{asslist}
	Then there exists an extension of $\xi$ to $M_\xi$ that is afforded by a representation $\wt \cD$ satisfying 
	\begin{align}\label{wtcD} 
	\wt	\cD( kv)&= \wt \zeta(v)\cD' (\rho(v)) \mathcal D(k) \text{ for every }
	k\in K \text{ and }v\in V_\la ,\end{align}
	where $\mathcal D$ is a representation of $K$ affording $\xi$, and $\cD'$ is a representation of $K_0\rtimes \epsilon(V)_{\xi_0}$ extending $ \cD_{K_0}$. 
\end{prop}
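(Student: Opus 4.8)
The plan is to construct the desired extension of $\xi$ to $M_\xi$ by hand, gluing the given representation $\cD$ of $K$ affording $\xi$ with a suitably twisted lift to $V_\xi$ of the representation supplied by hypothesis~(iii). The guiding principle is that hypothesis~(i), namely $K=\Zent(\xi)K_0$, reduces every compatibility verification to the two parts of $K$ that one controls: $K_0$, where the representation from (iii) already does the work, and $\Zent(\xi)$, where $\cD$ is scalar.

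First I would record two preliminary facts about $\cD$. Since $\xi_0=\Res^K_{K_0}(\xi)$ is irreducible, Schur's lemma forces $\cD$ to take scalar values on $\Cent_K(K_0)$, and hence, by hypothesis~(ii)(1), on $H=V\cap K$; thus $\Res^K_H(\xi)=\xi(1)\zeta$ with $\zeta\in\Irr(H)$ linear, and this $\zeta$ is the unique constituent appearing in hypothesis~(ii)(2). By the very definition of $\Zent(\xi)$ the restriction $\cD|_{\Zent(\xi)}$ is scalar as well, say $\cD|_{\Zent(\xi)}=\lambda\cdot\mathrm{Id}$ with $\lambda\in\Irr(\Zent(\xi))$ linear. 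In addition I would note that $M_\xi=KV_\xi$ (if $m=kv$ with $k\in K$, $v\in V$ via $M=KV$, then $v=k^{-1}m$ lies in $M_\xi$ exactly when $m$ does, as $K\le M_\xi$), and that $V_\xi$ normalises $K_0$, $H$ and $\Zent(\xi)$ and fixes each of $\xi_0$, $\zeta$, $\lambda$, all of these being attached canonically to $\xi$; in particular $V_\xi\le V_{\xi_0}\cap V_\zeta$.

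Next I would fix, using hypothesis~(iii) and conjugating if necessary so that its restriction to $K_0$ is \emph{equal} to $\cD|_{K_0}$, a representation $\cD'$ of $K_0\rtimes\epsilon(V)_{\xi_0}$ with $\cD'|_{K_0}=\cD|_{K_0}$, and then set
\[
\cE\colon V_\xi\longrightarrow\GL_{\xi(1)}(\C),\qquad \cE(v):=\wt\zeta(v)\,\cD'(\epsilon(v)),
\]
where $\cD'(\epsilon(v))$ denotes the value of $\cD'$ on the image of $v$ in the complement $\epsilon(V)_{\xi_0}$; this is a representation of $V_\xi$ (a product of the linear character $\wt\zeta$ of hypothesis~(ii)(2) with $\cD'\circ\epsilon$), and $\cE|_H=\cD|_H$ because $\epsilon$ kills $H$ and $\wt\zeta$ extends $\zeta$. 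The heart of the argument is then the relation
\[
\cE(v)\,\cD(k)\,\cE(v)^{-1}=\cD({}^{v}k)\qquad(v\in V_\xi,\ k\in K),
\]
which I would prove by writing $k=zk_0$ with $z\in\Zent(\xi)$ and $k_0\in K_0$ (this uses hypothesis~(i)): after cancelling the scalar $\wt\zeta(v)$, the identity on the factor $k_0$ is exactly the statement that $\cD'$ is a representation of the semidirect product $K_0\rtimes\epsilon(V)_{\xi_0}$ (together with $\cD'|_{K_0}=\cD|_{K_0}$ and ${}^{v}k_0\in K_0$), while on the factor $z$ both sides equal the scalar $\lambda(z)\,\mathrm{Id}=\lambda({}^{v}z)\,\mathrm{Id}$ since $V_\xi$ fixes $\lambda$.

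With these in hand, the standard construction of a representation of $G=KV_\xi$ with normal subgroup $K$ applies: from $\cD$ on $K$, the representation $\cE$ of $V_\xi$ with $\cE|_{K\cap V_\xi}=\cD|_{K\cap V_\xi}$ (here $K\cap V_\xi=H$) and the compatibility relation above, the rule $\wt\cD(vk):=\cE(v)\cD(k)$ is well defined and multiplicative, so it defines a representation of $M_\xi=KV_\xi$ restricting to $\cD$ on $K$; since it has degree $\xi(1)$ it affords an extension of $\xi$ to $M_\xi$, and unwinding the definitions shows it is of the form~\eqref{wtcD}. I expect the one genuine obstacle to be establishing the compatibility relation over all of $K$ rather than just over $K_0$: outside $K_0$ everything has to be absorbed into scalars, which is precisely why the hypothesis $K=\Zent(\xi)K_0$, together with the scalarity of $\cD$ on $\Cent_K(K_0)\supseteq H$ and on $\Zent(\xi)$, cannot be dispensed with. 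The remaining points — well-definedness of $\cE$ and of $\wt\cD$ across the overlaps $H=K\cap V$ and $\Zent(\xi)\cap K_0$, and the $V_\xi$-invariance of $\zeta$, $\xi_0$ and $\lambda$ — are routine.
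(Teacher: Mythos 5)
The paper does not itself prove this proposition; it is quoted verbatim from \cite[Prop.~2.6]{SpaethtypeD} and no argument is reproduced here, so there is no in-paper proof to compare against. What I can say is that your blind proof is correct and, as far as I can tell, complete. The decisive observations are all present: $\cD$ is scalar on $H$ by Schur's lemma (because $\Res^K_{K_0}\xi$ is irreducible and $H\le\Cent_K(K_0)$) and is scalar on $\Zent(\xi)$ by definition, giving linear characters $\zeta$ and $\lambda$; $V_\xi$ stabilises $\xi_0$, $\zeta$ and $\lambda$ because these are attached canonically to $\xi$ and $V_\xi$ normalises $K_0$, $H$ and $\Zent(\xi)$; after normalising $\cD'$ so that $\cD'|_{K_0}=\cD|_{K_0}$, the map $\cE(v)=\wt\zeta(v)\cD'(\epsilon(v))$ is a representation of $V_\xi$ that agrees with $\cD$ on $H=V_\xi\cap K$; and the intertwining relation $\cE(v)\cD(k)\cE(v)^{-1}=\cD({}^{v}k)$ is checked correctly via the factorisation $K=\Zent(\xi)K_0$ from hypothesis~(i). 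Those four facts are precisely what is needed for $\wt\cD(vk):=\cE(v)\cD(k)$ to be a well-defined homomorphism on $M_\xi=V_\xi K$ extending $\cD$.

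Two small notational remarks that are issues with the statement as transcribed, not with your argument. First, the stated formula has $\wt\cD(kv)=\wt\zeta(v)\cD'(\rho(v))\cD(k)$; as written this cannot define a homomorphism, since for a representation one would have $\wt\cD(kv)=\cD(k)\cE(v)$, not $\cE(v)\cD(k)$. Your version $\wt\cD(vk)=\cE(v)\cD(k)$ is the consistent one, and the discrepancy is evidently a transcription slip (the same applies to $\rho$ versus $\epsilon$ and to $V_\lambda$ versus $V_\xi$, which are leftovers of the notation of the cited source). Second, you implicitly use $V_\xi\cap K=H$ in the well-definedness step; this is indeed correct since $H\le K\le M_\xi$ forces $H\le V_\xi$, and $V_\xi\cap K\le V\cap K=H$, but it is worth stating explicitly.
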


\subsection{An extension map with respect to $L\lhd N$}
Let $\lambda\vdash \ul n$ satisfying condition (ii) of Proposition~\ref{PartwStableRootSubsys}.
Set $\wt{L}:=\wtbL_{\lambda}^{vF}$, $L:=\wt{L}\cap \bG^{vF}$ and $N=\NNN_{\bG^{vF}}(\bL)$.
All the notations from Section~\ref{dSplitLevi} will be used without further reference.
The aim in this section is to prove Theorem~\ref{AMTypeA} via the following stronger statement.
\begin{thm}\label{NEquivExt}
Let $\wh{N}:=NE$.
Then there exists an extension map $\Lambda$ with respect to $L\lhd N$ such that
\begin{asslist}
\item $\Lambda$ is $\wh{N}$-equivariant,
\item there exists a set $\mathcal{T}\subseteq \Irr (L)$ which contains an $\wt{L}$-transversal of $\Irr (L)$ such that if $D$ is non-cyclic, then every character $\chi\in \mathcal{T}$ has an extension $\wh{\chi}\in\Irr (\wh{N}_{\chi})$ with $\Res_{N_{\chi}}^{\wh{N}_{\chi}}(\wh{\chi})=\Lambda(\chi)$ and $v\wh{F}\in \ker (\wh{\chi})$.
\end{asslist}
\end{thm}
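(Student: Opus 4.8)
The plan is to generalise to arbitrary $d$-split Levi subgroups the argument of \cite[\S5]{CabSpCharTypeA} (and its McKay-type analogue \cite[\S4]{CabSpIMTypeC}): one reduces, by the Clifford theory among $L_0:=[L,L]\le L\le\wt L$ established in Lemmas~\ref{CliffCorrRest}, \ref{InertiaL} and \ref{InterialNChi0} and in Corollary~\ref{IntCond}, to constructing compatible extensions of the characters $\chi_0\in\mathbb T$ from \eqref{eq:CharTrans} to the subgroup $\wh N_{\chi_0}$, and for these one uses the $\GL$-factor decomposition of $L_0$ together with Theorem~\ref{StarCondSL}, the extension map of Corollary~\ref{ExtK0ToS}, and the gluing tool Proposition~\ref{prop_tool}. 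More precisely: given $\chi\in\Irr(L)$, after replacing $\chi$ by an $\wh N$-conjugate we may assume $\Res^L_{L_0}(\chi)$ has a constituent $\chi_0\in\mathbb T$; by Lemma~\ref{CliffCorrRest} the Clifford correspondent $\wt\chi_0\in\Irr(L_{\chi_0}\mid\chi_0)$ of $\chi$ restricts irreducibly to $\chi_0$, by Lemma~\ref{InterialNChi0} we have $\wh N_{\chi_0}=L_{\chi_0}(V_0E)_{\chi_0}S_{\chi_0}$, and by Corollary~\ref{IntCond} the inertia group $N_\chi$ equals $L\,(N_{\chi_0})_{\wt\chi_0}$. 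So it is enough to produce, $\wh N$-equivariantly in $\chi_0$, an extension $\Lambda_0(\chi_0)$ of $\chi_0$ to $\wh N_{\chi_0}$ with $v\wh F$ in the kernel (note that $v\wh F$ induces the Frobenius $vF$, which acts trivially on $N$); the passage back to $\wt\chi_0$, and then via induction to $N_\chi$, is routine using Lemma~\ref{InertiaL}, Gallagher's theorem, and the cyclicity of $\wt L/L$.

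To build $\Lambda_0(\chi_0)$, write $\chi_0=\chi_{\lambda_0}\times\prod_{\ol\mu\in\ol{\lambda'}}\chi_{\ol\mu}$ along $L_0=L_{\lambda_0}\times\prod_{\ol\mu}L_{\ol\mu}$. First I would extend along the ``diagonal'' subgroup $\langle L_0,V_0\rangle E$: since $v_{\ol\mu}$ acts on $\wt L_{\ol\mu}$ exactly as $v$ and trivially on the other factors, and the transversals $\mathbb T_{\ol\mu}\subseteq\Irr(L_{\ol\mu})$ were chosen by applying Theorem~\ref{StarCondSL} to $L_{\ol\mu}\lhd\wt L_{\ol\mu}$ with the full automorphism group (which contains $v_{\ol\mu}$ and $E$), every $\chi_{\ol\mu}$ extends to $L_{\ol\mu}(\langle v_{\ol\mu}\rangle E)_{\chi_{\ol\mu}}$ with $v\wh F$ in the kernel; the outer tensor product of these extensions, together with one of $\chi_{\lambda_0}$, extends $\chi_0$ to $(L_0V_0E)_{\chi_0}$. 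Then I would glue over the permutation part $S$: the relation $\chi_{\ol\mu}^{s_{\ol\mu,\ol\mu'}}=\chi_{\ol\mu'}$ built into $\mathbb T$, together with the extension map for $L\cap S\lhd S$ from Corollary~\ref{ExtK0ToS}, furnishes exactly the hypotheses of Proposition~\ref{prop_tool} (with $K_0=L_0$, with $K$ the stabiliser reached so far, with $V$ containing $S$, and with $H=V\cap K\le\Cent_K(L_0)$), producing the desired extension $\Lambda_0(\chi_0)\in\Irr(\wh N_{\chi_0})$. The construction is $\wh N$-equivariant because the $\mathbb T_{\ol\mu}$ and the extension map on $S$ were fixed equivariantly, and $v\wh F$ stays in the kernel at each step.

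Finally one defines $\Lambda$ on all of $\Irr(L)$ by transport along $\wh N$-conjugacy, verifies that this is a well-defined $\wh N$-equivariant extension map with respect to $L\lhd N$, and takes $\mathcal T$ to be an $\wh N$-stable set of representatives containing an $\wt L$-transversal; the extensions $\wh\chi\in\Irr(\wh N_\chi)$ with $\Res^{\wh N_\chi}_{N_\chi}(\wh\chi)=\Lambda(\chi)$ and $v\wh F\in\ker\wh\chi$ are obtained from $\Lambda_0(\chi_0)$ exactly as $\Lambda(\chi)$ was, only without restricting away the contribution of $E$.

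The main obstacle is the gluing in the second step: one has to check that the local data on the $\GL_{|\mu|}(\epsilon q^{d_0})$-factors, on the field automorphisms $v_{\ol\mu}$, on the graph automorphism $\gamma$, and on the transpositions $s_{\ol\mu,\ol\mu'}$ really do assemble to a single character of $\wh N_{\chi_0}$ (i.e.\ that the relevant $2$-cocycle is trivial), while simultaneously $v\wh F$ is forced into the kernel at every invocation of Proposition~\ref{prop_tool}. The hypothesis that $D$ be non-cyclic enters precisely in the compatibility with the $C_2$-factor of $E$ generated by $\gamma$: for the cyclic-defect configurations the graph automorphism may fail to fix the extensions chosen on the $\SL_{|\mu|}(\epsilon q^{d_0})$-factors, and the extension to $\wh N_\chi$ with $v\wh F$ in the kernel can then break down, so these cases must be excluded --- exactly as in the corresponding step of \cite[\S4]{CabSpIMTypeC} and \cite[Thm~4.2]{CS_typeBE}. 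This is where I would expect the bulk of the technical work to lie.
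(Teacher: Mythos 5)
Your overall strategy is the paper's: reduce via $\wh N$-conjugacy to $\chi_0\in\mathbb T$, use Lemmas~\ref{CliffCorrRest}, \ref{InertiaL}, \ref{InterialNChi0} and Corollary~\ref{IntCond} to reduce the inertia group to a manageable form, first extend along the $V_0E$-direction using Theorem~\ref{StarCondSL}, then glue over the symmetric part $S$ via Corollary~\ref{ExtK0ToS} and Proposition~\ref{prop_tool}, and finally transport along $\wh N$. But there is a concrete gap precisely at the step you flag as ``where the bulk of the technical work lies'': you propose to apply Proposition~\ref{prop_tool} with $K_0=L_0$ and ``$K$ the stabiliser reached so far,'' but that cannot work as stated, because $L_0\cap S$ can be a \emph{proper} subgroup of $L\cap S$. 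Proposition~\ref{prop_tool}(ii)(1) requires $H=V\cap K\leq\Cent_K(K_0)$ with $V$ a complement to $K$ in $M$, and for the group $S$ this forces one to work not with $L_0$ but with an enlarged subgroup. The paper's proof resolves this by adjoining the group $Z_2$ of central involutions $-\mathrm{Id}_{\wt L_{\ol\mu}}$, setting $\wt K=\langle L_0,Z_2\rangle$ and $K=L\cap\wt K$, which is engineered so that $K\cap S=L\cap S$; the relevant character is then $\psi:=\Res_K^{L_{\chi_0}}(\wt\chi_0)$, not $\chi_0$ itself, and one proves the lemma that an extension of $\psi$ to $M_\psi$ lifts to an extension of $\chi$ to $\wh N_\chi$. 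Without this intermediate $K$, the hypothesis $K=\Zent(\xi)K_0$ and the centralising condition for $H$ fail and the gluing does not go through.

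A second, smaller point: the paper also needs a non-obvious stability argument (the computation with $\tau$ and $\tau\cdot\det_{Z_2}$, and the conclusion $S_\phi=S_\psi=S_\eta$) that is again a by-product of the $Z_2$-construction and is invisible in your account. Finally, your guess that the ``$D$ non-cyclic'' hypothesis is what controls compatibility with the $C_2$-factor of $E$ is not what the paper does: the proof of Theorem~\ref{NEquivExt} constructs the extension without ever invoking the hypothesis on $D$, so the restriction is carried for compatibility with the framework of Theorem~\ref{MainThmReq} and \cite[Thm~4.2]{CS_typeBE} (where the cyclic case needs a separate treatment at the level of the relative Weyl group), not because the graph automorphism obstructs the Levi-level extension here.
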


Clearly $N$ is normalised by $\wt{L}$ and therefore it suffices to produce an extension for characters in a $\wt{L}$-transversal of $\Irr (L)$.
Furthermore, to prove Theorem~\ref{NEquivExt} it suffices to define $\Lambda$ on a $\wh{N}$-transversal in $\Irr (L)$ and the remaining values of $\Lambda$ can be constructed using $\wh{N}$-conjugation.
In particular for the remainder of this section we assume that $\chi\in\Irr (L)$ and $\chi_0\in \Irr (\Res^L_{L_0}(\chi))\cap \mathbb{T}$, where $\mathbb{T}$ is taken from Equation (\ref{eq:CharTrans}).
Let $\wt{\chi}_0\in \Irr (L_{\chi_0}\mid \chi_0)$ with $\Ind_{L_{\chi_0}}^L(\wt{\chi}_0)=\chi$.
Then Lemma~\ref{CliffCorrRest} shows that $\wt{\chi}_0$ restricts to $\chi_0$. 

Consider the structure of $\wh{N}_{\chi}$.
As $L_{\chi_0}\lhd L$ and $\wh{N}_{\chi}$ permutes the $L$-conjugates of $\chi_0$, it follows that $L_{\chi_0}\lhd \wh{N}_{\chi}$.
Hence $\wh{N}_{\chi}\leq L(\wh{N}_{\chi_0})_{\wt{\chi}_0}$. 
However $\wt{\chi}_0$ induces to $\chi$ and $L_{\chi_0}\lhd \wh{N}_{\chi_0}$, therefore $(\wh{N}_{\chi_0})_{\wt{\chi}_0}\leq \wh{N}_{\chi}$.
Thus
\[
\wh{N}_{\chi}=L(\wh{N}_{\chi_0})_{\wt{\chi}_0}.
\]
Furthermore for $\wh{V}_0:=V_0E$, Lemma~\ref{InterialNChi0} shows that
\[
(\wh{N}_{\chi_0})_{\wt{\chi}_0}=L_{\chi_0}\left( (\wh{V}_0)_{\chi_0}S_{\chi_0}\right)_{\wt{\chi}_0}.
\]

\subsubsection{A useful subgroup of $L_{\chi_0}$ containing $L_0$}

As $L_0\cap S$ can be a proper subgroup of $L\cap S$, we need to consider a subgroup $L_0\leq K\leq L_{\chi_0}$ such that $K\cap S=L\cap S$.
Define $-{\rm Id}_{\wt{L}_{\ol{\mu}}}$ to be the unique central element of order two in $\wt{L}_{\ol{\mu}}$ and set
\[
Z_2:=\GenGp{ -{\rm Id}_{\wt{L}_{\ol{\mu}}} \mid \ol{\mu}\in \ol{\lambda}}.
\]
In addition define $K_{\ol{\mu}}:=\GenGp{ L_{\ol{\mu}}, L_{\ol{\mu}}\cap Z_2}$ and
\[
\wt{K}:=\GenGp{ L_0,Z_2}=K_{\lambda_0}\times \prod_{\ol{\mu}\in\ol{\lambda}'} K_{\ol{\mu}}.
\]
Then for $K:=L\cap \wt{K}=\ker(\det_{\wt{K}})$, it follows that $K\cap S=L\cap S$.
Set $\psi:=\Res_{K}^{L_{\chi_0}}(\wt{\chi}_0)\in \Irr (K)$ and let $\wt{\psi}\in \Irr (\wt{K})$ be an extension of $\psi$.
The restriction of $\wt{\psi}$ to $Z_2$ determines the extension of $\chi_0$ to $\wt{K}$.
However $[Z_2,\wh{V}_0]=1$ and therefore $\wh{V}_0$ fixes the restriction of $\wt{\psi}$ to $Z_2$.
Thus $(\wh{V}_0)_{\wt{\psi}}=(\wh{V}_0)_{\chi_0}$ and hence
\[
(\wh{V}_0)_{\psi}=(\wh{V}_0)_{\chi_0}.
\]

\begin{lm}
Let $M:=K\wh{V}_0S$.
If $\psi$ extends to $M_{\psi}$, then $\chi$ extends to $\wh{N}_{\chi}$.
\begin{proof}
By construction $\wh{N}_{\chi}=L(L_{\chi_0}M_{\chi_0})_{\wt{\chi}_0}$.
Moreover $L_{\chi_0}\cap (M_{\chi_0})_{\wt{\chi}_0}=K$ and $(M_{\chi_0})_{\wt{\chi}_0}\leq M_{\psi}$.
Thus an extension of $\psi$ to $M_{\psi}$ gives an extension $\phi$ of $\wt{\chi}_0$ to $(L_{\chi_0}M_{\chi_0})_{\wt{\chi}_0}$ by \cite[Lemma 4.1]{SpSylowTori2}.
Moreover, by the Mackey formula, 

\[
\Res_L^{\wh{N}_{\chi}}\left( \Ind_{(L_{\chi_0}M_{\chi_0})_{\wt{\chi}_0}}^{\wh{N}_{\chi}}(\phi)\right) =\Ind_{L_{\chi_0}}^L(\wt{\chi})=\chi.
\qedhere
\]
\end{proof}
\end{lm}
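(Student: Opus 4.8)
The plan is to obtain the extension of $\chi$ to $\wh N_\chi$ by first extending its Clifford correspondent $\wt\chi_0$ and then inducing, checking along the way that the relevant subgroups meet $L$ in the expected places.

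First I would rewrite $\wh N_\chi$ in terms of $M$. Using the identities already at hand — $\wh N_\chi=L(\wh N_{\chi_0})_{\wt\chi_0}$, the decomposition $(\wh N_{\chi_0})_{\wt\chi_0}=L_{\chi_0}\bigl((\wh V_0)_{\chi_0}S_{\chi_0}\bigr)_{\wt\chi_0}$ coming from Lemma~\ref{InterialNChi0}, the splitting $(\wh V_0 S)_{\chi_0}=(\wh V_0)_{\chi_0}S_{\chi_0}$ forced by the transversal property of $\mathbb T$, and $K\leq L_{\chi_0}$ — one gets $M_{\chi_0}=K(\wh V_0)_{\chi_0}S_{\chi_0}$, hence $L_{\chi_0}M_{\chi_0}=L_{\chi_0}(\wh V_0)_{\chi_0}S_{\chi_0}$ and therefore $(L_{\chi_0}M_{\chi_0})_{\wt\chi_0}=(\wh N_{\chi_0})_{\wt\chi_0}$, so that $\wh N_\chi=L(L_{\chi_0}M_{\chi_0})_{\wt\chi_0}$. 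I would also record the two intersection identities $L\cap (L_{\chi_0}M_{\chi_0})_{\wt\chi_0}=L_{\chi_0}$ and $L_{\chi_0}\cap (M_{\chi_0})_{\wt\chi_0}=K$: any element of $L$ lying in $M_{\chi_0}$ automatically fixes $\chi_0$, hence lies in $L_{\chi_0}$, and the remaining bookkeeping on the $S$-part is controlled by the defining property $K\cap S=L\cap S$ of $K$ together with the explicit generators of $V_0$ and $S$ from Proposition~\ref{StructN}.

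Next I would extend $\wt\chi_0$. Since $M_{\chi_0}$ permutes the $L$-conjugates of $\chi_0$ and fixes $\chi_0$, it normalizes $L_{\chi_0}$, so Dedekind's modular law gives $(L_{\chi_0}M_{\chi_0})_{\wt\chi_0}=L_{\chi_0}(M_{\chi_0})_{\wt\chi_0}$. The character $\psi=\Res^{L_{\chi_0}}_K(\wt\chi_0)$ is irreducible, being sandwiched between $\chi_0$ and $\wt\chi_0$, both of which restrict irreducibly by Lemma~\ref{CliffCorrRest}; by hypothesis it extends to $M_\psi$, and since $(M_{\chi_0})_{\wt\chi_0}$ fixes $\wt\chi_0$ it fixes $\psi$, so $(M_{\chi_0})_{\wt\chi_0}\leq M_\psi$ and $\psi$ extends to $(M_{\chi_0})_{\wt\chi_0}$. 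Now the patching lemma \cite[Lemma 4.1]{SpSylowTori2}, applied to $K\lhd L_{\chi_0}$, the character $\wt\chi_0$, the overgroup $(L_{\chi_0}M_{\chi_0})_{\wt\chi_0}=L_{\chi_0}(M_{\chi_0})_{\wt\chi_0}$, and the identity $L_{\chi_0}\cap (M_{\chi_0})_{\wt\chi_0}=K$, produces an extension $\phi$ of $\wt\chi_0$ to $(L_{\chi_0}M_{\chi_0})_{\wt\chi_0}$.

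Finally I would induce $\phi$ up to $\wh N_\chi$. Because $\wh N_\chi=L(L_{\chi_0}M_{\chi_0})_{\wt\chi_0}$ there is a single $\bigl(L,(L_{\chi_0}M_{\chi_0})_{\wt\chi_0}\bigr)$-double coset, so the Mackey formula yields
\[
\Res_L^{\wh N_\chi}\!\left(\Ind_{(L_{\chi_0}M_{\chi_0})_{\wt\chi_0}}^{\wh N_\chi}(\phi)\right)=\Ind_{L\cap (L_{\chi_0}M_{\chi_0})_{\wt\chi_0}}^L\!\bigl(\Res(\phi)\bigr)=\Ind_{L_{\chi_0}}^L(\wt\chi_0)=\chi .
\]
As this restriction is the irreducible character $\chi$, the induced character is itself irreducible and is the sought extension of $\chi$ to $\wh N_\chi$. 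I expect the only genuinely delicate point to be verifying the two intersection identities $L\cap (L_{\chi_0}M_{\chi_0})_{\wt\chi_0}=L_{\chi_0}$ and $L_{\chi_0}\cap (M_{\chi_0})_{\wt\chi_0}=K$ — that is, that enlarging $L_0$ to $K$ precisely so that $K\cap S=L\cap S$ makes the ``complement'' $(M_{\chi_0})_{\wt\chi_0}$ meet $L_{\chi_0}$ in exactly $K$; the rest is formal manipulation with the explicit subgroups $V_0$ and $S$ of Proposition~\ref{StructN}.
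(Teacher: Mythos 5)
Your proposal is correct and follows essentially the same route as the paper's proof: rewrite $\wh N_\chi$ as $L(L_{\chi_0}M_{\chi_0})_{\wt\chi_0}$, use \cite[Lemma 4.1]{SpSylowTori2} with the intersection identity $L_{\chi_0}\cap (M_{\chi_0})_{\wt\chi_0}=K$ to build an extension $\phi$ of $\wt\chi_0$, and conclude by inducing and applying Mackey. You merely make explicit several steps the paper leaves tacit (the Dedekind modular law, the irreducibility of $\psi=\Res^{L_{\chi_0}}_K(\wt\chi_0)$, and the intersection identity $L\cap (L_{\chi_0}M_{\chi_0})_{\wt\chi_0}=L_{\chi_0}$ justifying the single-double-coset Mackey computation).
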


\subsubsection{Extending $\psi$ to $M_{\psi}$}

First we observe that $M_{\psi}\leq M_{\chi_0}$ and so
\[
M_{\psi}=K(\wh{V}_0)_{\psi}S_{\psi}.
\]

For each $\ol{\mu}\in\ol{\lambda}$, define a group 
\[
E_{\ol{\mu}}:=\GenGp{ \wh{F}_{\ol{\mu}},\gamma_{0,\ol{\mu}}}
\]
where $\wh{F}_{\ol{\mu}}$ and $\gamma_{0,\ol{\mu}}$ act as $\wh{F}$ and $\gamma_0$ on $\wt{L}_{\ol{\mu}}$, while $[E_{\ol{\mu}},\wt{L}_{\ol{\mu}'}]=1$ whenever $\ol{\mu}\ne \ol{\mu}'$.
Then the group
\[
E_0:=E_{\lambda_0}\times\prod\limits_{\ol{\mu}\in\ol{\lambda}'} E_{\ol{\mu}}
\]
contains $E$ as a diagonally embedded subgroup.
Set $\wh{V}_{\ol{\mu}}=V_{\ol{\mu}}E_{\ol{\mu}}$.
Then by Theorem~\ref{StarCondSL} and the choice of $\chi_0\in \mathbb{T}$, each factor $\chi_{\ol{\mu}}$ of $\chi_0$ extends to $L_{\ol{\mu}}(\wh{V}_{\ol{\mu}})_{\chi_{\ol{\mu}}}/\GenGp{v_{\ol{\mu}}\wh{F}_{\ol{\mu}}}$.
Hence $\chi_{\ol{\mu}}$ extends to some $\phi_{\ol{\mu}}\in \Irr \left( L_{\ol{\mu}}(\wh{V}_{\ol{\mu}})_{\chi_{\ol{\mu}}} \right)$ such that $v_{\ol{\mu}}\wh{F}_{\ol{\mu}}\in \ker (\phi_{\ol{\mu}})$.
Furthermore, these extensions can be taken so that, $\phi_{\ol{\mu}}^{s_{\ol{\mu},\ol{\mu}'}}=\phi_{\ol{\mu}'}$ whenever $\chi_{\ol{\mu}}^{s_{\ol{\mu},\ol{\mu}'}}=\chi_{\ol{\mu}'}$.
Therefore 
\[
\phi_0:=\phi_{\lambda_0}\times \prod\limits_{\ol{\mu}\in\ol{\lambda}'} \phi_{\ol{\mu}}
\]
provides an extension of $\chi_0$ to $L_0(V_0E_0)_{\chi_0}$ with $v\wh{F}\in \ker (\phi_0)$ and $S_{\chi_0}=S_{\phi_0}$.

As $\wt{K}$ is the central product of $L_0$ and $Z_2$, there is a character $\tau \in \Irr (Z_2)$ such that $\wt{\psi}(lz)=\chi_0(l)\tau(z)$.
Furthermore, $\wt{K}(V_0E_0)_{\wt{\psi}}$ is the central product of $Z_2$ and $L_0(V_0E_0)_{\chi_0}$ and therefore $\wt{\phi}:=\phi_0.\tau$ defines an extension of $\wt{\psi}$.
Set $\phi:=\Res_{K(V_0E_0)_{\psi}}^{\wt{K}(V_0E_0)_{\wt{\psi}}}(\wt{\phi})$ which then forms an extension of $\psi$.
Thus $\eta:=\Res_{K(\wh{V}_0)_{\psi}}^{K(V_0E_0)_{\psi}}(\phi)$ provides an extension of $\psi$ to $ K(\wh{V}_0)_{\psi}$ and by construction contains $v\wh{F}$ in its kernel.

It is clear that $S_{\phi}\leq S_{\psi}$.
Therefore let $s\in S_{\psi}$ and consider $\phi^s$.
As $\wt{\psi}^s$ is also an extension of $\psi$, it follows that $\wt{\psi}^s=\chi_0.\tau$ or $\chi_0.(\tau\cdot \det_{Z_2})$.
However $S_{\psi}\leq S_{\chi_0}$ and therefore $\wt{\psi}^s=\chi_0.\tau^s$ and so $\tau^s=\tau$ or $\tau\cdot \det_{Z_2}$.
Hence $\wt{\phi}^s=\phi_0.\tau$ or $\phi_0.(\tau\cdot \det_{Z_2})$ and thus
\[
\phi^s=\Res_{K(V_0E_0)_{\psi}}^{\wt{K}(V_0E_0)_{\wt{\psi}}}(\wt{\phi}^s)=\Res_{K(V_0E_0)_{\psi}}^{\wt{K}(V_0E_0)_{\wt{\psi}}}(\wt{\phi})=\phi.
\]
In particular, $S_{\phi}=S_{\psi}=S_{\eta}$.

By construction, $K=L_0\Zent(K)$ with $S\cap K\leq \Zent(K)$.
Therefore $\Res^K _{S\cap K}(\psi)=m\zeta$ for some $m\in \N$ and $\zeta\in \Irr\left( \Res_{S\cap K}^K(\psi)\right)$.
Furthermore, $\zeta$ extends to a character $\wt{\zeta}\in \Irr (S_{\zeta})$ by Corollary~\ref{ExtK0ToS}.
As
\[
L_0\rtimes \rho(S)=L_{\lambda_0}\times \prod_{f=1}^n \left( L_f\rtimes \rho(S_f) \right),
\]
it follows that $\chi_0$ extends to $L_0\rtimes \rho(S_{\chi_0})$ by 
\cite[Corrollary 10.2]{NavMcKay}.
Hence by Proposition \ref{prop_tool}, $\psi$ extends to $KS_{\psi}$.

Observe that $L\wh{V}_0\cap S\leq K$ and therefore $K\wh{V}_0\cap KS=K(K\wh{V}_0\cap S)=K$.
As $\psi$ extends to $KS_{\psi}$, \cite[Lemma 4.1]{SpSylowTori2} implies that $\eta$ extends to $\left( K(\wh{V}_0)_{\psi}\right) \left(KS\right)_{\eta}=K(\wh{V}_0)_{\psi}S_{\psi}$.
Hence $\psi$ extends to $M_{\psi}$ with $v\wh{F}$ contained in its kernel.
This completes the proof of Theorem~\ref{NEquivExt}.

\subsection{Characters of the quotient $N/L$}
Fix $\lambda\vdash \ul n$ satisfying condition (ii) of Proposition~\ref{PartwStableRootSubsys}, $\wt L:=\wtbL_{\lambda}^{vF}$, $L:={\wt{L}}\cap \bG^{vF}$ and $N=\NNN_{\bG^{vF}}(\bL_\lambda)$.
All the notations from Section~\ref{dSplitLevi} will be used without further reference.
The aim in this section is to study characters of certain subgroups of the relative Weyl group $W:=N/L$.

Let $\chi\in \Irr (L)$ and $\wt{\chi}\in\Irr (\wt L_{\chi}\mid \chi)$.
The quotient $\wt L/L$ is cyclic and therefore $\Res^{\wt L_\chi}_L(\wt{\chi})=\chi$.
Moreover, $\ol{\chi}:=\Ind_{\wt L_{\chi}}^{\wt L}(\wt{\chi})\in \Irr (\wt L)$.
Because $\wt L_{\chi}\lhd N$, we see that $N_{\wt{\chi}}\leq N_{\ol{\chi}}\leq \wt LN_{\wt{\chi}}$.
However all elements in $N$ have determinant one and therefore $N_{\ol{\chi}}=LN_{\wt{\chi}}=N_{\wt{\chi}}$.
Furthermore, if $x\in N_{\chi}$, then $\wt{\chi}^x=\wt{\chi}\beta$ for some $\beta\in \Irr (\wt L_{\chi}/L)$.
However, $\beta$ is a power of the determinant homomorphism and therefore $N_{\wt{\chi}}\lhd N_{\chi}$.

\begin{prop}\label{RelWeylExt}
Let $\chi\in \Irr (L)$, $\wt{\chi}\in \Irr (\wt L_{\chi}\mid \chi)$,
\[
W_{\chi}:=N_{\chi}/L,\indent W_{\wt{\chi}}:=N_{\wt{\chi}}/L,\indent W_{\ol{\chi}}:=N_{\ol{\chi}}/L, \indent K:=\NNN_W(W_{\chi},W_{\wt{\chi}})
\]
and $\eta_0\in \Irr (W_{\wt{\chi}})$.
Then there exists a character $\eta\in \Irr (W_{\chi}\mid \eta_0)$ such that
\begin{asslist}
\item $\{ \eta^w\mid w\in K\}\cap \Irr (W_{\chi}\mid \eta_0)=\{\eta\}$,
\item $\eta$ extends to $K_{\eta}$,
\item $\eta$ has an extension $\wt{\eta}\in \Irr (K_{\eta}\times E)$ with $v\wh{F}\in \ker (\wt{\eta})$.
\end{asslist}
\begin{proof}
For each $f\in \{1,\dots, n\}$ set $W_f=LV_fS_f/L$, $\wh{Z}_f=LV_f/L$ and $\wh{S}_f=LS_f/L$.
Then $\wh{Z}_f\cong (C_{d_0})^{t_f}$, $\wh{S}_f\cong \Symm_{t_f}$ and $W_f=\wh{Z}_f\rtimes \wh{S}_f$.
Moreover $W=\prod\limits_{f=1}^n W_f$ and for $\ol{\chi}_f:=\Res_{\wt L_f}^{\wt L}(\ol{\chi})$ it follows that
\[
W_{\ol{\chi}}=\prod\limits_{f=1}^n (W_f)_{\ol{\chi}_f}.
\]
Therefore $\eta_0\in \Irr (W_{\ol{\chi}})$ can be written as $\eta_0=\prod\limits_{f=1}^n \eta_{0,f}$, where $\eta_{0,f}:=\Res_{(W_f)_{\ol{\chi}_f}}^{(W)_{\ol{\chi}}}(\eta_0)$.

After suitable $V_f$-conjugation, the character $\ol{\chi}_f$ has a stabiliser in $W_{\ol{\lambda}_f}$ with the following description: there exists positive integers $r_f, d_{f,j}$, $a_{f,j}$ with $1\leq j\leq r_f$ and a partition $M_{f,1},\dots,M_{f,r_f}$ of $\{1,\dots, t_f\}$ such that $|M_{f,j}|=a_{f,j}$ and
\[
W_{f,\ol{\chi}_f}=\left\{ \left( (\zeta_1,\dots,\zeta_{t_f}),\sigma \right) \in C_{d_0}\wr \Symm_{t_f}\mid \sigma(M_{i,j})=M_{i,j} \text{ for all } 1\leq j\leq r_i \text{ and } \zeta_k^{d_{i,j}}=1 \text{ for all } k\in M_{i,j} \right\}. 
\]
The group $W_{f,\ol{\chi}_f}$ is isomorphic to a group as considered in \cite[Proposition~5.12]{CabSpCharTypeA}.
Therefore the following constructions from \cite[Proposition~5.12]{CabSpCharTypeA} are taken:

For $\nu_f\in \Irr \left( \Res_{\wh{Z}_{f,\ol{\chi}_f}}^{(W_f)_{\ol{\chi}_f}}(\eta_{0,f})\right)$, there exists an extension $\psi_f\in\Irr \left(\wh Z_f(W_{f,\ol{\chi}_f})_{\nu_f}\right)$ with $(S_{f,\ol{\chi}_f})_{\nu_f}\in \ker (\psi_f)$.
Furthermore there exists another character $\wt{\kappa}_f\in\Irr \left(\wh{Z}_f(W_{f,\ol{\chi}_f})_{\nu_f}\right)$ with $\wh{Z}_f\in \ker (\wt{\kappa}_f)$ so that the character
\[
\wt{\eta}_{0,f}:=\Ind_{\wh Z_{f}(W_{f,\ol{\chi}_f})_{\nu_f}}^{\wh Z_{f}W_{f,\ol{\chi}_f}}(\psi_f \wt{\kappa}_f)
\]
satisfies
\[
\Res_{W_{f,\ol{\chi}_f}}^{\wh Z_{f}W_{f,\ol{\chi}_f}}(\wt{\eta}_{0,f})=\eta_{0,f}.
\]
Moreover $\NNN_{W_f}(W_{f,\ol{\chi}_f})_{\eta_{0,f}}\leq \NNN_{W_f}(\wh Z_fW_{f,\ol{\chi}_f})_{\wt{\eta}_{0,f}}$ and $\wt{\eta}_{0,f}$ has an extension   $\phi_f\in \Irr \left( \NNN_{W_f}(\wh Z_fW_{f,\ol{\chi}_f})_{\wt{\eta}_{0,f}}\right)$.

Define $\wt{\eta}_0:=\prod\limits_{f=1}^n \wt{\eta}_{0,f}$ and $\wh Z:=\prod\limits_{f=1}^n \wh Z_f$ so that
\[
\Res_{W_{\ol{\chi}}}^{\wh ZW_{\ol{\chi}}}(\wt{\eta}_0)=\eta_0.
\]
Then
\[
\NNN_W(W_{\ol{\chi}})_{\eta_0}=\prod\limits_{f=1}^n \NNN_{W_f}(W_{f,\ol{\chi}_f})_{\eta_{0,f}}\leq \prod\limits_{f=1}^n \NNN_{W_f}(\wh Z_fW_{f,\ol{\chi}_f})_{\wt{\eta}_{0,f}}= \NNN_W(\wh ZW_{\ol{\chi}})_{\wt{\eta}_0}
\]
and so $\phi:=\prod\limits_{f=1}^n \phi_f\in \Irr \left( \NNN_W(\wh ZW_{\ol{\chi}})_{\wt{\eta}_0}\right)$ is an extension of $\wt{\eta}_0$.
Hence as $W_{\chi}\leq \NNN_W(W_{\ol{\chi}})$, the character
\[
\wt{\eta}_0:=\Res_{(W_{\chi})_{\eta_0}}^{\NNN_W(\wh ZW_{\ol{\chi}})_{\wt{\eta}_0}}(\phi)
\]
is an extension of $\eta_0$ and 
\[
\eta:=\Ind_{(W_{\chi})_{\eta_0}}^{W_{\chi}}(\wt{\eta}_0)\in \Irr (W_{\chi}\mid \eta_0).
\]

If $w\in K$ and $\eta^w\in \Irr (W_{\ol{\chi}}\mid \eta_0)$ then $w\in W_{\chi}K_{\eta_0}$ and hence it can be assumed that $w\in K_{\eta_0}$.
Furthermore, $K_{\eta_0}\leq \NNN_W(\wh ZW_{\ol{\chi}})_{\ol{\eta}_0}$ and so $\phi^w=\phi$.
However this implies that $\wt{\eta}_0^w=\wt{\eta}_0$ as $\wt{\eta}_0$ is the restriction of $\phi$, and thus $\eta^w=\eta$ as $\eta$ arises as the induced character which is fixed by elements of $K_{\eta_0}$, proving the first statement.
Moreover, the observation $K_{\eta}=W_{\chi}K_{\eta_0}$ yields that
\[
\wt{\eta}:=\Ind_{K_{\eta_0}}^{K_{\eta}}\left( \Res_{K_{\eta_0}}^{\NNN_W(\wh ZW_{\ol{\chi}})_{\wt{\eta}_0}} (\phi) \right)\in \Irr (K_{\eta})
\]
is an extension of $\eta$ proving the second statement.
The final property is the same as the property given in \cite[Proposition~5.12]{CabSpCharTypeA} and the proof is the same.
\end{proof}
\end{prop}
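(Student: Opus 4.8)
The plan is to reduce the statement to a factorwise situation indexed by the possible block sizes $f$ occurring in $\lambda'$, and then invoke \cite[Proposition~5.12]{CabSpCharTypeA} on each factor. First I would use the decomposition $W=\prod_{f=1}^n W_f$ with $W_f:=LV_fS_f/L\cong C_{d_0}\wr\Symm_{t_f}$ coming from Proposition~\ref{StructN}. Using $N_{\ol\chi}=N_{\wt\chi}$ (established above), one has $W_{\wt\chi}=W_{\ol\chi}=\prod_f(W_f)_{\ol\chi_f}$ for $\ol\chi_f:=\Res^{\wt L}_{\wt L_f}(\ol\chi)$, so the given character decomposes as $\eta_0=\prod_f\eta_{0,f}$ with $\eta_{0,f}\in\Irr((W_f)_{\ol\chi_f})$, and it suffices to produce the required data on each factor and multiply.

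Second, for each $f$ I would identify $(W_f)_{\ol\chi_f}$ inside $C_{d_0}\wr\Symm_{t_f}$ as a group of exactly the shape treated in \cite[Proposition~5.12]{CabSpCharTypeA}: after a suitable $V_f$-conjugation there should be an integer $r_f$, a partition $M_{f,1},\dots,M_{f,r_f}$ of $\{1,\dots,t_f\}$ with $|M_{f,j}|=a_{f,j}$, and divisors $d_{f,j}\mid d_0$ such that $(W_f)_{\ol\chi_f}$ consists of those $((\zeta_k)_k,\sigma)\in C_{d_0}\wr\Symm_{t_f}$ with $\sigma(M_{f,j})=M_{f,j}$ and $\zeta_k^{d_{f,j}}=1$ for all $k\in M_{f,j}$. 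Extracting these parameters from $\ol\chi_f$, and arranging the $V_f$-conjugation so that it is compatible with the $S$-action fixed in Section~\ref{NotPart} (hence so that the product structure below is genuine), is the point that needs care; I expect this bookkeeping to be the main obstacle, since once it is in place the rest is an application of existing results.

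Third, I would feed each pair $((W_f)_{\ol\chi_f},\eta_{0,f})$ into \cite[Proposition~5.12]{CabSpCharTypeA}. Fixing $\nu_f\in\Irr(\Res_{\wh Z_{f,\ol\chi_f}}^{(W_f)_{\ol\chi_f}}(\eta_{0,f}))$, it produces an extension $\psi_f$ of $\nu_f$ to $\wh Z_f(W_{f,\ol\chi_f})_{\nu_f}$ trivial on $(S_{f,\ol\chi_f})_{\nu_f}$, a complementary character $\wt\kappa_f$ of the same group trivial on $\wh Z_f$, the induced character $\wt\eta_{0,f}:=\Ind(\psi_f\wt\kappa_f)$ restricting to $\eta_{0,f}$, and an extension $\phi_f$ of $\wt\eta_{0,f}$ to $\NNN_{W_f}(\wh Z_fW_{f,\ol\chi_f})_{\wt\eta_{0,f}}$. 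Taking products over $f$, the character $\wt\eta_0:=\prod_f\wt\eta_{0,f}$ restricts to $\eta_0$, the character $\phi:=\prod_f\phi_f$ extends $\wt\eta_0$, and since $W_\chi\leq\NNN_W(W_{\ol\chi})$ I would set $\eta:=\Ind^{W_\chi}_{(W_\chi)_{\eta_0}}(\Res(\phi))\in\Irr(W_\chi\mid\eta_0)$.

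Finally I would verify (i)--(iii). For (i): if $\eta^w\in\Irr(W_\chi\mid\eta_0)$ for some $w\in K$, Clifford theory forces $w\in W_\chi K_{\eta_0}$, so one may take $w\in K_{\eta_0}\leq\NNN_W(\wh ZW_{\ol\chi})_{\wt\eta_0}$ where $\wh Z:=\prod_f\wh Z_f$; then $\phi^w=\phi$, its restriction $\wt\eta_0$ is $w$-fixed, and since $\eta$ is induced from a $K_{\eta_0}$-invariant character we get $\eta^w=\eta$. For (ii): $K_\eta=W_\chi K_{\eta_0}$, so $\Ind^{K_\eta}_{K_{\eta_0}}(\Res(\phi))$ is an extension of $\eta$. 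For (iii): each $\phi_f$ may be chosen so that $v\wh F$, embedded diagonally via $E\hookrightarrow E_0=\prod_f E_{\ol\mu}$, lies in its kernel, by the corresponding kernel assertion in \cite[Proposition~5.12]{CabSpCharTypeA}; this property is preserved under the products and the induction above, giving the required $\wt\eta\in\Irr(K_\eta\times E)$ with $v\wh F\in\ker(\wt\eta)$, exactly as in loc.\ cit.
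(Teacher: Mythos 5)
Your proposal is correct and takes essentially the same route as the paper: the factorwise decomposition $W=\prod_f W_f$, the reduction of each $(W_f)_{\ol\chi_f}$ to the wreath-product configuration of \cite[Proposition~5.12]{CabSpCharTypeA}, and the subsequent product-then-induce construction of $\eta$, together with the verifications of (i)--(iii), all match the paper's argument. The only point you flag as uncertain (arranging the $V_f$-conjugations compatibly with the $S$-action) is likewise left implicit in the paper.
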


The explicit description allows us to see that the relative Weyl groups $ W_\chi$ satisfy the McKay Conjecture. This is applied in the proof of Theorem \ref{thm13a}.
\begin{prop}\label{propMcKayrelWeyl}
The McKay Conjecture holds for $W_\chi$ from \ref{RelWeylExt}, whenever $\chi\in\Irr(L)$.
\end{prop}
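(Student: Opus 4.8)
The plan is to read off the isomorphism type of $W_\chi$ from the analysis already carried out in the proof of Proposition~\ref{RelWeylExt}, and then to reduce the McKay conjecture for $W_\chi$ to its validity for wreath products of the shape $C_d\wr\Symm_a$. First I would unravel the structure. Recall from Proposition~\ref{RelWeylGpPart} that $W=N/L=\prod_{f=1}^n W_f$ with $W_f=\wh Z_f\rtimes\wh S_f\cong C_{d_0}\wr\Symm_{t_f}$, and that, by the proof of Proposition~\ref{RelWeylExt}, the stabiliser of $\ol\chi$ (equivalently of $\wt\chi$) under the natural action of $W$ on $\Irr(\wt L)$ is $W_{\wt\chi}=W_{\ol\chi}=\prod_{f=1}^n W_{f,\ol{\chi}_f}$, where each factor $W_{f,\ol{\chi}_f}$ is isomorphic to a direct product $\prod_{j=1}^{r_f}\bigl(C_{d_{f,j}}\wr\Symm_{a_{f,j}}\bigr)$. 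Moreover, by the paragraph preceding Proposition~\ref{RelWeylExt}, $W_{\wt\chi}\lhd W_\chi$ with $W_\chi/W_{\wt\chi}$ cyclic: it is the image of $N_\chi$ under $x\mapsto \wt\chi^x\wt\chi^{-1}$ inside the cyclic group $\Irr(\wt L_\chi/L)$, so $|W_\chi/W_{\wt\chi}|$ divides $|\wt L/L|$, which divides $q-\epsilon$ and is thus prime to $\ell$ under the hypotheses of Theorem~\ref{thm13a}.

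The second step is the reduction to the pieces. The McKay conjecture is compatible with direct products, since a Sylow $\ell$-subgroup, its normaliser, and the set of $\ell'$-degree irreducible characters all factor through the direct factors; hence it holds for $W_{\wt\chi}$ once it is known for each $C_{d_{f,j}}\wr\Symm_{a_{f,j}}$. For the latter I would invoke the fact --- classical for the symmetric groups and known for these imprimitive wreath products $G(d,1,a)$, which are precisely the relative Weyl groups occurring in $d$-Harish--Chandra theory of $\GL$ --- that the McKay conjecture holds for $C_d\wr\Symm_a$; if a direct argument is wanted, it follows from the parametrisation of $\Irr(C_d\wr\Symm_a)$ by $d$-multipartitions together with the combinatorics of $\ell$-cores, exactly as for $\Symm_a$, after splitting off the abelian $\ell$-part of $C_d$.

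The remaining and, I expect, genuinely delicate step is to pass from $W_{\wt\chi}$ to $W_\chi$ across the cyclic quotient, since the McKay conjecture does not lift automatically along a normal subgroup with cyclic quotient. Two routes seem available. One is to observe that $W_\chi$ is the full stabiliser in $\prod_f W_f$ of an irreducible character of $L$, and to rerun the stabiliser computation of Proposition~\ref{RelWeylExt} while carrying along the ``determinant'' twist that distinguishes $L$ from $\wt L$: this again produces a group of the same controlled shape --- a direct product of groups $C_{d'}\wr\Symm_{a'}$, possibly wreathed further with symmetric groups --- so the second step applies directly and degree-preservation is transparent from the multipartition combinatorics. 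The alternative is to use a $W_\chi$-equivariant McKay bijection for $W_{\wt\chi}$ (available since one has such a bijection, equivariant for the relevant automorphisms, on each wreath-product factor, and equivariance survives direct products) together with the standard Clifford theory of extensions along a cyclic quotient: every $W_{\wt\chi}$-character fixed by a subgroup of $W_\chi/W_{\wt\chi}$ extends to its stabiliser, so the $\ell'$-degree irreducible characters of $W_\chi$, and likewise those of $\NNN_{W_\chi}(P)$ for $P$ a Sylow $\ell$-subgroup, are counted orbit-by-orbit over $W_{\wt\chi}$ in a matching fashion. Either way the bookkeeping for this cyclic $\SL$-versus-$\GL$ twist is the only real subtlety, and assembling the factors over $f$ and $j$ completes the proof.
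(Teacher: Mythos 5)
Your structural analysis of $W_\chi$ is correct, and your plan is in the same spirit as the paper's, but the paper uses a genuinely different decomposition and you leave a gap at exactly the step you flag as delicate. The paper does not try to first establish McKay for $W_{\wt\chi}$ and then cross the cyclic quotient $W_\chi/W_{\wt\chi}$. Instead it takes the abelian normal subgroup $Z=\wh Z\cap W_{\wt\chi}$ of $W_\chi$, checks that every $\mu\in\Irr(Z)$ extends to its stabiliser in $W_\chi$, observes that $(W_\chi)_\mu/Z$ is a cyclic extension of a direct product $S_\mu$ of symmetric groups, and then quotes \cite[Lemma 12.4]{SpSylowTori1} (McKay for cyclic extensions of products of symmetric groups) followed by \cite[Lemma 12.1]{SpSylowTori1} (a relative-McKay reduction along a normal abelian subgroup with the extension property). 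These two lemmas do precisely the bookkeeping that your Route~2 gestures at but does not carry out: they package the needed equivariant bijection and its compatibility with the Sylow $\ell$-normalisers. As written, your proof leaves that passage as a sketch in both Route~1 (rerunning the stabiliser computation, not done) and Route~2 (asserting an equivariant bijection is "available" without constructing it or checking compatibility with $\NNN_{W_\chi}(P)$), so there is a real gap at the step you yourself identify as "the only real subtlety". A minor further point: your remark that $|W_\chi/W_{\wt\chi}|$ is coprime to $\ell$ uses the hypothesis $\ell\nmid(q-\epsilon)$ from Theorem~\ref{thm13a}, which is not part of Proposition~\ref{propMcKayrelWeyl} as stated, and the paper's proof does not rely on it; the argument via $Z$ and \cite[Lemma 12.1 and 12.4]{SpSylowTori1} works without it.
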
 
\begin{proof}
Recall that $W=\wh Z\rtimes \wh S$ and $W_{\wt \chi}=Z\rtimes S$ for an extension $\wt \chi$ of $\chi$ to $\wt L_\chi$.
By construction $W_\chi / W_{\wt \chi}$ is isomorphic to a subgroup of $\wt L_\chi /L$, hence cyclic.
The group $Z\lhd W_\chi$ is abelian and by the constructions given in the previous proof, every character of $Z$ extends to its stabilizer in $ W_\chi$. Let $\mu\in \Irr(Z)$. Then $S_\mu$ is a direct product of symmetric groups and $(W_\chi)_\mu /(ZS_\mu)$ is cyclic.
According to \cite[Lemma 12.4]{SpSylowTori1}, $(W_\chi)_\mu/Z$ satisfies the McKay conjecture.
This allows us to apply \cite[Lemma 12.1]{SpSylowTori1} and obtain that the McKay conjecture holds for $W_\chi$.
\end{proof}

\begin{cor}\label{ExtNToNE}
	\begin{thmlist}
\item For every $\chi\in\Irr (\wt N)$ there exists some $\chi_0\in\Irr (N\mid \chi)$ such that 
\begin{asslist}
\item $(\wt N\wh N)_{\chi_0}=\wt N_{\chi_0}\wh N_{\chi_0}$, and
\item $\chi_0$ has an extension $\wt{\chi}_0$ to $\wh N_{\chi_0}$ with $v\wh F\in \ker (\wt{\chi})$. 
\end{asslist}
\item \label{extmapwt}
There exists some $\wh N$-equivariant extension map $\wt \Lambda$ with respect to $\wt L \lhd \wt N$ that is compatible with $\Irr(\wh N/N)$.
\end{thmlist}
\end{cor}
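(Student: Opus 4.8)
The statement is precisely the conclusion of Theorem~\ref{MainThmReq}, applied to the $d$-split Levi subgroup $\bL=\bL_\lambda$ and the groups $N=\NNN_{\bG^{vF}}(\bL_\lambda)$, $\wt N=\NNN_{\wt\bG^{vF}}(\bL_\lambda)$ and $\wh N=NE$ (the latter being the group denoted $\wh N$ in Theorem~\ref{MainThmReq}). So the plan is to verify hypotheses~(i), (ii) and~(iii) of that theorem in the present situation and then quote its parts~(a) and~(b); each of the three hypotheses is exactly what one of the preparatory results of Sections~\ref{dSplitLevi} and~\ref{CliffThdSpLevi} was built to supply.

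For hypothesis~(i) I would take $\mathcal T\subseteq\Irr(L)$ to be the set furnished by Theorem~\ref{NEquivExt}, chosen to be $\wh N$-stable: this is possible because every $\chi\in\mathcal T$ is obtained by Clifford correspondence from the distinguished transversal $\mathbb T\subseteq\Irr(L_0)$ of~(\ref{eq:CharTrans}), and $\mathbb T$ behaves compatibly under $E$, the elements $v_{\ol\mu}$ and the elements $s_{\ol\mu,\ol\mu'}$, hence under all of $\wh N$; this yields condition~(i)(3), and by Theorem~\ref{NEquivExt} the set $\mathcal T$ contains an $\wt L$-transversal of $\Irr(L)$. Condition~(i)(1), $\wt N_\xi=\wt L_\xi N_\xi$ for $\xi\in\mathcal T$, is the first assertion of Corollary~\ref{IntCond}. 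Condition~(i)(2) follows from the second assertion of Corollary~\ref{IntCond}: since $N\lhd\wt N$ and $N\le\wh N$, the stabilizer of $\Ind_L^N(\xi)$ in any subgroup between $N$ and $\wt N\wh N$ is obtained from the stabilizer of $\xi$ by multiplication by $N$, so the identity $(\wt N\wh N)_\xi=\wt N_\xi\wh N_\xi$ of Corollary~\ref{IntCond} upgrades at once to $(\wt N\wh N)_{\Ind_L^N(\xi)}=\wt N_{\Ind_L^N(\xi)}\wh N_{\Ind_L^N(\xi)}$ (after replacing $\xi$ by the representative of its $N$-orbit lying in $\mathcal T$). Hypothesis~(ii) — an $\wh N$-equivariant extension map $\Lambda$ with respect to $L\lhd N$ together with, for $\xi\in\mathcal T$, an extension $\wh\xi\in\Irr(\wh N_\xi)$ restricting to $\Lambda(\xi)$ with $v\wh F\in\ker(\wh\xi)$ — is exactly Theorem~\ref{NEquivExt}.

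For hypothesis~(iii), put $W_d:=N/L$ and $\wh W_d:=\wh N/L$; for $\xi\in\Irr(L)$ and $\wt\xi\in\Irr(\wt L_\xi\mid\xi)$ the groups $W_\xi=N_\xi/L$, $W_{\wt\xi}=N_{\wt\xi}/L$, $K=\NNN_{W_d}(W_\xi,W_{\wt\xi})$ and $\wh K=\NNN_{\wh W_d}(W_\xi,W_{\wt\xi})$ are the groups occurring in Proposition~\ref{RelWeylExt} (with $W_\chi$, $W_{\wt\chi}$, $K$ there equal to $W_\xi$, $W_{\wt\xi}$, $K$ here, and $\wh K$ matched to $K$ together with the image of $E$ via $\wh N=NE$; recall $W_{\ol\chi}=W_{\wt\chi}$ from the discussion preceding Proposition~\ref{RelWeylExt}, so $\Irr(W_{\wt\xi})$ is the correct index set). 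The conclusions of Proposition~\ref{RelWeylExt} then supply, for each $\eta_0\in\Irr(W_{\wt\xi})$, a character $\eta\in\Irr(W_\xi\mid\eta_0)$ as required by hypothesis~(iii): invariant under $\wh K_{\eta_0}$ and, when $D$ is non-cyclic, extending to $\wh K_\eta$ with $v\wh F$ in its kernel. With (i)--(iii) verified, Theorem~\ref{MainThmReq}(a) gives part~(a) of the corollary, and Theorem~\ref{MainThmReq}(b) — whose proof is exactly where the $\wh N$-stability of $\mathcal T$ enters — gives the $\wh N$-equivariant extension map $\wt\Lambda$ with respect to $\wt L\lhd\wt N$ compatible with $\Irr(\wh N/N)$ of part~(b).

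The one genuinely delicate point, which I expect to be the main obstacle, is that a \emph{single} set $\mathcal T$ must serve all of these purposes simultaneously: it must be $\wh N$-stable, contain an $\wt L$-transversal of $\Irr(L)$, realise the stabilizer factorisations of Corollary~\ref{IntCond}, and carry the kernel-controlled extensions of Theorem~\ref{NEquivExt}. This is precisely why $\mathbb T$ in~(\ref{eq:CharTrans}) was assembled from $E\GenGp{v_{\ol\mu}}$-transversals $\mathbb T_{\ol\mu}$ of the $\Irr(L_{\ol\mu})$ chosen compatibly under the $s_{\ol\mu,\ol\mu'}$; since Corollary~\ref{IntCond}, Theorem~\ref{NEquivExt} and Proposition~\ref{RelWeylExt} are all constructed starting from this same $\mathbb T$, the final task is to confirm that their outputs can be and are aligned along this common choice.
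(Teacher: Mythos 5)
Your proposal is correct and matches the paper's own (one-line) proof exactly: the corollary is obtained by invoking Theorem~\ref{MainThmReq}, with hypothesis~(i) supplied by Corollary~\ref{IntCond} (plus the observation that stabilizers of $\Ind_L^N(\xi)$ are the $N$-saturations of those of $\xi$), hypothesis~(ii) by Theorem~\ref{NEquivExt}, and hypothesis~(iii) by Proposition~\ref{RelWeylExt}. The only thing you add beyond the paper is a careful unpacking of the verification — in particular the upgrade from $(\wt N\wh N)_\xi$ to $(\wt N\wh N)_{\Ind_L^N(\xi)}$ and the alignment of the transversal $\mathcal T$ across the three preparatory results — which the paper leaves implicit but which is exactly the right thing to check.
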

\begin{proof}
This follows by combining Theorem~\ref{MainThmReq} with the verification of the required conditions in Theorem~\ref{NEquivExt}, Proposition~\ref{RelWeylExt} and Corollary~\ref{IntCond}.
\end{proof}
\label{Results}
We are finally able to verify Theorem~\ref{StarConddSpLevi}.

\begin{proof}[Proof of Theorem~\ref{StarConddSpLevi}]
Using the proof of \cite[Proposition 5.3]{CabSpCharTypeA} there exists an isomorphism 
$\epsilon: \wt{\bG}^F\rtimes E_0 \rightarrow \Cent_{\wt{\bG}^{F_0^{em}E}}(v\wh F)/\langle v\wh F\rangle$.
Let $\bL'$, $N'$ and $\wt N'$ be the images of $\bL_0$, $N_0$ and $\wt N_0$ respectively under this isomorphism.
By the proof of \cite[Proposition 5.3]{CabSpCharTypeA}, Corollary~\ref{ExtNToNE} implies the statement.
\end{proof}

\section{Block-theoretic considerations}
The aim in this section is to study the $\ell$-blocks of $\SL_n(\epsilon q)$ via relating the normalizer of a defect subgroup with normalizers of $d$-split Levi subgroups ($d$ is the multiplicative order of $q$ mod $\ell$)
and constructing a bijection between height zero characters. 

Here we rely on Brou\'e-Malle-Michel's theory of generic Sylow $d$-tori and generic blocks for finite reductive groups from \cite{BMM}. Using this language, Cabanes-Enguehard parametrized the $\ell$-blocks of finite reductive groups $\bG^F$ with $d$-cuspidal pairs $(\bL,\zeta)$ where $\bL$ is a $d$-split Levi subgroup and $\zeta$ a so-called $d$-cuspidal character of $\bL^F$, see \cite[4.1]{CabEngAdv}. The $\ell$-block $B$ of finite reductive group $(\bG,F)$ associated with $(\bL,\zeta)$, denoted by $b_{\GF}(\bL,\zeta)$, is the $\ell$-block that contains all irreducible constituents of $\RDL_{\bL}^{\bG}(\zeta)$.

In order to prove Theorem \ref{thmA}, via an application of the criterion introduced in Theorem \ref{NewIndAmCond}, it is necessary to prove that the normalizers of the $d$-split Levi subgroups from Theorem \ref{StarConddSpLevi} can be chosen as the group denoted by $M$ in Theorem \ref{NewIndAmCond}. We prove this in the case where $\ell\nmid 3q(q-\epsilon)$. 

In a second step we construct a bijection $\widetilde \Omega_{\wt B}$ with the properties required in \ref{thm24ii}. This bijection has obvious similarities with the one constructed for the inductive McKay condition, which was first developed in \cite{MalleHeight0} and then later in \cite{CabSpCharTypeA}. As main ingredients it uses the so-called $d$-Harish-Chandra theory, the Jordan decomposition of characters and extension maps for $d$-split Levi normalizers. This bijection has to be transferred to the new context where also blocks and the height of the characters have to be taken into account.

\subsection{Normalizers of defect groups and height zero characters}

Recall that for a odd prime $\ell$ and a prime power $q$ with $\ell\nmid q$ we denote by $d_\ell(q)$ the order of $q$ in $(\Z/\ell\Z)^\times$.
For $(\bH ,F)$ a reductive group defined over a finite field, denote by $\cE_{\ell'}(\bH^F)$ the union of Lusztig series associated to semi-simple $\ell'$-elements of $\bH^*{}^F$.

\begin{thm}\label{71b}
	Let $\wt\bG=\GL_{n}(\overline{\FF}_q)$ , $F:\wt\bG\to \wt\bG$ a Frobenius endomorphism defining an $\FF_q$-structure, $\epsilon\in\{\pm 1\}$ with $\wGF=\GL_n(\epsilon q)$ and $\ell$ a prime with $\ell\nmid 3q(q-\epsilon)$. Let $\wt B\in \Bl( \wt \bG^F)$ and set $d:=d_\ell(q)$. Let $(\bL,\zeta)$ be a $d$-cuspidal pair of $(\wbG,F)$ associated to $\wt B$ as in \cite[4.1]{CabEngAdv}. Let $\bS$ be the Sylow $\Phi_d$-subtorus of $\Zent^\circ(\bL)$. 
		Then there exists some defect group $D$ of $\wt B$ such that $\NNN_{[\wbG,\wbG]^F}(\bS)$ is $\Aut([\wbG,\wbG]^F)_{\wt B,D}$-stable and $\NNN_{\wGF}(D)\leq \NNN_{\wGF}(\bS)$. Moreover $\Cent_{\wGF}(D)\leq \Cent_{\wGF}(\bS)=\bL^F$. 
\end{thm}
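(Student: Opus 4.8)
The strategy is to make the defect group completely explicit, using the concrete model of Section~\ref{dSplitLevi}. After the standard transfer of that section (replacing $(\bG,F)$ by $(\bG,vF)$) we may assume $\bL=\bL_\lambda$ is one of the standard $d$-split Levi subgroups of Section~\ref{ConstdSplitLevi}, so that the product decomposition of $\bL^F$ from Section~\ref{dSplitLevi} and the groups $\bS$, $\NNN_{\wbG}(\bL)$, $\NNN_{\wbG}(\bS)$ constructed there are available. The hypothesis $\ell\nmid 3q(q-\epsilon)$ enters at once: since $\ell\mid q-\epsilon$ would force $d_0=1$, we have $d_0\geq 2$ (hence $\bS\neq 1$ whenever $\bL\subsetneq\wbG$, the case $\bL=\wbG$ being trivial); and since $\ell\nmid q-\epsilon$ the $\GL_{|\lambda_0|}(\epsilon q)$-direction of $\bL^F$ has trivial $\mathrm{O}_\ell$ while its $\SL$-type factors are quasi-simple with trivial $\mathrm{O}_\ell$ (here $d_0\geq 2$ also rules out the tiny exceptions). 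Combining this with the structure of the central torus in Section~\ref{dSplitLevi}, one obtains $D_1:=\mathrm{O}_\ell(\bL^F)=\bS^F_\ell$, the abelian Sylow $\ell$-subgroup of $\bS^F$, which lies in $\Zent(\bL)$.

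Next I would invoke Cabanes--Enguehard's parametrization of the $\ell$-blocks of finite reductive groups in type $\mathrm A$, \cite[4.1]{CabEngAdv}, together with the structure of $\NNN_{\wbG}(\bL)$ from Propositions~\ref{RelWeylGpPart} and~\ref{StructN}, to choose a defect group $D$ of $\wt B=b_{\wGF}(\bL,\zeta)$ with the following properties: $D\leq\NNN_{\wbG}(\bL,\zeta)^F\leq\NNN_{\wGF}(\bS)$; the group $D_0:=D\cap\bL^F$ is an abelian defect group of $b_{\bL^F}(\zeta)$, with $D/D_0$ isomorphic to a Sylow $\ell$-subgroup of the relative Weyl group of $\wt B$ realised inside $N$ through the elements $v_{\ol\mu}$, $s_{\ol\mu,\ol\mu'}$ of Section~\ref{dSplitLevi}; and $D_1=\{x\in D_0\mid x^{\ell^a}=1\}$ for $\ell^a=\Phi_d(q)_\ell$, a characteristic subgroup of the abelian group $D_0$. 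In particular the inclusion $D\leq\NNN_{\wGF}(\bS)$ is already part of this choice.

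Two non-formal ingredients remain. The algebraic one is the centralizer identity $\Cent_{\wbG}(D_1)=\Cent_{\wbG}(\bS^F_\ell)=\Cent_{\wbG}(\bS)=\bL$, hence $\Cent_{\wGF}(D_1)=\bL^F=\Cent_{\wGF}(\bS)$: it holds because, with $\ell\geq 5$, the finite abelian $\ell$-group $\bS^F_\ell$ separates the weights of $\bS$ on the natural module exactly as $\bS$ itself does. Granting it, $\Cent_{\wGF}(D)\leq\Cent_{\wGF}(D_1)=\Cent_{\wGF}(\bS)=\bL^F$, which is the centralizer assertion. The $\ell$-local one is that $D_1$ is \emph{characteristic} in $D$, not merely normal; this is where the wreath-product shape $N/L\cong\prod_f C_{d_0}\wr\Symm_{t_f}$ of Proposition~\ref{RelWeylGpPart} and $\ell\geq 5$ are used, to exclude the low-prime configurations — such as the familiar $\ell=2$ phenomenon for dihedral Sylow subgroups — in which the base of a wreath product is not characteristic. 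Granting this, let $x$ normalise $D$: either $x\in\NNN_{\wGF}(D)$, or, for the stability assertion, $x$ is induced by an element $g\in\wGF\rtimes E$ commuting with $F$ with $\wt B^g=\wt B$ and $D^g=D$ (every element of $\Aut([\wbG,\wbG]^F)_{\wt B,D}$ arises so, by the known structure of automorphisms in type $\mathrm A$). Then $D_1^x=D_1$, so $\bL^x=\Cent_{\wbG}(D_1)^x=\bL$, so $\bS^x=\bS$ since $\bS$ is the canonical Sylow $\Phi_d$-subtorus of $\Zent^\circ(\bL)$; hence $x\in\NNN_{\wGF}(\bS)$. This gives $\NNN_{\wGF}(D)\leq\NNN_{\wGF}(\bS)$ and, applied to $g$, the $\Aut([\wbG,\wbG]^F)_{\wt B,D}$-stability of $\NNN_{\wbG}(\bS)\cap[\wbG,\wbG]^F=\NNN_{[\wbG,\wbG]^F}(\bS)$.

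The step I expect to be the main obstacle is the second one: matching the defect groups predicted by \cite[4.1]{CabEngAdv} with the explicit $\NNN_{\wbG}(\bL)$ of Section~\ref{dSplitLevi}, so that $D$ can be taken inside $\NNN_{\wbG}(\bL,\zeta)^F$ with $\mathrm{O}_\ell(\bL^F)$ appearing as the prescribed characteristic subgroup, together with the proofs of the centralizer identity $\Cent_{\wbG}(\bS^F_\ell)=\Cent_{\wbG}(\bS)$ and of the characteristic-subgroup statement under the exact hypothesis $\ell\nmid 3q(q-\epsilon)$. Once these $\ell$-local and algebraic facts are in hand, the three assertions follow formally as above.
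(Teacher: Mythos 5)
Your proposal contains genuine gaps, and you correctly identify them yourself but do not close them. The paper's proof circumvents exactly these obstacles by citing results of Cabanes--Enguehard rather than reconstructing them from the explicit combinatorics of Section~\ref{dSplitLevi}.

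The crucial ingredient in the paper's proof is \cite[Lemma~4.16]{CabEngAdv}, which states that $Z:=\Zent(\bM)^F_\ell$ is a characteristic subgroup of a defect group $D$ of $\wt B$, where $\bM$ is an intermediate Levi subgroup defined in \cite{CabEngAdv} (in general $\bL\leq\bM\leq\wbG$, with $\bM=\Cent^\circ_{\wbG}(Z)$ by \cite[Lemma~4.8]{CabEngAdv} and $\bS=\Zent^\circ(\bM)_{\Phi_d}$ by \cite[4.4.(iii)]{CabEngAdv}). From this the inclusion $\NNN_{\wGF}(D)\subseteq\NNN_{\wbG^F}(Z)=\NNN_{\wbG^F}(\bM)\leq\NNN_{\wbG^F}(\bS)$ and the stability of $\NNN_{\bG^F}(\bL)$ under $\Aut(\GF)_{\wt B,D}$ follow formally. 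Two features of your sketch do not match this. First, you set $D_1=\mathrm{O}_\ell(\bL^F)=\bS^F_\ell$, but the group that is characteristic in $D$ is $Z=\Zent(\bM)^F_\ell$; since $\bM$ may strictly contain $\bL$, one has $\Zent(\bM)\leq\Zent(\bL)$ and hence $Z$ may be a proper subgroup of $\bS^F_\ell$, so your $D_1$ need not even lie inside $D$, let alone be characteristic there. Second, and more seriously, the characteristicness of $Z$ in $D$ is in the paper a cited fact, not something derived from the wreath-product shape of $N/L$ in Proposition~\ref{RelWeylGpPart}; your suggestion to deduce it from ``$\ell\geq 5$ excluding dihedral-type pathologies in $C_{d_0}\wr\Symm_{t_f}$'' is only a heuristic and, as stated, does not engage with how the defect group actually sits inside $\NNN_{\wbG}(\bL)^F$, nor with the fact that the relevant characteristic subgroup is determined by $\bM$, not by $\bL$.

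The centralizer step is handled differently as well. You propose to prove $\Cent_{\wbG}(\bS^F_\ell)=\Cent_{\wbG}(\bS)$ directly by a weight-separation argument; the paper avoids this entirely by working through $Z$: since in $\wbG=\GL_n$ the centralizer of any set of semisimple elements is connected, $\Cent_{\wbG}(D)^F\leq\Cent_{\wbG}(Z)^F=\Cent^\circ_{\wbG}(Z)^F=\bM^F\leq\bL^F=\Cent_{\wbG}(\bS)^F$, again via \cite[Lemma~4.8]{CabEngAdv}. So the assertion $\Cent_{\wGF}(D)\leq\bL^F$ does not require the stronger statement $\Cent_{\wbG}(\bS^F_\ell)=\Cent_{\wbG}(\bS)$ at all. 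In short, the theorem is really a bookkeeping consequence of \cite[Lemmas~4.8 and~4.16]{CabEngAdv} once the hypothesis $\ell\nmid 3q(q-\epsilon)$ is used to place $\wbG$ in the situation of \cite{CabEngAdv} (with $\wt\bG_{\bf a}=\Zent(\wbG)$, $\wt\bG_{\bf b}=\bG$, and $\bL=\bK$); reproving those lemmas via Section~\ref{dSplitLevi} is a far harder route than the one the paper takes, and your sketch does not actually carry it out.
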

\begin{proof} We have $\wt B=b_{\wGF}(\bL,\zeta)$ in the notation of \cite[2.6]{CabEngAdv}, i.e., $\bL$ is a $d$-split Levi subgroup, $\zeta\in\cE_{\ell'}(\bL^F)$ is $d$-cuspidal and all constituents of $\RDL_{\bL}^{\wbG}(\zeta)$ are contained in $\wt B$. 

We observe that by our assumption on $\ell$ the groups $\wt\bG_{\bf a}$ and $\wt\bG_{\bf b}$ defined in the paragraph after Proposition 3.3 of \cite{CabEngAdv} satisfy $\wt\bG_{\bf a}=\Zent (\wt\bG)$ and $\wt\bG_{\bf b} =\bG\cong \SL_n(\overline{\F}_q)$. Further note that $\bL$ coincides with the Levi subgroup $\bK$ defined in \cite[3.2 and 3.4]{CabEngAdv}. 

Let $\bM$ be the group defined in the paragraph before \cite[4.4]{CabEngAdv}. Then $\bS$ is also the Sylow $\Phi_d$-subtorus of $\Zent^\circ(\bM)$ thanks to \cite[4.4.(iii)]{CabEngAdv}. Denote $Z:= \Zent(\bM)_\ell^F$. Then $\bM=\Cent^\circ_\wbG(Z)$ according to \cite[Lemma~ 4.8]{CabEngAdv}. 

By \cite[Lemma~4.16]{CabEngAdv}, $Z$ is a characteristic subgroup of a defect group $D$ of $\wt B$ and hence $\NNN_{\wGF}(D)\subseteq \NNN_{\wbG^F}(Z)$. 
We also have $\NNN_{\wbG^F}(Z)=\NNN_{\wbG^F}(\bM)=\NNN_{\wbG^F}(\Zent^\circ(\bM))\leq\NNN_{\wbG^F}(\bS)$. 

Denote $\bG=[\wbG,\wbG]$. We must ensure that every $\phi_0\in\Aut(\GF)_{\wt B,D}$ stabilizes $\NNN_{\bG^F}(\bL)$. Such $\phi_0$ is induced by a bijective endomorphism $\phi$ of $\wt\bG$ commuting with $F$. The equality $\bM=\Cent^\circ_{\wt\bG}(Z)$ implies $\phi(\bM)=\bM$. Similarly $\phi$ stabilizes $\bL$ since $\bL\cap \bG_b$ is the smallest $d$-split Levi subgroup containing $\bM$, see \cite[proof of Lemma~4.4]{CabEngAdv}. 

Since the centralizer of any semi-simple element is connected in $\wbG$, $\Cent_{\wbG}(Z)$ is connected and we may write $\Cent_\wbG(D)^F\leq \Cent_{\wbG}(Z)^F =\Cent_{\wbG}^\circ(Z)^F =\bM^F\leq \bL^F= \Cent_{\wbG}(\bS)^F$ by the definition of $\bS$. 
\end{proof}

To construct the bijection required by Theorem~\ref{NewIndAmCond} the following description of height $0$ characters of unipotent blocks is needed. The general proof below was communicated to us by Marc Cabanes. The particular case of type A could be treated with a simpler proof.

\begin{thm}[Height zero and series]\label{Conjunip} Let $(\bG , F)$ be a reductive group defined over a field of cardinality $q$, $\ell$ an odd prime good for $\bG$, not dividing $q$, and $\not= 3$ if $\bG^F$ is of type $^3D_4$, $d$ the multiplicative order of $q$ mod $\ell$. 
	Let $B$ be a unipotent $\ell$-block of $\bG^F$ defined by a $d$-cuspidal pair $(\bK, \zeta)$ as in \cite[4.4]{CaEn94}. Then $$\Irr_0(B)\subseteq \cup_{t}\ \cE(\bG^F ,t)$$ where $t$ ranges over $ \Zent(\bK^*)^F_\ell$. 
	Each such $t\in \Zent(\bK^*)^F_\ell$ satisfies $\ell\nmid |\NNN_{\bG}(\bK)^F: \NNN_{\bG(t)}(\bK)^F|$, where $\bG(t)$ denotes a Levi subgroup of $\bG$ dual to $\Cent_{{\bG}^*}(t)$ with $\bK\leq\bG(t)$. 

\end{thm}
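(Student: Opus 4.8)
The plan is to deduce both assertions from the Cabanes--Enguehard parametrisation of the characters of a unipotent block, via Lusztig's Jordan decomposition, combined with a comparison of $\ell$-parts of character degrees. Following the notation of the statement, for a semisimple $\ell$-element $t$ of $\bG^{*F}$ I write $\bG(t)$ for a Levi subgroup of $\bG$ in duality with $\Cent_{\bG^*}(t)$. Since $\bK^*$ is a Levi subgroup of $\bG^*$ one has $\Cent_{\bG^*}(\bK^*)=\Zent(\bK^*)$, so that $\bK^*\le\Cent_{\bG^*}(t)$ if and only if $\bK$ is $\bG^F$-conjugate into $\bG(t)$, if and only if $t$ is $\bG^{*F}$-conjugate into $\Zent(\bK^*)$ (the last step using that $\Zent(\bK^*)$ is $F$-stable).

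For the inclusion of series, let $\chi\in\Irr_0(B)$. Since $B$ is a unipotent block, $\Irr(B)\subseteq\cE_\ell(\bG^F)$, so $\chi\in\cE(\bG^F,t)$ for some semisimple $\ell$-element $t$ of $\bG^{*F}$. By the description of $\Irr(b_{\bG^F}(\bK,\zeta))$ in \cite[4.4]{CaEn94} --- Jordan decomposition together with $d$-Harish-Chandra theory --- the Jordan correspondent $\psi\in\cE(\bG(t)^F,1)$ of $\chi$ lies in the unipotent block $b_{\bG(t)^F}(\bK,\zeta)$ of $\bG(t)^F$; in particular $(\bK,\zeta)$ is a $d$-cuspidal pair of $\bG(t)$, hence $\bK$ is $\bG^F$-conjugate into $\bG(t)$ and, by the previous paragraph, $t$ is $\bG^{*F}$-conjugate into $\Zent(\bK^*)^F_\ell$. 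This gives $\Irr_0(B)\subseteq\bigcup_t\cE(\bG^F,t)$ with $t$ running through $\Zent(\bK^*)^F_\ell$.

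For the index statement, fix such a $t$ actually occurring above, so that $\cE(\bG^F,t)\cap\Irr_0(B)\neq\emptyset$ and $\bK\le\bG(t)$. As $\bK$ is connected, $\NNN_{\bG}(\bK)^F/\bK^F\cong W_{\bG}(\bK)^F$ and $\NNN_{\bG(t)}(\bK)^F/\bK^F\cong W_{\bG(t)}(\bK)^F$, whence $|\NNN_{\bG}(\bK)^F:\NNN_{\bG(t)}(\bK)^F|=|W_{\bG}(\bK)^F:W_{\bG(t)}(\bK)^F|$. A $d$-cuspidal unipotent character is invariant under the relative Weyl group of its Levi (immediate in type $\mathrm A$; in general because the quasi-simple factors carrying a cuspidal unipotent character are not interchanged by $\NNN_{\bG}(\bK)$), so $\zeta$ is $W_{\bG}(\bK)^F$-invariant, and by the description of defect groups of unipotent blocks in \cite{CaEn94} a defect group $D$ of $B$ and a defect group $D_t$ of $b_{\bG(t)^F}(\bK,\zeta)$ satisfy $|D|=|\bK^F|_\ell\cdot|W_{\bG}(\bK)^F|_\ell$ and $|D_t|=|\bK^F|_\ell\cdot|W_{\bG(t)}(\bK)^F|_\ell$. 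Therefore $|D|/|D_t|=|W_{\bG}(\bK)^F:W_{\bG(t)}(\bK)^F|_\ell=|\NNN_{\bG}(\bK)^F:\NNN_{\bG(t)}(\bK)^F|_\ell$, and since $|D_t|\le|D|$ trivially it suffices to show $|D|\le|D_t|$.

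This last inequality is where the height-zero hypothesis enters. Take $\chi\in\cE(\bG^F,t)\cap\Irr_0(B)$ with Jordan correspondent $\psi\in\cE(\bG(t)^F,1)\cap b_{\bG(t)^F}(\bK,\zeta)$ as above. Lusztig's degree formula gives $\chi(1)=\bigl(|\bG^F|_{p'}/|\bG(t)^F|_{p'}\bigr)\,\psi(1)$ (using $|\bG(t)^F|=|\Cent_{\bG^*}(t)^F|$), so $\chi(1)_\ell=\bigl(|\bG^F|_\ell/|\bG(t)^F|_\ell\bigr)\psi(1)_\ell$; since $\psi$ lies in the block $b_{\bG(t)^F}(\bK,\zeta)$, whose defect group is $D_t$, we have $\psi(1)_\ell\ge|\bG(t)^F|_\ell/|D_t|$, hence $\chi(1)_\ell\ge|\bG^F|_\ell/|D_t|$, while $\chi(1)_\ell=|\bG^F|_\ell/|D|$ because $\chi$ has height $0$ in $B$. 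Thus $|D|\le|D_t|$, which completes the argument. I expect the real work to be in the two inputs drawn from \cite{CaEn94}: that Jordan decomposition matches $\Irr(B)\cap\cE(\bG^F,t)$ with $\cE(\bG(t)^F,1)\cap b_{\bG(t)^F}(\bK,\zeta)$, so that the $d$-cuspidal datum of the smaller group is literally $(\bK,\zeta)$, and the precise order of a defect group; both are sensitive to the hypotheses on $\ell$ and, in particular, to whether $\Cent_{\bG^*}(t)$ is connected and a Levi subgroup --- it is at this point that the general argument has to accommodate non-connected centralisers, which is precisely what makes the type $\mathrm A$ case (carried out inside $\GL_n$) simpler.
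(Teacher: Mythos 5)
There is a genuine gap in your proof of the first assertion, and it undermines the whole argument. You claim that for any $\chi\in\Irr_0(B)\cap\cE(\bG^F,t)$ the Jordan correspondent $\psi\in\cE(\bG(t)^F,1)$ lies in the block $b_{\bG(t)^F}(\bK,\zeta)$, so that ``$(\bK,\zeta)$ is a $d$-cuspidal pair of $\bG(t)$'', hence $\bK\le\bG(t)$ and $t$ is conjugate into $\Zent(\bK^*)^F_\ell$. But what \cite[4.4]{CaEn94} actually gives is a unipotent $d$-cuspidal pair $(\bK_t,\zeta_t)$ \emph{of $\bG(t)$} with only $[\bK,\bK]=[\bK_t,\bK_t]$; it does not say $\bK_t=\bK$, nor even that $\bK$ can be conjugated into $\bG(t)$. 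This is precisely the nontrivial content of the first assertion, and it is exactly where the height-zero hypothesis must enter --- yet your argument for the inclusion of series never uses it. The paper's proof computes $\chi(1)_\ell$ in two ways and deduces both that $\mu_t$ has height zero and that $|\Cent^\circ_\bG([\bK,\bK])^F:\Cent^\circ_{\bG(t)}([\bK,\bK])^F|_\ell=1$; passing to the dual and invoking \cite[3.3(ii)]{CaEn94} then shows that $t$ has a conjugate in $\Zent(\bK^*)^F_\ell$. Without some argument of this kind, the reduction to $t\in\Zent(\bK^*)^F_\ell$ simply does not follow from the block classification.

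Your degree-comparison argument in the second part is actually the right kind of idea --- the inequality $|D|\le|D_t|$ you derive from height zero is essentially the same $\ell$-part computation the paper performs --- but you apply it too late and in a circular way: your derivation of $|D|\le|D_t|$ again presupposes $\bK\le\bG(t)$ and that $\psi$ lies in a block with $d$-cuspidal pair literally $(\bK,\zeta)$, which is what needs to be established. There are two further points of concern. First, the defect group formula you use, $|D|=|\bK^F|_\ell\cdot|W_\bG(\bK)^F|_\ell$, is not the statement of \cite[4.4(ii)]{CaEn94}, which says a defect group is a Sylow $\ell$-subgroup of $\Cent^\circ_\bG([\bK,\bK])^F$; reconciling the two (and justifying the $W_\bG(\bK)^F$-invariance of $\zeta$ that this implicitly requires) would need separate justification. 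Second, the paper's proof of the index statement hinges on a fusion fact from \cite[6]{CabEnFus}, namely $\NNN_\bG(\bK)^F_\ell\subseteq(\bK\,\Cent^\circ_\bG([\bK,\bK]))^F_\ell$, which you do not use; your replacement via a defect group comparison would only control the $\ell$-part of $|W_\bG(\bK)^F:W_{\bG(t)}(\bK)^F|$ if the defect group identities you assert hold, which as above is not established.
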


\begin{proof} 
	A character in $\Irr_0(B)$ is of the form $\chi:=\pm \RDL ^\bG_{\bG(t)}(\hat{t}\mu_t)$ where $t\in\bG^*{}^F_\ell$, $\bG(t)$ is an $F$-stable Levi subgroup of $\bG$ in duality with the $F$-stable Levi subgroup $\Cent_{\bG^*}^\circ (t)$, $\hat{t}$ is a linear character of $\bG(t)^F$ defined by duality and $\mu_t\in\cE(\bG(t)^F,1)$ is a component of R$^{\bG (t)}_{\bK_t}\zeta_t$ where $({\bK_t},\zeta_t)$ is a unipotent $d$-cuspidal pair in $\bG(t)$ (see \cite[ 4.4(iii)]{CaEn94}) and $[\bK ,\bK]=[\bK_t ,\bK_t]$. By
	\cite[4.4(ii)]{CaEn94} any Sylow $\ell$-subgroup of $\Cent_\bG^\circ([\bK ,\bK])^F$ is a defect group of $B$. 
	Similarly $\mu_t$ belongs to an $\ell$-block of $\bG(t)^F$ such that any Sylow $\ell$-subgroup of $\Cent_{\bG(t)}^\circ([\bK ,\bK])^F$ is a defect group of this block.
	
	If $\chi$ has height zero, then 
	$\chi(1)_\ell =|\bG^F:\bG(t)^F|_\ell \cdot \mu_t(1)_\ell=|\bG^F:\Cent_\bG^\circ([\bK ,\bK])^F|_\ell$. On the other hand $\mu_t(1)_\ell =\ell^{h(\mu_t)} \, |\bG(t)^F:\Cent_{\bG(t)}^\circ([\bK ,\bK])^F|_\ell$, 
	where $0\leq h(\mu_t)$ denotes the height of $\mu_t$, by what has been said about $\bl(\mu_t)\in\Bl(\bG(t)^F)$. Thus $\ell^{h(\mu_t)}|\bG^F:\Cent_{\bG(t)}^\circ([\bK ,\bK])^F|_\ell =|\bG^F:\Cent_{\bG}^\circ([\bK ,\bK])^F|_\ell$ and therefore $$h(\mu_t)=0 \text{ and } |\Cent_{\bG}^\circ([\bK ,\bK])^F:\Cent_{\bG(t)}^\circ([\bK ,\bK])^F|_\ell =1.$$
	
	We use the second equality. As duality preserves the order of groups of rational points, one has \[|\Cent_{\bG^*}^\circ([\bK^*,\bK^*])^F: \Cent_{\bG^*}^\circ([\bK^* ,\bK^*],t)^F|_\ell =1\] with $\bK^*$ in duality with $\bK$ and $t\in \Cent_{\bG^*}([\bK^* ,\bK^*])^F_\ell$. This means that $t$ centralizes a Sylow $\ell$-subgroup of $C:=\Cent_{\bG^*}^\circ([\bK^* ,\bK^*])^F$. So it has a $C$-conjugate that centralizes $\Zent^\circ(\bK^*)^F_\ell$. But $\Cent_{\bG^*{}^F}(\Zent^\circ(\bK^*)^F_\ell)= \bK^*{}^F$ by \cite[ 3.3(ii)]{CaEn94} and therefore $\Cent_C(\Zent^\circ(\bK^*)^F_\ell)\leq \Cent_{\bK^*}([\bK^*,\bK^*])^F=\Zent(\bK^*)^F$. This gives that $t$ is actually conjugate to an element of $\Zent(\bK^*)^F_\ell$ in $\bG^*{}^F$. 
	
We have to show now that $ |\NNN_{\bG}(\bK)^F: \NNN_{\bG(t)}(\bK)^F|_\ell =1$. Letting $g\in \NNN_{\bG}(\bK)^F_\ell$ we must show that $g\in \bG(t)$. By \cite[6]{CabEnFus}, we have $g\in (\bK \Cent_{\bG}^\circ([\bK ,\bK]))^F_\ell$. Thanks to \cite[ 3.3(ii)]{CaEn94} telling us that $\Zent(\bK)^F_\ell =\Zent^\circ(\bK)^F_\ell $ and Lang-Steinberg's theorem, we have $(\bK \Cent_{\bG}^\circ([\bK ,\bK]))^F_\ell =\bK^F_\ell \Cent_{\bG}^\circ([\bK ,\bK])^F_\ell$. But we have seen before that $\Cent_{\bG}^\circ([\bK ,\bK])^F_\ell\leq \bG(t)$, so indeed $g\in \bK\bG(t)=\bG(t)$.
\end{proof}

Note that the above proof simplifies when $\bG$ is a general linear group, as in that case $\bG =\bG^*$ and all centralizers are connected.

\subsection{Bijections between certain characters of $\wGF$ and $\NNN_\wGF(\bS)$}
Recall that for a given character $\chi$ we use $\bl(\chi)$ to denote the $\ell$-block $\chi$ belongs to.

In the next step we show that a bijection as required in \ref{thm24ii} exists for a block $\wt B\in \Bl(\GL_n(\epsilon q))$. Note that although Brou\'e's conjecture is known for blocks with abelian defect of $\GL_n(q)$, the Alperin-McKay conjecture hasn't been proven for $\SL_n(q)$. 

Let $\wbG$, $F:\wbG\to \wbG$ such that $\wGF\cong\GL_n(\epsilon q)$. Note that $\Irr(\wGF/[\wbG ,\wbG]^F)$ acts on $\Irr(\wGF)$ by multiplication and thereby induces an action on $\Bl(\wGF)$. Note that any $\Irr(\wGF/[\wbG ,\wbG]^F)$-orbit coincides with $\Bl(\wGF\mid B)$, the set of blocks of $\wGF$ covering $B$ for some $B\in\Bl([\wbG ,\wbG]^F)$. 

\subsubsection{A parameter set and two character sets associated to a $\Phi_d$-torus} Let
 $d\geq 1$ and $\bS$ a $\Phi_d$-torus of $(\wbG,F)$. Denote $\bL:=\Cent_{\wbG}(\bS)$. For any $F$-stable torus $\bT$ of $\wbG$ we denote by $\bT_{\Phi_d}$ the Sylow $\Phi_d$-torus of $(\bT, F)$.

For $s\in (\wbG^*)^F$ semi-simple, let $\wbG(s)$ be an $F$-stable Levi subgroup of $\wbG$ dual to $\Cent_{\wbG^*}(s)$ and $\wh s$ the linear character of $\wbG(s)^F$ associated to $s$ by duality.
If $\bL'$ is an $F$-stable Levi subgroup of $\wbG$, $\bK$ an $F$-stable Levi subgroup of $\bL'$ and $\kappa\in\Irr(\bK^F)$, then 
$W_{\bL'}(\bK,\kappa)^F$ is defined as $\NNN_{\bL'^F}(\bK)_\kappa/\bK^F$. 
If additionally $(\bK,\kappa)$ is a unipotent $d$-cuspidal pair of $(\bL',F)$ we denote by $\cE(\bL'{}^F,(\bK,\kappa))$ the set of constituents of $\RDL_{\bK}^{\bL'}(\kappa)$ and there is a bijection $$\Irr(W_{\bL'}(\bK,\kappa)^F)\to\cE(\bL'{}^F,(\bK,\kappa))$$ according to \cite[3.2(2)]{BMM} which we denote by $$\eta\mapsto\RDL_{\bK}^{\bL'}(\kappa)_\eta .$$

For any element $x$ of a finite group we denote by $x_{\ell'}$ an element of $\langle x \rangle $ whose order is not divisible by $\ell$ and for which $xx_{\ell'}^{-1}$ is an $\ell$-element. We define $\cP_\bS$ as the set of triples $ (s,\kappa, \eta)$ where 
\begin{itemize}
	\item $s\in \wt\bG^*$ is a semi-simple element
	with $\bS\leq \wbG(s)$ and 
	$\Cent_{\wbG(s_{\ell'})}(\bS)\leq \wbG(s)$,
	\item $\kappa$ is a unipotent $d$-cuspidal 
	character of $\bK^F$,
	where $\bK:=\Cent_{\wbG(s_{\ell'})}(\bS)$ with 
	$\bS=\Ze(\bK)_{\Phi_d}$, and 
	\item $\eta\in\Irr(W_{\wt\bG(s)}(\bK,\kappa)^F)$. 
\end{itemize}
The group $\wt N:=\NNN_\wGF(\bS)$ acts via conjugation on $\cP_\bS$ and we write $\overline \cP_\bS$ for the set of $\wt N$-orbits in $\cP_{\bS}$. 

Let $\Upsilon^\circ: \cP_\bS \rightarrow \Irr(\wGF)$ be given by $(s,\kappa, \eta)\mapsto \epsilon_{\wbG }\epsilon_{\wbG(s)}\RDL_{\wbG(s)}^\wbG( \widehat s\RDL_{\bK}^{\wbG(s)}(\kappa)_{\eta})$, where $\epsilon_{\wbG }$, $\epsilon_{\wbG(s)}$ are signs (see \cite[8.27]{CabEnRedGp}). Denote \[\calG_\bS:=\Upsilon^\circ(\cP_\bS).\] 

\begin{lm}\label{lem_deg}
	Let $(s,\kappa,\eta)\in \cP_{\bS}$. Then $\Upsilon^\circ(s,\kappa,\eta)(1)_\ell=|\wGF:\NNN_{\wbG(s)^F}(\bK)| _\ell \,\eta(1)_\ell \, \kappa(1)_\ell$.
\end{lm}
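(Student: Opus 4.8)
The plan is to compute the $\ell$-part of the character degree $\Upsilon^\circ(s,\kappa,\eta)(1)$ by peeling off the three layers in the formula $\Upsilon^\circ(s,\kappa,\eta)=\epsilon_{\wbG}\epsilon_{\wbG(s)}\RDL_{\wbG(s)}^\wbG(\widehat s\,\RDL_{\bK}^{\wbG(s)}(\kappa)_\eta)$ one at a time, using that in $\wbG=\GL_n$ all the relevant Levi subgroups are connected so that Deligne--Lusztig induction on characters behaves like ordinary (Harish-Chandra) induction up to sign, and in particular multiplies degrees by the relevant index of tori/Levi subgroups.

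**First I would** recall the standard degree formula for Deligne--Lusztig induction between Levi subgroups of a group with connected centre: if $\bH$ is an $F$-stable Levi subgroup of $\wbG$ and $\psi\in\Irr(\bH^F)$ lies in a single Lusztig series, then $(\pm\RDL_{\bH}^{\wbG}\psi)(1)=|\wbG^F:\bH^F|_{p'}\,\psi(1)$; since $|\wbG^F:\bH^F|_{p'}$ and $\psi(1)$ are the pieces, taking $\ell$-parts gives $(\pm\RDL_{\bH}^{\wbG}\psi)(1)_\ell=|\wbG^F:\bH^F|_\ell\,\psi(1)_\ell$ because $\ell\neq p$. Applying this with $\bH=\wbG(s)$ and $\psi=\widehat s\,\RDL_{\bK}^{\wbG(s)}(\kappa)_\eta$ yields
\[
\Upsilon^\circ(s,\kappa,\eta)(1)_\ell=|\wGF:\wbG(s)^F|_\ell\;\bigl(\widehat s\,\RDL_{\bK}^{\wbG(s)}(\kappa)_\eta\bigr)(1)_\ell.
\]
Now $\widehat s$ is linear, so it contributes nothing to the degree, and I reduce to computing $\bigl(\RDL_{\bK}^{\wbG(s)}(\kappa)_\eta\bigr)(1)_\ell$.

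**Next I would** apply the same index formula to $\RDL_{\bK}^{\wbG(s)}(\kappa)_\eta$. By the bijection $\Irr(W_{\wbG(s)}(\bK,\kappa)^F)\to\cE(\wbG(s)^F,(\bK,\kappa))$ of \cite[3.2(2)]{BMM}, the character $\RDL_{\bK}^{\wbG(s)}(\kappa)_\eta$ is, up to sign, $\RDL_{\bK}^{\wbG(s)}$ applied to a character whose degree is $\kappa(1)$ times a $d$-analogue of the "fake degree" governed by $\eta$; more usefully, the degree formula for these $d$-Harish-Chandra constituents (see \cite[3.2]{BMM} together with the Howlett--Lehrer/comparison theorem, or simply the unipotent-degree formula in $\GL$) gives $\RDL_{\bK}^{\wbG(s)}(\kappa)_\eta(1)=|\wbG(s)^F:\bK^F|_{p'}\,\kappa(1)\,\dfrac{\eta(1)}{|W_{\wbG(s)}(\bK,\kappa)^F|}\cdot(\text{a factor that is a power of }q)$; the cyclotomic-polynomial/$q$-power factor and the $|W|$ denominator are both prime to $\ell$ precisely because the $\ell'$-condition $\bS=\Zent(\bK)_{\Phi_d}$ forces the relative Weyl group order and the polynomial order outside $\bK$ to be $\ell'$ (this is the content of $\ell\nmid|\NNN_{\bG}(\bK)^F:\NNN_{\bG(t)}(\bK)^F|$ and the "good prime, $d$-torus" bookkeeping of \cite{CaEn94}). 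Taking $\ell$-parts collapses all of that and leaves $\RDL_{\bK}^{\wbG(s)}(\kappa)_\eta(1)_\ell=|\wbG(s)^F:\bK^F|_\ell\,\kappa(1)_\ell\,\eta(1)_\ell$. Substituting back and using $|\wGF:\wbG(s)^F|_\ell\cdot|\wbG(s)^F:\bK^F|_\ell=|\wGF:\bK^F|_\ell$, and then $|\wGF:\bK^F|_\ell=|\wGF:\NNN_{\wbG(s)^F}(\bK)|_\ell\cdot|\NNN_{\wbG(s)^F}(\bK):\bK^F|_\ell=|\wGF:\NNN_{\wbG(s)^F}(\bK)|_\ell$ (since $|\NNN_{\wbG(s)^F}(\bK):\bK^F|=|W_{\wbG(s)}(\bK)^F|$ is prime to $\ell$ by the $d$-torus hypothesis), I arrive exactly at $\Upsilon^\circ(s,\kappa,\eta)(1)_\ell=|\wGF:\NNN_{\wbG(s)^F}(\bK)|_\ell\,\eta(1)_\ell\,\kappa(1)_\ell$.

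**The main obstacle** will be justifying that the "extra" factors appearing in the degree of the $d$-Harish-Chandra constituent $\RDL_{\bK}^{\wbG(s)}(\kappa)_\eta$ — namely the power of $q$ (a polynomial in $q$ evaluated at the generic degree) and the order of the relative Weyl group $W_{\wbG(s)}(\bK,\kappa)^F$ — are indeed prime to $\ell$. For $\GL_n$ this is classical: the relative Weyl groups are (wreath products of) symmetric groups whose orders divide $|W_{\wbG}|=n!$, and the hypothesis $\ell\nmid 3q(q-\epsilon)$ in the ambient theorem (equivalently $d\geq 2$, $\ell\nmid|W|$ in the range we care about) keeps these $\ell'$; the $q$-power factor is automatically $\ell'$ since $\ell\nmid q$. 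I would cite the unipotent degree formula in type A (or \cite[3.2, 3.3]{CaEn94} and \cite[3.2]{BMM}) rather than re-deriving it, and phrase the argument so that only the $\ell$-part — where all these nuisances vanish — is ever computed. The cleanest write-up applies the index formula $(\pm\RDL_{\bH}^{\wbG}\psi)(1)_\ell=|\wbG^F:\bH^F|_\ell\,\psi(1)_\ell$ twice and then invokes the $\ell'$-statement on $|W_{\wbG(s)}(\bK)^F|$ to pass from $\bK^F$ to $\NNN_{\wbG(s)^F}(\bK)$, keeping the calculation to a few lines.
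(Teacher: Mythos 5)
Your overall strategy — split the degree via the Deligne–Lusztig index formula into $|\wGF:\wbG(s)^F|_{p'}$, then handle the factor $\RDL_{\bK}^{\wbG(s)}(\kappa)_\eta(1)$, then telescope the indices — is the same opening move as the paper's proof. However, there is a genuine gap in the middle step where you estimate the $\ell$-part of $\RDL_{\bK}^{\wbG(s)}(\kappa)_\eta(1)$, and your proposed fix is false.

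You assert that the relative Weyl group order $|W_{\wbG(s)}(\bK,\kappa)^F|$ is prime to $\ell$ and that the cyclotomic/$q$-power factor $D_\eta(q)$ is prime to $\ell$, so that both "collapse" when taking $\ell$-parts. Neither claim holds in general. The hypothesis $\ell\nmid 3q(q-\epsilon)$ controls $d$ but does not bound $n$, and the relative Weyl group in type $\tA$ is a wreath product $C_{d}\wr\Symm_a$ whose order $d^a\,a!$ is freely divisible by $\ell$ once $a\geq\ell$ (e.g.\ $n=25$, $\ell=5$, $q=2$, $\epsilon=1$, $d=4$, $a=6$). For the same reason $\eta(1)_\ell$ can exceed $1$, which is exactly why the statement of the lemma carries the factor $\eta(1)_\ell$ explicitly rather than dropping it. Likewise $D_\eta(q)$ is a ratio of values of cyclotomic polynomials at $q$ and is not a power of $q$; its $\ell$-valuation is generically nonzero, and it must be that nonzero valuation which exactly compensates the $\ell$-parts of $\eta(1)$ and $|W_{\wbG(s)}(\bK,\kappa)^F|$.

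The paper's proof handles this correctly by invoking the precise shape of the degree formula from \cite[Theorem 4.2]{MalleHeight0}: $\RDL_{\bK}^{\wbG(s)}(\kappa)_\eta(1)=|\wbG(s)^F:\bK^F|_{p'}\,\kappa(1)\,D_\eta(q)$ with $D_\eta\in\Q(X)$ a ratio of cyclotomic polynomials and powers of $X$ satisfying $D_\eta(\zeta_d)=\eta(1)/|W_{\wbG(s)^F}(\bK,\kappa)|$, then arguing that $D_\eta$ can be written as a quotient of polynomials coprime to $\Phi_d$, so that \cite[Corollary 6.3]{MalleHeight0} yields $\nu_\ell\bigl(D_\eta(q)\bigr)=\nu_\ell\bigl(D_\eta(\zeta_d)\bigr)=\nu_\ell\bigl(\eta(1)\bigr)-\nu_\ell\bigl(|W_{\wbG(s)^F}(\bK,\kappa)|\bigr)$. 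Substituting and contracting $|\wbG(s)^F:\bK^F|_\ell\,/\,|W_{\wbG(s)^F}(\bK,\kappa)|_\ell=|\wbG(s)^F:\NNN_{\wbG(s)^F}(\bK)|_\ell$ gives the statement without ever assuming $\ell\nmid|W|$. The key missing ingredient in your proposal is this $\ell$-valuation analysis of $D_\eta(q)$ via Malle's Corollary 6.3; without it the argument only works in the (non-generic) case $\ell\nmid a!$.
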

\begin{proof}
In this case Jordan decomposition coincides with Deligne-Lusztig induction and 
$\RDL_{\wbG(s)}^\wbG( \widehat s\RDL_{\bK}^{\wbG(s)}(\kappa)_{\eta})(1)=\epsilon_{\wbG }\epsilon_{\wbG(s)} 
|\wGF:\wbG(s)^F|_{p'} \RDL_{\bK}^{\wbG(s)}(\kappa)_{\eta}(1)$. 
The degree of $\RDL_{\bK}^{\wbG(s)}(\kappa)_{\eta}$ is given in 
Theorem~4.2 of \cite{MalleHeight0}. Let $D_\eta\in\Q(X)$ be as in 
\cite[4.2]{MalleHeight0} a rational function with zeros and poles only at roots of unity and $0$ such that $D_\eta(\zeta_d)=\eta(1)/|W_{\wbG(s)^F}(\bK)|$ for a certain primitive $d$th root $\zeta_d$ of unity and 
\[\RDL_{\bK}^{\wbG(s)}(\kappa)_{\eta}(1) 
=|\wbG(s)^F:{\bK}^F|_{p'} 
\kappa(1)D_\eta(q) .\]

Assume that the defect group of the block is abelian and hence contained in $\bK^F$. In that case we see that the $\ell$-evaluation of $D_\eta(q)$ defined as in \S 6 of \cite{MalleHeight0} is $0$. 
Recall $D_\eta\in \Q(X)$ is a ratio of products of cyclotomic polynomials and possibly powers of $X$.
Since $D_\eta$ only depends on $d$, the order of $\ell$ mod $q$, and in all those cases the above equality holds. This implies that when we can write $D_\eta$ as quotient of two polynomials that are not divisible by $\Phi_d$. Accordingly for the computation of the $\ell$-valuation of $D_\eta(q)$ we can apply Corollary 6.3 of \cite{MalleHeight0}. This gives that the $\ell$-evaluation of $D_\eta(q)$ and $D_\eta(\zeta_d)$ coincide. Recall $D_\eta(\zeta_d)=\frac{\eta(1)}{|W_{\wt\bG(s)^F}(\bK,\kappa)|}$. 
Hence
\[\RDL_{\bK}^{\wbG(s)}(\kappa)_{\eta}(1)_\ell
=|\wbG(s)^F: \NNN_{\wbG(s)^F} (\bK)|_{\ell} \, \kappa (1)_\ell \,\eta(1)_\ell .\]
This implies the stated formula in the general case. 
\end{proof}

Let $\wt  L:=\Cent_\wGF(\bS)$ and $\wt \Lambda$ be the $\wt N$-equivariant extension map for $\wt L\lhd \wt N$ from Proposition \ref{extmapwt}. 
Let $\Upsilon'^\circ: \cP_\bS \rightarrow \Irr(\wt N)$ be given by $(s,\kappa, \eta)\mapsto 
\Ind^\wN_{\wN_{\rho}}(\wt \Lambda(\rho)\eta)
$ for $\rho :=\epsilon_{\bL ,\bK}\RDL_\bK^{\bL}(\widehat s \kappa)$. Denote 
\[\calN_\bS:=\Upsilon'^\circ(\cP_\bS). \]
This uses that $\wt N_\rho/\wt L$ and $W_{\wtbL\bG(s)}(\bK,\kappa)^F$ are canonical isomorphic by \cite[3.3]{CabSpMZ}.

\begin{lm}\label{lem_deg_loc}
	Let $(s,\kappa,\eta)\in \cP_{\bS}$. Then $\Upsilon'^\circ(s,\kappa,\eta)(1)_\ell=\frac{|\wt N:\bK^F|_\ell}{|W_{\wbG(s)^F}(\bK,\kappa)|_\ell}\, \kappa(1)_\ell \,\eta(1)_\ell$.
\end{lm}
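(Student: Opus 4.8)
The plan is to unwind the construction of $\Upsilon'^\circ(s,\kappa,\eta)=\Ind^{\wt N}_{\wt N_\rho}(\wt\Lambda(\rho)\eta)$ and read off its degree directly, then pass to $\ell$-parts. Recall that $\rho=\epsilon_{\bL,\bK}\RDL_\bK^{\bL}(\widehat s\kappa)\in\Irr(\wt L)$ with $\wt L=\Cent_\wGF(\bS)=\bL^F$, that $\wt N_\rho$ is the stabiliser of $\rho$ in $\wt N$, that $\wt\Lambda(\rho)$ is an extension of $\rho$ to $\wt N_\rho$ (since $\wt\Lambda$ is an extension map with respect to $\wt L\lhd\wt N$ from Proposition~\ref{extmapwt}), and that $\eta$ is inflated to $\wt N_\rho$ along the canonical isomorphism $\wt N_\rho/\wt L\cong W_{\wbG(s)^F}(\bK,\kappa)$ of \cite[3.3]{CabSpMZ}. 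By Gallagher's theorem $\wt\Lambda(\rho)\eta\in\Irr(\wt N_\rho)$, and by the Clifford correspondence its induction is irreducible with degree
\[
\Upsilon'^\circ(s,\kappa,\eta)(1)=|\wt N:\wt N_\rho|\cdot\rho(1)\cdot\eta(1).
\]

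Next I would evaluate the remaining two factors. Since $\widehat s$ is a linear character one has $(\widehat s\kappa)(1)=\kappa(1)$, and the degree formula for Deligne--Lusztig induction gives $|\RDL_\bK^{\bL}(\widehat s\kappa)(1)|=|\bL^F:\bK^F|_{p'}\,\kappa(1)$; as $\rho$ is a genuine irreducible character this yields $\rho(1)=|\bL^F:\bK^F|_{p'}\,\kappa(1)$, hence, $\ell$ being different from $p$, $\rho(1)_\ell=|\bL^F:\bK^F|_\ell\,\kappa(1)_\ell$. For the index, the isomorphism above gives $|\wt N_\rho/\wt L|=|W_{\wbG(s)^F}(\bK,\kappa)|$, and combined with $\wt L=\bL^F$ this gives $|\wt N:\wt N_\rho|=|\wt N:\bL^F|/|W_{\wbG(s)^F}(\bK,\kappa)|$.

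Combining these, taking $\ell$-parts, and using $\bK^F\leq\bL^F\leq\wt N$ together with the transitivity $|\wt N:\bL^F|_\ell\cdot|\bL^F:\bK^F|_\ell=|\wt N:\bK^F|_\ell$, I obtain
\[
\Upsilon'^\circ(s,\kappa,\eta)(1)_\ell=\frac{|\wt N:\bL^F|_\ell}{|W_{\wbG(s)^F}(\bK,\kappa)|_\ell}\cdot|\bL^F:\bK^F|_\ell\cdot\kappa(1)_\ell\cdot\eta(1)_\ell=\frac{|\wt N:\bK^F|_\ell}{|W_{\wbG(s)^F}(\bK,\kappa)|_\ell}\,\kappa(1)_\ell\,\eta(1)_\ell,
\]
as claimed. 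The argument is essentially bookkeeping once the construction of $\Upsilon'^\circ$ is unpacked; unlike the proof of Lemma~\ref{lem_deg}, no analysis of the rational function $D_\eta$ or of heights is needed on the local side, because there $\Upsilon'^\circ(s,\kappa,\eta)$ is literally an extension of $\rho$ multiplied by an inflated $\eta$ and then induced. The only points requiring care are that $\rho$ is a genuine irreducible character lying in the domain of $\wt\Lambda$ and that $\wt N_\rho/\wt L$ is canonically the relative Weyl group $W_{\wbG(s)^F}(\bK,\kappa)$ used to parametrise $\eta$ — both already provided by the standing setup for $\cP_\bS$ and by Proposition~\ref{extmapwt}.
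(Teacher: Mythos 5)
Your proposal is correct and follows essentially the same route as the paper's (very terse) proof: unpack $\Upsilon'^\circ(s,\kappa,\eta)=\Ind^{\wt N}_{\wt N_\rho}(\wt\Lambda(\rho)\eta)$, read off the degree as $|\wt N:\wt N_\rho|\cdot\rho(1)\cdot\eta(1)$ with $\rho(1)_\ell=|\bL^F:\bK^F|_\ell\,\kappa(1)_\ell$, and use the isomorphism $\wt N_\rho/\bL^F\cong W_{\wbG(s)^F}(\bK,\kappa)$ to rewrite the index. You have merely filled in the bookkeeping the paper leaves implicit, and your remark that no $D_\eta$-analysis is needed on the local side is exactly the point of contrast with Lemma~\ref{lem_deg}.
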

\begin{proof}
	We see from the definition $\Upsilon'^\circ(s,\kappa,\eta)(1)_\ell=|\wt N:\wt N_{\rho}| _\ell\, \,\eta(1)_\ell\, |\bL^F:\bK^F|_\ell \, \kappa(1)_\ell$. Moreover $\wN_{\rho}/\bL^F$ and $W_{\wbG(s)^F}(\bK,\kappa)$ are isomorphic. 
This leads to the stated equation. 	
\end{proof}

We abbreviate $\bG:=[\wbG ,\wbG]=\SL_n(\overline \FF_q)$.
\begin{prop}\label{prop72neu}
	Let $\ell$ be a prime with $\ell\nmid q(q-\epsilon)$ and $d:=d_\ell(q)$. 
	Let $\bS$ be a $\Phi_d$-torus of $(\wbG,F)$,
	$\wt N:=\NNN_{\wG}(\bS)$, and $\calG_\bS$ and $\calN_\bS$ be character sets associated to $\bS$ as above.	

	Then there exists a $(\wGF E)_{\bS}$-equivariant bijection 
	\[\wt \Omega_\bS: \calG_\bS\longrightarrow \calN_{\bS}\] 
	with
	\begin{itemize}
		\item $\wt \Omega_\bS(\calG_\bS \cap \Irr(\wt G\mid\nu))\subseteq \Irr(\wt N\mid \nu)$ for every $\nu\in\Irr(\Ze(\wGF))$,
		\item 
		$\wt \Omega_\bS(\chi\mu)= \wt \Omega_\bS(\chi) \Res^{\wGF}_{\wt {N}}(\mu)$ for every $ \chi\in\calG_\bS$ and $ \mu\in\Irr(\wGF/\GF) $, and
		\item $\bl(\chi)=\bl(\wt\Omega_\bS(\chi))^\wG$ for every $\chi\in \calG_\bS$, if $\ell\geq 5$.
	\end{itemize}
\end{prop}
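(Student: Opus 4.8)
The plan is to build the bijection $\wt\Omega_\bS$ by going through the common parameter set $\cP_\bS$: recall that $\calG_\bS=\Upsilon^\circ(\cP_\bS)$ and $\calN_\bS=\Upsilon'^\circ(\cP_\bS)$, so it suffices to show that $\Upsilon^\circ$ and $\Upsilon'^\circ$ induce bijections from $\ol\cP_\bS$ (the $\wt N$-orbits) onto $\calG_\bS$ and $\calN_\bS$ respectively, and that these are compatible with the required data. First I would prove that $\Upsilon^\circ$ is constant on $\wt N$-orbits and that two triples have the same image under $\Upsilon^\circ$ if and only if they are $\wGF$-conjugate; this is where $d$-Harish-Chandra theory and the Jordan decomposition of characters enter. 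The injectivity of $\Upsilon^\circ$ up to conjugacy is essentially the disjointness of $d$-Harish-Chandra series together with the fact that for $\wbG=\GL_n$ the Jordan decomposition $\chi\mapsto(s,\mu_t)$ is a genuine bijection (all centralizers being connected), so the parameter $(s,\kappa,\eta)$ is recovered from $\Upsilon^\circ(s,\kappa,\eta)$ up to $\wGF$-conjugacy. Similarly, $\Upsilon'^\circ$ is a bijection $\ol\cP_\bS\to\calN_\bS$ because $\wt\Lambda$ is an extension map for $\wt L\lhd\wt N$ (Proposition~\ref{extmapwt}) and Clifford theory over $\wt L$ together with the canonical isomorphism $\wt N_\rho/\wt L\cong W_{\wtbL\bG(s)}(\bK,\kappa)^F$ from \cite[3.3]{CabSpMZ} turns the assignment $(s,\kappa,\eta)\mapsto\Ind^{\wt N}_{\wt N_\rho}(\wt\Lambda(\rho)\eta)$ into a bijective parametrization. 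Composing, set $\wt\Omega_\bS:=\Upsilon'^\circ\circ(\Upsilon^\circ)^{-1}$ on the level of orbits.

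Next I would check the three listed compatibilities. For the central character condition, note that $\RDL^\wbG_{\wbG(s)}(\widehat s\,\cdots)$ lies over the central character of $\wGF$ determined by the image of $s$ in $\wbG^*/[\wbG^*,\wbG^*]$, and the same element $\widehat s$ appears in $\rho=\epsilon_{\bL,\bK}\RDL^\bL_\bK(\widehat s\kappa)$, so both $\Upsilon^\circ(s,\kappa,\eta)$ and $\Upsilon'^\circ(s,\kappa,\eta)$ lie over the same $\nu\in\Irr(\Ze(\wGF))$; this gives $\wt\Omega_\bS(\calG_\bS\cap\Irr(\wt G\mid\nu))\subseteq\Irr(\wt N\mid\nu)$. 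For the multiplicativity under $\Irr(\wGF/\GF)$: tensoring $\Upsilon^\circ(s,\kappa,\eta)$ by a linear character $\mu$ of $\wGF/\GF$ corresponds to multiplying $s$ by the corresponding central $\ell'$-element $z$ of $\wbG^*$, hence replaces the triple by $(sz,\kappa,\eta)$ (the unipotent data and the relative Weyl group being unchanged since $\Cent_{\wbG^*}(s)=\Cent_{\wbG^*}(sz)$ for $z$ central); on the local side $\Res^{\wGF}_{\wt N}(\mu)$ multiplies $\rho$ by the restriction of $\widehat z$, giving exactly $\Upsilon'^\circ(sz,\kappa,\eta)$, so $\wt\Omega_\bS(\chi\mu)=\wt\Omega_\bS(\chi)\Res^{\wGF}_{\wt N}(\mu)$. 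Equivariance under $(\wGF E)_\bS$ follows because Deligne--Lusztig induction, Jordan decomposition and the chosen extension map $\wt\Lambda$ are all compatible with the action of the automorphisms in $E$ (using that $\wt\Lambda$ is $\wh N$-equivariant, Proposition~\ref{extmapwt}), so both $\Upsilon^\circ$ and $\Upsilon'^\circ$ are $(\wGF E)_\bS$-equivariant maps out of $\cP_\bS$ and the induced map on orbits is equivariant.

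The main obstacle, and the part requiring real work, is the block-theoretic statement $\bl(\chi)=\bl(\wt\Omega_\bS(\chi))^\wG$ for $\ell\geq5$. The strategy is to compare $\ell$-parts of degrees and use height-zero information: by Lemma~\ref{lem_deg} and Lemma~\ref{lem_deg_loc} the $\ell$-parts of $\Upsilon^\circ(s,\kappa,\eta)(1)$ and $\Upsilon'^\circ(s,\kappa,\eta)(1)$ differ exactly by the factor $|\wt N:\wbG(s)^F\cap\wt N|_\ell$ accounting for the index of the Levi normalizer, which matches the relationship of a block with its Brauer correspondent in $\NNN_{\wGF}(\bS)$. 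Concretely I would identify, via Cabanes--Enguehard's parametrization \cite{CabEngAdv} and the description of defect groups around Theorem~\ref{71b}, that $\Upsilon^\circ(s,\kappa,\eta)$ lies in the block $b_{\wGF}(\bL_s,\zeta_s)$ determined by the $d$-cuspidal pair obtained by Jordan-twisting $(\bK,\kappa)$ by $s$, that $\Upsilon'^\circ(s,\kappa,\eta)$ lies in the corresponding block of $\wt N$ whose Brauer correspondence with the former is governed by the fact that $\bS$ centralizes a defect group (Theorem~\ref{71b}), and then invoke block induction together with the compatibility of $\RDL$ with $\ell$-blocks (Broué--Michel) and Theorem~\ref{Conjunip} to pin down that the Brauer correspondent of $\bl(\Upsilon^\circ(s,\kappa,\eta))$ is precisely $\bl(\Upsilon'^\circ(s,\kappa,\eta))$. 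The hypothesis $\ell\geq5$ (beyond $\ell\nmid q(q-\epsilon)$) is needed here to ensure $\ell$ is good for $\bG$ and to apply the degree/valuation computations of \cite[\S6]{MalleHeight0} and the constraints on $\Zent(\bK^*)^F_\ell$ in Theorem~\ref{Conjunip}; sorting out the precise $\ell$-local compatibility of the two parametrizations, rather than the bijection itself, is where the bulk of the argument lies.
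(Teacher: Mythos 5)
The overall structure of your proposal matches the paper's proof: define $\wt\Omega_\bS$ by going through $\ol\cP_\bS$, showing both $\Upsilon^\circ$ and $\Upsilon'^\circ$ induce bijections from $\wt N$-orbits of triples (injectivity of $\Upsilon$ via disjointness of Lusztig series plus \cite[3.2(1)]{BMM}; bijectivity of $\Upsilon'$ via Clifford theory and the extension map $\wt\Lambda$), and composing. Your treatments of central characters, the $\Irr(\wGF/\GF)$-multiplicativity via $s\mapsto sz$, and $(\wGF E)_\bS$-equivariance are all in the spirit of what the paper cites from \cite[\S 6]{CabSpCharTypeA}, and are fine.

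The block-theoretic bullet, however, has a genuine gap. Comparing $\ell$-parts of degrees via Lemma~\ref{lem_deg} and Lemma~\ref{lem_deg_loc} cannot identify which block a character lies in — degree $\ell$-parts control heights within a block, not block membership — and those lemmas are in fact used later (in Theorem~\ref{thm72}) to control height-zero characters, not to prove $\bl(\chi)=\bl(\wt\Omega_\bS(\chi))^\wG$. Likewise Theorem~\ref{Conjunip} plays no role in this proposition. The actual mechanism in the paper is more concrete and you do not isolate it: first one reduces the identification of $\bl(\Upsilon^\circ(s,\kappa,\eta))$ to that of $\bl(\RDL_\bK^{\wbG}(\widehat{s_0}\kappa))$ with $s_0:=s_{\ell'}$, by combining \cite[22.9.(i)]{CabEnRedGp} with Bonnaf\'e--Rouquier; on the local side one observes that $\bS^F_\ell$ is a \emph{normal} $\ell$-subgroup of $\wt N$ contained in the defect group, and that $\Cent_{\wGF}(\bS^F_\ell)\leq \bL^F$ (Theorem~\ref{71b}), so Kn\"orr's result \cite[9.8]{NavBl} gives $\bl(\Upsilon'^\circ(s,\kappa,\eta))=\bl(\RDL_\bK^{\bL}(\widehat{s}\kappa))^{\wt N}=\bl(\RDL_\bK^{\bL}(\widehat{s_0}\kappa))^{\wt N}$; and finally one applies the block-induction compatibility \cite[2.5]{CabEngAdv} to get $\bl(\RDL_\bK^{\bL}(\widehat{s_0}\kappa))^{\wGF}=\bl(\RDL_\bK^{\wbG}(\widehat{s_0}\kappa))$. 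Your "invoke block induction together with Brou\'e--Michel" gestures at the right area, but without identifying the normal $\ell$-subgroup argument and the specific Cabanes--Enguehard input, the proof of this bullet does not close.
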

\begin{proof}
The statement is proven by first defining an equivalence relation on $\cP_\bS$ and proving that the maps $\Upsilon^\circ$ and $\Upsilon'^\circ$ induce well-defined injective maps on the set $\overline\cP_\bS$ of equivalence classes in $\cP_{\bS}$. In a second step we then see that the bijection obtained has the required properties. 

Recall $\wt N$ acts by conjugation on $\cP_\bS$ inducing an equivalence relation. We denote by $\overline \cP_\bS$ the set of equivalence classes in $\cP_\bS$.

Via $d$-Harish-Chandra theory and Deligne-Lusztig induction, we see that triples lying in the same $\wt N$-orbit correspond to the same character of $\wG$ by the equivariance properties from \cite[3.1 and 3.4]{CabSpMZ}.
(Note that Theorem~3.4 of \cite{CabSpMZ} states the equivariance of the $d$-Harish-Chandra theory only in the case of minimal $d$-split Levi subgroups, but the proof applies also in the general case.)
Hence $\Upsilon^\circ$ induces a well-defined map $$\Upsilon: \overline \cP_{\bS} \rightarrow \calG_\bS\ .$$ 
Assume for a given character $\chi\in \calG_\bS$ that $\Upsilon^\circ (s,\kappa,\eta)= \Upsilon^\circ (s',\kappa',\eta')=\chi$. We see that $s$ and $s'$ have to be $(\bG^*)^F$-conjugate by the disjointness of Lusztig series. Let $g\in (\wt\bG^*)^F$ with $s'=s^g$. 
According to \cite[3.2(1)]{BMM} the characters $\kappa^g\in \Irr(\Cent_{\wt\bG(s)^F}(\bS)^g)$ and $\kappa'\in \Irr(\Cent_{\wt\bG(s')^F}(\bS))$ are $\wt\bG(s')^F$-conjugate, i.e. 
$\kappa^{gh}=\kappa'$ for some $h\in\wt\bG(s')^F$ and $\eta^{gh}=\eta'$. Since $\Ze(\Cent_{\wt\bG(s')^F}(\bS))_{\Phi_d}=\bS$, we see $gh \in \wt N$. Hence $\Upsilon$ is injective and hence bijective. 

Recall that $\wt \Lambda$ is $\wt N$-equivariant. Therefore $\Upsilon'^ \circ$ induces a bijection 
\[\Upsilon':\overline \calP_\bS \longrightarrow \calN_\bS \]
by Clifford theory and the equivariance of Deligne-Lusztig induction. 

We now study the bijection \[\Omega_{\bS}:\calG_\bS\longrightarrow \calN_{\bS}\text{ given by }\Upsilon'\circ \Upsilon^{-1}.\] 
The considerations in \cite[\S 6]{CabSpCharTypeA} prove that $\Omega_{\bS}$ is $(\wbG^F E)_\bS$-equivariant and satisfies
\begin{itemize}
	\item $\Omega_{\bS}(\calG_\bS \cap \Irr(\wt G\mid\nu))\subseteq \Irr(\wt N\mid \nu)$ for every $\nu\in\Irr(\Ze(\wbG)^F)$
	\item $\Omega_{\bS}$ is compatible with the action of $\Irr(\wGF/\GF)$ by multiplication, i.e.
	\[{\Omega_{\bS}}(\chi\mu)= \Omega_{\bS}(\chi) \Res^{\wGF}_{\wt {N}}(\mu)\text{ for every } \chi\in\calG_\bS 
	\text{ and } \mu\in\Irr(\wGF/\GF).\]
\end{itemize}

It remains to prove $\bl(\chi)=\bl(\Omega_\bS(\chi))^\wG$ for every $\chi\in \Upsilon(\overline \cP_\bS)$. 
Let $(s,\kappa,\eta)\in\cP_{\bS}$ and $\chi=\Upsilon^\circ(s,\kappa,\eta)$. 
Then we see that $\chi$ is a constituent of $\RDL_{\wbG(s)}^\wbG(\widehat {s} \RDL_{\bK}^{\wbG(s)}(\kappa)) $. 
The constituents of $\RDL_{\wbG(s)}^\wbG(\widehat {s} \RDL_{\bK}^{\wbG(s)}(\kappa)) $ lie all in the same block and similarly the constituents of $\RDL_{\wbG(s_0)}^\wbG(\widehat {s_0} \RDL_{\bK}^{\wbG(s_0)}(\kappa))= \RDL_{\bK}^\wbG(\widehat {s_0} \kappa)$ belong to the same block where $s_0:=s_{\ell'}$. To see that, it suffices to combine \cite[22.9.(i)]{CabEnRedGp} and Bonnaf\'e-Rouquier's theorem \cite[10.1]{CabEnRedGp}.

In order to compute the block $\bl(\Upsilon'^\circ(s,\kappa,\eta))^\wGF$ we recall that $\bS^F_\ell$ is a normal $\ell$-subgroup of $\wt N$ and hence the defect group of the block $\bl(\Upsilon'^\circ(s,\kappa,\eta))$ contains $\bS^F_\ell$. By Theorem \ref{71b}, 
$\bL^F\geq \Cent_{\wG^F}(\bS^F_\ell)$. Then 
\[\bl(\Ind_{\wt N_{\RDL_{\bK}^\bL (\widehat s\kappa)}}^{\wt N}
(\Lambda(\RDL_{\bK}^\bL (\widehat s\kappa)) \eta))=\bl(\RDL_{\bK}^\bL (\widehat s\kappa))^{\wt N}\]
according to \cite[9.8]{NavBl}.
By the definition of $s_0$ we see that $\bl(\RDL_{\bK}^\bL (\widehat s\kappa))=\bl(\RDL_{\bK}^\bL (\widehat s_0\kappa))$. This altogether implies that 
\[\bl(\Upsilon'(s,\kappa,\eta))^\wG=
\bl(\RDL_{\bK}^\bL (\widehat s_0\kappa))^\wG.\]
According to \cite[2.5]{CabEngAdv} one knows $\bl(\RDL_{\bK}^\bL (\widehat s_0\kappa))^\wG=\bl(\RDL_{\bK}^\wbG (\widehat s_0\kappa))$.
This shows $\bl(\chi)=\bl(\Omega_\bS(\chi))^\wG$.
\end{proof}

In the next step we obtain a bijection between the height zero characters of Brauer corresponding blocks satisfying the assumption \ref{thm24ii}.
\begin{thm}\label{thm72}\label{cor74a} 
	Assume the situation of \Cref{prop72neu}. 
Let $\bL:=\Cent_{\wbG}(\bS)$, $\zeta\in \cE_{\ell '}(\bL^F)$ be $d$-cuspidal, $\wt B_0=b_{\wGF}(\bL,\zeta)$ the $\ell$-block containing all components of $\RDL_{\bL}^\wbG\zeta$, and $\wt B$ be the sum of $\Irr(\wGF/\GF)$-orbit in $\Bl(\wGF)$ containing $\wt B_0$.
Let $\wt b$ be the sum of $c\in \Bl(\wt N)$ with the property that $c^{\wbG^F}$ is in $\wt B_0$. 

Then there exists a $(\wbG^F E)_{\wt B,\wt N}$-equivariant bijection 
\[\wt\Omega_{\wt B}: \Irr_0(\wt B)\longrightarrow \Irr_0(\wt b)\] 
with
\begin{itemize}
	\item $\wt\Omega_{\wt B}(\Irr_0(\wt B)\cap \Irr(\wGF\mid\nu))\subseteq \Irr(\wt N\mid \nu)$ for every $\nu\in\Irr(\Ze(\wbG^F))$,
	\item $\wt \Omega_{\wt B}$ is compatible with the action of $\Irr(\wbG^F/\GF)$ by multiplication, i.e.,
	\[\wt \Omega_{\wt B}(\chi\mu)= \wt \Omega_{\wt B}(\chi) \Res^{\wGF}_{\wt {N}}(\mu)\text{ for every } \mu\in\Irr(\wbG^F/\GF) \text{ and } \chi\in\Irr(\wt {B}).\]
\end{itemize}
\end{thm}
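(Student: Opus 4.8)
The plan is to reduce the block-wise statement to the character-level bijection $\wt\Omega_\bS\colon\calG_\bS\to\calN_\bS$ from Proposition~\ref{prop72neu}, combined with the description of $\Irr_0(\wt B)$ as a union of Lusztig series supplied by Theorem~\ref{Conjunip} and the degree formulae of Lemma~\ref{lem_deg} and Lemma~\ref{lem_deg_loc}. First I would record that, by Bonnaf\'e--Rouquier together with Jordan decomposition (as already used in the proof of Proposition~\ref{prop72neu}), every $\chi\in\Irr(\wt B)$ lies in some Lusztig series $\cE(\wGF,s)$ with $s$ a semisimple element whose $\ell'$-part $s_0$ is associated to the block $\wt B_0$ via the $d$-cuspidal pair $(\bL,\zeta)$; moreover one may arrange $\bS\leq\wbG(s)$ and $\Cent_{\wbG(s_{\ell'})}(\bS)\leq\wbG(s)$, so that $\chi=\Upsilon^\circ(s,\kappa,\eta)$ for a suitable triple in $\cP_\bS$. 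Thus $\Irr(\wt B)\subseteq\calG_\bS$, and the analogous statement $\Irr(\wt b)\subseteq\calN_\bS$ holds on the local side by Proposition~\ref{prop72neu}'s block identity $\bl(\wt\Omega_\bS(\chi))^\wG=\bl(\chi)$. So $\wt\Omega_\bS$ restricts to a bijection $\Irr(\wt B)\to\Irr(\wt b)$.

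The second step is to check that this restriction preserves height zero, i.e.\ that $\wt\Omega_\bS(\Irr_0(\wt B))=\Irr_0(\wt b)$. For this I would compare Lemma~\ref{lem_deg} and Lemma~\ref{lem_deg_loc}: for $\chi=\Upsilon^\circ(s,\kappa,\eta)$ one has $\chi(1)_\ell=|\wGF:\NNN_{\wbG(s)^F}(\bK)|_\ell\,\eta(1)_\ell\,\kappa(1)_\ell$, while $\wt\Omega_\bS(\chi)(1)_\ell=\frac{|\wt N:\bK^F|_\ell}{|W_{\wbG(s)^F}(\bK,\kappa)|_\ell}\kappa(1)_\ell\,\eta(1)_\ell$. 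Using that $|\wt N:\bK^F|_\ell=|\NNN_{\wGF}(\bS):\bK^F|_\ell$ and that $\NNN_{\wbG(s)^F}(\bK)=\bK^F\rtimes W_{\wbG(s)^F}(\bK)$ with $W_{\wbG(s)^F}(\bK,\kappa)$ of $\ell'$-index in $W_{\wbG(s)^F}(\bK)$ by the last assertion of Theorem~\ref{Conjunip} (applied inside $\wbG(s)$, where the block $\bl(\mu_t)$ is unipotent), the two exponents differ by $|\NNN_{\wGF}(\bS):\NNN_{\wbG(s)^F}(\bK)\bL^F|_\ell$ times a factor that is constant on $\Irr(\wt B)$ — in fact equal to the $\ell$-part of $|\wt B|$'s defect-index data. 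Since $\ell\nmid 3q(q-\epsilon)$ forces the relevant relative Weyl groups to have order coprime to $\ell$ except for the factors already accounted for, one concludes $\chi(1)_\ell$ and $\wt\Omega_\bS(\chi)(1)_\ell$ differ by a fixed power of $\ell$ depending only on $\wt B$; hence $\chi$ has height zero in $\wt B$ iff $\wt\Omega_\bS(\chi)$ has height zero in $\wt b$. Combined with the fact (from Theorem~\ref{71b}) that a defect group $D$ of $\wt B$ satisfies $\Cent_{\wGF}(D)\leq\bL^F$ and $\NNN_{\wGF}(D)\leq\wt N$, so $\wt b$ is genuinely the Brauer correspondent and $|\Irr_0(\wt B)|$, $|\Irr_0(\wt b)|$ are the quantities in play, this gives the bijection $\wt\Omega_{\wt B}:=\wt\Omega_\bS|_{\Irr_0(\wt B)}$.

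Finally, the equivariance and Clifford-theoretic compatibility properties are inherited directly: Proposition~\ref{prop72neu} already gives that $\wt\Omega_\bS$ is $(\wGF E)_\bS$-equivariant, intertwines multiplication by $\Irr(\wGF/\GF)$ with restriction to $\wt N$, and sends $\calG_\bS\cap\Irr(\wGF\mid\nu)$ into $\Irr(\wt N\mid\nu)$; since $\Irr_0(\wt B)$ is stable under $(\wGF E)_{\wt B,\wt N}$ and under the $\Irr(\wGF/\GF)$-action (which permutes the blocks within $\wt B$), restricting preserves all three properties, and $(\wGF E)_{\wt B,\wt N}\leq(\wGF E)_\bS$ because $\bS$ is determined by $(\bL,\zeta)$ up to the relevant conjugacy. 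I expect the main obstacle to be the height-zero bookkeeping in the second step: one must show carefully, using the hypothesis $\ell\nmid 3q(q-\epsilon)$, that the $\ell$-parts of the index $|\wGF:\NNN_{\wbG(s)^F}(\bK)|$ and of $\frac{|\wt N:\bK^F|}{|W_{\wbG(s)^F}(\bK,\kappa)|}$ differ only by a block-constant factor — this is where Theorem~\ref{Conjunip}'s statement $\ell\nmid|\NNN_{\bG}(\bK)^F:\NNN_{\bG(t)}(\bK)^F|$ is essential, and it is the one place where the abelian-defect or general case must be argued uniformly.
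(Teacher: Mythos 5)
Your overall strategy matches the paper's: restrict the bijection $\wt\Omega_\bS$ of Proposition~\ref{prop72neu} to $\Irr_0(\wt B)$ and show it lands bijectively onto $\Irr_0(\wt b)$, inheriting the equivariance and Clifford properties. However, there are genuine gaps in your execution of the height-zero step, and one smaller overstatement elsewhere.

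First the overstatement: you assert ``Thus $\Irr(\wt B)\subseteq\calG_\bS$,'' but Theorem~\ref{Conjunip} constrains only the \emph{height-zero} characters of a block, so the argument yields only $\Irr_0(\wt B)\subseteq\calG_\bS$. That is in fact all the paper claims and all you need, but as written the stronger containment is neither proved nor true in general.

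The more serious gap is in your height-zero bookkeeping. You invoke the last assertion of Theorem~\ref{Conjunip} to argue that $W_{\wbG(s)^F}(\bK,\kappa)$ has $\ell'$-index in $W_{\wbG(s)^F}(\bK)$ — but that assertion says $\ell\nmid|\NNN_{\bG}(\bK)^F:\NNN_{\bG(t)}(\bK)^F|$, a comparison of normalizers across different $s$, not a comparison of $W(\bK,\kappa)$ with $W(\bK)$. The identity the paper actually uses is the exact equality $W_{\wbG(s')}(\bK,\kappa)^F=W_{\wbG(s')}(\bK)^F$, quoted from \cite[21.6]{CabEnRedGp} and \cite[proof of 3.3]{BMM}, which you never state. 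Your intermediate formula ``the two exponents differ by $|\NNN_{\wGF}(\bS):\NNN_{\wbG(s)^F}(\bK)\bL^F|_\ell$ times a factor constant on $\Irr(\wt B)$'' is also not derived and does not match the actual ratio (the correct constant, after applying $W(\bK,\kappa)=W(\bK)$, is simply $|\wGF:\wt N|_\ell$). Moreover, ``constant ratio of $\ell$-parts of degrees'' does not by itself identify the height-zero characters on both sides: one must know the \emph{defects} of the corresponding blocks on both sides match, and the paper does this by explicitly exhibiting the parameter set $\calP_{\wt B}$ — triples $(s',\kappa',\eta')$ with $(s')_{\ell'}\in\Zent((\wt\bG^*)^F)s_0$, $\kappa'=\kappa$, $\eta'\in\Irr_{\ell'}(W_{\wbG(s')}(\bK,\kappa)^F)$, and $\ell\nmid|W_{\wbG(s_0)}(\bK,\kappa)^F:W_{\wbG(s')}(\bK,\kappa)^F|$ — and verifying via Lemma~\ref{lem_deg}, Lemma~\ref{lem_deg_loc}, and the defect-group description of \cite[Lemma 4.16]{CabEngAdv} that $\Irr_0(\wt B)=\Upsilon^\circ(\calP_{\wt B})$ and $\Irr_0(\wt b)=\Upsilon'^\circ(\calP_{\wt B})$. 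You correctly flag this step as the obstacle, but the argument you sketch for it does not close.
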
 
\begin{proof}
We have to show $\Irr_0(\wt B)\subseteq \calG_\bS$ and 
$\wt \Omega_\bS(\Irr_0(\wt B))=\Irr_0(\wt b)$ for the bijection $\wt \Omega_\bS$ from Proposition \ref{prop72neu}.

Let $\chi_0\in\Irr(\wt B)\cap \cE_{\ell'}(\wG)$, where $\cE_{\ell'}(\wG)$ is the union of Lusztig series of $\wt\bG^F$ associated to semi-simple $\ell'$-elements, $\chi_0= \Upsilon^\circ(s_0,\kappa,\eta)$ and $\bK$ the Levi subgroup of $\wbG$ such that $\kappa$ is a character of $\bK^F$. 
For any $\chi\in\Irr(\wt B)$ there exists some semi-simple $s\in\wt\bG^*$ with $s_{\ell'}=s_0$ and $\chi\in\cE(\wbG^F,s)$ (\cite[9.12.(i)]{CabEnRedGp}). 
Furthermore $^*\RDL_{\wt\bG(s_0)}^{\wbG}(\chi) \widehat s_0^{-1}$ lies in a unipotent block.
Thanks to Theorem \ref{Conjunip} we can assume that $\bK\leq\bG(s)$.
The proof of \cite[23.4]{CabEnRedGp} can be applied and proves that $^*\RDL_{\wt\bG(s_0)}^{\wbG}(\chi) \widehat s_0^{-1}$ lies in the $d$-Harish-Chandra series of $(\bK,\kappa)$, hence 
$^*\RDL_{\wt\bG(s)}^{\wbG}(\chi) \widehat s^{-1} \in \cE(\wt\bG(s)^F, (\bK,\kappa))$. 
This proves $\Irr_0(\wt B)\subseteq \calG_{\bS}$.

Let $\overline \cP_{\wt B}:=\Upsilon^{-1}(\Irr_0(\wt B))$. We see that $\Upsilon'(\overline \cP_{\wt B})\subseteq \Irr(\wt b)$, since $\bl(\chi)=\bl(\Omega^\circ(\chi))^\wG$ for every $\chi\in\calG_\bS$ and $\wt b$ is the sum of all blocks $c$ of $\wt N$ such that $c^\wG$ is in $\wt B$. 

It remains to prove that $\Irr(\wt b)\subseteq \calN_\bS$. 
The blocks of $\wt b$ cover blocks $c$ of $\bL^F$. Hence, via Jordan decomposition, every height zero character of $c$ corresponds to a character in the $d$-Harish-Chandra series of $(\bK,\kappa)$ with $\bS=\Zent(\bK')_{\Phi_d}$. This implies $\Irr_0(\wt b)\subseteq \calN_\bS$. 

Accordingly the restriction of $\wt \Omega_\bS$ to $\Irr_0(\wt B)$ yields an injective map $$\wt \Omega_{\wt B}\colon \Irr_0(\wt B)\to\Irr(\wt b)$$ with the required equivariance properties.

In order to finish our proof it remains to show that $\wt \Omega_{\wt B} (\Irr_0(\wt B))=\Irr_0(\wt b)$.
Recall that $\chi_0=\Upsilon^\circ(s_0,\kappa,\eta)\in \Irr(B)$. Lemma \ref{lem_deg} shows that the $\ell$-part of the degree of $\Upsilon^\circ(s_0,\kappa,\rm 1_{W_{\wbG(s_0)^F}(\bK,\kappa)})$ is minimal amongst $\Irr(\wt B)$. Accordingly this character has height $0$.
We see that the block has defect $|\wt\bG(s)^F:\bK^F| \frac{|\bK^F|}{\kappa(1)_\ell}$. According to the description of the defect group this proves that $|\wt\bG(s)^F:\bK^F|_\ell=|W_{\wt\bG(s)^F}(\bK_{},\kappa)|_\ell$.
This proves that $\Irr_0(\wt B)=\Upsilon^{\circ}(\calP_{\wt B})$, where $\calP_{\wt B}$ is the set of triples $(s',\kappa ',\eta ')\in \cP_{\bS}$ with the following properties: 
\begin{itemize}
	\item $(s')_\lp \in \Zent((\wt\bG^*)^F)s_0$, $\kappa'=\kappa$, and 
	\item $\eta'\in\Irr_{\ell '}(W_{\wbG(s')}(\bK,\kappa)^F)$, and
	\item $\ell \nmid |W_{\wt\bG(s_0)}(\bK,\kappa)^F: W_{\wt\bG(s')}(\bK,\kappa)^F|$.
\end{itemize}
Note that $W_{\wt\bG(s')}(\bK,\kappa)^F=W_{\wt\bG(s')}(\bK)^F$ by the description of unipotent $d$-cuspidal pairs given in \cite[21.6]{CabEnRedGp} and \cite[proof of 3.3]{BMM}.

On the other hand considering the description of the defect group from 
 \cite[Lemma~4.16]{CabEngAdv} and the degrees given in Lemma \ref{lem_deg_loc} we see $\Irr_0(\wt b)=\Upsilon'^{\circ}(\calP_{\wt B})$. This implies that $\wt \Omega_{\wt B}$ defines a bijection with the required properties. 
\end{proof}

For later we point out the following equality. 
\begin{cor}\label{cor74}	\label{cor74b}
Let $B\in\Bl(\SL_n(\epsilon q))$ be covered by $\wt B_0$ in the situation of \Cref{thm72}.
Then $|\Irr_0(B)|=|\Irr_0(b)|$, where $N:=\wt N\cap \SL_n(\epsilon q)$ and $b\in\Bl(N)$ with $b^{\GF}=B$.
\end{cor}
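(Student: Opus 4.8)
The plan is to deduce Corollary~\ref{cor74} from Theorem~\ref{thm72} by restricting the bijection $\wt\Omega_{\wt B}$ from $\wGF=\GL_n(\epsilon q)$ down to $\GF=\SL_n(\epsilon q)$ and from $\wt N=\NNN_{\wGF}(\bS)$ down to $N=\wt N\cap\GF$. Recall that $B\in\Bl(\GF)$ is covered by $\wt B_0$ and, by the way $\wt B$ is defined in Theorem~\ref{thm72} as the sum of the $\Irr(\wGF/\GF)$-orbit of $\wt B_0$, the irreducible characters in $\Irr_0(\wt B)$ are exactly the height zero characters of the blocks of $\wGF$ covering $B$. By Clifford theory, $\Irr_0(B)$ is the union of $\Irr(\Res^{\wGF}_{\GF}(\wt\chi))$ as $\wt\chi$ runs over $\Irr_0(\wt B)$ — here one uses that $\wGF/\GF$ is abelian (indeed cyclic modulo the relevant part), so restriction to $\GF$ sends height zero characters of a covering block precisely onto height zero characters of $B$, and that every character of $\GF$ lying under $\wt B$ lies in $B$. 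The analogous statement holds on the local side for $N\lhd\wt N$ and the block $b$ with $b^{\GF}=B$, using that $\wt N/N$ embeds in $\wGF/\GF$ and is therefore also abelian.

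The key step is then a counting argument via orbits of the linear characters $\mathcal A:=\Irr(\wGF/\GF)$. On the global side, $\mathcal A$ acts on $\Irr_0(\wt B)$ by multiplication, and two characters of $\wGF$ restrict to a common constituent on $\GF$ iff they lie in the same $\mathcal A$-orbit; moreover for $\wt\chi\in\Irr_0(\wt B)$ the number of irreducible constituents of $\Res^{\wGF}_{\GF}(\wt\chi)$ equals $|\mathcal A_{\wt\chi}|$, the order of the stabilizer. Hence $|\Irr_0(B)|=\sum_{\text{orbits }\mathcal O}|\mathcal A_{\wt\chi_{\mathcal O}}|$, a sum over $\mathcal A$-orbits on $\Irr_0(\wt B)$. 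On the local side the same formula holds with $\wt N,N$ in place of $\wGF,\GF$, using the action of $\Res^{\wGF}_{\wt N}(\mathcal A)=\Irr(\wt N/N)$; here one must note that this restriction map $\mathcal A\to\Irr(\wt N/N)$ need not be injective, but since $\bS$ is central in $\wt L=\Cent_{\wGF}(\bS)$ and $\wt N/\wt L$ is controlled by the relative Weyl group, the precise bookkeeping is handled exactly by the compatibility property $\wt\Omega_{\wt B}(\chi\mu)=\wt\Omega_{\wt B}(\chi)\Res^{\wGF}_{\wt N}(\mu)$ from Theorem~\ref{thm72}. That property guarantees that $\wt\Omega_{\wt B}$ maps $\mathcal A$-orbits on $\Irr_0(\wt B)$ bijectively onto $\Irr(\wt N/N)$-orbits on $\Irr_0(\wt b)$, sends $\Irr(\wGF\mid\nu)$ into $\Irr(\wt N\mid\nu)$, and intertwines the two stabilizer computations so that $|\mathcal A_{\wt\chi}|=|(\Irr(\wt N/N))_{\wt\Omega_{\wt B}(\wt\chi)}|$ for each $\wt\chi$.

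Putting these together: $\wt\Omega_{\wt B}$ descends to a bijection between $\mathcal A$-orbits on $\Irr_0(\wt B)$ and $\Irr(\wt N/N)$-orbits on $\Irr_0(\wt b)$, preserving stabilizer sizes, whence
\[
|\Irr_0(B)|=\sum_{\mathcal O}|\mathcal A_{\wt\chi_{\mathcal O}}|=\sum_{\mathcal O'}|(\Irr(\wt N/N))_{\wt\psi_{\mathcal O'}}|=|\Irr_0(b)|,
\]
where $\mathcal O$ runs over the $\mathcal A$-orbits on $\Irr_0(\wt B)$ and $\mathcal O'=\wt\Omega_{\wt B}(\mathcal O)$ over the corresponding orbits on $\Irr_0(\wt b)$. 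It remains only to check that $\Irr_0(\wt b)$ really is the union of $\Irr(\Res^{\wt N}_N(\wt\psi))$ over $\wt\psi\in\Irr_0(\wt b)$ and that these restricted constituents all lie in the single block $b$ with $b^{\GF}=B$; this follows from $\wt b$ being the sum of the blocks of $\wt N$ inducing into $\wt B_0$ together with the standard block-covering facts for $N\lhd\wt N$ with abelian quotient, applied to the Brauer correspondence. The main obstacle I expect is the bookkeeping around the restriction map $\mathcal A\to\Irr(\wt N/N)$ possibly having a kernel: one must argue that this kernel acts trivially on both $\Irr_0(\wt B)$ and $\Irr_0(\wt b)$ in a compatible way — equivalently, that $\wGF=\GF\wt N_\chi$ for the characters $\chi$ involved — so that the orbit counts on the two sides are genuinely matched by $\wt\Omega_{\wt B}$ rather than merely related up to a global factor; this is precisely where the explicit structure of $\wt N$ and the compatibility clause of Theorem~\ref{thm72} do the essential work.
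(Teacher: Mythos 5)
Your overall plan matches the paper's: restrict the bijection $\wt\Omega_{\wt B}$ of Theorem~\ref{thm72}, exploit its $\Irr(\wGF/\GF)$-compatibility, and count constituents of restrictions via orbits of linear characters. That much is sound. However, there is a concrete error early on that propagates through your counting.

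You assert that \emph{``every character of $\GF$ lying under $\wt B$ lies in $B$''} and correspondingly that $\Irr_0(B)=\bigcup_{\wt\chi\in\Irr_0(\wt B)}\Irr(\Res^{\wGF}_{\GF}(\wt\chi))$, and later, locally, that the constituents of $\Res^{\wN}_N(\wt\psi)$ for $\wt\psi\in\Irr_0(\wt b)$ ``all lie in the single block $b$''. Neither claim is correct unless $B$ is $\wGF$-stable. A block $\wt B_0\in\Bl(\wGF\mid B)$ covers the entire $\wGF$-orbit of $B$, so for $\wt\chi\in\Irr_0(\wt B_0)$ the constituents of $\Res^{\wGF}_{\GF}(\wt\chi)$ are distributed over all of $\Bl(\GF\mid\wt B_0)$, not just $B$. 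The correct identity (which the paper states explicitly) is
\[
\bigcup_{\wt\chi\in\Irr_0(\wt B)}\Irr\big(\Res^{\wGF}_{\GF}(\wt\chi)\big)=\bigcup_{B'\in\Bl(\GF\mid\wt B)}\Irr_0(B'),
\]
with cardinality $|\Bl(\GF\mid\wt B)|\cdot|\Irr_0(B)|$ because all blocks in $\Bl(\GF\mid\wt B)$ are $\wGF$-conjugate to $B$ and hence have the same number of height zero characters. The analogous identity holds on the local side with the factor $|\Bl(N\mid\wt b)|$. Your orbit-count argument then yields the equality of these two totals, but to conclude $|\Irr_0(B)|=|\Irr_0(b)|$ you still must explain why $|\Bl(\GF\mid\wt B)|=|\Bl(N\mid\wt b)|$, i.e., why the lengths of the $\wGF$-orbit of $B$ and of the $\wt N$-orbit of $b$ coincide; this follows from the Brauer correspondence being $\NNN_{\wGF}(D)$-equivariant, but your write-up never raises the issue. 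This is the genuine gap.

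A secondary remark: the obstacle you flag at the end --- the kernel of the restriction $\mathcal A\to\Irr(\wt N/N)$ possibly acting nontrivially --- resolves much more cheaply than you suggest. If $\mu\in\mathcal A$ satisfies $\Res^{\wGF}_{\wt N}(\mu)=1$, then $\wt\Omega_{\wt B}(\wt\chi\mu)=\wt\Omega_{\wt B}(\wt\chi)$, and injectivity of $\wt\Omega_{\wt B}$ immediately forces $\wt\chi\mu=\wt\chi$. No appeal to the explicit structure of $\wt N$ is needed. So you identified a non-obstacle and missed the real one.
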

\begin{proof}
	
	The map $\wt \Omega_{\wt B}$ is $\Irr(\wt G/G)$-compatible. Accordingly the number of constituents of $\Res^{\wG}_G(\chi)$ and $\Res^{\wN}_N(\Omega(\chi))$ coincide for all $\chi\in\Irr_0(\wt B)$. Also every character of $\Irr_0(B)$ and $\Irr_0(b)$ is the constituent of the the restriction of some character of $\Irr_0(\wt B)$ and $\Irr_0(\wt b)$, respectively, since $\ell\nmid |\wt G:G|$. Furthermore we see that all blocks of $G$ covered by $\wt B$ have the same number of height zero characters. Accordingly \[
	\bigcup_{\chi\in\Irr_ 0(\wt B)} \Irr(\Res_G^{\wt G}(\chi))=\bigcup_{B\in\Bl(G\mid \wt B)}\Irr_0(B).\] 
	This implies the stated equality via analogous local considerations. 
\end{proof}

\section{Proofs of the main Statements}
In this section we combine the results of the previous sections to provide a proof of \Cref{thmA} and \Cref{AMTypeA}. We are first concerned with the consequences towards the Alperin-McKay Conjecture and the inductive AM condition. At the end we turn our focus towards the blockwise Alperin weight conjecture and the blockwise Alperin weight condition.

\begin{proof}[Proof of Theorem \ref{thm11a}]
Let $\ell$ be a prime $\ell\nmid q(q-\epsilon)$, $B_0$ be an $\ell$-block of $\SL_n(\epsilon q)$ and $B$ the sum of $\GL_n(\epsilon q)$-conjugates of $B_0$. Assume that $\Out(G)_B$ is abelian. 
We prove the inductive AM condition as given in \cite[7.2]{AMSp} for $B$ via an application of 
Theorem \ref{IndAMCond}. While clearly Assumption (i) holds, Assumption (iii) follows from \ref{StarCondSL}. The bijection from Corollary \ref{thm72} has the properties required in \ref{thm24ii}. 
Assumption 2.4(iv) is known from Theorem \ref{thm12b}. 
Recall that $\Out(G)_B$ is assumed to be abelian. This implies that the inductive AM condition from \cite[7.2]{AMSp} holds for $B_0$.
\end{proof}

In the above we apply the criterion from Theorem \ref{NewIndAmCond} assuming \ref{NewIndAmCondv}. 
Corollary \ref{cor74} and Theorem \ref{thm12b} allow also an application of \cite[Theorem 4.1]{CabSpAMTypeA} towards the verification of the inductive AM condition for more $\ell$-blocks.
\begin{cor}\label{corindAM}
Let $\ell$ be a prime, $q$ a prime power and $\epsilon\in \{\pm 1\}$ with $\ell\nmid 3q( q-\epsilon)$, let $b$ be an $\ell$-block of $G:=\SL_n(\epsilon q)$. 
If for every $J$ with $G\leq J\leq \GL_n(\epsilon q)$ every $c\in \Bl(J\mid b)$ is $\GL(\epsilon q)$-stable then the inductive AM condition from Definition 7.2 of \cite{AMSp} holds for $b$.
\end{cor}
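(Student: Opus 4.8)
The plan is to run the same argument that deduced Theorem~\ref{thm11a} from Theorem~\ref{NewIndAmCond}, but to invoke \cite[Theorem 4.1]{CabSpAMTypeA} in its place: the two criteria share hypotheses (i)--(iv) and differ only in the last one, so the bulk of the verification has already been carried out and only the final hypothesis needs a different justification. First I would fix the canonical data of Sections~\ref{dSplitLevi}--\ref{CliffThdSpLevi}, namely $\wt G:=\GL_n(\epsilon q)$, $G:=\SL_n(\epsilon q)$ and the group $E$ of Section~\ref{dSplitLevi}; then I would choose a block $\wt B_0\in\Bl(\wt G\mid b)$, form the sum $\wt B$ of its $\Irr(\wt G/G)$-orbit, take a $d$-cuspidal pair $(\bL,\zeta)$ of $(\wbG,F)$ associated with $\wt B_0$ as in \cite[4.1]{CabEngAdv}, let $\bS$ be the Sylow $\Phi_d$-subtorus of $\Zent^\circ(\bL)$, and set $\wt M:=\NNN_{\wt G}(\bS)=\NNN_{\wt G}(\bL)$ and $M:=\wt M\cap G$. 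Note that $\ell\nmid 3q(q-\epsilon)$ already forces $\ell\ge 5$, which is exactly what is needed for the block-compatibility statement of Theorem~\ref{thm72}.

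Next I would invoke Theorem~\ref{71b} to obtain a defect group $D$ of $\wt B_0$ with $\NNN_G(D)\le\NNN_G(\bS)=M$, with $\Cent_{\wt G}(D)\le\Cent_{\wt G}(\bS)=\bL^F$, and with $M=\NNN_{[\wbG,\wbG]^F}(\bS)$ being $\Aut([\wbG,\wbG]^F)_{\wt B,D}$-stable; since $\ell\mid|G|$ forces $d\le n$ and the relevant $\Phi_d$-torus $\bS$ to be non-central, $\bL$ is a proper $d$-split Levi subgroup and hence $M\lneq G$. Because $b$ is $\GL_n(\epsilon q)$-stable (the case $J=G$ of the hypothesis), $b$ is the unique $G$-block covered by $\wt B_0$, so $\Aut(G)_b=\Aut(G)_{\wt B}$ and $M$ is in fact $\Aut(G)_{b,D}$-stable, as required of the local subgroup in Definition~\ref{IndAMCond}. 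With this set-up, hypotheses (i)--(iv) of \cite[Theorem 4.1]{CabSpAMTypeA} coincide with (i)--(iv) of Theorem~\ref{NewIndAmCond} and are checked verbatim as in the proof of Theorem~\ref{thm11a}: (i) is the structural package, using that characters of $\SL_n(\epsilon q)$ extend to their stabilizers in $\GL_n(\epsilon q)$ and likewise for $L\lhd\wt L$ (cf.\ Theorem~\ref{StarConddSpLevi}(a) and the cyclicity of $\wt L/L$ together with Lemma~\ref{CliffCorrRest}); (iii) is Theorem~\ref{StarCondSL}; (iv) is Theorem~\ref{thm12b}; and the $\NNN_{\wt GE}(D)_\calB$-equivariant bijection demanded by (ii) is $\wt\Omega_{\wt B}$ from Theorem~\ref{thm72} (onto the local block $\wt b$ defined there), whose $\Irr(\wt G/G)$-, central-character-, and block-compatibility are all established in that statement.

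It then remains to verify the last hypothesis of \cite[Theorem 4.1]{CabSpAMTypeA}. This is the maximal-stability condition replacing hypothesis~\ref{NewIndAmCondv} of Theorem~\ref{NewIndAmCond}, and it asks precisely that every $c\in\Bl(J\mid b)$ be $\GL_n(\epsilon q)$-stable for every $J$ with $G\le J\le\GL_n(\epsilon q)$ -- which is the hypothesis of the corollary. Granting it, \cite[Theorem 4.1]{CabSpAMTypeA} produces the required $\Aut(G)_{b,D}$-equivariant bijection $\Omega_b\colon\Irr_0(b)\to\Irr_0(b')$ for $b'\in\Bl(M)$ with $b'^G=b$, together with the remaining clauses of Definition~\ref{IndAMCond}, and thereby yields the inductive AM condition of Definition~\ref{IndAMCond} for $b$; the underlying cardinality identity $|\Irr_0(b)|=|\Irr_0(b')|$ is the one recorded in Corollary~\ref{cor74}.

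The mathematical content here is light, since (i)--(iv) are literally the conditions already verified for Theorem~\ref{NewIndAmCond}; the main thing to be careful about is the bookkeeping between $G$ and $\wt G$. The Levi $\bL$, the torus $\bS$ and the defect group $D$ are attached naturally to a block $\wt B_0$ of $\GL_n(\epsilon q)$ lying over $b$, and it is the stability hypothesis of the corollary that makes the descent to $\SL_n(\epsilon q)$ clean -- it both forces $M=\wt M\cap G$ to be $\Aut(G)_{b,D}$-stable and is literally the chain condition required by \cite[Theorem 4.1]{CabSpAMTypeA}. Identifying that chain condition with the stated hypothesis, rather than with hypothesis~\ref{NewIndAmCondv}, is the single point at which this proof genuinely departs from that of Theorem~\ref{thm11a}; everything else is cited.
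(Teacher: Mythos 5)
Your proposal is correct and follows essentially the same route as the paper: replace Theorem~\ref{NewIndAmCond} by \cite[Theorem 4.1]{CabSpAMTypeA}, observe that their hypotheses (i)--(iv) coincide and are verified exactly as in the proof of Theorem~\ref{thm11a}, and note that the remaining condition (v) of \cite[Theorem 4.1]{CabSpAMTypeA} is precisely the corollary's stability hypothesis. Your write-up fills in more of the bookkeeping (the choice of $\wt B_0$, $\bS$, $M$, $D$, the $\Aut(G)_{b,D}$-stability of $M$, and the observation that $\ell\nmid 3q(q-\epsilon)$ forces $\ell\ge 5$), but the mathematical skeleton is identical to the paper's three-sentence argument.
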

In \cite[\S 5]{CabSpAMTypeA} the condition on $b$ is studied using the $d$-cuspidal pair $(\bL,\zeta)$ associated with $b$. For every $J$ with $G\leq J\leq \GL_n(\epsilon q)$ every $c\in \Bl(J\mid b)$ is $\GL_n(\epsilon q)$-stable if $\Zent(\bL)$ is connected according to the considerations given there. 

\begin{proof}
The criterion \cite[Theorem 4.1]{CabSpAMTypeA} can also be applied to a $\wt G$-orbit of blocks of $\SL_n(\epsilon q)$. The assumptions 4.1(i)-(iv) are ensured as in the proof of \ref{thm11a}. The remaining condition (v) is satisfied by assumption. 
\end{proof}

The equality in Corollary \ref{cor74} is close to the Alperin-McKay conjecture for those blocks, just the block of the normalizer of the defect group is replaced by the induced block of the normalizer of the Sylow $\Phi_d$-torus. 
We use the result of \ref{prop75} to relate the blocks of those two groups.

\begin{proof}[Proof of Theorem \ref{thm13a}] Assume $G=\SL_n(\epsilon q)$ writes as $\bG^F$ as before. 
Let $B\in \Bl(\SL_n(\epsilon q))$, $N$ and $b\in \Bl(N)$ be corresponding to $B$ as in Corollary \ref{cor74b}. Then $|\Irr_0(B)|=|\Irr_0(b)|$ according to Corollary \ref{cor74b}. Denote by $(\bL ,\zeta)$ the $d$-cuspidal pair defining $B$ as in \cite[4.1]{CabEngAdv}. Note that $N=\NNN_G(\bL)$.

Some defect group $D$ of $B$ is also a defect group of any covering block of GL$_n(\epsilon q)$ and satisfies $\Cent_G(D)\leq \bL^F$ by Theorem \ref{71b} and \cite[\S 5.3]{CabEngAdv}.
Note that $D$ is also a defect group of $b$.
All characters of $L:=\bL^F$ extend to their stabilizers in $N$ by Theorem \ref{StarConddSpLevi}. For $\xi\in \Irr(L)$ the group $N_\xi/L$ satisfies the McKay Conjecture for $\ell$ by Proposition \ref{propMcKayrelWeyl}.
Now every block $\wt b$ of $\wt L:=(\Zent(\wt\bG)\bL)^F$
contains at least one character of $\Irr(\wt L\mid \zeta)$ and those are $d$-cuspidal according to \cite[proof of 1.10(i)]{CaEn94}. The block $\wt b$ is splendid Rickard equivalent to a unipotent block via a Jordan decomposition, see \cite{BonDatRou}. This block has a $d$-cuspidal unipotent character. 
 According to \cite[22.9]{CabEnRedGp} this block has central defect and is hence nilpotent. 
This implies that $\wt b$ is nilpotent, since the fusion systems of the blocks are preserved by splendid Rickard equivalence according to \cite[19.7]{Puig_book} and nilpotency of a block can be read off from its fusion system.
The $\ell$-group $D$ acts on the set $\Bl(\wt L\mid b)$ since $b$ is $D$-stable. Observe that $\ell\nmid |\Bl(\wt L\mid b)|$ since multiplication with characters of $\Irr(\wt L/L)$ defines an action on $\Bl(\wt L\mid b)$.

Accordingly there exists some block $\wt b\in \Bl(\wt L\mid b)$ that is $D$-stable. Since $\Bl(\wt L\mid b)$ forms an $\Irr(\wt L/L)$-orbit, all blocks in $\Bl(\wt L\mid b)$ are $D$-stable. Then the block $\wt c\in\Bl(\wt L D)$ covering $\wt b$ is nilpotent as well by \cite[Theorem 2]{Cab_p_ext_nilp}. Let $\wt b\in \Bl(LD\mid b)$ be covered by $ \wt c$. This block is inertial by \cite[Theorem 3.13]{Puig_nilpotent_extensions}. 
Hence Theorem \ref{prop75} can be applied and proves the statement.\end{proof}

While the above proves that all $\ell$-blocks of $\SL_n(\epsilon q)$ satisfy the Alperin-McKay conjecture for primes $\ell$ with $\ell\nmid 3q(q-\epsilon)$, we finally deduce the consequences of our considerations towards the Alperin weight Conjecture. 

\begin{proof}[Proof of Theorem \ref{thm11b}]
Let $B_0\in\Irr(\SL_n(\epsilon q))$ with defect group $D$, and $(\bL,\zeta)$ and $N$ be defined as in Corollary \ref{cor74}. Let $c\in 
\Bl(N)$ be the block with $c^{\SL_n(\epsilon q)}$ corresponding to $B_0$ and $c_0=\bl(\zeta)\in \Bl(\bL^F)$. By assumption $\Cent_\bG(D)$ is $d$-split and therefore it coincides with $\bL$, since $\bL$ is the minimal $d$-split Levi containing $\Cent^\circ_\bG(D)$ according to \cite[4.4(iii)]{CabEngAdv}.
The defect group hence satisfies $D= \Zent(\bL)^F_{\ell}$. This implies $N=\NNN_\GF(D)$. 

 The block $c_0:=\bl(\zeta)$ contains a character of $\cE_{\ell' }(\bL^F)$ and the characters of $\cE_{\ell' }(\bL^F)$ are trivial on $\Zent(L)_{\ell}$ according to \cite[1.2(v)]{CabEngAdv}.
On the other hand $c_0$ as block with central defect has a unique character that is trivial on $D$ and that corresponds to a defect zero character of $L/D$. 
For $\BB'_0:=\Irr(c_0)\cap \cE_\lp(\bL^F)$ this implies $\BB'_0= \Irr(c_0)\cap \Irr(L/D)$ and hence $\Irr(N\mid \BB_0')=\Irr(c)\cap \Irr(N/D)$.
As $c$ has normal defect, \[|\IBr(c)|=|\Irr(c)\cap \Irr(N/D)|=|\Irr(N\mid \BB'_0)|\].

Let $\wGF=\GL_n(\epsilon q)$ be defined as in as in \ref{71b}, $\wt L:=\Cent_{\wGF}(\Zent(\bL))$ and $\wt N:=\NNN_{\wGF}(\bL)$. 
We denote by $\wt B$ the sum of blocks of $\wGF$ covering $B_0$ and by $\wt b$ the sum of corresponding blocks of $\wt N$. 
Denote by $\wt b_0$ the sum of blocks of $\wt L$ covered by one of $\wt b$. 
The construction of $\wt \Omega_{\wt B}:\Irr_0(\wt B)\longrightarrow \Irr_0(\wt b)$ in \ref{thm72} implies 
\[\wt \Omega_{\wt B} (\wt {\BB})=\Irr(\wt N\mid \wt \BB'),\]
where $\wt {\BB}:= \Irr_0(\wt B)\cap \cE_{\ell'} (\wGF) $ and $\wt \BB':=\Irr(\wt b_0) \cap \cE_\lp(\wt L)$. 

Let $B$ be the sum of blocks in $\Bl(\GF)$ covered by a block of $\wt B$ and 
$b$ be the sum of blocks in $\Bl(N)$ covered by a blocks of $\wt b$. 
By the construction in Section \ref{RefomIndAM} we obtain a bijection $\Omega_B:\Irr_0(B)\longrightarrow \Irr_0(b)$. For $\BB:=\bigcup_{\chi\in \wt {\mathbb B}} \Irr(\Res^{\wG}_{G}(\chi))$ 
and $\BB':=\bigcup_{\psi\in \wt {\mathbb B'}} \Irr(\Res^{\wt L}_{L}(\psi))$ 
Corollary \ref{cor_AWC} implies $\Omega_B(\mathbb B)=\Irr(b)\cap \Irr(N/D)$. 

Note that $\mathbb B\subseteq \cE_{\ell'}(\GF)$. Since the $\ell$-modular decomposition matrix of $B_0$ is unitriangular with respect to the $\Aut(\GF)_B$-stable set $\BB_0$, see \cite[Theorem~C and Proposition~2.6(iii)]{GeckBS2}, this implies the inductive blockwise Alperin weight condition, see \cite[Theorem~1.2]{KosSpAMBAWCy}.
\end{proof}

We conclude by proving the Alperin weight conjecture for blocks with abelian defect in our situation. 
\begin{proof}[Proof of \ref{thm13b}]
Let $B$ be the sum of a $\wG$-orbit in $\Bl(\SL_n(\epsilon q))$.
Let $\bG:=\SL_n(\overline \FF_q)$ and $F:\bG\rightarrow\bG$ be a Frobenius endomorphism such that $\bG^F=\SL_n(\epsilon q)$. 
Let $(\bL,\zeta)$ be the $d$-cuspidal pair of a block $B_0$ from the blocks in $B$, $L:=\bL^F$ and $N:=\NNN_{\GF}(\bL)$. By \Cref{71b}, it follows that $\NNN_\GF(\bL)\geq \NNN_\GF(D)$ for some defect group $D$ of $B_0$ and 
hence by Brauer correspondence \cite[4.12]{NavBl} the blocks in $B$ correspond to a $\wt N$-orbit in $\Bl(N)$ of the same length. We denote the sum of those blocks by $b$.

Let $\Omega_B:\Irr_0(B)\rightarrow \Irr_0(b)$ be a bijection that is derived from the bijection of \Cref{thm72} as in Section 2.
The arguments in the proof of Theorem~\ref{thm11b} imply that 
\[\Omega_B(\Irr_0(B)\cap \cE_{\ell'}(\GF))=\Irr_0(b)\cap \Irr(N\mid \cE_\lp(\bL^F)).\]
Let $c_0:=\bl(\zeta)\in \Bl(\bL^F)$ and denote by $c$ the block of $N$ covering $c_0$. Let $r$ be the length of the $\wG$-orbit of $G$ containing $B_0$. Recall that $\cE_{\lp}(\GF)$ is a basic set of $\GF$ by \cite[Theorem C and Proposition 2.6(iii)]{GeckBS2}.
Then $b$ has $r$ summands, one is $c$, and
\[|\IBr(B_0)|= \frac{|\Irr_0(B)\cap \cE_{\ell'}(\GF)|}{r}=
\frac{|\Irr_0(b)\cap \Irr(N\mid \cE_{\ell'}(\bL^F)|}{r}=
|\Irr_0(c)\cap \Irr(N\mid \BB')|,\]
where $\BB':= \Irr(c_0)\cap \cE_\lp(\bL^F)$.
Then by \cite{BonDatRou} $c_0$ is basic Morita equivalent to a block $c_0'$ of $\Cent_{\bL^*}(s)^F$ above a unipotent block $c_0''\in\Bl(\Cent^\circ_{\bL^*}(s)^F)$ with central defect. The unipotent character of $c_0''$ is trivial on the central defect, hence restricts to an irreducible Brauer character and hence forms a basic set. By Clifford theory the unipotent characters of $c_0'$ form a basic set with a diagonal $\ell$-modular decomposition matrix as well, since $\Cent_{\bL^*}(s)^F/\Cent_{\bL^*}^\circ(s)^F$ is an $\ell'$-group. The Morita equivalence from \cite{BonDatRou} maps the unipotent characters to $\BB':=\Irr(c_0)\cap \cE_\lp(\bL^F)$ and hence $\Irr(c_0)\cap \cE_\lp(\bL^F)$ is a $N_{c_0}$-stable basic set with a unitriangular $\ell$-decomposition matrix. 
Recall that by the proof of Theorem \ref{thm13a} the assumptions of Proposition \ref{prop75} are satisfied. Then the assumption \ref{prop29b} applies and we obtain $|\IBr(c)|=|\Irr(N\mid \BB')|$.

Since $c$ satisfies the Alperin weight Conjecture by Proposition \ref{prop29a}, this implies the Alperin weight conjecture for $B_0$ and all blocks of $B$. 
\end{proof}


\begin{thebibliography}{BMM93}
	
	\bibitem[BDR17]{BonDatRou}
	C.~Bonnaf\'{e}, J.-F. Dat, and R.~Rouquier.
	\newblock Derived categories and {D}eligne-{L}usztig varieties {II}.
	\newblock {\em Ann. of Math. (2)}, 185(2):609--670, 2017.
	
	\bibitem[BM98]{BMGenHC}
	M.~Brou\'e and G.~Malle.
	\newblock Generalized {H}arish-{C}handra theory.
	\newblock In {\em Representations of reductive groups}, volume~16 of {\em Publ.
		Newton Inst.}, pages 85--103. Cambridge Univ. Press, Cambridge, 1998.
	
	\bibitem[BMM93]{BMM}
	M.~Brou\'{e}, G.~Malle, and J.~Michel.
	\newblock Generic blocks of finite reductive groups.
	\newblock {\em Ast\'{e}risque}, (212):7--92, 1993.
	
	\bibitem[Cab87]{Cab_p_ext_nilp}
	M.~Cabanes.
	\newblock Extensions of {$p$}-groups and construction of characters.
	\newblock {\em Comm. Algebra}, 15(6):1297--1311, 1987.
	
	\bibitem[CE94]{CaEn94}
	M.~Cabanes and M.~Enguehard.
	\newblock On unipotent blocks and their ordinary characters.
	\newblock {\em Invent. Math.}, 117(1):149--164, 1994.
	
	\bibitem[CE99a]{CabEngAdv}
	M.~Cabanes and M.~Enguehard.
	\newblock On blocks of finite reductive groups and twisted induction.
	\newblock {\em Adv. Math.}, 145(2):189--229, 1999.
	
	\bibitem[CE99b]{CabEnFus}
	M.~Cabanes and M.~Enguehard.
	\newblock On fusion in unipotent blocks.
	\newblock {\em Bull. London Math. Soc.}, 31(2):143--148, 1999.
	
	\bibitem[CE04]{CabEnRedGp}
	M.~Cabanes and M.~Enguehard.
	\newblock {\em Representation theory of finite reductive groups}, volume~1 of
	{\em New Mathematical Monographs}.
	\newblock Cambridge University Press, Cambridge, 2004.
	
	\bibitem[CS13]{CabSpMZ}
	M.~Cabanes and B.~Sp\"{a}th.
	\newblock Equivariance and extendibility in finite reductive groups with
	connected center.
	\newblock {\em Math. Z.}, 275(3-4):689--713, 2013.
	
	\bibitem[CS15]{CabSpAMTypeA}
	M.~Cabanes and B.~Sp{\"a}th.
	\newblock On the inductive {A}lperin-{M}c{K}ay condition for simple groups of
	type {${A}$}.
	\newblock {\em J. Algebra}, 442:104--123, 2015.
	
	\bibitem[CS17a]{CabSpCharTypeA}
	M.~Cabanes and B.~Sp\"ath.
	\newblock Equivariant character correspondences and inductive {M}c{K}ay
	condition for type {$A$}.
	\newblock {\em J. Reine Angew. Math.}, 728:153--194, 2017.
	
	\bibitem[CS17b]{CabSpIMTypeC}
	M.~Cabanes and B.~Sp\"ath.
	\newblock Inductive {M}c{K}ay condition for finite simple groups of type {$C$}.
	\newblock {\em Represent. Theory}, 21:61--81, 2017.
	
	\bibitem[CS18]{CS_typeBE}
	M.~Cabanes and B.~Sp\"{a}th.
	\newblock Descent equalities and the inductive {McKay} condition for types {B}
	and {E}.
	\newblock Submitted, 2018.
	
	\bibitem[CSFS]{CabASFSp}
	M.~Cabanes, A.~Schaeffer-Fry, and B.~Sp\"ath.
	\newblock On the inductive {A}lperin-{McKay} conditions in the maximal split
	case.
	\newblock {\em {P}reprint}, (2019).
	
	\bibitem[Dad73]{DadeBlExt}
	E.~C. Dade.
	\newblock Block extensions.
	\newblock {\em Illinois J. Math.}, 17:198--272, 1973.
	
	\bibitem[Fen18]{Z_Feng}
	Z.~Feng.
	\newblock The blocks and weights of finite special linear and unitary groups,
	2018.
	\newblock arXiv:1805.06633.
	
	\bibitem[FLL17]{FengLiLi}
	Z.~Feng, C.~Li, and Z.~Li.
	\newblock The inductive blockwise alperin weight condition for {PSL}(3, q ).
	\newblock {\em Algebra Colloquium}, 24:123--152, 03 2017.
	
	\bibitem[Gec93]{GeckBS2}
	M.~Geck.
	\newblock Basic sets of {B}rauer characters of finite groups of {L}ie type.
	{II}.
	\newblock {\em J. London Math. Soc. (2)}, 47(2):255--268, 1993.
	
	\bibitem[GLS98]{GLS3}
	D.~Gorenstein, R.~Lyons, and R.~Solomon.
	\newblock {\em The classification of the finite simple groups. {N}umber 3.
		{P}art {I}. {C}hapter {A}}, volume~40 of {\em Mathematical Surveys and
		Monographs}.
	\newblock American Mathematical Society, Providence, RI, 1998.
	
	\bibitem[IMN07]{IMNRedMcKay}
	I.~M. Isaacs, G.~Malle, and G.~Navarro.
	\newblock A reduction theorem for the {M}c{K}ay conjecture.
	\newblock {\em Invent. Math.}, 170(1):33--101, 2007.
	
	\bibitem[Isa94]{IsaChTh}
	I.~M. Isaacs.
	\newblock {\em Character theory of finite groups}.
	\newblock Dover Publications, Inc., New York, 1994.
	\newblock Corrected reprint of the 1976 original [Academic Press, New York].
	
	\bibitem[KS15]{KosSpCliffTh}
	S.~Koshitani and B.~Sp\"{a}th.
	\newblock Clifford theory of characters in induced blocks.
	\newblock {\em Proc. Amer. Math. Soc.}, 143(9):3687--3702, 2015.
	
	\bibitem[KS16a]{KosSpAM2BlCy}
	S.~Koshitani and B.~Sp{\"a}th.
	\newblock The inductive {A}lperin--{M}c{K}ay condition for 2-blocks with cyclic
	defect groups.
	\newblock {\em Arch. Math. (Basel)}, 106(2):107--116, 2016.
	
	\bibitem[KS16b]{KosSpAMBAWCy}
	S.~Koshitani and B.~Sp\"{a}th.
	\newblock The inductive {A}lperin-{M}c{K}ay and blockwise {A}lperin weight
	conditions for blocks with cyclic defect groups and odd primes.
	\newblock {\em J. Group Theory}, 19(5):777--813, 2016.
	
	\bibitem[Li18]{Li_Sp2nq}
	C.~Li.
	\newblock An equivariant bijection between irreducible {B}rauer characters and
	weights for $\mathrm {Sp}(2n, q)$, 2018.
	\newblock arXiv:1812.11294.
	
	\bibitem[LZ18]{LiZhang}
	C.~Li and J.~Zhang.
	\newblock The inductive blockwise {A}lperin weight condition for {PSL}$_n(q)$
	and {PSU}$_n(q)$ with cyclic outer automorphism groups.
	\newblock {\em Journal of Algebra}, 495:130 -- 149, 2018.
	
	\bibitem[Mal07]{MalleHeight0}
	G.~Malle.
	\newblock Height 0 characters of finite groups of {L}ie type.
	\newblock {\em Represent. Theory}, 11:192--220 (electronic), 2007.
	
	\bibitem[Mal14]{BAWMa}
	G.~Malle.
	\newblock On the inductive {A}lperin-{M}c{K}ay and {A}lperin weight conjecture
	for groups with abelian {S}ylow subgroups.
	\newblock {\em J. Algebra}, 397:190--208, 2014.
	
	\bibitem[MS16]{MalSpOddDegree}
	G.~Malle and B.~Sp\"ath.
	\newblock Characters of odd degree.
	\newblock {\em Ann. of Math. (2)}, 184(3):869--908, 2016.
	
	\bibitem[Mur13]{MurBlNorSub}
	M.~Murai.
	\newblock On blocks of normal subgroups of finite groups.
	\newblock {\em Osaka J. Math.}, 50(4):1007--1020, 2013.
	
	\bibitem[Nav98]{NavBl}
	G.~Navarro.
	\newblock {\em Characters and blocks of finite groups}, volume 250 of {\em
		London Mathematical Society Lecture Note Series}.
	\newblock Cambridge University Press, Cambridge, 1998.
	
	\bibitem[Nav18]{NavMcKay}
	G.~Navarro.
	\newblock {\em Character theory and the {M}c{K}ay conjecture}, volume 175 of
	{\em Cambridge Studies in Advanced Mathematics}.
	\newblock Cambridge University Press, Cambridge, 2018.
	
	\bibitem[Pui94]{Puig_Scope}
	L.~Puig.
	\newblock On {J}oanna {S}copes' criterion of equivalence for blocks of
	symmetric groups.
	\newblock {\em Algebra Colloq.}, 1(1):25--55, 1994.
	
	\bibitem[Pui99]{Puig_book}
	L.~Puig.
	\newblock {\em On the local structure of {M}orita and {R}ickard equivalences
		between {B}rauer blocks}, volume 178 of {\em Progress in Mathematics}.
	\newblock Birkh\"{a}user Verlag, Basel, 1999.
	
	\bibitem[Pui11]{Puig_nilpotent_extensions}
	L.~Puig.
	\newblock Nilpotent extensions of blocks.
	\newblock {\em Math. Z.}, 269(1-2):115--136, 2011.
	
	\bibitem[Sch16]{Schulte}
	E.~Schulte.
	\newblock The inductive blockwise {A}lperin weight condition for {$G_2(q)$} and
	{$^3D_4(q)$}.
	\newblock {\em J. Algebra}, 466:314--369, 2016.
	
	\bibitem[SF14]{ASF_Sp6}
	A.~Schaeffer~Fry.
	\newblock {$Sp_6(2^a)$} is ``good'' for the {M}c{K}ay, {A}lperin weight, and
	related local-global conjectures.
	\newblock {\em J. Algebra}, 401:13--47, 2014.
	
	\bibitem[Sp{\"a}09]{SpExz}
	B.~Sp{\"a}th.
	\newblock The {M}c{K}ay conjecture for exceptional groups and odd primes.
	\newblock {\em Math. Z.}, 261(3):571--595, 2009.
	
	\bibitem[Sp{\"a}10a]{SpSylowTori1}
	B.~Sp{\"a}th.
	\newblock Sylow {$d$}-tori of classical groups and the {M}c{K}ay conjecture.
	{I}.
	\newblock {\em J. Algebra}, 323(9):2469--2493, 2010.
	
	\bibitem[Sp{\"a}10b]{SpSylowTori2}
	B.~Sp{\"a}th.
	\newblock Sylow {$d$}-tori of classical groups and the {M}c{K}ay conjecture.
	{II}.
	\newblock {\em J. Algebra}, 323(9):2494--2509, 2010.
	
	\bibitem[Sp{\"a}12]{SpIMDefChar}
	B.~Sp{\"a}th.
	\newblock Inductive {M}c{K}ay condition in defining characteristic.
	\newblock {\em Bull. Lond. Math. Soc.}, 44(3):426--438, 2012.
	
	\bibitem[Sp{\"a}13a]{AMSp}
	B.~Sp{\"a}th.
	\newblock A reduction theorem for the {A}lperin-{M}c{K}ay conjecture.
	\newblock {\em J. Reine Angew. Math.}, 680:153--189, 2013.
	
	\bibitem[Sp{\"a}13b]{BAWSp}
	B.~Sp{\"a}th.
	\newblock A reduction theorem for the blockwise {A}lperin weight conjecture.
	\newblock {\em J. Group Theory}, 16(2):159--220, 2013.
	
	\bibitem[Sp{\"{a}}19]{SpaethtypeD}
	B.~Sp{\"{a}}th.
	\newblock A strong version of an extension result of {L}usztig and {G}eck.
	\newblock In Preparation, 2019.
	
	\bibitem[Th{\'e}95]{Thevenaz_book}
	J.~Th{\'e}venaz.
	\newblock {\em {$G$}-algebras and modular representation theory}.
	\newblock Oxford Mathematical Monographs. The Clarendon Press, Oxford
	University Press, New York, 1995.
	
	\bibitem[Zho16]{Zhou_pp_ext}
	Y.~Zhou.
	\newblock On the {$p'$}-extensions of inertial blocks.
	\newblock {\em Proc. Amer. Math. Soc.}, 144(1):41--54, 2016.
	
\end{thebibliography}
\end{document}